\documentclass[11pt]{article}

\usepackage[margin=2.5cm]{geometry}

\usepackage{tikz}
\usetikzlibrary{positioning, arrows.meta, decorations.pathreplacing}

\usepackage{amsthm}
\usepackage{bm}
\usepackage{amsmath}
\usepackage{amsfonts}
\usepackage{color}
\usepackage{amssymb}
\usepackage{graphicx}
\usepackage{enumerate}
\usepackage{amsthm,amscd}
\usepackage[all]{xy}
\usepackage{pifont} 
\usepackage{authblk}

\allowdisplaybreaks

\def\marrow{{\boldmath {\marginpar[\hfill$\rightarrow \rightarrow$]{$\leftarrow \leftarrow$}}}}

\def\andrey#1{{\sc Andrey: }{\marrow\sf \textcolor{blue}{#1}}}
\newcommand{\m}{\mathcal}

\usepackage[colorlinks=true,anchorcolor=blue,filecolor=blue,linkcolor=red,urlcolor=blue,citecolor=blue]{hyperref} 

\newtheorem{theorem}{Theorem}[section]

\newtheorem{corollary}[theorem]{Corollary}
\newtheorem{definition}[theorem]{Definition}

\newtheorem{lemma}[theorem]{Lemma}

\newtheorem{proposition}[theorem]{Proposition}

\newtheorem*{remark*}{Remark} 
\newtheorem{claim}[theorem]{Claim}

\newtheorem{obser}[theorem]{Observation}
\newtheorem{assumption}[theorem]{Assumption}

\RequirePackage[normalem]{ulem} 
\RequirePackage{color}\definecolor{RED}{rgb}{1,0,0}\definecolor{BLUE}{rgb}{0,0,1} 
\providecommand{\DIFaddbegin}{} 
\providecommand{\DIFaddend}{} 
\providecommand{\DIFdelbegin}{} 
\providecommand{\DIFdelend}{} 
\providecommand{\DIFaddbeginFL}{} 
\providecommand{\DIFaddendFL}{} 
\providecommand{\DIFdelbeginFL}{} 
\providecommand{\DIFdelendFL}{} 
\newcommand{\DIFscaledelfig}{0.5}
\RequirePackage{settobox} 
\RequirePackage{letltxmacro} 
\newsavebox{\DIFdelgraphicsbox} 
\newlength{\DIFdelgraphicswidth} 
\newlength{\DIFdelgraphicsheight} 
\LetLtxMacro{\DIFOincludegraphics}{\includegraphics} 
\newcommand{\DIFaddincludegraphics}[2][]{{\color{blue}\fbox{\DIFOincludegraphics[#1]{#2}}}} 
\newcommand{\DIFdelincludegraphics}[2][]{
\sbox{\DIFdelgraphicsbox}{\DIFOincludegraphics[#1]{#2}}
\settoboxwidth{\DIFdelgraphicswidth}{\DIFdelgraphicsbox} 
\settoboxtotalheight{\DIFdelgraphicsheight}{\DIFdelgraphicsbox} 
\scalebox{\DIFscaledelfig}{
\parbox[b]{\DIFdelgraphicswidth}{\usebox{\DIFdelgraphicsbox}\\[-\baselineskip] \rule{\DIFdelgraphicswidth}{0em}}\llap{\resizebox{\DIFdelgraphicswidth}{\DIFdelgraphicsheight}{
\setlength{\unitlength}{\DIFdelgraphicswidth}
\begin{picture}(1,1)
\thicklines\linethickness{2pt} 
{\color[rgb]{1,0,0}\put(0,0){\framebox(1,1){}}}
{\color[rgb]{1,0,0}\put(0,0){\line( 1,1){1}}}
{\color[rgb]{1,0,0}\put(0,1){\line(1,-1){1}}}
\end{picture}
}\hspace*{3pt}}} 
} 
\LetLtxMacro{\DIFOaddbegin}{\DIFaddbegin} 
\LetLtxMacro{\DIFOaddend}{\DIFaddend} 
\LetLtxMacro{\DIFOdelbegin}{\DIFdelbegin} 
\LetLtxMacro{\DIFOdelend}{\DIFdelend} 
\DeclareRobustCommand{\DIFaddbegin}{\DIFOaddbegin \let\includegraphics\DIFaddincludegraphics} 
\DeclareRobustCommand{\DIFaddend}{\DIFOaddend \let\includegraphics\DIFOincludegraphics} 
\DeclareRobustCommand{\DIFdelbegin}{\DIFOdelbegin \let\includegraphics\DIFdelincludegraphics} 
\DeclareRobustCommand{\DIFdelend}{\DIFOaddend \let\includegraphics\DIFOincludegraphics} 
\LetLtxMacro{\DIFOaddbeginFL}{\DIFaddbeginFL} 
\LetLtxMacro{\DIFOaddendFL}{\DIFaddendFL} 
\LetLtxMacro{\DIFOdelbeginFL}{\DIFdelbeginFL} 
\LetLtxMacro{\DIFOdelendFL}{\DIFdelendFL} 
\DeclareRobustCommand{\DIFaddbeginFL}{\DIFOaddbeginFL \let\includegraphics\DIFaddincludegraphics} 
\DeclareRobustCommand{\DIFaddendFL}{\DIFOaddendFL \let\includegraphics\DIFOincludegraphics} 
\DeclareRobustCommand{\DIFdelbeginFL}{\DIFOdelbeginFL \let\includegraphics\DIFdelincludegraphics} 
\DeclareRobustCommand{\DIFdelendFL}{\DIFOaddendFL \let\includegraphics\DIFOincludegraphics} 

\begin{document}

\title{Structure and properties of large cross-intersecting families}
\author{
Yang Huang\thanks{Moscow Institute of Physics and Technology; 
E-mail: \url{1060393815@qq.com}.}
\quad \quad 
Andrey Kupavskii\thanks{Moscow Institute of Physics and Technology; 
E-mail: \url{kupavskii@ya.ru}. }
}
\date{\today}

\maketitle

\vspace{-0.5cm}
\begin{abstract}

The study of intersecting families, initiated by Erd\H{o}s, Ko, and Rado, is a central topic in extremal combinatorics. A classical stability result of Hilton and Milner determines the largest non-trivial intersecting family, and in subsequent works researchers developed structural stability results via the notion of diversity.

In this paper, we study cross-intersecting families. We establish a structural theorem for large cross-intersecting pairs, extending Kupavskii's theorem from intersecting families to the cross-intersecting setting. Our result characterizes extremal cross-intersecting pairs in terms of their diversity parts and maximal cross-intersecting extensions. As corollaries, we obtain cross-intersecting analogues of several classical theorems, including those of Han--Kohayakawa and Huang--Peng.

A key ingredient in the proof is a new shifting method, called the
$S_{U,V}^{Q}$-shift, which not only preserves global intersection
properties but also maintains certain local substructures after shifting.
We expect this method to be useful elsewhere, and it is already one of the key tools in establishing a product analogue of the Hilton--Milner theorem.

\end{abstract}




\section{Introduction}
Let $[n]=\{1,\dots, n\}$ denote the standard $n$-element set. 
For a set $X$ and an integer $k$, let $2^{X}$ denote the power set of $X$, 
and let ${X\choose k}$ denote the family of all $k$-element subsets of $X$. A family $\mathcal{F}$ is called  intersecting if for any $A, B\in \m F$ we have $A\cap B\ne \emptyset$. A {\it star} is a family whose sets contain a fixed element. 
A {\it full star} consists of all $k$-sets containing a fixed element.

The study of problems on intersecting families was initiated by Erd\H{o}s, Ko, and Rado \cite{EKR1961}, who proved that for any $n> 2k$, the largest intersecting families in ${[n]\choose k}$ are full stars. 
A family $\mathcal{F}$ is called {\it non-trivial} if $\mathcal{F}$ is not a star.  
Moreover, they also asked what is the largest non-trivial intersecting family. 
This question was resolved by Hilton and Milner \cite{HM1967}, who showed that the largest non-trivial intersecting family is  isomorphic  to $\mathcal{HM}_{n,k}:=\{ [2,k+1] \} \cup \{H\in \binom{[n]}{k}: 1\in H, H\cap [2,k+1] \neq \emptyset\}$.  
The work of Hilton and Milner \cite{HM1967} provides the first {\it stability} result for the Erd\H{o}s--Ko--Rado theorem, where a stability result refers to a result that gives an (ideally optimal) trade-off  between how far an intersecting family $\m F$ is from a star, and how large $\m F$ is.  
Kostochka and Mubayi \cite{KM2016}
studied the structure of large non-trivial intersecting families for $k\ge 4$ and $n$ large enough.
Subsequently,  Han and Kohayakawa \cite{HK2017} 
investigated the largest intersecting families that are not contained in full stars, nor in Hilton--Milner's families. Furthermore, 
the first author and Peng \cite{HP2024} studied the largest intersecting families that are not contained in full stars, nor in Hilton--Milner's families, nor in Han--Kohayakawa's families.

It is convenient to state the stability results of Erd\H{o}s--Ko--Rado theorem in terms of the diversity. For a family $\mathcal{F}$, we denote by $\Delta(\mathcal{F})$ the maximum degree of $\mathcal{F}$. The {\it diversity} of $\m F$ is defined as 
\[ \gamma(\m F) := |\m F|-\Delta(\m F).  \] 
In other words, $\gamma (\m F)$ is the number of sets in $\m F$ not containing the element of maximum degree. The following result of Frankl \cite{Frankl-max-degree}, slightly strengthened and formulated in diversity terms by Kupavskii and Zakharov~\cite{KZ2018}, gives such an optimal stability result.

\begin{theorem}[Frankl \cite{Frankl-max-degree};  Kupavskii--Zakharov \cite{KZ2018}] \label{thm:FKZ}
Let $n>2k$ be positive integers. Suppose that \( \mathcal{F} \subset \binom{[n]}{k} \) is intersecting and $\gamma(\m F)\ge {n-u-1\choose  n-k-1}$ for some real $u$, where $3\le u\le k$. Then
\[
|\mathcal{F}| \leq \binom{n-1}{k-1} - \binom{n-u-1}{k-1} + \binom{n-u-1}{n-k-1}.
\]
\end{theorem}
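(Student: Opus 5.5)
We reduce to a cross-intersecting problem on the link of the maximum-degree element and then apply a Kruskal--Katona/Lovász-type bound.

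\textbf{Setup.} Let $1$ be an element of maximum degree. Define
\[
\m F(1)=\{F\setminus\{1\}: 1\in F\in\m F\}\subset{[2,n]\choose k-1},\qquad \m F(\bar 1)=\{F\in\m F: 1\notin F\}\subset{[2,n]\choose k}.
\]
Then $|\m F|=|\m F(1)|+|\m F(\bar 1)|$ and $|\m F(\bar1)|=\gamma(\m F)$. Since $\m F$ is intersecting, the pair $\m F(1),\m F(\bar 1)$ is cross-intersecting on the ground set $[2,n]$ of size $n-1$. So it suffices to show the following: if $\m A\subset{[2,n]\choose k-1}$ and $\m B\subset{[2,n]\choose k}$ are cross-intersecting with $|\m B|\ge{n-u-1\choose n-k-1}$, then
\[
|\m A|+|\m B|\le{n-1\choose k-1}-{n-u-1\choose k-1}+{n-u-1\choose n-k-1}.
\]
We also keep the additional constraint $|\m B|\le|\m A|$, which follows from maximality of the degree. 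This constraint caps $\gamma$: without it, $|\m B|$ large and $\m A$ empty would break the bound.

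\textbf{Main step: the complement trick.} Cross-intersection means that no set of $\m A$ is disjoint from a set of $\m B$. Equivalently, $\m A$ avoids the family $\partial_{k-1}(\m B^c)$, where
\[
\m B^c=\{[2,n]\setminus B: B\in \m B\}\subset{[2,n]\choose n-k-1}
\]
and $\partial_{k-1}$ denotes the $(k-1)$-shadow. Since $n>2k$, we have $n-k-1\ge k$, so this is a genuine lower shadow. Hence
\[
|\m A|\le {n-1\choose k-1}-|\partial_{k-1}\m B^c|.
\]
Write $|\m B|=|\m B^c|={x\choose n-k-1}$ with real $x\ge n-u-1$, using the Lovász form. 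The Lovász version of Kruskal--Katona gives $|\partial_{k-1}\m B^c|\ge{x\choose k-1}$. Therefore
\[
|\m F|\le f(x):={n-1\choose k-1}-{x\choose k-1}+{x\choose n-k-1}.
\]

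\textbf{Optimizing in $x$.} It remains to show that $f(x)\le f(n-u-1)$ on the admissible range. The lower end of this range is $x\ge n-u-1$. The upper end comes from $|\m B|\le|\m A|$, which forces $x$ to be at most about $n-4$, corresponding to $u\ge3$. The ratio
\[
{x\choose n-k-1}\Big/{x\choose k-1}=\frac{(k-1)!\,(x-k+1)!}{(n-k-1)!\,(x-n+k+1)!}
\]
is at most $1$ in the relevant range, since $n-k-1>k-1$. So the function $g(x)={x\choose k-1}-{x\choose n-k-1}$ should be increasing on the interval, because the derivative of ${x\choose k-1}$ dominates. This would give the claim on the middle part of the range.

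\textbf{Main obstacle.} The hardest part is the calculus inequality together with handling the endpoint: $g$ is not monotone on the whole real range. My plan there is to compare $f(n-u-1)$ with $f(n-4)$ directly, showing that the function is convex-then-decreasing. If this fails, I would treat the large-$\gamma$ case by a separate argument, shifting the family so it becomes a subfamily of one with $\gamma={n-4\choose n-k-1}$ and comparing to the known Hilton--Milner-type extremal families for $u=3$. The hypothesis $u\le k$ ensures that ${n-u-1\choose k-1}$ makes sense with $x\ge k-1$, so that the Lovász bound applies.
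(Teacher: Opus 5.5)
The paper does not prove this theorem; it quotes it from Frankl and Kupavskii--Zakharov. So your argument has to stand on its own, and it has a genuine gap at the upper end of the range of $x$.

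\textbf{What works.} The first half is sound. $\mathcal{F}(1)$ and $\mathcal{F}(\bar 1)$ are cross-intersecting on $[2,n]$, and complements plus Lov\'asz give $|\mathcal{F}|\le f(x):=\binom{n-1}{k-1}-\binom{x}{k-1}+\binom{x}{n-k-1}$, where $\gamma(\mathcal{F})=\binom{x}{n-k-1}$ and $x\ge n-u-1$.

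\textbf{Where it breaks.} There are two problems with your upper bound on $x$.
\begin{itemize}
\item Maximality of the degree does not give $|\mathcal{F}(\bar 1)|\le|\mathcal{F}(1)|$. It only gives $|\mathcal{F}(j)|\le|\mathcal{F}(1)|$ for each $j$. The Fano plane ($n=7$, $k=3$, $u=3$) has $\Delta=3<4=\gamma$.
\item Even granting that inequality, it does not force $x$ below about $n-4$. Already $x=n-3$ is compatible with it, since $\binom{n-3}{n-k-1}+\binom{n-3}{k-1}=\binom{n-2}{k-1}<\binom{n-1}{k-1}$, and so are values slightly above $n-3$.
\end{itemize}
Beyond $n-3$ your estimate is useless. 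One computes $f(n-4)=f(n-3)=\binom{n-2}{k-2}+2\binom{n-3}{k-2}$, and $f$ is strictly increasing on $[n-3,n-2]$ up to $f(n-2)=\binom{n-1}{k-1}$. So for $u=3$ the inequality $f(x)\le f(n-u-1)$ fails for every $x>n-3$, that is, whenever $\gamma(\mathcal{F})>\binom{n-3}{k-2}$.

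At best your argument covers the case $\gamma(\mathcal{F})\le\binom{n-3}{k-2}$, and even there the calculus step needs an actual proof. The ratio $\binom{x}{n-k-1}/\binom{x}{k-1}$ is below $1$ on all of $[n-k-1,n-2)$. That includes $[n-3,n-2]$, where $\binom{x}{k-1}-\binom{x}{n-k-1}$ is decreasing. So \emph{ratio at most $1$} does not mean the derivative of $\binom{x}{k-1}$ dominates.

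\textbf{The large-diversity regime is not vacuous.} Take $\mathcal{F}=\{F\in\binom{[n]}{k}:|F\cap[5]|\ge 3\}$. Element $1$ has maximum degree, and $\gamma(\mathcal{F})=4\binom{n-5}{k-3}+\binom{n-5}{k-4}$. For $n=2k+1$ this equals $\frac{(k-2)(5k-3)}{(2k-2)(2k-3)}\binom{n-3}{k-2}>\binom{n-3}{k-2}$ for every $k\ge 4$ (for $k=4$: $17>15$). Such families satisfy the hypothesis with $u=3$, so this regime must be handled, and there your bound $f(x)$ exceeds the target.

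\textbf{What is missing.} You need an essential use of the fact that $1$ has maximum degree, beyond the sizes of $\mathcal{F}(1)$ and $\mathcal{F}(\bar 1)$. Consider the lexicographic pairs that realise the Lov\'asz estimate with $\gamma>\binom{n-3}{k-2}$. In them every set of $\mathcal{F}(\bar 1)$ contains $2$, and $2$ has strictly larger degree than $1$. Exactly these configurations must be excluded.

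Your fallback does not do this. An $S_{1,j}$-shift preserves $|\mathcal{F}|$, the intersecting property and the maximality of the degree of $1$. But a single shift can lower $\gamma$ by an uncontrolled amount, possibly below $\binom{n-u-1}{n-k-1}$, and then the hypothesis is lost. Appealing to the known extremal families for $u=3$ is circular, since that is a case of the theorem itself. You would need quantitative control of how much diversity one shift can destroy, or a separate argument for $\gamma(\mathcal{F})>\binom{n-3}{k-2}$. The proposal contains neither.
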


Setting $u=k$, Theorem \ref{thm:FKZ} recovers the Hilton--Milner theorem. The bound in Theorem \ref{thm:FKZ} is tight when $u$ is an integer, as witnessed by setting $v=u$ in the following family: 
\begin{equation}\label{example}
\mathcal{H}_{k,v,u}:= \left\{H\in {[n]\choose k}: 1\in H, [2,v+1]\cap H\ne \emptyset \right\} \cup \left\{H\in {[n]\choose k}: [2,u+1]\subset H \right\}.\end{equation}
Unfortunately, Theorem~\ref{thm:FKZ} does not give an exact stability result for intersecting families $\m F$ with diversity $\gamma(\m F) \ge 2$. 
To fill this gap, Kupavskii \cite{Kup2018, 11-23-2} proved the following generalization, which is a conclusive form of the above results. Rather than just working with the distance between $\mathcal{F}$ and a star, it bounds the size of an intersecting family in terms of the structure of its diversity part. Before stating the result, we need to give some notation.

\begin{definition}
    We say that a family $\mathcal{M}$ is { \it intersection-minimal} if for any $M'\in \mathcal{M}$, we have 
    \[ \left|\cap_{M\in \mathcal{M}\setminus \{M'\}} M \right| 
    > \left|\cap_{M\in \mathcal{M}}M \right|. \]
\end{definition}
Note that a family consisting of a single set is regarded as intersection-minimal.
For an element $x\in [n]$, we denote  
$\mathcal{F}(\bar{x}):=\{F\in \mathcal{F} : x\notin F\}$ and 
$\mathcal{F}(x) :=\{F\setminus \{x\} : F\in \mathcal{F}, x\in F \}$. 
For each $i\in [k]$, we write $I_i:=[i+1,k+i]$ and
\[
\mathcal{J}_{i}:=\{I_1, I_i\}\cup \left\{F\in \tbinom{[n]}{ k}: 1\in F, F\cap I_1\ne \emptyset, F\cap I_i\ne \emptyset \right\}.
\]

\begin{theorem}[Kupavskii \cite{Kup2018, 11-23-2}]\label{11-23-2}
Assume that \( n > 2k \geq 8 \). Consider an intersecting family \( \mathcal{F} \subset \binom{[n]}{k} \) with $1$ being the element of maximum degree. Take an intersection-minimal subfamily \( \mathcal{M} \subset \mathcal{F}(\bar{1}) \) such that
$| \cap_{M \in \mathcal{M}} M | = m.
$
Take the (unique) maximal intersecting family \( \mathcal{F}' \), such that \( \mathcal{F}'(\bar{1}) = \mathcal{M} \). If \( m \geq 3 \), then we have
\[
|\mathcal{F}| \leq |\mathcal{F}'|,
\]
and for \( k \geq 5 \), the equality is possible if and only if \( \mathcal{F} \) is isomorphic to \( \mathcal{F}' \). Moreover, if \( \mathcal{F} \) is as above and
$|\bigcap_{F \in \mathcal{F}(\bar{1})} F| \leq m
$
for some \( m \geq 3 \), then
\begin{equation} \label{eq-Kup-more}
|\mathcal{F}| \leq |\m J_{k-m+1}|,
\end{equation}
and, for \( k \geq 5 \), equality is possible only if \( \mathcal{F} \) is isomorphic to \( \m J_{k-m+1} \).
\end{theorem}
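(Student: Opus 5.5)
Write $\mathcal{A}:=\mathcal{F}(1)\subset\binom{[2,n]}{k-1}$ and $\mathcal{B}:=\mathcal{F}(\bar 1)\subset\binom{[2,n]}{k}$. The plan reduces Theorem~\ref{11-23-2} to a statement about the cross-intersecting pair $(\mathcal{A},\mathcal{B})$. Since $\mathcal{F}$ is intersecting, every $A\in\mathcal{A}$ meets every $B\in\mathcal{B}$, and $\mathcal{B}$ is itself intersecting. The maximal family $\mathcal{F}'$ is explicit:
\[
\mathcal{F}'=\mathcal{M}\cup\Bigl\{F\in\tbinom{[n]}{k}: 1\in F,\ F\cap M\neq\emptyset\ \forall M\in\mathcal{M}\Bigr\}.
\]
This family is intersecting because $\mathcal{M}\subset\mathcal{B}$ is intersecting. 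It is also unique, since each set containing $1$ must meet every member of $\mathcal{M}$.

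The inequality $|\mathcal{F}|\le|\mathcal{F}'|$ becomes a trade-off. Every $B\in\mathcal{B}\setminus\mathcal{M}$ that we add kills the sets of $\mathcal{A}'=\mathcal{F}'(1)$ disjoint from $B$. So it suffices to show that for any intersecting $\mathcal{B}\supseteq\mathcal{M}$,
\[
|\mathcal{B}\setminus\mathcal{M}|\le \bigl|\{A\in\mathcal{A}' : A\cap B=\emptyset\ \text{for some } B\in\mathcal{B}\setminus\mathcal{M}\}\bigr|.
\]
I would prove this by a matching/injection argument. Put $T=\bigcap\mathcal{M}$ with $|T|=m\ge3$. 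Each $B\notin\mathcal{M}$ should be assigned a $(k-1)$-set $A\subset[2,n]\setminus B$ that meets all $M\in\mathcal{M}$, with the map injective.

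The natural tool is a bipartite-graph Hall/Kruskal--Katona argument. Consider the bipartite "disjointness" graph between $\binom{[2,n]}{k}$ and the $(k-1)$-sets hitting all of $\mathcal{M}$. By shifting (fixing $T$ and the structure of $\mathcal{M}$), I would reduce to the case where $\mathcal{B}\setminus\mathcal{M}$ and $\mathcal{A}$ are shifted with respect to coordinates outside $\bigcup\mathcal{M}$. Then compare $|\mathcal{B}\setminus\mathcal{M}|$ with the size of its "shadow of complements" restricted to sets hitting $T$. In the case $\mathcal{M}=\{M\}$ this is exactly the Hilton--Milner computation. There $m=k$, and $\binom{n-k-1}{k-1}\ge$ the number of extra $B$'s, which are constrained to meet $M$. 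The condition $m\ge3$ is used so that the number of $(k-1)$-sets avoiding a given $B$ but meeting $T$ dominates: with $|T|\ge3$, sets meeting $T$ outside $B$ are plentiful. The bound $n>2k$ then yields the comparison via the standard inequality $\binom{n-j-1}{k-1}\ge\binom{n-j-1}{k-j}$-type estimates.

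The second part, \eqref{eq-Kup-more}, follows by choosing $\mathcal{M}\subset\mathcal{F}(\bar1)$ intersection-minimal with $\bigcap\mathcal{M}=\bigcap\mathcal{F}(\bar 1)$, of size $m'\le m$. By the first part, $|\mathcal{F}|\le|\mathcal{F}'|$. It then remains to maximize $|\mathcal{F}'|$ over intersection-minimal $\mathcal{M}$ with $|\bigcap\mathcal{M}|\le m$. I would show that adding sets to $\mathcal{M}$ or enlarging pairwise intersections only reduces $\mathcal{F}'(1)$ faster than it gains. The optimum should be two sets $I_1,I_{k-m+1}$ sharing exactly $m$ elements, which is $\mathcal{J}_{k-m+1}$, together with a monotonicity computation in $m$.

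The equality cases for $k\ge5$ come from tracking where the injections are strict.

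The main obstacle is the injection in the first part for general $\mathcal{M}$, since $\mathcal{M}$ can have many sets with complicated intersection pattern. I would first try induction on $|\mathcal{M}|$. Removing a set $M'\in\mathcal{M}$ increases $|\bigcap\mathcal{M}|$ by minimality, so the inductive hypothesis still applies with $m$ larger. A careful Kruskal--Katona comparison then handles the new sets.
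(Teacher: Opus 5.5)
\noindent Your proposal has a genuine gap: the reduction never uses the hypothesis that $1$ is an element of maximum degree, and without it the inequality you reduce to is false. So no injection of the kind you describe can exist in general.

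Take $\mathcal M=\{M\}$ with $2\in M\subset[2,n]$, so $m=k\ge 3$, and let $\mathcal B=\{B\in\binom{[2,n]}{k}: 2\in B\}$. This $\mathcal B$ is intersecting and contains $\mathcal M$. A set $A\in\mathcal A'$ can be disjoint from a member of $\mathcal B$ only if $2\notin A$, so at most $\binom{n-2}{k-1}-\binom{n-k-1}{k-1}$ sets are killed. On the other hand $|\mathcal B\setminus\mathcal M|=\binom{n-2}{k-1}-1$, and $\binom{n-k-1}{k-1}>1$ since $n>2k$.

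Put differently, the full star centred at $2$ satisfies every hypothesis of the theorem except the maximum-degree one, yet its size $\binom{n-1}{k-1}$ exceeds $|\mathcal F'|$. This example already has $|\mathcal M|=1$, so even the Hilton--Milner case is not ``a computation'' under your reduction. Your heuristic that sets meeting $T$ are plentiful when $|T|\ge 3$ cannot compensate, because the real obstruction is that $\mathcal F(\bar 1)$ itself is too large.

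The missing idea is that the trade-off holds only when the diversity part is small. That smallness must be extracted from the maximum-degree assumption (so that $\gamma(\mathcal F)=|\mathcal F(\bar 1)|$) together with the largeness of $|\mathcal F|$, via a Frankl-type diversity theorem such as Theorem~\ref{thm:FKZ}.

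The paper does not reprove Theorem~\ref{11-23-2}, but it isolates the core of Kupavskii's argument as Lemma~\ref{lemf}, generalized in Lemma~\ref{lemkey}. The hypothesis $|\mathcal F|\le\binom{|X|-3}{k-3}$ of Lemma~\ref{lemf} (relaxed to $\binom{|X|-2}{k-2}$ in Lemma~\ref{lemkey}(ii) with $r=k-1$) is exactly such a diversity bound. In the warm-up for $a=b$, the paper supplies it via Theorem~\ref{thm:FK} with $u=v=3$.

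The trade-off is then proved by extremal bipartite switching, not by an injection:
\begin{enumerate}
\item Take a cross-intersecting pair maximizing $|\mathcal F|+|\mathcal G|$, and among those minimize $|\mathcal F|$.
\item Shift by $S_{i,j}$ with $i\in\bigcap\mathcal Y$. These shifts fix $\mathcal Y$ and, unlike shifts outside $\bigcup\mathcal M$, push the diversity part towards $\bigcap\mathcal Y$.
\item Show that every diversity set contains $\bigcap\mathcal Y$ (Claims~\ref{1-8-1} and~\ref{1-8-2}). Here the size bound, Lov\'asz's Theorem~\ref{12-2-2} and Theorem~\ref{10-12-1} make it profitable to delete a slice $\mathcal F_i$ and add all $G$ with $G\cap[i]=\{i\}$.
\item Use Theorem~\ref{12-3-2} to show that each slice $\mathcal F_{s+i}$ consists only of the set coming from $Y_i$ (Claim~\ref{12-14-1}).
\item Finish with a cover argument using Theorem~\ref{12-3-3}, giving $\mathcal F=\mathcal Y$.
\end{enumerate}
The size bound enters at the step showing that every diversity set contains $\bigcap\mathcal Y$. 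That is exactly the step that fails for the star at $2$.

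Two smaller points. First, induction on $|\mathcal M|$ does not help as stated: removing $M'$ enlarges $\mathcal F'(1)$, so the inductive hypothesis gives a weaker bound than the one you need. Second, in the second part you apply the first part, which requires $|\bigcap\mathcal M|\ge 3$, so the case $|\bigcap\mathcal F(\bar 1)|\le 2$ needs a separate argument. For instance, if $\mathcal F(\bar 1)$ consists of two sets meeting in one element and $m<k$, no subfamily has common intersection of size in $[3,m]$.
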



 For two families $\m A$ and $ \m B$, we say that they are {\it cross-intersecting} if $A\cap B\ne \emptyset$ for any $A\in \m A$ and $B\in \m B$.  
In order to establish the stability of Erd\H{o}s--Ko--Rado theorem, Hilton and Milner \cite{HM1967} had to deal with cross-intersecting families. Actually, problems on cross-intersecting families naturally arise in this context, e.g., when decomposing an intersecting family into the part that contains a given element and the part that does not contain it. Another connection between cross-intersecting and intersecting families is via the famous Kruskal--Katona theorem \cite{12-1-2, 12-1-3}. As it was noted by Hilton, 
the following theorem can be obtained by the Kruskal--Katona theorem.

\begin{theorem}[Daykin \cite{Day1973}]
    Let $n > a+b$ be integers and 
$\m A\subset {[n]\choose a}$, $\m B\subset {[n]\choose b}$ be cross-intersecting families. 
If $|\m A|\ge {n-1\choose a-1}$ and $|\m B| \ge {n-1\choose b-1}$, 
then both $\mathcal{A}$ and $\m B$ are full stars with the same center. 
\end{theorem}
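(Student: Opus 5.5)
Following Hilton's remark, I would prove Daykin's theorem through the Kruskal--Katona theorem, in its standard cross-intersecting form. Pass to complements: put $\overline{\m B}:=\{[n]\setminus B: B\in\m B\}\subset{[n]\choose n-b}$. Since $n>a+b$, we have $n-b>a$. Cross-intersection says that no $A\in\m A$ lies inside any $\overline B$. Equivalently, the $a$-shadow $\partial_a\overline{\m B}$ is disjoint from $\m A$, so $|\m A|+|\partial_a\overline{\m B}|\le{n\choose a}$. Kruskal--Katona (Lov\'asz form) applies because $|\overline{\m B}|=|\m B|\ge{n-1\choose b-1}={n-1\choose n-b}$, and it gives $|\partial_a\overline{\m B}|\ge{n-1\choose a}$. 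Then $|\m A|\le{n\choose a}-{n-1\choose a}={n-1\choose a-1}$. Hence both inequalities in the hypothesis are in fact equalities: $|\m A|={n-1\choose a-1}$, $|\m B|={n-1\choose b-1}$, and $|\partial_a\overline{\m B}|={n-1\choose a}$ exactly.

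The structural conclusion comes from the equality case of Lov\'asz's version of Kruskal--Katona. If $|\m G|={x\choose m}$ with $x$ an integer and $|\partial_{m-1}\m G|={x\choose m-1}$, then $\m G={X\choose m}$ for some $x$-set $X$. Iterating shadows down to level $a$ (each level being tight, since the bound at level $a$ is attained), we conclude that $\overline{\m B}={X\choose n-b}$ with $|X|=n-1$. Writing $X=[n]\setminus\{i\}$, this says exactly that $\m B$ consists of all $b$-sets containing $i$, i.e.\ $\m B$ is a full star. Then $\m A$ must avoid $\partial_a\overline{\m B}={X\choose a}$, so every $A\in\m A$ contains $i$. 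Combined with $|\m A|={n-1\choose a-1}$, this makes $\m A$ the full star centred at $i$ as well.

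The main obstacle is the equality characterization. If I cannot cite it, I would prove it directly. First, run a shifting argument to reduce to shifted families. Then use induction on $n$ via the decomposition $\m G=\m G(n)\cup\m G(\bar n)$, which gives $\partial\m G\supseteq\partial\m G(\bar n)\cup(\m G(n)+\{n\})$ together with $\partial\m G(n)$. Tightness at every step forces one of the two parts to be empty or complete. Because shifting can destroy the "full star" structure only in controlled ways, I would add a final check that undoing the shifts preserves the property $\overline{\m B}={X\choose n-b}$. A cleaner alternative is to invoke Füredi--Griggs/Mörs uniqueness. Either way, $n>a+b$ is used precisely to guarantee $n-b>a$, so that the shadow is taken to a strictly lower level and the equality case is non-degenerate.
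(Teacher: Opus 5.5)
Your argument is correct, and it follows the route the paper indicates. The paper gives no proof of its own: it cites Daykin together with Hilton's remark that the result follows from Kruskal--Katona, and your complement/shadow argument with Lov\'asz's bound carries this out, using $n-b>a$ so that tightness at level $a$ propagates up to level $n-b-1$. The only external input is the standard equality case of Lov\'asz's bound for integer $x$, which you can simply cite (Lov\'asz, or F\"uredi--Griggs/M\"ors) rather than rely on the vaguer shifting sketch.
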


Frankl and Kupavskii \cite{FK2021jcta} managed to generalize Theorem \ref{thm:FKZ} to cross-intersecting families.

\begin{theorem}[Frankl--Kupavskii \cite{FK2021jcta}] 
\label{thm:FK}
Let \( a \), \( b \) be positive integers, \( n \geq a + b \), and let \( \mathcal{A} \subset \binom{[n]}{a} \) and \( \mathcal{B} \subset \binom{[n]}{b} \) be cross-intersecting. Suppose that
\begin{align*}
|\mathcal{A}| & \geq \binom{n-1}{a-1} - \binom{n-v-1}{a-1} + \binom{n-u-1}{n-a-1}, \\
|\mathcal{B}| &\geq \binom{n-1}{b-1} - \binom{n-u-1}{b-1} + \binom{n-v-1}{n-b-1}
\end{align*}
for some real \( 3 \leq u \leq a \), \( 3 \leq v \leq b \) and, moreover, that at least one of the displayed inequalities is strict. Then
\[
\gamma(\mathcal{A}) < \binom{n-u-1}{n-a-1} \quad \text{and} \quad \gamma(\mathcal{B}) < \binom{n-v-1}{n-b-1},
\]
moreover, both families share a (unique) element of maximum degree.
\end{theorem}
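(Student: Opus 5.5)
We prove Theorem~\ref{thm:FK} by contradiction, combining shifting with the Kruskal--Katona theorem in its Lov\'asz/Hilton form, which is how cross-intersecting bounds are usually extracted.

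First we reduce to shifted families. Assume the conclusion fails, say $\gamma(\mathcal A)\ge \binom{n-u-1}{n-a-1}$. Let $1$ be an element of maximum degree in $\mathcal A\cup\mathcal B$. We apply $(i,j)$-shifts with $2\le i<j\le n$ to both families at once. These shifts preserve cross-intersection and the sizes $|\mathcal A|,|\mathcal B|$, and they never move element $1$, so they do not decrease $\gamma(\mathcal A)=|\mathcal A(\bar 1)|$. The danger is that a shift could make some other element more popular than $1$. As in Frankl's degree argument, we handle this by stopping before any shift that would push the diversity of the relevant family below the threshold, so that the diversity stays at least $\binom{n-u-1}{n-a-1}$ throughout. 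The degree of element $1$ in each family is then at most the corresponding star bound minus the corresponding diversity contribution. After finitely many steps we may assume both families are shifted on $[2,n]$ and that $\gamma(\mathcal A)\ge\binom{n-u-1}{n-a-1}$.

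Next we decompose around element $1$: write $\mathcal A=\mathcal A(1)\sqcup\mathcal A(\bar1)$ and $\mathcal B=\mathcal B(1)\sqcup\mathcal B(\bar1)$. Cross-intersection gives two conditions.
\begin{itemize}
\item $\mathcal A(\bar1)$ and $\mathcal B(1)$ are cross-intersecting on $[2,n]$. Passing to complements within $[2,n]$ and applying Kruskal--Katona, the condition $|\mathcal A(\bar1)|\ge\binom{n-u-1}{n-a-1}$ forces $|\mathcal B(1)|\le\binom{n-1}{b-1}-\binom{n-u-1}{b-1}$.
\item Symmetrically, $\mathcal A(1)$ is bounded in terms of $|\mathcal B(\bar1)|$.
\end{itemize}
The key quantitative statement is the Hilton-type inequality: if $\mathcal P\subset\binom{[2,n]}{a}$ and $\mathcal Q\subset\binom{[2,n]}{b-1}$ are cross-intersecting, then $|\mathcal P|\ge\binom{n-x-1}{n-a-1}$ implies $|\mathcal Q|\le\binom{n-1}{b-1}-\binom{n-x-1}{b-1}$. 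Moreover, the function $x\mapsto \binom{n-1}{b-1}-\binom{n-x-1}{b-1}+\binom{n-x-1}{n-b-1}$ is monotone decreasing in the relevant range $3\le x\le b$. This monotonicity lets us compare sizes.

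Now we combine the two conditions. Write $|\mathcal A(\bar1)|=\binom{n-x-1}{n-a-1}$ with $x\le u$, and $|\mathcal B(\bar1)|=\binom{n-y-1}{n-b-1}$. Then
\[|\mathcal B|\le \binom{n-1}{b-1}-\binom{n-x-1}{b-1}+\binom{n-y-1}{n-b-1}\]
and
\[|\mathcal A|\le\binom{n-1}{a-1}-\binom{n-y-1}{a-1}+\binom{n-x-1}{n-a-1}.\]
Comparing these with the hypotheses, and using $x\le u$ together with the monotonicity, forces $y\le v$ and then equality everywhere. This contradicts the assumption that at least one hypothesis is strict. The case $y<3$ has to be treated separately: there $\mathcal B(\bar1)$ is very large, and we instead use the trivial bound to show that $|\mathcal A(1)|$ is too small.

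Finally, the shared unique element of maximum degree follows from the same inequalities. Once both diversities are below the thresholds, each family has more than half of its sets through its maximum-degree element. A second center would violate the Daykin/Kruskal--Katona bound.

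The main obstacle is the first step: keeping the diversity large while shifting, and handling the moment when the maximum-degree element changes. It is also the only place where $u,v\ge3$ enters, through the monotonicity estimate, and that requires careful binomial calculus with real parameters.
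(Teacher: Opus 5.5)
The paper does not prove this statement. It is quoted from Frankl--Kupavskii \cite{FK2021jcta}, with the remark that it is significantly harder to prove. Your argument therefore has to stand on its own, and its central step is not a valid deduction.

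Once the element $1$ is fixed, the only facts you use are the two Lov\'asz--Kruskal--Katona bounds $|\mathcal B(1)|\le\binom{n-1}{b-1}-\binom{n-x-1}{b-1}$ and $|\mathcal A(1)|\le\binom{n-1}{a-1}-\binom{n-y-1}{a-1}$, the two hypotheses, and $x\le u$. Subtracting, your comparison amounts to the coupled system
\begin{gather*}
\tbinom{n-x-1}{n-a-1}-\tbinom{n-u-1}{n-a-1}\ge\tbinom{n-y-1}{a-1}-\tbinom{n-v-1}{a-1},\\
\tbinom{n-y-1}{n-b-1}-\tbinom{n-v-1}{n-b-1}\ge\tbinom{n-x-1}{b-1}-\tbinom{n-u-1}{b-1}.
\end{gather*}
This system has strict solutions far from equality. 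Take $a=b$, $n>2a$, $u=v=3$, and let $\mathcal A,\mathcal B$ be the full stars centred at $2$. Relative to the element $1$ you get $x=y=1$, both Kruskal--Katona bounds are tight, and both hypotheses hold strictly. Both displayed inequalities are then strict, because $\binom{n-4}{a-1}>\binom{n-4}{a-3}$. So no monotonicity argument applied to these inequalities can force ``equality everywhere''.

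The monotone function you quote is the single-family one behind Theorem~\ref{thm:FKZ}. In the cross setting the gain $\binom{n-x-1}{n-a-1}$ goes to $\mathcal A$ while the loss $\binom{n-x-1}{b-1}$ is paid by $\mathcal B$, so that function does not govern this system.

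What is missing is a mechanism that excludes small $x$ and $y$ by genuinely using that $1$ has maximum degree in \emph{both} families. This could be a comparison of the degrees of $1$ and $2$, or a shifting procedure that provably keeps the diversity above the threshold. That is the heart of the Frankl--Kupavskii theorem, and your proposal does not address it:
\begin{itemize}
\item the range $x<3$, which $x\le u$ permits, is never treated;
\item the ``trivial bound'' sketched for $y<3$ cannot work as described, since in the star example $\mathcal A(1)$ is indeed small but $\mathcal A(\bar 1)$ compensates.
\end{itemize}
This small range is also exactly where the strictness hypothesis has to act. Take $\mathcal A=\{F\in\binom{[n]}{a}:|F\cap[3]|\ge 2\}$ and $\mathcal B=\{F\in\binom{[n]}{b}:|F\cap[3]|\ge 2\}$. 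Then $x=y=2$, both hypotheses hold with equality for $u=v=3$, both diversities exceed their thresholds, and there are three elements of maximum degree.

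The remaining steps do not close the gap. Step~1 is self-contradictory: if you stop before any shift that would lower the diversity, the families are not shifted. It also conflates $|\mathcal A(\bar 1)|$ with $\gamma(\mathcal A)$, which agree only when $1$ has maximum degree in $\mathcal A$. In any case, shiftedness is never used afterwards.

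Your last paragraph, on a common unique centre, is an assertion rather than an argument. Moreover, the coincidence of the centres is part of the conclusion, so it cannot be presupposed by decomposing both families around one element. Even in the much more special setting of Claim~\ref{5-1-2}, the paper needs a separate argument to get a common centre. That argument itself invokes this theorem for $\mathcal A(1\bar 2)$ and $\mathcal B(2\bar 1)$.
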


The bound of Theorem \ref{thm:FK} is tight when $u,v$ are integers, as seen 
 by setting $\m A = \m H_{a,v,u}$ and $ \m B = \m H_{b,u,v}$. Actually, Theorem~\ref{thm:FK} is significantly harder to prove, but the effort pays off: Theorem~\ref{thm:FK} is more powerful and already found a couple of applications in questions on intersecting families, e.g., in the paper \cite{FK2020}, but also in the questions on intersecting antichains (private communication with Bal\'azs Patk\'os). With some extra effort, Frankl and Kupavskii \cite{FK2021jcta} managed to extend Theorem~\ref{thm:FK} to a sharp stability result for the Kruskal--Katona theorem.

Let $x$ be an element of maximum degree in $\mathcal{F}$ (chosen arbitrarily if there are multiple choices). Denote by $\mathcal{F}_\Delta = \mathcal{F}(x)$ the {\it degree part}, and by $\mathcal{F}_\gamma = \mathcal{F}(\bar x)$ the {\it diversity part}.   
For two cross-intersecting families $\m F\subset {[n]\choose k}$ and $\m G \subset {[n]\choose \ell}$, we say that {\it $\m F$ is  maximal cross-intersecting with $\m G$} if there is no set $F\in {[n]\choose k}\setminus \m F$ such that $\m F \cup \{F\}$ and $\m G$ are still cross-intersecting.
We say that 
{\it $\m F$ and $\m G$ are maximal cross-intersecting families}, if, for any pair of cross-intersecting families $\m F'\subset {[n]\choose k}$ and $\m G'\subset {[n]\choose \ell}$ with $\m F\subset \m F'$ and $\m G\subset \m G'$, we necessarily have $\m F= \m F'$ and $\m G= \m G'$.
The main result of this paper is a generalization of Theorem~\ref{11-23-2} to the case of cross-intersecting families.

\begin{theorem}[Main result]\label{thm:main}
Let $a\ge b\ge 4$, $n> a+b$ be integers, $\mathcal{A}\subset {[n]\choose a}$ and $\mathcal{B}\subset {[n]\choose b}$ be cross-intersecting families. 
Take intersection-minimal subfamilies $\mathcal{M}\subset \mathcal{A}_\gamma$ and $\mathcal{T}\subset \mathcal{B}_\gamma$ with $|\cap_{M\in \mathcal{M}}M|\ge a-b+3$ and $|\cap_{T\in \mathcal{T}}T|\ge 3$.
Define cross-intersecting families $\mathcal{A}',\mathcal{B}'$ as follows:
they share the same maximum-degree element; 
their diversity parts are isomorphic to $\mathcal{M}$ and $\mathcal{T}$, respectively; 
 and their degree parts are maximal cross-intersecting with $\m T$ and $\m M$, respectively. 
 If 
\[ |\mathcal{A}|\ge |\mathcal{A}'|  \quad \text{and} \quad |\mathcal{B}|\ge |\mathcal{B}'|, \] 
 then their elements of maximal degree coincide, $|\mathcal{A}|= |\mathcal{A}'|$ and $|\mathcal{B}|= |\mathcal{B}'|$. 
\end{theorem}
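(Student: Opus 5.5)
We reduce the statement to a comparison of the degree parts. Write $x$ for the maximum-degree element of $\m A$ and $y$ for that of $\m B$.

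\textbf{Step 1: the centres coincide.} We first show $x=y$ using Theorem~\ref{thm:FK}. Since $|\cap_{M\in\m M}M|\ge a-b+3$ and $\m M$ is intersection-minimal, a direct count (inclusion–exclusion over the sets of $\m M$ and $\m T$) should bound $|\m A'|$ and $|\m B'|$ from below by the thresholds of Theorem~\ref{thm:FK}, with suitable real parameters $u\le a$ and $v\le b$ both at least $3$. The most natural choice is to take $u$ and $v$ as the intersection sizes, clipped appropriately. The conclusion of Theorem~\ref{thm:FK} then gives a common element of maximum degree, say $x=y=1$, together with small diversities. If the displayed inequalities are only equalities, I would perturb $u$ or $v$ slightly, or handle the boundary case directly.

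\textbf{Step 2: reduction to an inequality.} Set $\m A_\gamma=\m A(\bar 1)\supseteq\m M$ and $\m B_\gamma\supseteq\m T$. Then $\m A(1)$ must cross-intersect $\m B_\gamma$, and $\m B(1)$ must cross-intersect $\m A_\gamma$. We want
\[|\m A(1)|+|\m A_\gamma|\le |\m A'(1)|+|\m M|,\]
and the analogous inequality for $\m B$. Since $\m A'(1)$ consists of all $(a-1)$-sets meeting every $T\in\m T$, we have
\[|\m A'(1)|-|\m A(1)|\ge \#\{(a-1)\text{-sets avoiding }1\text{ that meet all of }\m T\text{ but miss some }B\in\m B_\gamma\setminus\m T\}.\]
So it suffices to show two things. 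First, each extra set in $\m B_\gamma\setminus\m T$ kills at least as many degree-part sets of $\m A$ as its own contribution to $\m B$, and symmetrically. Second, these losses add up correctly when many extra sets are present.

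The argument is a weighted injection, mirroring Kupavskii's proof of Theorem~\ref{11-23-2}. The key input is that intersection-minimality forces $\cap\m T$ to have at least $3$ elements, so the sets missing one new $B$ while meeting all of $\m T$ are abundant, of order $\binom{n-b-2}{a-3}$. By contrast, new sets in $\m A_\gamma$ must meet all of $\m B$. To make the two-sided trade-off tractable I would apply shifting, the $S^{Q}_{U,V}$-shift announced in the abstract. It would be used to push $\m A_\gamma,\m B_\gamma$ into a canonical form while preserving cross-intersection, the fixed substructures $\m M,\m T$, and the centre. Having reached that form, we compare it with the explicit families $\m H_{a,v,u}$-type constructions.

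\textbf{Step 3: equality.} Once both inequalities hold, the hypotheses $|\m A|\ge|\m A'|$ and $|\m B|\ge|\m B'|$ force equality in both.

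\textbf{Main obstacle.} I expect the hardest part to be the coupled counting in Step 2. Enlarging $\m A_\gamma$ shrinks $\m B(1)$ while enlarging $\m B_\gamma$ shrinks $\m A(1)$, so the two inequalities cannot be proved separately. One has to show that no simultaneous enlargement of both diversity parts beats $(\m A',\m B')$ in both coordinates.

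I would first try to handle this by contradiction. Suppose both $|\m A|\ge|\m A'|$ and $|\m B|\ge|\m B'|$ hold with one inequality strict. Applying shifting, we may assume $\m A_\gamma,\m B_\gamma$ are generated by few sets. Then compare the sums $|\m A|+\lambda|\m B|$ for a suitable weight $\lambda$ depending on $a,b$. Here the asymmetry $a\ge b$ is exactly why the hypothesis requires $a-b+3$ on the $\m M$ side.
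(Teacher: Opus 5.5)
Your Step~1 is asserted rather than proved, and it is exactly where the paper says the direct route breaks down when $a>b$. The available lower bounds (Lemma~\ref{11-23-3}) are $|\mathcal{A}'|\ge\binom{n-1}{a-1}-\binom{n-4}{a-1}+\binom{n-a+b-4}{b-3}$ and $|\mathcal{B}'|\ge\binom{n-1}{b-1}-\binom{n-4}{b-1}+\binom{n-4}{b-3}$. These are the thresholds of Theorem~\ref{thm:FK} for $(u,v)=(a-b+3,3)$ and $(u,v)=(3,3)$ respectively. The $\mathcal{A}$-threshold decreases in $u$ and the $\mathcal{B}$-threshold increases in $u$, so with these bounds no common $u$ works once $a>b$. ``Perturbing $u$ or $v$ slightly'' does not help, and you give no sharper count that is valid for all admissible $\mathcal{M},\mathcal{T}$.

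A common centre is also not enough for Step~2. Your per-set trade-off is, in aggregate, the pair of inequalities
\[
|\mathcal{A}(1)|+|\mathcal{B}(\bar 1)|\le|\mathcal{A}'(1)|+|\mathcal{T}|,\qquad |\mathcal{B}(1)|+|\mathcal{A}(\bar 1)|\le|\mathcal{B}'(1)|+|\mathcal{M}|.
\]
Here the degree part of each family is grouped with the diversity part of the \emph{other} family; these are cross-intersecting pairs on $[2,n]$. Summing gives $|\mathcal{A}|+|\mathcal{B}|\le|\mathcal{A}'|+|\mathcal{B}'|$, which with the hypotheses forces equality. So your ``main obstacle'' comes only from grouping by family, and no weight $\lambda\ne1$ is needed. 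For $a=b\ge5$ this is essentially the paper's warm-up.

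However, as statements about a cross-intersecting pair these inequalities are false without an upper bound on the diversity part. Taking $\mathcal{A}(\bar1)=\binom{[2,n]}{a}$ forces $\mathcal{B}(1)=\emptyset$ and violates the second one. The paper proves them in Lemmas~\ref{lemkey} and~\ref{10-25-2} by an extremal switching argument, not per-set counting, since the killed sets overlap. Those lemmas need $\gamma(\mathcal{A})\le\binom{n-1-(a-b+2)}{b-2}$ and $\gamma(\mathcal{B})\le\binom{n-3}{b-2}$. The condition $|\cap\mathcal{Y}|\ge k-r+2$ in Lemma~\ref{lemkey}, with $k=a$ and $r=b-1$, is where $a-b+3$ comes from.

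The missing idea is therefore the diversity reduction of Lemma~\ref{lemmain}. One must replace $(\mathcal{A},\mathcal{B})$ by cross-intersecting families that are no smaller, share a centre, have diversities below these thresholds, and still contain copies of $\mathcal{M}$ and $\mathcal{T}$ in their diversity parts. This is the bulk of the paper (Propositions~\ref{12-15-1}, \ref{11-29-1}, \ref{10-25-3}). It uses L-initial replacements via Daykin's theorem, and a minimal-counterexample argument based on positive correlation of shifted families. The hardest part is a further minimal-counterexample argument with the $S^{Q}_{U,V}$-shift and the structure-preserving Lemma~\ref{1-7-1}. It forces every set of $\mathcal{B}(\bar1)\setminus\mathcal{T}$ to contain $[2,m'+2]$, until the resulting upper bound on $|\mathcal{B}(\bar1)|$ contradicts the lower bound coming from $|\mathcal{B}|\ge|\mathcal{B}'|$.

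You name the $S^{Q}_{U,V}$-shift, but you do not say which shifts exist, why they keep $\mathcal{M}$, $\mathcal{T}$ and the centre, or how they lower the diversity. As it stands, Step~2 describes the goal rather than an argument. (Minor: $|\cap\mathcal{T}|\ge3$ is a hypothesis, not a consequence of intersection-minimality.)
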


It is also possible to characterize equality cases (which are in most cases isomorphic to $\m F', \m G'$), but this is tedious and we decided to omit it. While Theorem~\ref{thm:FK} with $u=a, v=b$ gives a cross-intersecting variant of the Hilton--Milner Theorem,
Theorem \ref{thm:main} allows us to obtain cross-intersecting variants of the Han--Kohayakawa theorem and the Huang--Peng theorem, as well as the second part of Theorem~\ref{11-23-2}, i.e. (\ref{eq-Kup-more}). We state those as a corollary without proof. 
For $m \in [a]$ and $t \in [b]$,
we take $A_1, A_2\in {[2,n]\choose a}$ such that $A_1\cap A_2=[2,m+1]$. 
Similarly, choose $B_1, B_2\in {[2,n]\choose b}$ such that $B_1\cap B_2=[2,t+1]$.
Define
\begin{align*}
&\mathcal{A}^* :=\Big\{H\in \tbinom{[n]}{a}: 1\in H, H\cap B_1 \ne \emptyset, H\cap B_2 \ne \emptyset \Big\} \cup \{A_1, A_2\},\\
&\mathcal{B}^* :=\Big\{H\in \tbinom{[n]}{ b}: 1\in H, H\cap A_1 \ne \emptyset, H\cap A_2 \ne \emptyset \Big\} \cup \{B_1, B_2\}.
\end{align*}

\begin{corollary}\label{12-4-1}
Let $a\ge b\ge 5$, $n> a+b$ be integers, $\mathcal{A}\subset {[n]\choose a}$ and $\mathcal{B}\subset {[n]\choose b}$ be cross-intersecting families  
such that $|\cap_{A\in \mathcal{A}_\gamma}A|\le m$ and $|\cap_{B\in \mathcal{B}_\gamma}B|\le t$, where $m\ge a-b+3$ and $t\ge 3$.
Suppose that $|\mathcal{A}|\ge |\mathcal{A}^*|$ and $|\mathcal{B}|\ge |\mathcal{B}^*|$, then $|\mathcal{A}|=|\mathcal{A}^*|$ and $|\mathcal{B}|=|\mathcal{B}^* |$.
\end{corollary}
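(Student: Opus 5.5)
We derive Corollary~\ref{12-4-1} from Theorem~\ref{thm:main}, in the same way that (\ref{eq-Kup-more}) follows from the first part of Theorem~\ref{11-23-2}. Let $\mathcal{A},\mathcal{B}$ satisfy the hypotheses. First we check that $\mathcal{A}_\gamma$ and $\mathcal{B}_\gamma$ are nonempty. If, say, $\mathcal{B}_\gamma=\emptyset$, then $\mathcal{B}$ is a star, so $|\mathcal{B}|\le\binom{n-1}{b-1}$. Also $\mathcal{A}$ cannot be a full star with the same center, since it must accommodate the sizes. We then compare with Theorem~\ref{thm:FK} (or Daykin's theorem) to show that $|\mathcal{A}|\ge|\mathcal{A}^*|$ and $|\mathcal{B}|\ge|\mathcal{B}^*|$ cannot both hold. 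This is a routine size computation, because $|\mathcal{A}^*|$ and $|\mathcal{B}^*|$ are within lower-order terms of the full stars.

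Next, since $\bigcap_{A\in\mathcal{A}_\gamma}A$ has size at most $m$, we pick an intersection-minimal subfamily $\mathcal{M}\subset\mathcal{A}_\gamma$ with the same intersection: start from $\mathcal{A}_\gamma$ and greedily delete sets whose removal does not enlarge the intersection. Then $m':=|\cap\mathcal{M}|\le m$. We also need $m'\ge a-b+3$ in order to apply Theorem~\ref{thm:main}. If $m'$ is smaller, we enlarge it by taking a subfamily whose intersection has size exactly in $[a-b+3,m]$. The intersection sizes along a chain of removals jump, but we can always choose a pair $\{A_1',A_2'\}$, or an intersection-minimal subfamily, with intersection size at most $m$ and at least $a-b+3$. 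If no such subfamily exists, we argue monotonicity directly, as below. We treat $\mathcal{T}\subset\mathcal{B}_\gamma$ with $t'\le t$ and $t'\ge3$ in the same way. Theorem~\ref{thm:main} then gives $|\mathcal{A}|\le|\mathcal{A}'|$ and $|\mathcal{B}|\le|\mathcal{B}'|$ in the sense that both inequalities $\ge$ force equality. Here $\mathcal{A}'$ has diversity part $\mathcal{M}$ and degree part maximal cross-intersecting with $\mathcal{T}$, that is, $\mathcal{A}'=\{H\ni1: H\cap T\ne\emptyset\ \forall T\in\mathcal{T}\}\cup\mathcal{M}$.

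The remaining and main step is a comparison lemma: among all pairs $(\mathcal{M},\mathcal{T})$ of intersection-minimal families with intersection sizes $m'\le m$ and $t'\le t$, the pair $|\mathcal{A}'|,|\mathcal{B}'|$ is coordinatewise dominated by $(|\mathcal{A}^*|,|\mathcal{B}^*|)$. In other words, two sets with intersection of size exactly $m$ (respectively $t$) is the best choice. We prove this in two stages. First, $|\mathcal{A}'|=|\mathcal{M}|+\binom{n-1}{a-1}-\#\{H\ni 1 \text{ missing some } T\}$. By inclusion–exclusion, the loss is at least the loss from any two members of $\mathcal{T}$ whose union is as small as possible. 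Replacing $\mathcal{T}$ by two sets $T_1,T_2$ with $|T_1\cap T_2|=t'$ decreases the loss from $\mathcal{T}$ by at least the number of extra sets. Since $n>a+b$, each additional member of $\mathcal{T}$ forbids far more than one $a$-set, so it outweighs the gain of $1$ in $|\mathcal{T}|$. This is the delicate counting step, analogous to Kupavskii's argument for $\mathcal{J}_i$. Second, the loss $\binom{n-|T_1\cup T_2|-1}{a-1}$-type term is decreasing in $|T_1\cup T_2|=2b-t'$, so it is largest when $t'=t$. This gives $|\mathcal{A}'|\le|\mathcal{A}^*|$ and likewise $|\mathcal{B}'|\le|\mathcal{B}^*|$.

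Combining, $|\mathcal{A}^*|\le|\mathcal{A}|\le|\mathcal{A}'|\le|\mathcal{A}^*|$, where the middle inequality comes from Theorem~\ref{thm:main} applied jointly, and similarly for $\mathcal{B}$. All inequalities are therefore equalities. The case $b\ge5$ (rather than $4$) leaves enough room for the strict inequalities in the comparison lemma. I expect the comparison lemma, especially handling families $\mathcal{T}$ with more than two sets while keeping both coordinates simultaneously controlled, to be the main obstacle.
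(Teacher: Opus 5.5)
The paper states Corollary~\ref{12-4-1} without proof, as a consequence of Theorem~\ref{thm:main}, so there is no written argument to match against. Judged on its own, your plan is the natural one, but it has two genuine gaps.

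\textbf{Your comparison lemma is false as stated.} In $|\mathcal{A}'|=|\mathcal{M}|+|\mathcal{A}'(1)|$, the term $|\mathcal{M}|$ comes from $\mathcal{A}$'s own diversity part, while $\mathcal{A}'(1)$ is constrained only by $\mathcal{T}$. Nothing ties the two together within one coordinate. Take $a=b=5$, $m=t=3$, $\mathcal{T}=\{B_1,B_2\}$ with $B_1\cap B_2=[2,4]$, and the intersection-minimal family $\mathcal{M}=\{[2,4]\cup\{5,6\},[2,4]\cup\{5,7\},[2,4]\cup\{6,7\}\}$. Then $\mathcal{A}'(1)=\mathcal{A}^*(1)$ and $|\mathcal{A}'|=|\mathcal{A}^*|+1$. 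Symmetrically, three sets in $\mathcal{T}$ give $|\mathcal{B}'|>|\mathcal{B}^*|$.

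In such cases $|\mathcal{A}|\ge|\mathcal{A}^*|$ does not give $|\mathcal{A}|\ge|\mathcal{A}'|$, and Theorem~\ref{thm:main} then says nothing. It only concerns pairs that dominate $(|\mathcal{A}'|,|\mathcal{B}'|)$ in \emph{both} coordinates. It never yields $|\mathcal{A}|\le|\mathcal{A}'|$ on its own, as your final chain assumes.

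Your counting heuristic mixes coordinates. An extra member of $\mathcal{T}$ shrinks $\mathcal{A}'(1)$, but the $+1$ it contributes sits in $|\mathcal{B}'|$. What the heuristic can actually prove are the diagonal inequalities $|\mathcal{T}|+|\mathcal{A}'(1)|\le 2+|\mathcal{A}^*(1)|$ and $|\mathcal{M}|+|\mathcal{B}'(1)|\le 2+|\mathcal{B}^*(1)|$. These give only $|\mathcal{A}'|+|\mathcal{B}'|\le|\mathcal{A}^*|+|\mathcal{B}^*|$, so you need the sum bound $|\mathcal{A}|+|\mathcal{B}|\le|\mathcal{A}'|+|\mathcal{B}'|$. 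The proof of Theorem~\ref{thm:main} (Lemma~\ref{lemmain} followed by Lemma~\ref{10-25-2}) does produce that bound. However, it uses the hypotheses $|\mathcal{A}|\ge|\mathcal{A}'|$, $|\mathcal{B}|\ge|\mathcal{B}'|$ along the way, e.g.\ through Lemma~\ref{11-23-3} and \eqref{lowerbound}. You would have to re-run that proof and check that $|\mathcal{A}|\ge|\mathcal{A}^*|$ and $|\mathcal{B}|\ge|\mathcal{B}^*|$ suffice at each such place.

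\textbf{Small common intersections are not handled.} If $|\cap\,\mathcal{A}_\gamma|<a-b+3$ (or $|\cap\,\mathcal{B}_\gamma|<3$), there need not be any intersection-minimal $\mathcal{M}\subset\mathcal{A}_\gamma$ with $a-b+3\le|\cap\,\mathcal{M}|\le m$, contrary to your assertion.

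Example: $a=b=5$, $n=11$, $m=t=3$, $\mathcal{A}_\gamma=\{[2,6],[7,11]\}$, $\mathcal{B}_\gamma=\{\{2,3,4,7,8\},\{2,3,5,7,9\}\}$, with both degree parts maximal. This pair satisfies all hypotheses and has $|\mathcal{A}|=|\mathcal{A}^*|$ and $|\mathcal{B}|=|\mathcal{B}^*|=202$, because $\tbinom{n-1-2a+m}{b-1}=0$. Yet the subfamilies of $\mathcal{A}_\gamma$ have intersections of size $5$ or $0$ only. A singleton $\mathcal{M}$ gives $|\mathcal{B}'|\ge 206>202=|\mathcal{B}|$, so Theorem~\ref{thm:main} cannot be invoked.

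The paper's machinery does not cover this case directly either: Lemma~\ref{lemkey}(ii) needs $|\cap\,\mathcal{Y}|\ge k-r+2=a-b+3$. ``Argue monotonicity directly'' is therefore an unfilled hole, and this case needs its own proof.

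Your Step~1 is also unnecessary: with the usual convention $\cap\,\emptyset=[n]$, the hypotheses already force $\mathcal{A}_\gamma,\mathcal{B}_\gamma\ne\emptyset$. The purely size-based argument you sketch for it is invalid anyway. The pair $\mathcal{B}=\{B\in\tbinom{[n]}{b}: 1\in B,\ B\cap[2,a+1]\ne\emptyset\}$, $\mathcal{A}=\{A\in\tbinom{[n]}{a}:1\in A\}\cup\{[2,a+1]\}$ is cross-intersecting, has $\mathcal{B}$ a star, and exceeds both $|\mathcal{A}^*|$ and $|\mathcal{B}^*|$.
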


\noindent{\bf A new shifting-based technique.}
By refining the classical Daykin shift, we introduce a new shifting method, called the $S_{U,V}^Q$-shift. Traditional shifting operations preserve the size of the family and its overall properties (intersection, cross-intersection, etc.), whereas our method additionally guarantees that certain local substructures remain preserved after shifting. Moreover, the method applies to very general types of substructures. 

The main purpose of developing this method is to prove Proposition~\ref{10-25-3}, which constitutes one of the most technically involved proof in this paper. The method has already prove to be useful: in a separate work, the first author applied it  to solve the product version of the Hilton--Milner problem \cite{huang-product}. We believe that this method has substantial potential for further development and possesses independent research significance. We will introduce it in Section \ref{secpro3}.

\noindent{\bf Organization.}
In Section \ref{secpre}, we provide the necessary preliminaries.
In Section \ref{secmain}, we present the proof of the main theorem. First, modulo some simpler lemmas, we give a proof of the main theorem in the warm-up case $a=b\ge 5$. Second, assuming several Lemmas, we provide the complete proof of Theorem \ref{thm:main}.
The following sections are devoted to proving the lemmas. 

\section{Preliminaries}\label{secpre}

For a family $\mathcal{F}$, denote by $\partial_{\ell}\mathcal{F}$ the {\it $\ell$-shadow of $\m F$}: the collection of all $\ell$-sets contained in at least one set from $\mathcal{F}$. If $\mathcal{F} \subset {[n]\choose k}$,
then we abbreviate $\partial_{k-1}\mathcal{F}$ as $\partial \mathcal{F}$.
For a set $F$, we denote by $\min F$ and $\max F$ the smallest and largest element of $F$, respectively.
We use the lexicographic order, or lex order for short, on $2^{[n]}$.
For two sets $A,B\subset [n]$, we write $A\prec B$ if either
$B\subset A$, or $\min(A\setminus B)<\min(B\setminus A)$.
Throughout this paper, the notation $A\prec B$ always refers to
the lex order. This is different from the colexicographic order,
or colex order, where one usually writes $A\prec B$ if either
$A\subset B$, or $\max(A\setminus B)<\max(B\setminus A)$.

We use $\mathcal{L}(X,r,k)$ and $\mathcal{C}(X,r,k)$ to denote the families consisting of the first $r$ $k$-sets of $X$ in lexicographic (lex) and colexicographic (colex) order, respectively. For a family $\mathcal{F} \subset \binom{[n]}{k}$, we say that $\mathcal{F}$ is \textit{$L$-initial on $[n]$} if $\mathcal{F} = \mathcal{L}([n], |\mathcal{F}|, k)$; otherwise, we say $\mathcal{F}$ is \textit{not $L$-initial}. Similarly, a pair of families $(\mathcal{A}, \mathcal{B})$ is \textit{not $L$-initial} if at least one of $\mathcal{A}$,  $\mathcal{B}$ is not $L$-initial.

Recall  the classical Kruskal--Katona theorem \cite{12-1-2, 12-1-3}. 
\begin{theorem}[Kruskal--Katona Theorem]
If $\mathcal{F}\subset {[n]\choose k}$, then $|\partial \mathcal{F}|\ge|\partial \mathcal{C}([n], |\mathcal{F}|,k)|$. 
\end{theorem}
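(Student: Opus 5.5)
The statement is the Kruskal--Katona theorem, which the paper recalls as a classical result rather than proving. I would prove it by the standard shifting/compression argument.

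The first step is to reduce to shifted families. For $i<j$, let $S_{ij}$ be the usual shift: replace $F$ by $(F\setminus\{j\})\cup\{i\}$ whenever $j\in F$, $i\notin F$, and the image is not already in $\mathcal{F}$. Two facts are needed.
\begin{enumerate}
\item Shifting does not enlarge the shadow: $\partial S_{ij}(\mathcal{F})\subset S_{ij}(\partial\mathcal{F})$. This is checked by a short case analysis on where a shadow set $G$ of a shifted set comes from. In particular $|\partial S_{ij}\mathcal{F}|\le|\partial\mathcal{F}|$.
\item Shifting preserves $|\mathcal{F}|$, and each nontrivial shift strictly decreases $\sum_{F}\sum_{x\in F}x$.
\end{enumerate}
Repeated shifting therefore terminates in a shifted family of the same size with no larger shadow. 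So we may assume $\mathcal{F}$ is shifted, meaning $S_{ij}\mathcal{F}=\mathcal{F}$ for all $i<j$.

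The second step is a double induction on $n$ and $k$; the cases $k=1$ and $n=k$ are trivial. Split a shifted $\mathcal{F}$ by the last element:
\[
\mathcal{F}_0=\{F\in\mathcal{F}:n\notin F\},\qquad \mathcal{F}_1=\{F\setminus\{n\}:n\in F\in\mathcal{F}\}.
\]
Shiftedness gives $\partial\mathcal{F}_1\subset\mathcal{F}_0$. It also gives
\[
\partial\mathcal{F}=\partial\mathcal{F}_0\ \cup\ \{G\cup\{n\}: G\in\partial\mathcal{F}_1\}\ \cup\ \mathcal{F}_1 ,
\]
and, again by shiftedness, the last two pieces combine so that $|\partial\mathcal{F}|\ge|\partial\mathcal{F}_0|+|\mathcal{F}_1|$. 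Symmetrically, splitting on the element $1$ yields a lower bound $|\partial \mathcal F|\ge |\mathcal F(1)|+|\partial \mathcal F(1)|$. Here $|\mathcal{F}(1)|\ge$ the appropriate fraction of $|\mathcal{F}|$ for shifted families, and the induction hypothesis applies to $\mathcal{F}(1)\subset\binom{[2,n]}{k-1}$.

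The third step is to write $|\mathcal{F}|=\binom{a_k}{k}+\binom{a_{k-1}}{k-1}+\dots$ in cascade form and verify the numerical inequality. The colex initial family $\mathcal{C}([n],|\mathcal{F}|,k)$ splits by the first element in the matching way: its degree part is the colex initial family of size $\binom{a_k-1}{k-1}+\binom{a_{k-1}-1}{k-2}+\dots$, and its shadow has size exactly $\binom{a_k}{k-1}+\binom{a_{k-1}}{k-2}+\dots$. It then suffices to show two things:
\begin{itemize}
\item if $|\mathcal{F}(1)|$ is at least the colex degree part, then induction together with monotonicity of the shadow function gives the bound;
\item if $|\mathcal{F}(1)|$ is smaller, then since $\partial\mathcal{F}(\bar 1)\subset\mathcal{F}(1)$ we get $|\mathcal{F}(\bar1)|$ large relative to a small set, which contradicts induction applied on $[2,n]$.
\end{itemize}

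The main obstacle is this last numerical step: comparing cascade representations and exploiting the monotonicity and near-additivity of the function $x\mapsto$ minimal shadow. To make it clean, I would first try Lov\'asz's real-valued version, with $|\mathcal{F}|=\binom{x}{k}$ and target $\binom{x}{k-1}$. In that form the induction closes by a one-line argument: if $|\mathcal{F}(1)|<\binom{x-1}{k-1}$, then $|\mathcal{F}(\bar1)|>\binom{x-1}{k}$, so by induction $|\partial\mathcal{F}(\bar1)|>\binom{x-1}{k-1}$, which contradicts $\partial\mathcal{F}(\bar1)\subset\mathcal{F}(1)$. The exact colex version would follow by the same scheme with cascades tracked explicitly.
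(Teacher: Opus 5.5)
The paper gives no proof of this statement. It quotes Kruskal--Katona as classical and only remarks that Daykin reproved it via $S_{U,V}$-shifts, which compress a family directly to a colex-initial one and so need no numerical bookkeeping. Your route is the standard shifting proof (due to Frankl): reduce to shifted families, then split on the element $1$ using $\partial\mathcal{F}(\bar 1)\subset\mathcal{F}(1)$ and $|\partial\mathcal{F}|\ge|\mathcal{F}(1)|+|\partial\mathcal{F}(1)|$. That is the right idea, but two things need fixing.

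First, your split on the last element is wrong. The inclusion $\partial\mathcal{F}_1\subset\mathcal{F}_0$ does not type-check, since it compares $(k-2)$-sets with $k$-sets. What shiftedness gives is $\mathcal{F}_1\subset\partial\mathcal{F}_0$, hence $|\partial\mathcal{F}|=|\partial\mathcal{F}_0|+|\partial\mathcal{F}_1|$. Your inequality $|\partial\mathcal{F}|\ge|\partial\mathcal{F}_0|+|\mathcal{F}_1|$ is false: for $\mathcal{F}=\binom{[5]}{2}$ it reads $5\ge 4+4$. You never use this split, so just delete it.

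Second, and this is the genuine gap: you only close Lov\'asz's real version, which the paper quotes separately as Theorem~\ref{12-2-2} and which is weaker. You defer exactly the step the exact colex statement requires. Let $\mathcal{C}=\mathcal{C}([n],|\mathcal{F}|,k)$, $d=|\mathcal{C}(1)|$, $g=|\mathcal{C}(\bar 1)|$, and let $\sigma_j(x)$ be the shadow size of the first $x$ colex $j$-sets.

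In the case $|\mathcal{F}(1)|\ge d$ you need $|\partial\mathcal{C}|=d+\sigma_{k-1}(d)$. This is immediate: $\mathcal{C}$ is shifted, so your first-element inequality is an equality for it, and $\mathcal{C}(1)$ is colex-initial on $[2,n]$.

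In the case $|\mathcal{F}(1)|\le d-1$ you have $|\mathcal{F}(\bar 1)|\ge g+1$, and the contradiction needs $\sigma_k(g+1)\ge d$. This is not just monotonicity plus an identity $\sigma_k(g)=d$, because that identity fails when the cascade has terms with $a_j=j$. For $k=3$ and $|\mathcal{F}|=\binom{5}{3}+\binom{2}{2}+\binom{1}{1}=12$ one gets $g=4$ and $\sigma_3(4)=6<8=d$, while $\sigma_3(5)=8$; so the $+1$ is genuinely needed.

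Here is a direct proof of $\sigma_k(g+1)\ge d$. Let $H$ be the colex-first $k$-subset of $[2,n]$ not in $\mathcal{C}$; if none exists, this case is vacuous. Then $\mathcal{C}(\bar 1)\cup\{H\}$ is colex-initial of size $g+1$ on $[2,n]$. Take $G\in\mathcal{C}(1)$.
\begin{itemize}
\item Since $G\cup\{1\}$ precedes $H$, the element $y:=\max\bigl(H\triangle(G\cup\{1\})\bigr)$ lies in $H\setminus G$.
\item Put $x:=\min([2,n]\setminus G)$; then $x\le y$.
\item If $x<y$, then $G\cup\{x\}$ still precedes $H$ and avoids $1$, so by the choice of $H$ it lies in $\mathcal{C}(\bar 1)$.
\item If $x=y$, then $[2,y-1]\subset G$, and counting elements shows $G\cup\{y\}=H$.
\end{itemize}
Thus $\mathcal{C}(1)\subset\partial\bigl(\mathcal{C}(\bar 1)\cup\{H\}\bigr)$, i.e.\ $\sigma_k(g+1)\ge d$. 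With these two facts your case analysis goes through and proves the exact statement.
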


We will use the following version of the Kruskal--Katona theorem.

\begin{theorem}[Daykin \cite{12-1-4}]\label{12-1-1}
Let $n\ge a+b$, and let $\mathcal{A}\subset {[n]\choose a},\mathcal{B}\subset {[n]\choose b}$ be cross-intersecting. Then $\mathcal{L}([n], |\mathcal{A}|,a)$, $\mathcal{L}([n], |\mathcal{B}|,b)$ are also cross-intersecting.
\end{theorem}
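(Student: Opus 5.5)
We prove Theorem~\ref{12-1-1} by reducing it to the Kruskal--Katona theorem through complements and colex order. The key fact is that, for $k$-sets of $[n]$, taking complements and reversing the ground set turns lex order into colex order. Concretely, let $\varphi(i)=n+1-i$. For $X\subset[n]$ set $X^{*}:=\varphi([n]\setminus X)$.

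First we check that the map $A\mapsto A^{*}$ sends the first $r$ $a$-sets in lex order to the first $r$ $(n-a)$-sets in colex order, that is, $\mathcal{L}([n],r,a)^{*}=\mathcal{C}([n],r,n-a)$. For distinct $a$-sets $A,A'$ we have $A\prec A'$ in lex iff $\min(A\triangle A')\in A$. This holds iff $\min(A\triangle A')\notin [n]\setminus A$, and applying $\varphi$ turns the minimum into a maximum. So $A\prec A'$ in lex iff $\max(A^*\triangle A'^*)\in A'^*$, which says exactly that $A^*$ precedes $A'^*$ in colex.

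Next we recast cross-intersection as a shadow condition. The families $\mathcal{A},\mathcal{B}$ are cross-intersecting iff no $B\in\mathcal{B}$ is contained in $[n]\setminus A$ for any $A\in\mathcal{A}$. Equivalently, $\mathcal{B}$ is disjoint from the $b$-shadow $\partial_b\{[n]\setminus A: A\in\mathcal{A}\}$; here $n-a\ge b$ is guaranteed by $n\ge a+b$. Write $\mathcal{A}^c:=\{[n]\setminus A:A\in\mathcal{A}\}$. Then the condition reads $|\mathcal{B}|\le \binom{n}{b}-|\partial_b\mathcal{A}^c|$.

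Iterating the Kruskal--Katona theorem from level $n-a$ down to level $b$ gives $|\partial_b\mathcal{A}^c|\ge|\partial_b\mathcal{C}([n],|\mathcal{A}|,n-a)|$. We may apply the theorem repeatedly because the shadow of an initial colex segment is again an initial colex segment, and shadow sizes are monotone in the size of the family. By the first step and the symmetry under $\varphi$, the family $\mathcal{C}([n],|\mathcal{A}|,n-a)$ is the image under $\varphi$ of the complements of $\mathcal{L}:=\mathcal{L}([n],|\mathcal{A}|,a)$. Hence
\[|\partial_b \mathcal{L}^c| \le |\partial_b\mathcal{A}^c|\le \binom{n}{b}-|\mathcal{B}|,\]
where $\mathcal{L}^c:=\{[n]\setminus L: L\in\mathcal{L}\}$.

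It remains to show that the complement $\mathcal{D}:=\binom{[n]}{b}\setminus\partial_b\mathcal{L}^c$, which is the family of $b$-sets meeting every member of $\mathcal{L}$, is itself an initial lex segment. Then $\mathcal{D}$ contains $\mathcal{L}([n],|\mathcal{B}|,b)$, and cross-intersection follows. The shadow of a colex initial segment is colex initial, so $\varphi(\partial_b\mathcal{L}^c)$ is a colex-initial family of $b$-sets, i.e.\ an initial segment of the order "$\max$ of the symmetric difference lies in the later set". Its complement is therefore a final segment of that same order. Under $\varphi$, a final colex segment corresponds to an initial lex segment, since $\varphi$ converts the max-comparison back into a min-comparison with the roles reversed. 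So $\mathcal{D}$ is lex-initial, and in particular $\mathcal{L}([n],|\mathcal{B}|,b)\subset\mathcal{D}$. This is the assertion of Theorem~\ref{12-1-1}.

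The main obstacle is the order-reversal bookkeeping in the first step and in the "final segment versus initial segment" step. We would verify it carefully on the defining condition $\min(A\setminus B)<\min(B\setminus A)$. Everything else is the standard Kruskal--Katona machinery.
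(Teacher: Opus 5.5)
Your argument is correct, and it takes a different route from the one the paper points to.

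The paper does not prove this theorem. It quotes it from Daykin as a ``version of the Kruskal--Katona theorem''. The only hint of a proof is Lemma~\ref{SUV} together with property $\mathbf{P}$, which the authors call the key idea of Daykin's argument. As long as the pair is not L-initial one finds $(U,V)$ satisfying $\mathbf{P}$. The $S_{U,V}$-shift then preserves cross-intersection and strictly decreases the lex-index sum, so iterating ends at the lex-initial pair of the same sizes.

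You instead treat Kruskal--Katona as a black box and transport it through $X\mapsto\varphi([n]\setminus X)$. This map is an order isomorphism from lex on $a$-sets to colex on $(n-a)$-sets, while $\varphi$ alone is an order anti-isomorphism from colex to lex on $b$-sets. Both facts check out on the criterion ``$\min(A\triangle A')\in A$''. The iterated use of Kruskal--Katona is also fine: the shadow of a colex segment is a colex segment, and $|\partial\mathcal C([n],m,k)|$ is monotone in $m$.

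Your route gives brevity and an extra structural fact. The family of all $b$-sets meeting every member of $\mathcal L([n],|\mathcal A|,a)$ is itself lex-initial. This is a qualitative form of the partner Lemma~\ref{3-12-1} used later in the paper.

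The shifting route is self-contained; the paper notes that Daykin used the same shifts to prove Kruskal--Katona itself. More importantly for this paper, it is the template for the $S^Q_{U,V}$-shift and the structure-preserving Lemma~\ref{1-7-1}. There the two families must be moved one step at a time while controlling which sets move, and a one-shot compression via Kruskal--Katona gives no such control.
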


As a corollary of Theorem \ref{12-1-1}, we get the following.

\begin{proposition}\label{1-4-1}
Let $n\ge a+b$ and $i$ be integers, and let $\m A \subset {[n]\choose a}$ and $\m B \subset {[n]\choose b}$ be cross-intersecting. If $|\m A|\ge {n\choose a}-{n-i\choose a}$, then $|\m B|\le {n-i\choose b-i}$.
\end{proposition}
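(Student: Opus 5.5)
By Theorem~\ref{12-1-1}, it suffices to prove the statement when $\m A$ and $\m B$ are $L$-initial. The pair $\m L([n],|\m A|,a)$, $\m L([n],|\m B|,b)$ is cross-intersecting and has the same sizes as $\m A,\m B$, so we may assume $\m A=\m L([n],|\m A|,a)$ and $\m B=\m L([n],|\m B|,b)$. Put $s=|\m A|$ and suppose $s\ge {n\choose a}-{n-i\choose a}$. The first ${n\choose a}-{n-i\choose a}$ sets in lex order are exactly the $a$-sets meeting $[i]$. Indeed, every $a$-set meeting $[i]$ precedes every $a$-set disjoint from $[i]$: if $\min F\le i<\min F'$, then $\min(F\setminus F')\le \min F<\min(F'\setminus F)$. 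Hence $\m A\supseteq\{A\in{[n]\choose a}: A\cap[i]\neq\emptyset\}$. (If $i\le 0$ the hypothesis is vacuous or trivial, and if $i>b$ the claimed bound reads $|\m B|\le 0$, which is consistent with the argument below.)

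Next we show that every $B\in\m B$ contains $[i]$. Suppose $B\in\m B$ misses some $j\in[i]$. The set $[n]\setminus B$ has $n-b\ge a$ elements and contains $j$, so we can choose an $a$-set $A\subset[n]\setminus B$ with $j\in A$. This $A$ meets $[i]$, so $A\in\m A$, yet $A\cap B=\emptyset$, contradicting cross-intersection. Therefore $\m B\subseteq\{B\in{[n]\choose b}:[i]\subseteq B\}$, and so $|\m B|\le{n-i\choose b-i}$. When $i>b$ this family is empty, which matches the convention ${n-i\choose b-i}=0$.

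The only point needing care is the lex-order fact that the $a$-sets meeting $[i]$ form an initial segment. This follows directly from the definition of $\prec$ given above. The remaining inputs are that $n\ge a+b$ guarantees the disjoint set $A$ exists, and that the reduction via Theorem~\ref{12-1-1} preserves the sizes, so the bound transfers back to the original $\m B$.
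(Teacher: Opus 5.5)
Your proof is correct and follows the paper's argument: you reduce to $L$-initial families via Theorem~\ref{12-1-1}, observe that $\m A$ then contains every $a$-set meeting $[i]$, and use $n\ge a+b$ to force $[i]\subseteq B$ for every $B\in\m B$. You also supply two details the paper leaves implicit, namely the check that the $a$-sets meeting $[i]$ form a lex initial segment and the explicit choice of a set $A\subset[n]\setminus B$ containing $j$.
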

\begin{proof}
By Theorem \ref{12-1-1}, we may assume that $\m A=\m L([n], |\m A|, a)$ and $\m B=\m L([n], |\m B|, b)$. Since $|\m A|\ge {n\choose a}-{n-i\choose a}$, all $a$-sets $A\subset [n]$ with $A\cap [i]\ne \emptyset$ belong to $\m A$. 
As $\m A$ and $\m B$ are cross-intersecting and $n\ge a+b$,  every set $B\in \m B$ must contain $[i]$. This implies that $|\m B|\le {n-i\choose b-i}$.
\end{proof}

\begin{theorem}[Lov\'{a}sz \cite{12-2-3}]\label{12-2-2}
If $\mathcal{F}\subset {[n]\choose k}$ is such that  $|\mathcal{F}|={x\choose k}$ for some real $x\ge k$, then 
$$|\partial \mathcal{F}|\ge {x \choose k-1}. $$
\end{theorem}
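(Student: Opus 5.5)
The statement is Lovász's version of Kruskal--Katona (Theorem~\ref{12-2-2}). I would prove it by induction on $k$, and for fixed $k$ by induction on $|\mathcal{F}|$, after reducing to shifted families.

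\textbf{Step 1: reduce to shifted families.} Apply the standard $(i,j)$-shifts $S_{ij}$ with $i<j$. These do not change $|\mathcal{F}|$, and they satisfy $\partial S_{ij}(\mathcal{F})\subset S_{ij}(\partial\mathcal{F})$, so the shadow does not grow. It therefore suffices to treat a family $\mathcal{F}$ that is shifted, i.e.\ stable under every $S_{ij}$.

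\textbf{Step 2: split on the element $1$.} Write $\mathcal{F}=\mathcal{F}(\bar 1)\sqcup\{F:1\in F\}$, and set $\mathcal{F}_1:=\mathcal{F}(1)\subset\binom{[2,n]}{k-1}$ and $\mathcal{F}_0:=\mathcal{F}(\bar 1)$. Shiftedness gives $\partial\mathcal{F}_0\subset\mathcal{F}_1$: if $G\subset F\in\mathcal{F}_0$ with $|G|=k-1$, then shifting the missing element of $F$ to $1$ yields $G\cup\{1\}\in\mathcal{F}$. Hence $|\mathcal{F}_1|\ge|\partial\mathcal{F}_0|$. On the other hand,
\[
\partial\mathcal{F}\supseteq \mathcal{F}_1\ \sqcup\ \{\{1\}\cup G: G\in\partial\mathcal{F}_1\},
\]
so $|\partial\mathcal{F}|\ge|\mathcal{F}_1|+|\partial\mathcal{F}_1|$.

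\textbf{Step 3: the key inequality.} I claim $|\mathcal{F}_1|\ge\binom{x-1}{k-1}$. Suppose instead $|\mathcal{F}_1|<\binom{x-1}{k-1}$. Then
\[
|\mathcal{F}_0|=\binom{x}{k}-|\mathcal{F}_1|>\binom{x-1}{k}.
\]
Applying induction on $k$ (the statement for $k$-sets of smaller size counts, or directly the same statement for $\mathcal{F}_0$ with parameter $y>x-1$) gives $|\partial\mathcal{F}_0|\ge\binom{y}{k-1}>\binom{x-1}{k-1}>|\mathcal{F}_1|$. This contradicts $\partial\mathcal{F}_0\subset\mathcal{F}_1$. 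One subtlety is that $\mathcal{F}_0$ is a $k$-uniform family of smaller size, so this step uses induction on $|\mathcal{F}|$ (or on $n$, since $\mathcal{F}_0\subset\binom{[2,n]}{k}$). The base cases are $k=1$, which is trivial since $|\partial\mathcal{F}|=1=\binom{x}{0}$, and $x=k$, where $|\mathcal{F}|=1$.

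\textbf{Step 4: conclude.} Write $|\mathcal{F}_1|=\binom{y}{k-1}$ with $y\ge x-1$. Induction on $k$ gives $|\partial\mathcal{F}_1|\ge\binom{y}{k-2}$, hence
\[
|\partial\mathcal{F}|\ge\binom{y}{k-1}+\binom{y}{k-2}=\binom{y+1}{k-1}\ge\binom{x}{k-1}.
\]

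The main obstacle is Step 3. Handling real $x$ needs monotonicity of $\binom{y}{k}$ in real $y\ge k-1$. It also requires care when $x-1<k$, i.e.\ when $\mathcal{F}_0$ could be empty. In that case $\binom{x-1}{k}$ should be read as $0$ or as a small positive value, and the inequality has to be checked directly, for instance using $|\mathcal{F}_1|\ge 1$.
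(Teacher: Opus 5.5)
The paper gives no proof of this statement. It quotes the result from Lov\'asz and uses it only as a black box, for the shadow bound inside the proof of Lemma~\ref{lemkey}. So there is no in-paper argument to compare against. Your proposal is the standard shifting proof of Lov\'asz's form of Kruskal--Katona (essentially Frankl's short proof), and it is correct. For shifted $\mathcal{F}$ you have $\partial\mathcal{F}_0\subset\mathcal{F}_1$ and $|\partial\mathcal{F}|\ge|\mathcal{F}_1|+|\partial\mathcal{F}_1|$. The contradiction in Step~3 and the Pascal identity $\binom{y}{k-1}+\binom{y}{k-2}=\binom{y+1}{k-1}$ in Step~4 both go through for real $y$.

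The boundary issue you flag at the end resolves automatically. Since $x\ge k$, every factor of $\binom{x-1}{k}=\frac{(x-1)(x-2)\cdots(x-k)}{k!}$ is nonnegative. So the contradiction hypothesis $|\mathcal{F}_0|>\binom{x-1}{k}\ge 0$ already forces $\mathcal{F}_0\neq\emptyset$, and you can write $|\mathcal{F}_0|=\binom{y}{k}$ with $y\ge k$. Strict monotonicity of $z\mapsto\binom{z}{k}$ on $[k-1,\infty)$ then gives $y>x-1$. If $\mathcal{F}_0=\emptyset$, the claim of Step~3 holds directly, since $|\mathcal{F}_1|=\binom{x}{k}=\frac{x}{k}\binom{x-1}{k-1}\ge\binom{x-1}{k-1}$.

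In Step~4, $\mathcal{F}_1\neq\emptyset$ and $y\ge x-1\ge k-1$, so the induction hypothesis for $(k-1)$-uniform families applies. Step~3 invokes the statement for the same $k$, so the inner induction must be on $|\mathcal{F}|$ or on $n$, as you note. This is legitimate: a nonempty shifted family always has $\mathcal{F}_1\neq\emptyset$ (apply some $S_{1j}$ to any set missing $1$), hence $|\mathcal{F}_0|<|\mathcal{F}|$.
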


For $1\le i <j \le n$ and $F\subset [n]$, the classical $(i,j)$-shift, introduced by Erd\H{o}s, Ko, and Rado, is defined as follows:
$S_{i,j}(F):=(F\setminus \{j\}) \cup \{i\}$ if $F\cap \{i,j\}=\{j\}$, and $S_{i,j}(F):=F$ otherwise. For a family $\m F$, set 
\[
S_{i,j}(\mathcal{F}) := \{ S_{i,j}(F) : F \in \mathcal{F} \} \cup \{ F \in \mathcal{F} : S_{i,j}(F) \in \mathcal{F} \}.
\]
Daykin \cite{Day1973} gave an alternative proof of the Kruskal--Katona theorem. In that paper, he introduced an important notion of $S_{U,V}$-shifts, which is a generalization of an $S_{i,j}$-shift. Let $U,V$ be disjoint sets of the same size. Define $S_{U,V}(F):=(F\setminus V) \cup U$ if $F\cap (V\cup U)=V$, and $S_{U,V}(F):=F$ otherwise. For a family, define 
\[
S_{U,V}(\mathcal{F}) := \{ S_{U,V}(F) : F \in \mathcal{F} \} \cup \{ F \in \mathcal{F} : S_{U,V}(F) \in \mathcal{F} \}.
\]
Throughout the paper, when using the notation $S_{U,V}$, we always assume that $U,V$ have the same size and are disjoint.
We say that $S_{U,V}$-shift {\it acts trivially on $\m F$} (or on a pair of families $(\m A, \m B)$) if $S_{U,V}(\m F)=\m F$ (resp. $(S_{U,V}(\m A), S_{U,V}(\m B))=(\m A, \m B)$).

For the lex order on $\binom{[n]}{k}$, let $i_{\text{lex}}(S)$ denote the position of a set $S$ in this order. For two families $\mathcal{F}_1, \mathcal{F}_2 \subset \binom{[n]}{k}$, we write 
$\mathcal{F}_1 \prec \mathcal{F}_2$ if 
$
\sum_{S \in \mathcal{F}_1} i_{\text{lex}}(S) \leq \sum_{S \in \mathcal{F}_2} i_{\text{lex}}(S).
$
In case of strict inequality, we write  $\mathcal{F}_1 \precneqq \mathcal{F}_2$.
For pairs $(\mathcal{F}_1, \mathcal{G}_1)$ and  $(\mathcal{F}_2, \mathcal{G}_2)$, we write $(\mathcal{F}_1, \mathcal{G}_1) \precneqq  (\mathcal{F}_2, \mathcal{G}_2)$ to mean $\mathcal{F}_1 \prec \mathcal{F}_2$, $\mathcal{G}_1 \prec \mathcal{G}_2$, and at least one of $\mathcal{F}_1 \precneqq \mathcal{F}_2$ and  $\mathcal{G}_1 \precneqq \mathcal{G}_2$ holds.

For a family $\m F\subset {[n]\choose k}$ (or a pair of families $\mathcal{A}\subset {[n]\choose a}$ and $\mathcal{B}\subset {[n]\choose b}$), 
we can find a pair of sets $(U,V)$ satisfying the following property whenever $\m F$ (resp. $(\mathcal{A},\mathcal{B})$) is not $L$-initial.

\begin{itemize}
\item[{\bf P}] Let $U,V\subset [n]$ satisfy 
$U\cap V=\emptyset$, $|U|=|V|$ and $U\prec V$. 
We say that $(U,V)$ satisfy {\bf P} for a family $\m F$ if
 $S_{U,V}(\m F)\precneqq \m F$, and 
for any $V'\subsetneqq V$ and any $U'\subsetneqq U$ with $U'\prec V'$ and $|U'|=|V'|$, we have $S_{U',V'}(\m F)=\m F$. 
\end{itemize}

We say that a pair $(U,V)$ satisfy {\bf P} for a pair of families $(\m A, \m B)$ if $(U,V)$ satisfy {\bf P} for at least one of $\m A$ and $\m B$.
 
The following lemma captures the key idea of the proof of Daykin~\cite{Day1973}, adapted to cross-intersecting families.

\begin{lemma}\label{SUV}
Let $n\ge a+b$, \(\mathcal{A}\subset {[n]\choose a}\) and \(\mathcal{B}\subset {[n]\choose b}\) be cross-intersecting.
Let $(U,V)$ satisfy {\bf P} for $(\m A, \m B)$. Then \(S_{U,V}(\mathcal{A})\) and \(S_{U,V}(\mathcal{B})\) are also cross-intersecting.
\end{lemma}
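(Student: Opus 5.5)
Suppose, for contradiction, that some $A'\in S_{U,V}(\mathcal{A})$ and $B'\in S_{U,V}(\mathcal{B})$ are disjoint. The plan is to pull $A'$ and $B'$ back to sets of $\mathcal{A}$ and $\mathcal{B}$, show that the only surviving configuration produces a disjoint pair $A\in\mathcal{A}$, $B\in\mathcal{B}$ with $A\cap B=\emptyset$ except for a small "exchange" through $U,V$, and then use the minimality part of {\bf P} (the smaller shifts $S_{U',V'}$ act trivially) to find a genuine disjoint pair in $(\mathcal{A},\mathcal{B})$. That contradicts cross-intersection.

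\textbf{Step 1: preimages.} Each $A'\in S_{U,V}(\mathcal{A})$ is either a set $A'\in\mathcal{A}$ with $S_{U,V}(A')\in\mathcal{A}$ or $A'=S_{U,V}(A')$, or $A'=(A\setminus V)\cup U$ for some $A\in\mathcal{A}$ with $A\cap(U\cup V)=V$. The same holds for $B'$. We run through the four combinations.

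\textbf{Step 2: the easy cases.}
\begin{itemize}
\item If neither set was moved, both lie in the original families, which are cross-intersecting.
\item If both were moved, then $A',B'\supseteq U$. When $U\neq\emptyset$ they meet, and $U=\emptyset$ is excluded by $S_{U,V}(\mathcal{F})\precneqq\mathcal{F}$.
\item Suppose $A'=(A\setminus V)\cup U$ was moved and $B'\in\mathcal{B}$ stayed. Since $A'\cap B'=\emptyset$ while $A\cap B'\neq\emptyset$, we get $A\cap B'\subseteq V$ and $B'\cap U=\emptyset$. If $B'\cap(U\cup V)=V$ is impossible or $S_{U,V}(B')\in\mathcal{B}$, then $S_{U,V}(B')=(B'\setminus V)\cup U\in\mathcal{B}$ whenever $V\subseteq B'$, and this set is disjoint from $A$ because $A\cap U=\emptyset$. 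That is a contradiction.
\end{itemize}

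\textbf{Step 3: the main obstacle.} What remains is the subcase where $B'$ meets $V$ only partially, say $\emptyset\neq V'':=B'\cap V\subsetneq V$, and $B'\cap U=\emptyset$. Here I use the size condition $n\ge a+b$ together with the minimality in {\bf P}. Put $V':=V\setminus V''$, the part of $V$ missed by $B'$. The set $X:=[n]\setminus(A'\cup B')$ has at least $n-a-b\ge 0$ elements, and $A'$ already contains $U$.

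The idea is to choose $U'\subsetneq U$ with $|U'|=|V''|$ and $U'\prec V''$; such a choice should be available because $U\prec V$ and the sets are disjoint, by taking the smallest elements of $U$. We then apply the trivial shift $S_{U',V''}$ to $B'$: it gives $(B'\setminus V'')\cup U'\in\mathcal{B}$. This set is disjoint from $A$, since $A\cap U=\emptyset$ and $A\cap B'\subseteq V''$, which contradicts cross-intersection.

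The delicate point is that the lex condition $U'\prec V''$ must hold for the chosen subsets. Since $U\prec V$ with $U\cap V=\emptyset$, we have $\min U<\min V$ only in the first coordinate, so subsets do not automatically inherit the comparison. If the direct choice fails, I would instead pair things the other way. That means swapping the roles to use a trivial $S_{U'',V'}$ acting on $A$, where $U''\subsetneq U$ and $V'$ is the part of $V$ not in $B'$, together with the symmetric statement of {\bf P} for whichever of $\mathcal{A},\mathcal{B}$ the condition refers to. If needed, I would induct on $|U|$ to choose the comparable pair: order $U$ and $V$ increasingly and match the first $|V''|$ elements.

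The symmetric case, with $B'$ moved and $A'$ unmoved, is identical with $a$ and $b$ swapped.
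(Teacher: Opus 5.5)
Your Step~3, and its mirror image for a moved set of $\mathcal{B}$, always applies the smaller shift to the \emph{unmoved} set; for example, it needs $(B'\setminus V'')\cup U'\in\mathcal{B}$. But {\bf P} for the pair only says that $(U,V)$ satisfies {\bf P} for \emph{one} of $\mathcal{A},\mathcal{B}$. So triviality of the smaller shifts $S_{U',V'}$ is available for that family alone.

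Suppose {\bf P} holds only for $\mathcal{A}$. Your argument then covers a moved set of $\mathcal{B}$ against an unmoved set of $\mathcal{A}$. It does not cover the case $A'=(A\setminus V)\cup U\notin\mathcal{A}$ against an unmoved $B'\in\mathcal{B}$, because nothing forces $(B'\setminus V'')\cup U'\in\mathcal{B}$.

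The fallback you sketch does not close this gap. Shifting $V'=V\setminus V''$ out of $A$ produces $(A\setminus V')\cup U''$, which still contains $V''=A\cap B'\neq\emptyset$ and so still meets $B'$.

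The missing observation is that the \emph{same} $V''=A\cap B'$ can be shifted out of the set on the other side. We have $A\cap U=B'\cap U=\emptyset$ and $A\cap B'=V''\subsetneq V$. Hence $S_{U',V''}(A)=(A\setminus V'')\cup U'$ is disjoint from $B'$, so it is not in $\mathcal{A}$. Thus $S_{U',V''}(\mathcal{A})\neq\mathcal{A}$, contradicting {\bf P} for $\mathcal{A}$.

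This is how the paper argues, in the proof of the more general Lemma~\ref{SUV'} ({\bf P} implies {\bf Q}). It assumes w.l.o.g.\ that the property holds for $\mathcal{A}$. In both mixed cases it then applies the smaller shift to a set of $\mathcal{A}$: the preimage of the moved set in one case, the unmoved set in the other.

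The point you single out as delicate is not actually an issue. For disjoint sets of equal size, $U'\prec V''$ simply means $\min U'<\min V''$. Since $\min U<\min V\le\min V''$, any $U'\subsetneq U$ of size $|V''|$ containing $\min U$ will do, for instance the first $|V''|$ elements of $U$. No induction on $|U|$ is needed, and neither the set $X$ nor the hypothesis $n\ge a+b$ enters the argument.
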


Hilton \cite{Hilton1977} showed that if 
$\m A,\m B\subset {[n]\choose k}$ are cross-intersecting and $n\ge 2k\ge4$, then
$|\m A|+|\m B|\le {n\choose k}.$
Frankl \cite{Frankl-max-degree} extended Hilton's result as follows.
\begin{theorem}[Frankl \cite{Frankl-max-degree}]\label{12-3-3}
Let \( a, b, n \) be nonnegative integers, \( n > a + b \), \( a \geq b \).  
Suppose that \( |Y| = n \), \( \mathcal{A} \subset \binom{Y}{a} \) and \( \mathcal{B} \subset \binom{Y}{b} \) are cross-intersecting. Then  
\[
|\mathcal{A}| + |\mathcal{B}| \leq \binom{n}{a}
\]  
with equality holding if and only if \( \mathcal{A} = \binom{Y}{a} \), \( \mathcal{B} = \emptyset \) or  
\( a = b \), \( \mathcal{A} = \emptyset \) and \( \mathcal{B} = \binom{Y}{b} \).
\end{theorem}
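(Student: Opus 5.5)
The plan is to turn the cross-intersecting condition into a statement about shadows, and then apply the normalized matching (local LYM) inequality. This avoids any shifting.

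\textbf{Reduction to a shadow bound.} Put $\overline{\mathcal{B}}:=\{Y\setminus B: B\in\mathcal{B}\}\subset\binom{Y}{n-b}$, so $|\overline{\mathcal{B}}|=|\mathcal{B}|$. A set $A\in\binom{Y}{a}$ is disjoint from $B$ exactly when $A\subset Y\setminus B$. Hence $\mathcal{A}$ and $\mathcal{B}$ are cross-intersecting if and only if $\mathcal{A}\cap\partial_a\overline{\mathcal{B}}=\emptyset$. Here $\partial_a$ makes sense because $n>a+b$ gives $n-b>a$. Consequently
\[
|\mathcal{A}|+|\mathcal{B}|\le \binom{n}{a}-|\partial_a\overline{\mathcal{B}}|+|\overline{\mathcal{B}}|,
\]
and it suffices to show $|\partial_a\overline{\mathcal{B}}|\ge|\overline{\mathcal{B}}|$, with a precise description of when equality occurs.

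\textbf{The shadow estimate.} Double count the containment pairs $(A,C)$ with $A\in\binom{Y}{a}$, $C\in\overline{\mathcal{B}}$, $A\subset C$. Each $C$ contains $\binom{n-b}{a}$ such sets $A$, and each $a$-set lies in $\binom{n-a}{n-b-a}$ sets of size $n-b$. This gives the normalized matching inequality
\[
\frac{|\partial_a\overline{\mathcal{B}}|}{\binom{n}{a}}\ \ge\ \frac{|\overline{\mathcal{B}}|}{\binom{n}{n-b}}.
\]
Since $b\le a<n-b$, the number $a$ lies between $b$ and $n-b$, so by unimodality of binomial coefficients $\binom{n}{a}\ge\binom{n}{b}=\binom{n}{n-b}$. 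Equality holds here iff $a=b$, because $a=n-b$ is excluded. Therefore $|\partial_a\overline{\mathcal{B}}|\ge|\overline{\mathcal{B}}|$, which proves the inequality.

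\textbf{Equality cases.} This is the main point requiring care. Equality in the normalized matching inequality forces every $a$-subset of a member of $\overline{\mathcal{B}}$ to lie only in members of $\overline{\mathcal{B}}$. The containment graph between levels $a$ and $n-b>a$ is connected, so this happens only if $\overline{\mathcal{B}}$ is empty or the whole level.

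If $\mathcal{B}=\emptyset$, equality forces $\mathcal{A}=\binom{Y}{a}$.

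If $\mathcal{B}=\binom{Y}{b}$, then $\mathcal{A}=\emptyset$, since every $a$-set misses some $b$-set because $n\ge a+b$. The sum is then $\binom{n}{b}$, which equals $\binom{n}{a}$ only when $a=b$.

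In every other case the inequality is strict. Either the normalized matching step is strict, or, when $a>b$, the step $\binom{n}{a}>\binom{n}{b}$ is strict while $\mathcal{B}\neq\emptyset$. This yields exactly the stated characterization.
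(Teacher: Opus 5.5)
Your proof is correct and complete. The one step you state without justification is connectivity of the inclusion graph between levels $a<n-b$. It is immediate: two $(n-b)$-sets that differ by a single swap share $n-b-1\ge a$ elements, hence a common $a$-subset.

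The paper itself does not prove this statement. It quotes it from Frankl and uses it as a black box, in Claim~\ref{1-8-2} and at the end of the proof of Lemma~\ref{lemkey}. So there is no internal proof to match.

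Your route is to pass to complements $\overline{\mathcal B}\subset\binom{Y}{n-b}$, rewrite cross-intersection as $\mathcal A\cap\partial_a\overline{\mathcal B}=\emptyset$, and then combine local LYM with $\binom{n}{a}\ge\binom{n}{b}$. This is an elementary argument, and it handles the equality case well. Equality forces tightness in the normalized matching inequality. It also forces $a=b$ whenever $\mathcal B\neq\emptyset$. Connectivity then pins $\overline{\mathcal B}$ down to $\emptyset$ or the full level.

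For the neighbouring bound, Theorem~\ref{10-12-1}, the paper instead uses Daykin's reduction to L-initial families (Theorem~\ref{12-1-1}) together with Huang--Peng local unimodality. That compression route would also give the inequality here. However, it replaces $\mathcal A,\mathcal B$ by L-initial families and loses their structure, so on its own it would not give the uniqueness part.

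Your argument also shows exactly where the strict hypothesis $n>a+b$ enters. The inequality already holds for $n\ge a+b$. Strictness is needed only to guarantee $a<n-b$. That is used twice: for the connectivity step, and for $\binom{n}{a}>\binom{n}{b}$ when $a>b$. Indeed, for $n=a+b$ the uniqueness claim is false.
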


Hilton and Milner \cite{HM1967} proved that for two non-empty $n$-vertex $k$-uniform cross-intersecting families with $n\ge 2k$, the sum of their sizes is at most ${n\choose k}-{n-k\choose k}+1$.
Frankl and Tokushige \cite{FT1992} generalized this result as follows.

\begin{theorem}[Frankl--Tokushige \cite{FT1992}]\label{12-3-2}
If $\mathcal{A} \subset \binom{[n]}{a}$ and $\mathcal{B} \subset \binom{[n]}{b}$ are non-empty cross-intersecting families with $n > a + b$ and $a \ge b$, then
\[
|\mathcal{A}| + |\mathcal{B}| \leq \binom{n}{a} - \binom{n-b}{b} + 1.
\]
If $a>b$, then the equality holds only if $\mathcal{B} = \{B\}$ and $\mathcal{A} = \{A \in \binom{[n]}{a} : A \cap B \neq \emptyset \}$.
\end{theorem}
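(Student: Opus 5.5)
We need to prove Theorem \ref{12-3-2} (Frankl--Tokushige): for nonempty cross-intersecting $\mathcal{A}\subset\binom{[n]}{a}$, $\mathcal{B}\subset\binom{[n]}{b}$ with $n>a+b$ and $a\ge b$, we have $|\mathcal{A}|+|\mathcal{B}|\le \binom{n}{a}-\binom{n-b}{a}+1$. Note that the intended bound must be $\binom{n}{a}-\binom{n-b}{a}+1$; the extremal example forces this, namely $\mathcal{B}=\{B\}$ and $\mathcal{A}=\{A: A\cap B\neq\emptyset\}$.

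\textbf{Reduction to $L$-initial families.} By Theorem \ref{12-1-1}, we may replace $\mathcal{A},\mathcal{B}$ by $\mathcal{L}([n],|\mathcal{A}|,a)$ and $\mathcal{L}([n],|\mathcal{B}|,b)$. These have the same sizes, are still cross-intersecting, and are nonempty.

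\textbf{Parametrization by $\mathcal{B}$.} Let $B_0$ be the last set of $\mathcal{B}$ in lex order. Since $\mathcal{A}$ is $L$-initial, it is contained in the family of $a$-sets meeting every member of $\mathcal{B}$, and we may take $\mathcal{A}$ to be that whole family. Thus
\[
|\mathcal{A}|=\binom{n}{a}-\bigl|\{A: A\cap B=\emptyset \text{ for some } B\in\mathcal{B}\}\bigr|.
\]
It therefore suffices to show
\[
\bigl|\{A\in\tbinom{[n]}{a}: \exists B\in\mathcal{B},\ A\cap B=\emptyset\}\bigr|\ \ge\ |\mathcal{B}|-1+\binom{n-b}{a}.
\]
Equivalently, passing to complements, the family $\overline{\mathcal{B}}=\{[n]\setminus B\}\subset\binom{[n]}{n-b}$ has $(a)$-shadow $\partial_a\overline{\mathcal{B}}$ of size at least $|\mathcal{B}|-1+\binom{n-b}{a}$.

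\textbf{Shadow estimate by induction on $|\mathcal{B}|$.} A single $(n-b)$-set has exactly $\binom{n-b}{a}$ subsets of size $a$, which settles $|\mathcal{B}|=1$. Now add sets of $\mathcal{B}$ one at a time in lex order. I claim each new complement contributes at least one new $a$-subset. Let $B'$ come after $B$ in lex order among $b$-sets of an initial segment. Then $[n]\setminus B'$ contains an element $x\in B$ with $x<\min(B'\setminus B)$. Take an $a$-subset of $[n]\setminus B'$ containing every element of $B\setminus B'$ that lies in $[n]\setminus B'$. This is possible because $|B\setminus B'|\le b\le a$ and $n-b\ge a$. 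Such a set meets every earlier $B''$ in the initial segment: earlier sets contain an element of $B'$-complement in an appropriate position, and this is where the precise lex structure is used. This gives the required increment of at least $1$ per set, hence the inequality. The strict inequality $n>a+b$ means that for $|\mathcal{B}|\ge2$ the increment is actually larger, which yields the uniqueness statement when $a>b$.

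\textbf{Expected obstacle.} The delicate step is showing, in the lex-initial setting, that each additional $B$ creates a genuinely new disjoint $a$-set. A cleaner alternative would be to apply Theorem \ref{12-2-2} (Lovász) to $\overline{\mathcal{B}}$ iteratively over the shadow levels $n-b\to a$. However, the $+1$-per-set bound is weak, so the direct injection argument above is the approach I would try first. The equality characterization for $a>b$ then follows by tracking when every increment equals $1$: for $|\mathcal{B}|\ge 2$ with $n>a+b$, the second set already adds at least $\binom{n-b-1}{a-1}>1$ new sets.
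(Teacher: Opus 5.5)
The paper quotes this theorem from Frankl--Tokushige without proving it, so your argument has to stand on its own. Several parts of it do. Your correction of the bound to $\binom{n}{a}-\binom{n-b}{a}+1$ is right: it is the value of the extremal pair, and it is the form the paper actually uses in the proof of Claim~\ref{12-14-1}. The reduction to $L$-initial families with $\mathcal{A}$ taken maximal, and the reformulation as a lower bound on $|\partial_a\overline{\mathcal{B}}|$, are also fine.

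\textbf{The gap is the inductive step.} It is false that each new set of an $L$-initial $\mathcal{B}$ creates a new $a$-set disjoint from it, even while the maximal $\mathcal{A}$ stays nonempty.

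Take $n=6$, $a=3$, $b=2$ and $\mathcal{B}=\{\{1,2\},\{1,3\},\{1,4\},\{1,5\}\}$, and add the next set $\{1,6\}$. Every $3$-subset of $[n]\setminus\{1,6\}=[2,5]$ misses some $j\in[2,5]$, so it is already disjoint from $\{1,j\}$. The maximal cross-intersecting $\mathcal{A}$ is the full star at $1$ both before and after. So the increment is $0$, although $\mathcal{A}\neq\emptyset$.

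The same happens in general whenever you add the last member $\{1\}\cup[n-b+2,n]$ of the star at $1$. For $1\notin A$, the set $[2,n]\setminus A$ has at least $b$ elements, so it contains a $(b-1)$-set other than $[n-b+2,n]$. Your recipe (an $a$-subset of $[n]\setminus B'$ containing $B\setminus B'$) only handles the single earlier set $B$. The appeal to ``the precise lex structure'' for all the other earlier sets is exactly where it breaks.

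Your step also never uses $\mathcal{A}\neq\emptyset$, which any proof must use. If the increment were always at least $1$, then at $\mathcal{B}=\binom{[n]}{b}$ with $a=b$ you would get $\binom{n}{a}\ge\binom{n}{a}-1+\binom{n-a}{a}$. That is false for $n>2a$.

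\textbf{What is needed instead.} Since increments can vanish, you need a global comparison rather than a per-step count. The same applies to the strictness behind the uniqueness claim: the large second increment $\binom{n-b-1}{a-1}$ you point out is correct, but it does not control later steps.

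One route within the paper's toolkit is the local unimodality argument from the proof of Theorem~\ref{10-12-1}.
\begin{enumerate}
\item Write $\mathcal{A}=\mathcal{L}([n],A,a)$. By Lemma~\ref{3-12-1}, $\mathcal{B}$ is contained in the partner family of $A$.
\item Since $\mathcal{B}\neq\emptyset$ and is $L$-initial, $[b]\in\mathcal{B}$. This forces $A$ into the lex-initial interval of $a$-sets meeting $[b]$.
\item The endpoints of this interval are $[a]$ and $\{b\}\cup[n-a+2,n]$. Their $f$-values are $1+\binom{n}{b}-\binom{n-a}{b}$ and $\binom{n}{a}-\binom{n-b}{a}+1$.
\item You would then need to check, as in that proof, that every interior set of the interval is of Type~I or Type~II. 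Once that holds, Lemmas~\ref{clm28} and~\ref{clm29} reduce everything to comparing the two endpoint values.
\end{enumerate}
The comparison follows from the identity
\[
\binom{n}{b}-\binom{n-a}{b}=\frac{\binom{n}{b}}{\binom{n}{a}}\left(\binom{n}{a}-\binom{n-b}{a}\right)
\]
together with $\binom{n}{b}\le\binom{n}{a}$, which is strict for $a>b$ and $n>a+b$. The strict inequalities in Lemmas~\ref{clm28} and~\ref{clm29}, together with this strict comparison, then also give the uniqueness statement for $a>b$.
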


In the proof of Lemma \ref{lemkey}, we shall need the following result. We note that a similar result, but with a weaker bound on $\m A$, appeared in \cite{KZ2018}.



\begin{theorem}\label{10-12-1}
Let $n\ge a+b$, $a\ge b$ and $\mathcal{A}\subset {[n]\choose a}$ and $\mathcal{B}\subset {[n]\choose b}$ be cross-intersecting families. If
$|\mathcal{A}|\leq {n-a+b \choose b},$
then
$|\mathcal{A}|+|\mathcal{B}|\leq {n \choose b}.$
\end{theorem}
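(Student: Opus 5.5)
The plan is to pass to complements and reduce everything to a shadow estimate, which the Lov\'asz form of the Kruskal--Katona theorem (Theorem~\ref{12-2-2}) handles directly. If $\mathcal{A}=\emptyset$ the claim is trivial, since $|\mathcal{B}|\le\binom{n}{b}$. So assume $\mathcal{A}\neq\emptyset$.

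\textbf{Step 1: reduction to a shadow inequality.} Let $\overline{\mathcal{A}}:=\{[n]\setminus A: A\in\mathcal{A}\}\subset\binom{[n]}{n-a}$. A $b$-set $B$ is disjoint from some $A\in\mathcal{A}$ if and only if $B\subset [n]\setminus A$. Since $n-a\ge b$, this says exactly that $B\in\partial_b\overline{\mathcal{A}}$. Cross-intersection therefore means $\mathcal{B}\cap\partial_b\overline{\mathcal{A}}=\emptyset$, so
\[
|\mathcal{B}|\le \binom{n}{b}-|\partial_b\overline{\mathcal{A}}|.
\]
It then suffices to prove $|\partial_b\overline{\mathcal{A}}|\ge |\mathcal{A}|=|\overline{\mathcal{A}}|$.

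\textbf{Step 2: iterated Lov\'asz.} Write $|\overline{\mathcal{A}}|=\binom{x}{n-a}$ for a real $x\ge n-a$; this is possible since $|\mathcal{A}|\ge 1$. The hypothesis $|\mathcal{A}|\le\binom{n-a+b}{b}=\binom{n-a+b}{n-a}$ gives $x\le n-a+b$. Apply Theorem~\ref{12-2-2} level by level from $n-a$ down to $b$.

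Suppose the current $j$-level shadow has size at least $\binom{x}{j}$. Then its size equals $\binom{y}{j}$ for some $y\ge x$, so its shadow has size at least $\binom{y}{j-1}\ge\binom{x}{j-1}$. Here $x\ge n-a\ge j$ keeps every binomial in the monotone range. By induction this yields $|\partial_b\overline{\mathcal{A}}|\ge\binom{x}{b}$.

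\textbf{Step 3: comparing binomials.} It remains to show $\binom{x}{b}\ge\binom{x}{n-a}$ for real $x$ with $n-a\le x\le n-a+b$. Write
\[
\frac{\binom{x}{n-a}}{\binom{x}{b}}=\prod_{j=b}^{n-a-1}\frac{x-j}{j+1}.
\]
Every factor has $x-j\ge n-a-j>0$, so the product is increasing in $x$. At $x=n-a+b$ it equals $\binom{n-a+b}{n-a}/\binom{n-a+b}{b}=1$. Hence the ratio is at most $1$, which gives $|\partial_b\overline{\mathcal{A}}|\ge\binom{x}{b}\ge\binom{x}{n-a}=|\mathcal{A}|$. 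Substituting into Step 1 finishes the proof.

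The main point needing care is Step 2. There I must make sure the real-parameter Lov\'asz bound can be iterated, which uses monotonicity of $\binom{y}{j}$ in $y$ for $y\ge j$. I must also make sure the range condition $x\ge j$ holds at every level, which follows from $x\ge n-a$. The rest is the elementary monotonicity argument in Step 3.
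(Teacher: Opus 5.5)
Your proof is correct, and it takes a genuinely different route from the paper's.

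The paper's argument works in the lex-initial setting:
\begin{itemize}
\item It uses Daykin's theorem to assume $\mathcal{A}=\mathcal{L}([n],A,a)$ and $\mathcal{B}=\mathcal{L}([n],B,b)$.
\item It uses the Huang--Peng $b$-partner lemma to replace $\mathcal{B}$ by the maximal partner $\mathcal{L}([n],B',b)$.
\item It then invokes the Huang--Peng local unimodality lemmas (the Type I/Type II sets) for $f(A)=|\mathcal{L}([n],A,a)|+|\mathcal{L}([n],B',b)|$ on the family $\{F:[a-b]\subset F\}$. The hypothesis $|\mathcal{A}|\le\binom{n-a+b}{b}$ guarantees that $A$ lies in this family. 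This pushes the maximum of $f$ to the two endpoints $[a]$ and $[a-b]\sqcup[n-b+1,n]$, where $f\le\binom{n}{b}$ is checked directly.
\end{itemize}

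You instead pass to complements. Cross-intersection becomes $\mathcal{B}\cap\partial_b\overline{\mathcal{A}}=\emptyset$, so everything reduces to $|\partial_b\overline{\mathcal{A}}|\ge|\overline{\mathcal{A}}|$. You get this from the iterated Lov\'asz bound $|\partial_b\overline{\mathcal{A}}|\ge\binom{x}{b}$ together with the elementary comparison $\binom{x}{b}\ge\binom{x}{n-a}$ for $n-a\le x\le n-a+b$. That comparison is a normalized-matching-type inequality. Its equality case $x=n-a+b$ is exactly the paper's endpoint $[a-b]\sqcup[n-b+1,n]$, i.e. $\mathcal{A}=\{A:[a-b]\subset A\}$.

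What each approach buys:
\begin{itemize}
\item Your route is much shorter and uses only Lov\'asz's form of Kruskal--Katona, which the paper already states. It never uses $a\ge b$ or the strict inequality $n>a+b$.
\item It also shows that the strengthening over Kupavskii--Zakharov's bound $\binom{n-a+b-1}{b-1}$ follows directly from Kruskal--Katona, without the local unimodality machinery.
\item The paper's route stays inside the lex-initial/partner framework used elsewhere in the paper. The price is reliance on the external Huang--Peng lemmas.
\end{itemize}
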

The above result strengthens a result of Kupavskii and Zakharov \cite{KZ2018}, which assumes the bound $|\mathcal{A}|\leq {n-a+b-1 \choose b-1}$. We prove it via the local unimodality lemma of Huang and Peng \cite{12-3-1}; the argument is self-contained and is given at the end of the paper.

\section{Proof of Theorem \ref{thm:main}}\label{secmain}

For a family $\m F$, we denote by $\cap\, \m F$ the common intersection of $\m F$, i.e. $\cap\,\m F=\cap_{F\in \m F}F$.
Two families $\mathcal{A}$ and $\mathcal{B}$ are called {\it non-empty cross-intersecting} if they are cross-intersecting and $\mathcal{A}, \mathcal{B} \neq \emptyset$.

Let us take the families $\m A, \m B$ as in the statement of the theorem. First, we give the proof in the much simpler case $a=b\ge 5$.
\subsection{Warm-up: the proof in the case $a=b\ge 5$}

In the case $a=b$, we aim to reduce the problem for two cross-intersecting families to that for a single intersecting family, enabling us to apply Theorem \ref{11-23-2}.
We construct two families as follows:
\begin{equation}\label{deff1f2}
\m F_1=\{A\in \m A: 1\in A\}\cup \m B(\bar{1}), \,\, \m F_2=\{B\in \m B: 1\in B\}\cup \m A(\bar{1}).
\end{equation}
Our goal is to establish the following two inequalities:  
\begin{equation}\label{f1f2}
    |\m F_1|\le |\m A'(1)|+|\m T|, \,\, |\m F_2|\le |\m B'(1)|+|\m M|.
\end{equation}
Once (\ref{f1f2}) is proved, we have 
$$|\m A|+|\m B|=|\m F_1|+|\m F_2|\le|\m A'(1)|+|\m T| + |\m B'(1)|+|\m M|=|\m A'|+|\m B'|.$$
Together with the assumptions $|\m A|\ge |\m A'|$ and $|\m B|\ge |\m B'|$, this forces $|\m A|= |\m A'|$ and $|\m B|= |\m B'|$, as required. 
To verify (\ref{f1f2}), 
note that if $\m F_1$ were intersecting, with $\Delta(\m F_1)=|\m F_1(1)|$ and $\m T \subset \m B(\bar1) \subset \m F_1(\bar{1})$, then Theorem \ref{11-23-2} would directly yield $|\m F_1|\le |\m A'(1)|+|\m T|$;
a similar argument works for $\mathcal{F}_2$. 
However, the reduction is obstructed by the fact that $\m F_1, \m F_2$ are not guaranteed to be intersecting, since $\m A(\bar1), \m B(\bar1)$ may not be intersecting. 
To overcome this difficulty, we employ the following key lemmas.

The first lemma provides lower bounds on the sizes of $\m A'$ and $\m B'$ (and is valid for not necessarily equal $a,b$). 

\begin{lemma}\label{11-23-3}
Let $\mathcal{A}', \mathcal{B}'$ be as in Theorem~\ref{thm:main}. Then
\begin{align}\label{11-30-1}
&|\mathcal{A}'|\ge {n-1\choose a-1}-{n-4\choose a-1}+{n-1-(a-b+3)\choose a-(a-b+3)},\\ 
\label{11-30-2}
&|\mathcal{B}'|\ge {n-1\choose b-1}-{n-4\choose b-1}+{n-4\choose b-3},
\end{align}
with the following exceptional cases:
\begin{itemize}
\item  $a=b=4$, $|\mathcal{M}|=2$ and $|\mathcal{T}|=1$. In this case, we have  $|\mathcal{B}'| \ge {n-1\choose b-1}-{n-4\choose b-1}+{n-4\choose b-3}-1$ and  $|\mathcal{A}'| > {n-1\choose a-1}-{n-4\choose a-1}+{n-4\choose a-3}$. 
\item $a=b=4$, $|\mathcal{T}|=2$ and $|\mathcal{M}|=1$. In this case, we have  $|\mathcal{A}'| \ge {n-1\choose a-1}-{n-4\choose a-1}+{n-4\choose a-3}-1$ and  $|\mathcal{B}'| > {n-1\choose b-1}-{n-4\choose b-1}+{n-4\choose b-3}$. 
\end{itemize}
\end{lemma}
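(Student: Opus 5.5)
Place the common maximum-degree element of $\mathcal{A}',\mathcal{B}'$ at $1$, write $M_0=\cap\,\mathcal{M}$ and $T_0=\cap\,\mathcal{T}$, and set $m=|M_0|\ge a-b+3$, $t=|T_0|\ge 3$. By the definition in Theorem~\ref{thm:main}, $\mathcal{A}'$ is exactly
\[
\mathcal{A}'=\mathcal{M}\cup\Bigl\{H\in\tbinom{[n]}{a}:1\in H,\ H\cap T\neq\emptyset\ \ \forall T\in\mathcal{T}\Bigr\},
\]
and $\mathcal{B}'$ is given symmetrically. So $|\mathcal{A}'|=|\mathcal{M}|+|\mathcal{A}'(1)|$, where $\mathcal{A}'(1)$ consists of the $(a-1)$-subsets of $[2,n]$ meeting every set of $\mathcal{T}$. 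The plan is to bound $|\mathcal{A}'(1)|$ from below by counting its complement, and then split into cases according to $|\mathcal{T}|$ and $|\mathcal{M}|$.

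\emph{Complement bound.} An $(a-1)$-set $H\subset[2,n]$ misses some $T\in\mathcal{T}$. Therefore
\[
|\mathcal{A}'(1)|\ \ge\ \tbinom{n-1}{a-1}-\sum_{T\in\mathcal{T}}\tbinom{n-1-b}{a-1},
\]
but this crude union bound is too weak when $|\mathcal{T}|$ is large. I would instead use intersection-minimality. Each $T\in\mathcal{T}$ contains $T_0$, so every $H$ meeting $T_0$ already lies in $\mathcal{A}'(1)$. This gives
\[
|\mathcal{A}'(1)|\ge\tbinom{n-1}{a-1}-\tbinom{n-1-t}{a-1}\ge\tbinom{n-1}{a-1}-\tbinom{n-4}{a-1},
\]
which is the first two terms of \eqref{11-30-1}. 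It remains to gain the extra term $\binom{n-1-(a-b+3)}{a-(a-b+3)}=\binom{n-a+b-4}{b-3}$. I get it from two sources: the sets of $\mathcal{M}$, and the sets $H$ that avoid $T_0$ but still meet every $T$.

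\emph{Case analysis.}

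\textbf{Case $|\mathcal{T}|=1$.} Then $T_0=T$ has size $b$, so the deficit becomes $\binom{n-1-b}{a-1}$ instead of $\binom{n-4}{a-1}$. The difference $\binom{n-4}{a-1}-\binom{n-1-b}{a-1}$ counts the sets avoiding $[3]$-type elements but meeting $T\setminus T_0'$ for a $3$-subset $T_0'$ of $T$. This equals $\sum_{j=4}^{b}\binom{n-1-j}{a-2}$, which dominates $\binom{n-a+b-4}{b-3}$ for $b\ge4$ (compare sizes, using $n>a+b$).

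\textbf{Case $|\mathcal{T}|\ge2$.} Intersection-minimality gives, for each $T'\in\mathcal{T}$, an element $x_{T'}\in\cap(\mathcal{T}\setminus\{T'\})\setminus T'$. Take two sets $T_1,T_2\in\mathcal{T}$ and an element $x\in T_1\setminus T_0$ lying in all other sets of $\mathcal{T}$ except $T_2$. Then consider the $(a-1)$-sets $H\ni x$ avoiding $T_0$ and containing some element of $T_2\setminus T_0$. All of these lie in $\mathcal{A}'(1)$, and there are roughly $\binom{n-t-2}{a-3}$ of them. This is at least $\binom{n-a+b-4}{b-3}$ because $a-3\ge b-3$ and $n-t-2\ge n-a+b-4$ when $t\le a-b+\dots$; some care is needed to choose parameters in the right regime.

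\textbf{Using $\mathcal{M}$.} In the remaining regimes, $|\mathcal{M}|\ge1$ contributes at least $1$. When $|\mathcal{M}|=1$ the set $M$ has size $a$, and then \eqref{11-30-2} for $\mathcal{B}'$ gains the symmetric term through $\binom{n-4}{b-1}-\binom{n-1-a}{b-1}$.

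\emph{Main obstacle.} The hard part is checking the binomial inequalities at the boundary $a=b=4$, $t=3$, where the gained term $\binom{n-4}{1}$ can fall short by exactly one. This is precisely where the exceptional cases come from. I would treat $|\mathcal{T}|\le2$, $|\mathcal{M}|\le2$ with $a=b=4$ by hand: count $\mathcal{A}'(1)$ exactly via inclusion–exclusion over the two sets, and verify the stated $-1$ and strict bounds. For all larger parameters, monotonicity in $n$ (using $n\ge a+b+1$) should reduce the check to the smallest $n$.
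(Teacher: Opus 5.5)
Your plan has a genuine gap, and it sits exactly in the case that makes the lemma hard.

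\textbf{The case $a=b$ with $t=3$.} When $a=b$, the value $t=3$ is allowed. The complement bound then yields only $\binom{n-1}{a-1}-\binom{n-4}{a-1}$. So the entire extra term $\binom{n-4}{a-3}$, minus $|\mathcal{M}|\le a-2$, must come from $(a-1)$-sets that avoid $T_0$ and meet every member of $\mathcal{T}$.

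Your comparison $n-t-2\ge n-a+b-4$ would need $t\le 2$ here. Moreover, the count you use is genuinely too small. Take $a=b=5$, $n=11$, $t=3$, $|\mathcal{T}|=2$. The sets through $x$ that avoid $T_0$ and meet $T_2\setminus T_0$ number $\binom{6}{3}-\binom{4}{3}=16$, while you need at least $\binom{7}{2}-3=18$.

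For $|\mathcal{T}|\ge 3$, the true number depends on how the sets $T\setminus T_0$ overlap outside the private elements. The claim that it is always at least $\binom{n-4}{a-3}-|\mathcal{M}|$ is not a binomial inequality that monotonicity in $n$ reduces to small cases. It is a strengthening, by $a-3$, of the inequality $|\mathcal{H}|\ge|\mathcal{H}_{a,3}|$ supplied by Kupavskii's Theorem~\ref{11-23-2}.

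\textbf{The bound \eqref{11-30-2}.} This is not obtained ``symmetrically''. Here $|\mathcal{B}'(1)|$ is governed by $\mathcal{M}$, whose intersection size $m\ge a-b+3$ may be small, and the target $\binom{n-4}{b-3}$ is not the mirror image of the target in \eqref{11-30-1}. Even for $m\ge 4$ the complement bound is insufficient. For $a=11$, $b=10$, $n=22$, $m=4$ it gains $\binom{17}{8}=24310$ over $\binom{n-1}{b-1}-\binom{n-4}{b-1}$, against the required $\binom{18}{7}=31824$, while $|\mathcal{T}|\le 8$.

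\textbf{What is missing.} The missing idea is to compare with maximal intersecting families and use Theorem~\ref{11-23-2} as a black box.

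For $m=3$, which forces $a=b$, the paper takes the maximal intersecting family $\mathcal{H}\subset\binom{[n]}{a}$ with $\mathcal{H}_\gamma=\mathcal{M}$.
\begin{itemize}
\item Then $\mathcal{H}(1)=\mathcal{B}'(1)$, so $|\mathcal{B}'|=|\mathcal{H}|-|\mathcal{M}|+|\mathcal{T}|$, and $|\mathcal{H}|\ge|\mathcal{H}_{a,3}|$ settles the case $|\mathcal{M}|\le|\mathcal{T}|$.
\item If $|\mathcal{M}|>|\mathcal{T}|$, Claim~\ref{minsize} gives $|\mathcal{M}|\le a-2$. For $a\ge 5$, the strengthened bound $|\mathcal{H}|-|\mathcal{H}_{a,3}|\ge a-3$ absorbs the loss; it comes from two local exchanges in $\mathcal{H}_{a,3}$ followed by another application of Theorem~\ref{11-23-2}.
\item The case $a=4$ produces the exceptional cases.
\end{itemize}

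For $4\le m\le b-1$, the paper replaces $\mathcal{M}$ by a $b$-uniform intersection-minimal $\mathcal{M}'$ whose covers are covers of $\mathcal{M}$. It then compares with $\mathcal{H}_{b,4}$, again via Theorem~\ref{11-23-2}.

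\textbf{Where your approach does work.} Direct counting of your kind is used only for \eqref{11-30-1} with $a>b$, where the target $\binom{n-a+b-4}{b-3}$ is small. There it is immediate for $t\ge 4$ and needs a split by $|\mathcal{T}|$ for $t=3$. Your approach is workable in that case if you replace $\binom{n-t-2}{a-3}$ by an exact count such as $\binom{n-5}{a-2}-\binom{n-b-2}{a-2}$. The case $a=b$ of \eqref{11-30-1} then follows from \eqref{11-30-2} by symmetry.
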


The second lemma provides an upper bound on the sum of sizes of non-empty cross-intersecting families.

\begin{lemma}\label{lemf}
Let $|X|\ge 2k\ge 8$, $\m F\subset {X\choose k}$ and $\m G\subset {X\choose k-1}$ be non-empty cross-intersecting. 
Assume that  $\m F$ contains an intersection-minimal subfamily $\m Y$, and let $\m G'\subset{X\choose k-1}$ be the family of all sets that intersect every member of $\m Y$. 
If $|\m F|\le {|X|-3\choose k-3}$ and $|\cap \, \m Y|\ge 3$,
then  
$|\m F|+|\m G|\le |\m Y|+|\m G'|.$
\end{lemma}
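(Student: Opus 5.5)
We have to prove Lemma~\ref{lemf}. Write $\mathcal{G}'$ for the family of all $(k-1)$-sets meeting every member of $\mathcal{Y}$. Since $\mathcal{Y}\subset\mathcal{F}$ and $\mathcal{G}$ is cross-intersecting with $\mathcal{F}$, we have $\mathcal{G}\subset\mathcal{G}'$. Hence it suffices to show
\[
|\mathcal{F}\setminus\mathcal{Y}| \le |\mathcal{G}'\setminus\mathcal{G}|.
\]
In words, every set of $\mathcal{F}$ outside $\mathcal{Y}$ must be paid for by $(k-1)$-sets that meet all of $\mathcal{Y}$ but are excluded from $\mathcal{G}$. The plan is to establish this inequality by splitting according to where the extra sets of $\mathcal{F}$ lie relative to the kernel $C:=\cap\,\mathcal{Y}$, which satisfies $|C|\ge 3$.

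\textbf{Step 1: the family $\mathcal{G}'$.} Every $(k-1)$-set meeting $C$ lies in $\mathcal{G}'$. For a set $F\in\mathcal{F}\setminus\mathcal{Y}$, the $(k-1)$-sets meeting $C$ but disjoint from $F$ are all excluded from $\mathcal{G}$. If $F\cap C\neq C$, pick $c\in C\setminus F$. There are $\binom{|X|-k-1}{k-2}$ sets of the form $\{c\}\cup G_0$ with $G_0\subset X\setminus(F\cup\{c\})$, and all of them are excluded from $\mathcal{G}$. So the union of excluded sets, taken over all $F\in\mathcal{F}\setminus\mathcal{Y}$, is a ``shadow-type'' family.

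\textbf{Step 2: comparing via Kruskal--Katona.} I would bound $|\mathcal{G}'\setminus\mathcal{G}|$ from below by the size of the union
\[
\bigcup_{F\in \mathcal{F}\setminus\mathcal{Y}} \left\{G\in \tbinom{X}{k-1}: G\cap F=\emptyset,\ G\in\mathcal{G}'\right\}.
\]
Passing to complements, $\{X\setminus F\}$ is a $(|X|-k)$-uniform family, and the union above contains the $(k-1)$-shadow of this complement family restricted to sets meeting $C$ (resp.\ meeting every $Y\in\mathcal{Y}$). The assumption $|\mathcal{F}|\le\binom{|X|-3}{k-3}$ is small enough that Lov\'asz's version (Theorem~\ref{12-2-2}) gives a shadow of size much larger than $|\mathcal{F}|$, since $|X|\ge 2k$ and the shadow from $(|X|-k)$-sets down to $(k-1)$-sets expands. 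The restriction to sets meeting $\mathcal{Y}$ costs only a lower-order term. I would therefore split into two cases. In the first, $F\not\supset C$ for most $F$, and we use the pinned element $c\in C\setminus F$ as above, applying a shadow bound on $X\setminus C$. In the second, $F\supset C$, and then any $(k-1)$-set containing a point of $C$ together with a point of each $Y\setminus C$ works, so we take $G$ meeting $C$ outside $F$; this is impossible, so instead take $G$ disjoint from $F$ that meets each $Y$ outside $C$. Here intersection-minimality is used: for each $Y'\in\mathcal{Y}$ there is an element of $\cap(\mathcal{Y}\setminus\{Y'\})\setminus Y'$, so $\mathcal{Y}$ has at most $k-|C|+1$ members and a transversal of size at most $|\mathcal{Y}|\le k-2$ exists.

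\textbf{Main obstacle.} The delicate step is the second case: sets $F\supseteq C$ for which the cheap witnesses meeting $C$ are unavailable. There one must build witnesses $G$ disjoint from $F$ but meeting all of $\mathcal{Y}$ through elements of $Y\setminus F$. Injectivity across different $F$ must be ensured, and the small-$k$ case $k=4$ must be handled carefully. I would first try a direct injection $F\mapsto G_F$ using a fixed transversal built from the elements guaranteed by intersection-minimality. If that fails, I would fall back on Theorem~\ref{12-3-2}/Theorem~\ref{10-12-1} applied to the traces of the two families on $X\setminus C$.
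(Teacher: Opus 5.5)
Your reduction to $|\mathcal F\setminus\mathcal Y|\le|\mathcal G'\setminus\mathcal G|$ is correct, and your treatment of the sets $F\not\supseteq C$ can be made rigorous. Assign each such $F$ to $c=\min(C\setminus F)$ and use the witnesses $\{c\}\cup G_0$. These lie in $\mathcal G'$. Witnesses from different classes are distinct, because they contain $c$ but avoid the smaller elements of $C$, which lie in $F$. Then apply Lov\'asz on $X\setminus\{c\}$; this works because $|\mathcal F|\le\binom{|X|-3}{k-3}\le\binom{|X|-3}{k-2}$. Witnesses for sets $F\supseteq C$ avoid $C$, so the two kinds of witnesses never coincide. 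You should therefore split the family rather than argue about ``most'' $F$.

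Everything then reduces to showing that $\mathcal H:=\{F\in\mathcal F\setminus\mathcal Y: C\subseteq F\}$ has at least $|\mathcal H|$ witnesses. A witness here is a $(k-1)$-set that avoids $C$, covers $\mathcal Y$, and is disjoint from some member of $\mathcal H$. This is the heart of the lemma, and your proposal does not prove it. You only say you would ``try'' an injection through a fixed transversal, or otherwise ``fall back'' on Frankl--Tokushige or Theorem~\ref{10-12-1}, and neither is carried out. Your transversal remark shows that each $F$ has at least one witness, but it says nothing about the size of the union.

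Your guiding heuristic, that restricting the complement shadow to $\mathcal G'$ ``costs only a lower-order term'', is false. In fact there is no slack at all in this case. Take $k=4$, $C=\{1,2,3\}$, $\mathcal Y=\{C\cup\{4\},C\cup\{5\}\}$, and let $\mathcal F$ be all $4$-sets containing $C$, so $|\mathcal F|=|X|-3=\binom{|X|-3}{1}$. For $F=C\cup\{h\}$ with $h\ge 6$, only $|X|-6$ of the $\binom{|X|-4}{3}$ triples disjoint from $F$ lie in $\mathcal G'$, namely the $\{4,5,w\}$. The union of these witnesses has exactly $|X|-5$ members, matching $|\mathcal F\setminus\mathcal Y|=|X|-5$, so the lemma holds with equality. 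Any argument for $\mathcal H$ must therefore be exact, and an unrestricted Kruskal--Katona count cannot supply it.

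The paper gets Lemma~\ref{lemf} as the case $r=k-1$ of Lemma~\ref{lemkey}(ii), using $\binom{|X|-3}{k-3}\le\binom{|X|-2}{k-2}$. There this step is handled as follows.
\begin{enumerate}
\item Normalize so that $\cap\,\mathcal Y=[s]$ and the pair is $S_{i,j}$-shifted for $i\in[s]$. Among all valid pairs, take one maximizing $|\mathcal F|+|\mathcal G|$ and, subject to that, minimizing $|\mathcal F|$.
\item A switching argument, bounded by Frankl--Tokushige, shows $\mathcal F([s+i-1],[s+i])=\{Y_i\setminus[s+i-1]\}$. Hence every $F\in\mathcal F\setminus\mathcal Y$ contains $[s+\ell]$.
\item Because $|Y_i\setminus[s+\ell]|>|F\setminus[s+\ell]|$, there is a cover $I$ of $\mathcal Y$ disjoint from $F$.
\item Frankl's bound (Theorem~\ref{12-3-3}) on $|\mathcal F_I|+|\mathcal G^I|$ shows the following switch does not decrease the sum: delete from $\mathcal F$ the sets avoiding $I$, and add to $\mathcal G$ all $(k-1)$-sets that contain $I$ and avoid $[s+\ell]$. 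This contradicts the minimality of $|\mathcal F|$.
\end{enumerate}
This exact switching, driven by sum bounds for cross-intersecting pairs, is the idea your proposal is missing.
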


\begin{remark*}
Note that Lemma \ref{lemf}, although not explicitly stated in the works \cite{Kup2018, 11-23-2}, can be obtained by the same argument that is used in the proof of Theorem~\ref{11-23-2} (see also \cite{Kup2018, 11-23-2}). 
In this paper we prove a more general statement—Lemma \ref{lemkey}—which implies Lemma \ref{lemf}.
\end{remark*}

Now, we give a proof of Theorem~\ref{thm:main} for  $a=b\ge 5$, assuming that  Lemmas \ref{11-23-3} and \ref{lemf} hold.

\begin{proof}[{\bf Proof of Theorem \ref{thm:main} for  $a=b\ge 5$}]
Since $a=b\ge 5$, Lemma \ref{11-23-3} gives $|\m A|\ge |\m A'|\ge {n-1\choose a-1}-{n-4\choose a-1}+{n-4\choose a-3}$ and $|\m B|\ge |\m B'|\ge {n-1\choose b-1}-{n-4\choose b-1}+{n-4\choose b-3}$. Applying Theorem \ref{thm:FK} with $a=b$ and $u=v=3$ yields
\begin{equation*}
    \gamma(\m A), \gamma(\m B) \le {n-4\choose a-3}={n-4\choose b-3},
\end{equation*}
and both families $\m A, \m B$ share the same element of maximum degree. W.l.o.g., we assume $\m A_{\Delta}=\m A(1)$ and  $\m B_{\Delta}=\m B(1)$. Then $\m M \subset \m A(\bar1)$ and $\m T \subset \m B(\bar1)$.

Let $\m F_1, \m F_2$ be defined as in (\ref{deff1f2}). As outlined at the beginning of this subsection, it suffices to establish \eqref{f1f2}.
Note that $\m A(1)\subset {[2,n]\choose a-1}$ and $\m B(\bar{1}) \subset {[2,n]\choose b}$ are cross-intersecting.  
Applying Lemma \ref{lemf} with
\[
\mathcal{F}=\mathcal{B}(\bar1),\; \mathcal{G}=\mathcal{A}(1),\; \mathcal{G}'=\mathcal{A}'(1),\;
\mathcal{Y}=\mathcal{T},\; X=[2,n],\; k=b=a,
\]
we obtain 
$
|\m A(1)|+|\m B(\bar{1})|\le |\m T|+|\m A'(1)|
$.
Consequently, $|\m F_1|=|\m A(1)|+|\m B(\bar{1})|\le |\m A'(1)|+|\m T|$. 
An analogous argument gives $|\m F_2|\le |\m B'(1)|+|\m M|$, thereby verifying \eqref{f1f2}. 
Since $|\mathcal{A}| \ge |\mathcal{A}'|$ and $|\mathcal{B}| \ge |\mathcal{B}'|$, this forces $|\mathcal{A}| = |\mathcal{A}'|$ and $|\mathcal{B}| = |\mathcal{B}'|$, which completes the proof for the case $a=b\ge 5$.
\end{proof}

\subsection{The general case}
In this subsection, we give a proof of Theorem \ref{thm:main} for all $a\ge b$ modulo several lemmas.
We begin with a key lemma that generalizes Lemma \ref{lemf} and captures the core argument from the proof of Theorem \ref{11-23-2}. 

\begin{lemma}\label{lemkey}
Let $X$ be a set with $|X|>k+r$ and $\min \{k,r\}\ge 3$ and let $\m F\subset {X\choose k}$ and $\m G\subset {X\choose r}$ be non-empty cross-intersecting. 
Suppose $\m F$ contains an intersection-minimal subfamily $\m Y$, and let $\m G'\subset{X\choose r}$ be the family of all sets that intersect every member of $\m Y$. Then the following hold.
\begin{itemize}
\item[(i)] If $k<r$ and $|\cap \, \m Y|\ge 1$, then  
$|\m F|+|\m G|\le |\m Y|+|\m G'|.$
\item[(ii)] If  $k\ge r$, 
$|\m F|\le {|X|-(k-r+1)\choose r-1}$ and $|\cap \, \m Y|\ge k-r+2$,
then the same inequality holds.
\end{itemize}
\end{lemma}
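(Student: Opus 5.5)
We first rewrite the target inequality in a more convenient form. Since $\m Y\subset\m F$ and $\m G\subset\m G'$ (every $G\in\m G$ meets every member of $\m F\supset\m Y$), the claim $|\m F|+|\m G|\le|\m Y|+|\m G'|$ is equivalent to
\[
|\m F\setminus\m Y|\le|\m G'\setminus\m G|.
\]
So we must show that each set of $\m F$ outside $\m Y$ "costs" at least one $r$-set that meets all of $\m Y$ but is missing from $\m G$.

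The plan is the standard Kupavskii argument. Put $C:=\cap\,\m Y$ and $c:=|C|$. By intersection-minimality, each $Y\in\m Y$ has a private element $x_Y\in\cap(\m Y\setminus\{Y\})\setminus Y$ (when $|\m Y|=1$ we take $\m Y=\{Y\}$, $C=Y$, and this step is vacuous). We split $\m F\setminus\m Y$ into two parts.

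\textbf{Part one: sets containing $C$.} Let $\m F_1$ consist of the sets in $\m F\setminus\m Y$ that contain $C$. For $F\in\m F_1$, the witnesses of $F$ are the sets $G\in\m G'$ that are disjoint from $F$. Each witness is excluded from $\m G$ by cross-intersection, so $\m G'\setminus\m G$ contains all witnesses of all $F\in\m F_1$. We need enough witnesses for each $F$. Since $F\notin\m Y$, $F\ne Y$ for each $Y$, so we can pick $y_Y\in Y\setminus F$; when $|\m Y|=1$ the set $Y\setminus F$ is nonempty because $|Y|=|F|=k$. A set consisting of these $y_Y$'s (at most $|\m Y|$ of them) together with padding from $X\setminus F$ meets every $Y$ and avoids $F$. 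Counting via a bipartite graph between $\m F_1$ and $\m G'\setminus\m G$, we need the degree ratio to favour $\m F_1$. Each $F$ has roughly $\binom{|X|-k-1}{r-1}$ witnesses. Each witness $G$ is disjoint from at most $\binom{|X|-r-c}{k-c}$ sets containing $C$, since $G\cap C=\emptyset$ is forced. In case (ii) the condition $c\ge k-r+2$ makes the second quantity smaller. In case (i), $k<r$ helps directly, and for $c\ge1$ an analogous comparison works when witnesses are chosen avoiding $C$. A version of Theorem~\ref{12-1-1} or Theorem~\ref{10-12-1}, applied to the trace families $\m F_1(C)$ and $\{G\setminus\ldots\}$ on $X\setminus C$, packages this comparison cleanly. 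Concretely, $\m F_1(C)\subset\binom{X\setminus C}{k-c}$ is cross-intersecting with the family of sets $G\in\m G$ avoiding $C$ and meeting all $Y\setminus C$, so we apply Theorem~\ref{10-12-1}, or Theorem~\ref{12-3-3} when $k-c<r$, to the link.

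\textbf{Part two: sets not containing $C$.} Let $\m F_2$ consist of the sets of $\m F$ missing some element of $C$. Any such $F$ misses some $z\in C$, and then every $r$-set through $z$ that avoids $F$ lies in $\m G'\setminus\m G$, because it meets all of $\m Y$ via $z$. We group by $z$: for fixed $z$, the family $\m F(\bar z)$ and the family $\m G(z)$ are cross-intersecting on $X\setminus\{z\}$, with uniformities $k$ and $r-1$. Theorem~\ref{12-3-3} (Frankl's Hilton-type bound) or Theorem~\ref{12-3-2} gives $|\m F(\bar z)|+|\m G(z)|\le\binom{|X|-1}{r-1}$ when $r-1\ge k$, which covers case (i). In case (ii) we use Theorem~\ref{10-12-1} instead, and this is exactly where the hypothesis $|\m F|\le\binom{|X|-(k-r+1)}{r-1}$ enters. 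These bounds show that the $r$-sets through $z$ missing from $\m G$ number at least $|\m F(\bar z)|$, which pays for $\m F_2$. We then combine over $z\in C$, disjointifying $\m F_2$ by assigning each $F$ to its smallest missing $z$.

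The main obstacle is avoiding double counting between the two parts and across the different $z$. The witnesses for part one avoid $C$, whereas those in part two contain some $z\in C$, so the two parts use disjoint sets of witnesses. Across different $z$, however, the witness sets overlap. To handle this, I would first apply the shifts from Lemma~\ref{SUV} inside $X\setminus C$, which keep $\m Y$'s structure fixed, to make the families initial. I would then do the accounting with an explicit injection, ordering the elements of $C$ and charging $F$ only to sets whose intersection with $C$ is exactly $\{z\}$. Verifying the case (ii) numerics for that restricted count is where I expect the real work to lie.
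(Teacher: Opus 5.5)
Your reformulation $|\mathcal F\setminus\mathcal Y|\le|\mathcal G'\setminus\mathcal G|$ and the split by whether $F\supseteq C$ fit the paper's argument. However, both halves have genuine gaps, exactly at the steps that carry the lemma.

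In Part two, case (ii), Theorem~\ref{10-12-1} applied to $\mathcal F(\bar z)$ and $\mathcal G(z)$ on $X\setminus\{z\}$ (uniformities $k$ and $r-1$) needs $|\mathcal F(\bar z)|\le\binom{|X|-(k-r+2)}{r-1}$. This does not follow from the hypothesis $|\mathcal F|\le\binom{|X|-(k-r+1)}{r-1}$, which is larger by $\binom{|X|-(k-r+2)}{r-2}$. The paper supplies an extra argument here:
\begin{itemize}
\item With $X=[n]$ and $C=[s]$, it makes $(\mathcal F,\mathcal G)$ $S_{i,j}$-shifted for all $i\in[s]$, $j>i$. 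These shifts fix every member of $\mathcal Y$, hence also $\mathcal G'$.
\item It considers $\mathcal F_i=\mathcal F([i-1],[i])$. For $i\le k-r+1<s$, shiftedness gives $\partial\mathcal F_i\subset\mathcal F([i])$. By Theorem~\ref{12-2-2}, $|\mathcal F_i|>\binom{n-1-(k-r+1)}{r-1}$ would then force $|\mathcal F|\ge|\mathcal F_i|+|\partial\mathcal F_i|>\binom{n-(k-r+1)}{r-1}$.
\item Only with this bound does Theorem~\ref{10-12-1} apply, to $\mathcal F_i$ and $\mathcal G^i=\mathcal G(\{i\},[i])$ on $[i+1,n]$.
\end{itemize}
Charging to $r$-sets with $\min G=i$, rather than $G\cap C=\{z\}$, is also what keeps both traces uniform and the witness pools disjoint. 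Your suggested remedy, shifting inside $X\setminus C$ to make the families initial, is not available. Such shifts move the parts $Y\setminus C$, so neither $\mathcal Y$ nor $\mathcal G'$ is preserved; only $S_{i,j}$ with $i\in\cap\mathcal Y$ is safe.

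In Part one, let $\mathcal G_C$ and $\mathcal G'_C$ denote the members of $\mathcal G$, resp.\ $\mathcal G'$, that avoid $C$. Theorem~\ref{12-3-3} on the link $X\setminus C$ yields only $|\mathcal F_1(C)|+|\mathcal G_C|\le\binom{|X|-c}{r}$, which never uses $\mathcal Y$. What you need is $|\mathcal F_1|\le|\mathcal G'_C\setminus\mathcal G_C|$, and $\mathcal G'_C$ is strictly smaller than $\binom{X\setminus C}{r}$. That inequality is the core of the lemma, not a repackaging of a double count.

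Your double count does not supply it either. If $F\supseteq C\cup\{x_Y:Y\in\mathcal Y\}$, the sets $Y\setminus F$ may be pairwise disjoint. A witness must then hit $|\mathcal Y|$ disjoint sets, so $F$ has order $|X|^{r-|\mathcal Y|}$ witnesses, not roughly $\binom{|X|-k-1}{r-1}$. Meanwhile your bound on the other side is $\binom{|X|-r-c}{k-c}$, of order $|X|^{r-2}$ when $c=k-r+2$.

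The missing idea is the paper's two-stage treatment, done by local exchanges in an extremal pair (maximize $|\mathcal F|+|\mathcal G|$, then minimize $|\mathcal F|$, keeping $\mathcal Y\subset\mathcal F$). Let $\ell=|\mathcal Y|$ and take private elements $s+1,\dots,s+\ell$.
\begin{itemize}
\item First stage: the sets of $\mathcal F$ containing $[s+i-1]$ but not $s+i$ are traded against the $r$-sets $G$ with $G\cap[s+i]=\{s+i\}$ meeting $Y_i$, all of which lie in $\mathcal G'$. Theorem~\ref{12-3-2} gives $|\mathcal F_{s+i}|+|\mathcal G^{s+i}|\le\binom{n-s-i}{r-1}-\binom{n-k-1}{r-1}+1$, forcing $\mathcal F_{s+i}=\{Y_i\setminus[s+i-1]\}$.
\item Second stage: every $F\in\mathcal F\setminus\mathcal Y$ now contains $[s+\ell]$ and satisfies $|F\setminus[s+\ell]|<|Y\setminus[s+\ell]|$. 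So $F$ misses a cover $I$ of $\mathcal Y$ lying outside $[s+\ell]$. Theorem~\ref{12-3-3} on the link, with uniformities $k-s-\ell<r-|I|$, then gives an exchange deleting $F$.
\end{itemize}
Working with exchanges rather than a global injection also removes the overlap of witnesses between different $F$, which your accounting would still have to control.
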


\begin{remark*}
Lemma \ref{lemkey} generalizes Lemma \ref{lemf}. 
Setting $r=k-1$ recovers the setting of Lemma \ref{lemf}, while crucially relaxing the upper bound on $|\m F|$ from ${|X|-3\choose k-3}$ to ${|X|-2\choose k-2}$. This relaxed bound extends its applicability and is essential for the proof of Lemma \ref{lemmain}.
\end{remark*}

In view of the proof for the $a=b$ case, a natural approach to the proof of the general case, is to try to  directly derive Theorem~\ref{thm:main} by combining Lemma~\ref{11-23-3}, Theorem~\ref{thm:FK} and Lemma~\ref{lemkey}. However, this approach fails. The underlying reason is that the lower bounds on $|\m A'|, |\m B'|$ are not strong enough to  satisfy the requirements of Theorem \ref{thm:FK}. 

Nevertheless, we can still apply Lemma \ref{lemkey} to handle the case where the diversity of each family is relatively small; see Lemma \ref{10-25-2}. 
Before stating this result, we simplify the structure of $\m A', \m B'$ from the theorem. W.l.o.g., we assume that the element of maximum degree in both families $\m A', \m B'$ is $1$. 
Furthermore, we may w.l.o.g. assume that $\cap\, \m A'(\bar1) = [2,m+1]$ and $\cap\, \m B'(\bar 1) = [2,t+1]$.
This normalization is justified because $\mathcal{M}$ and $\mathcal{T}$ are intersecting, and the only non-trivial cross-intersection constraints we need to preserve are between $(\mathcal{B}'(1), \mathcal{M})$ and $(\mathcal{A}'(1), \mathcal{T})$.
For clarity, we summarize the assumptions of Theorem~\ref{thm:main} and this simplifying assumption below.

\begin{assumption}\label{12-14-2}
Let $a\ge b\ge 4$, $n> a+b$ be integers, $\mathcal{A}\subset {[n]\choose a}$ and $\mathcal{B}\subset {[n]\choose b}$ be maximal cross-intersecting families. 
Take intersection-minimal subfamilies $\mathcal{M}\subset \mathcal{A}_\gamma$ and $\mathcal{T}\subset \mathcal{B}_\gamma$ such that  $m:=|\cap\, \mathcal{M}|\ge a-b+3$ and  $t:=|\cap\,\mathcal{T}|\ge 3$. Choose such a pair $(\mathcal A,\mathcal B)$ maximizing
$|\mathcal A|+|\mathcal B|$.

Take families  $\m A'$, $\m B'$ such that 
\begin{itemize}
\item The diversity parts $\m A'_\gamma, \m B'_\gamma\subset 2^{[2,n]}$ are isomorphic to $\m M, \m T,$ respectively, with $\cap \m A'_\gamma = [2,m+1]$ and $\cap \m B'_\gamma = [2,t+1]$.
\item Both families have $1$ as the element of maximum degree. Moreover, up to isomorphism,
$\m A'(1)$ and $\m B'(1)$ are maximal  cross-intersecting with $\m T$ and $\m M$, respectively. 
\end{itemize}
\end{assumption}

We emphasize that Assumption~\ref{12-14-2} can be seen  as a part of the setup for Theorem~\ref{thm:main}. 
Now we give the statement of Lemma \ref{10-25-2}.

\begin{lemma}\label{10-25-2}
In Assumption \ref{12-14-2}, if, additionally, both families $\m A$ and $\m B$ share an element of maximum degree, $\gamma(\mathcal{A})\le {n-1-(a-b+2)\choose a-(a-b+2)}$ and $\gamma(\mathcal{B})\le {n-3\choose b-2}$, 
then 
$
|\mathcal{A}|+|\mathcal{B}|\le |\mathcal{A}'|+|\mathcal{B}'|.
$
\end{lemma}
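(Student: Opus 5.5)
We follow the warm-up argument for $a=b$, with Lemma~\ref{lemkey} in place of Lemma~\ref{lemf}. Assume w.l.o.g.\ that $1$ is the common element of maximum degree of $\m A$ and $\m B$, so $\m M\subset\m A(\bar1)$ and $\m T\subset\m B(\bar1)$. Split
\[
|\m A|+|\m B| = \bigl(|\m A(1)|+|\m B(\bar1)|\bigr)+\bigl(|\m B(1)|+|\m A(\bar1)|\bigr).
\]
It suffices to prove the two inequalities
\[
|\m A(1)|+|\m B(\bar1)|\le |\m A'(1)|+|\m T|,\qquad |\m B(1)|+|\m A(\bar1)|\le |\m B'(1)|+|\m M|.
\]
Adding them gives $|\m A|+|\m B|\le|\m A'|+|\m B'|$, since $\m A'=\{\{1\}\cup G: G\in\m A'(1)\}\cup\m M$ up to isomorphism, and similarly for $\m B'$.

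\textbf{First inequality.} The families $\m B(\bar1)\subset\binom{[2,n]}{b}$ and $\m A(1)\subset\binom{[2,n]}{a-1}$ are cross-intersecting. Put $X=[2,n]$, $k=b$, $r=a-1$, $\m F=\m B(\bar1)$, $\m G=\m A(1)$, $\m Y=\m T$. Then $|X|=n-1>a+b-1=k+r$. By Assumption~\ref{12-14-2}, $\m A'(1)$ is (up to isomorphism) the family of all $(a-1)$-subsets of $X$ meeting every member of $\m T$, so $\m G'=\m A'(1)$.
\begin{itemize}
\item If $a>b+1$, then $k<r$, and $|\cap\,\m T|=t\ge3\ge1$, so Lemma~\ref{lemkey}(i) applies directly.
\item If $a\in\{b,b+1\}$, then $k\ge r$ and $k-r+1=b-a+2\le 2$, so we must check $t\ge k-r+2$ and $|\m F|\le\binom{|X|-(k-r+1)}{r-1}$. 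The first holds since $t\ge3\ge b-a+3$. The second follows from $\gamma(\m B)\le\binom{n-3}{b-2}$: for $a=b+1$ we need $\binom{n-2}{b-1}$, which is weaker; for $a=b$ we need $\binom{n-3}{b-2}$, which is exactly the hypothesis.
\end{itemize}
Both families are non-empty. Here $\m T\ne\emptyset$, and $\m A(1)\ne\emptyset$ because otherwise $\m A$ would have tiny maximum degree.

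\textbf{Second inequality.} Now take $\m F=\m A(\bar1)\subset\binom{X}{a}$, $\m G=\m B(1)\subset\binom{X}{b-1}$, $\m Y=\m M$, $k=a$, $r=b-1$. Since $k\ge r$, we are in case (ii), with $k-r+2=a-b+3\le m$, as required. The size condition reads
\[
|\m A(\bar1)|\le\binom{n-1-(a-b+2)}{b-2}=\binom{n-1-(a-b+2)}{a-(a-b+2)},
\]
which is exactly the hypothesis on $\gamma(\m A)$. Again $\m G'=\m B'(1)$ by maximality, so Lemma~\ref{lemkey}(ii) gives the inequality.

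The only delicate points are bookkeeping. First, we must confirm that $\m G'$ in Lemma~\ref{lemkey} coincides with the degree parts of $\m A',\m B'$; this uses the maximal cross-intersection in Assumption~\ref{12-14-2} and the isomorphism normalization. Second, the binomial parameters must match. The main obstacle is making sure that the boundary case $a=b$ or $a=b+1$ of the first inequality fits Lemma~\ref{lemkey}(ii) with the given bound on $\gamma(\m B)$, and that all hypotheses ($\min\{k,r\}\ge3$, which uses $b\ge4$, and non-emptiness) hold.
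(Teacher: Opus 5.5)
Your proposal is correct and is essentially the paper's proof. You split the sum into $|\m A(1)|+|\m B(\bar1)|$ and $|\m B(1)|+|\m A(\bar1)|$, apply Lemma~\ref{lemkey}(ii) to $(\m A(\bar1),\m B(1),\m M)$ with $k=a$, $r=b-1$, and apply Lemma~\ref{lemkey}(i) or (ii) to $(\m B(\bar1),\m A(1),\m T)$ according to whether $k<r$ or $k\in\{r,r+1\}$, with the same parameter checks; your explicit verification of non-emptiness and $\min\{k,r\}\ge 3$ is a small detail the paper leaves implicit.
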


As noted earlier, we cannot apply Theorem~\ref{thm:FK} directly to obtain the diversity bounds satisfying the assumptions of Lemma \ref{10-25-2}. To bridge this gap,
we establish the following key lemma, which reduces the proof of Theorem~\ref{thm:main} to the case covered by Lemma~\ref{10-25-2}. 

\begin{lemma}\label{lemmain}
Under Assumption~\ref{12-14-2}, if additionally, $|\m A|\ge |\m A'|$ and $|\m B|\ge |\m B'|$, then there exist cross-intersecting families $\m A^a\subset {[n]\choose a}$, $\m B^b\subset {[n]\choose b}$ such that the following hold:
\begin{enumerate}
\item $|\m A^a|\ge |\m A|$ and $ |\m B^b|\ge |\m B|$;
\item  $\Delta(\mathcal{A}^a)=|\m A^a(i)|\ge {n-2\choose a-2}+\gamma(\mathcal{A}^a)$ and   $\Delta(\mathcal{B}^b)=|\m B^b(i)|\ge {n-2\choose b-2}+\gamma(\mathcal{B}^b)$ for a common $i\in [n]$;
\item $\m A^a_\gamma$ and $\m B^b_\gamma$  contain isomorphic copies of $\m M$ and $\m T$, respectively;
\item $\gamma(\m A^a)\le {n-1-(a-b+2)\choose a-(a-b+2)}$ and $\gamma(\m B^b)\le {n-3\choose b-2}$.
\end{enumerate}
\end{lemma}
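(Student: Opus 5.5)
We will obtain $(\m A^a,\m B^b)$ from $(\m A,\m B)$ by a sequence of shifting operations that never decrease the sizes, preserve cross-intersection, and keep copies of $\m M$ and $\m T$ in the diversity parts. The $S^Q_{U,V}$-shift announced in the introduction is designed for exactly this. The plan is a minimal-counterexample argument in the lex order $\precneqq$ on pairs.

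\textbf{Setup and the good case.} Among all cross-intersecting pairs $(\m A^a,\m B^b)$ satisfying items 1 and 3, fix one that is $\precneqq$-minimal. Such pairs exist, since $(\m A,\m B)$ itself qualifies; maximality of $|\m A|+|\m B|$ in Assumption~\ref{12-14-2} lets us assume equality of sizes throughout. If the minimal pair already has a common element $i$ of maximum degree with $\gamma(\m A^a)\le {n-1-(a-b+2)\choose a-(a-b+2)}$ and $\gamma(\m B^b)\le{n-3\choose b-2}$, we still need item 2. For this we use the size bounds of Lemma~\ref{11-23-3}: $|\m A^a|\ge|\m A'|$ is roughly ${n-1\choose a-1}-{n-4\choose a-1}$, which exceeds ${n-2\choose a-2}+2\gamma$ by a comparison of binomials; the case $a=b=4$ is handled via its exceptional bounds. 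So the task reduces to showing that the minimal pair has small diversities and a common center.

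\textbf{Shifting step.} If the minimal pair is not $L$-initial, choose $(U,V)$ satisfying {\bf P}. By Lemma~\ref{SUV}, $S_{U,V}$ preserves cross-intersection and sizes and strictly decreases the pair in $\precneqq$. The only possible failure is that the shifted diversity parts no longer contain isomorphic copies of $\m M,\m T$ with intersections of sizes $m,t$, and that the center changes. Here we replace $S_{U,V}$ by the $S^Q_{U,V}$-variant, where $Q$ is the current copy of $\m M\cup\m T$ (together with the center): sets involved in the local structure are shifted only together with a compatible relabelling, so that the images of $\m M,\m T$ stay intersection-minimal with the same intersection sizes and stay off the new center. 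Verifying that this modified shift still preserves cross-intersection (re-running Daykin's argument using property {\bf P}) and still strictly decreases $\precneqq$ contradicts minimality. Hence the minimal pair is either $L$-initial or all admissible $S^Q_{U,V}$-shifts act trivially.

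\textbf{Analysing the stable pair.} An $L$-initial or near-$L$-initial pair of the given sizes is explicit: $\m A^a\supseteq\{A:1\in A, A\cap[2,4]\ne\emptyset\}$-type structure plus a small tail. Using Proposition~\ref{1-4-1} and Theorem~\ref{10-12-1} against the lower bounds of Lemma~\ref{11-23-3}, we show that element $1$ has maximum degree in both families. The diversity bounds of item 4 follow because otherwise Proposition~\ref{1-4-1} with $i=2$ (respectively $i=a-b+2$) would force the other family below its assumed size.

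\textbf{Main obstacle.} The hardest step is constructing the $S^Q_{U,V}$-shift so that it simultaneously preserves cross-intersection and keeps the local configuration $\m M,\m T$ intact with intersection sizes $\ge a-b+3$ and $\ge 3$. I would first try to define it as the ordinary $S_{U,V}$ conjugated by a permutation fixing $Q$ pointwise whenever $U\cup V$ meets $\bigcup Q$. I would then check case by case, according to how $U,V$ meet $\cap\m M$ and $\cap\m T$, that property {\bf P} still gives the cross-intersection.
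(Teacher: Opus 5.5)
There is a genuine gap. Your shifting step rests on a misreading of $S^Q_{U,V}$, and the analysis of the ``stable pair'' that is supposed to deliver item 4 cannot work.

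\textbf{The shift.} In the paper, $Q$ is property \textbf{Q} of the pair $(U,V)$, not a configuration of sets. The map is the ordinary Daykin shift. \textbf{Q} only weakens \textbf{P} to: for every $V'\subsetneqq V$, \emph{some} $U'\subsetneqq U$ acts trivially. Lemma~\ref{SUV'} shows this still preserves cross-intersection. The weakening is what makes Lemma~\ref{1-7-1} possible: under (P1)--(P7) one can prescribe $R\subset U$, $T\cap V=\emptyset$ and $i\notin V$. Then every member of $\mathcal M$ and $\mathcal T$ either meets $U$, misses an element of $V$, or would be mapped onto a set already in the family (for instance one containing a full $\{1,j\}$), and so survives.

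Your ``shift with a compatible relabelling'' of the sets of $\mathcal M\cup\mathcal T$ is not a well-defined operation on families. Daykin's argument gives nothing for it, because it needs every set to be moved by the same map. A relabelled member of $\mathcal M$ need not meet the shifted members of $\mathcal B$, and the resulting pair need not decrease in $\precneqq$.

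\textbf{The minimal-counterexample class.} Your class ``pairs satisfying items 1 and 3'' is not closed under shifting, since $\mathcal A^a_\gamma$ depends on the center and a shift can move it. The paper first splits by diversity. If both diversities are small, Proposition~\ref{12-15-1} finishes directly. Otherwise it secures $\Delta(\mathcal A)=|\mathcal A(1)|\ge\binom{n-2}{a-2}+\gamma(\mathcal A)$ and the analogue for $\mathcal B$:
\begin{itemize}
\item when $\gamma(\mathcal B)<\binom{n-4}{b-3}$, via Lemma~\ref{11-23-3} and Proposition~\ref{1-4-1};
\item otherwise, via Proposition~\ref{11-29-1} followed by making the four parts $L$-initial. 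This keeps copies of $\mathcal M$ and $\mathcal T$ because the diversities stay above $\binom{n-1-(a-b+3)}{b-3}$ and $\binom{n-4}{b-3}$.
\end{itemize}
With this degree condition in place, Observation~\ref{obser} and Claim~\ref{11-3-4} ensure that shifts with $U\prec V$ keep $1$ as the center. Only then is the minimal counterexample of Proposition~\ref{10-25-3} run.

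\textbf{The stable pair and item 4.} A pair admitting no structure-preserving shift is not $L$-initial: in an $L$-initial pair one family lies in a star, so its diversity part cannot contain $\mathcal M$ or $\mathcal T$. Nor is such a pair explicit.

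Item 4 also cannot be read off from Proposition~\ref{1-4-1} against Lemma~\ref{11-23-3}; the paper explicitly warns that these lower bounds are too weak. Take the pair $(\mathcal H_{a,2},\mathcal H_{b,2})$. It is $L$-initial in all four parts, and its sizes are at least the right-hand sides of \eqref{11-30-1} and \eqref{11-30-2}, with equality in the latter. Yet for $a>b$ it has $\gamma(\mathcal H_{a,2})=\binom{n-3}{a-2}>\binom{n-1-(a-b+2)}{b-2}$. Proposition~\ref{1-4-1} with $i=2$ only returns $\gamma(\mathcal A)\le\binom{n-3}{a-2}$.

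\textbf{What is missing.} You omit the whole structural extraction of Proposition~\ref{10-25-3}. Assuming $\gamma(\mathcal A)>\binom{n-1-(a-b+2)}{b-2}$ and using Claim~\ref{11-3-4}, the paper shows in turn:
\begin{itemize}
\item $\mathcal A(\bar1)$ and $\mathcal B(1)$ are $L$-initial, which yields $m'<m$ with $|\mathcal B(1)|<\binom{n-1}{b-1}-\binom{n-1-m'}{b-1}$;
\item every $B\in\mathcal B(\bar1)$ contains $[2,t+1]$;
\item every $B\in\mathcal B(\bar1)\setminus\mathcal T$ contains $[2,t+\ell+1]$, and then $[2,m'+2]$.
\end{itemize}
Each step needs its own case analysis to produce a stabilizing $(U,V)$ through Lemma~\ref{1-7-1}. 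The last step uses a shift that leaves $\mathcal T$ untouched and satisfies neither \textbf{P} nor \textbf{Q}, so it requires a separate cross-intersection argument.

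The contradiction then comes from comparing $|\mathcal B(\bar1)|\le\binom{n-m'-2}{b-m'-1}+|\mathcal T|$ with $|\mathcal B|\ge\binom{n-1}{b-1}-\binom{n-1-m}{b-1}+|\mathcal T|$. This lower bound uses the actual $m>m'$, not Lemma~\ref{11-23-3}. Your plan asserts the outcome of this analysis without any of its content. So neither the existence of a structure-preserving shift nor the reduction of the diversities is established.
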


In the proof of Lemma~\ref{lemmain}, we construct the families $\m A^a$ and $\m B^b$ gradually. 
While the first two conditions are not difficult to derive from the bounds from Lemma~\ref{11-23-3}, 
a major challenge is to simultaneously reduce the diversity of both families while preserving copies of $\m M$ and $\m T$ within the diversity parts. Other essential properties, such as sizes and the cross-intersecting condition, must also be maintained throughout.
To this end, we introduce a refined version of Daykin's $S_{U,V}$-shift method tailored to our setting and analyze its key properties in Section~\ref{tool}.

We now prove Theorem \ref{thm:main}, assuming Lemmas~\ref{lemmain} and ~\ref{10-25-2}.
\begin{proof}[{\bf Proof of Theorem \ref{thm:main}}]
Apply Lemma~\ref{lemmain} to obtain families $\m A^a$ and $\m B^b$ that satisfy the conditions 1--4 in Lemma~\ref{lemmain}. 
Then $\m A^a$ and $\m B^b$ also satisfy the assumptions of Lemma~\ref{10-25-2}. Applying Lemma~\ref{10-25-2} with $\m A^a$ and $\m B^b$ playing the roles of $\m A$ and $\m B$, respectively, yields $|\m A|+|\m B|\le |\m A^a|+|\m B^b|\le |\m A'|+|\m B'|$.
Together with the assumptions $|\m A|\ge |\m A'|$ and $|\m B|\ge |\m B'|$, this forces $|\m A|=|\m A'|$ and $|\m B|=|\m B'|$.
This completes the proof of Theorem \ref{thm:main}. 
\end{proof}

In the forthcoming sections, we shall prove Lemmas \ref{11-23-3}, \ref{10-25-2} and \ref{lemmain}.

\section{Establishing the lower bounds: Proof of Lemma \ref{11-23-3}}\label{lower}
Let us introduce some notation.  For a family $\m F$ and sets $X, Y$ with $X\subset Y$, we use the following standard conventions:
\begin{align*}
    \mathcal{F}[X, Y] &:=\big\{F: F\cap Y = X,\; F\in \mathcal{F} \big\}, &
    \mathcal{F}(X, Y) &:=\big\{F\setminus Y: F\cap Y = X,\; F\in \mathcal{F} \big\},\\[3pt]
    \mathcal{F}(X) &:=\{F\setminus X: X\subset F, F\in \m F\}, &
    \mathcal{F}(\bar X) &:=\{F: F\cap X=\emptyset, F\in \m F\}.
\end{align*}
We say that a set $C$ is a {\it cover} of a family $\m F$ if $C$ intersects every member of $\m F$.

Before proving Lemma \ref{11-23-3}, we make some simplifying assumptions. Since $\m A'$ and $\m B'$ have the same element of maximum degree,  we may assume w.l.o.g. that $\m A'_{\Delta}=\m A'(1)$ and $\m B'_{\Delta}=\m B'(1)$. Moreover, we may assume $\m A'(\bar{1})=\m M$ and $\m B'(\bar{1})=\m T$ with $$[2,m+1]=\cap \, \m M,\ \ \ \ \ \ [2,t+1]=\cap \, \m T.$$

Write $\mathcal{M}=\{M_1, \dots, M_{k}\}$.
Since $\mathcal{M}$ is intersection-minimal, for each $i\in [k]$ there exists 
$
p_i\in \cap\,(\mathcal{M}\setminus \{M_i\})\setminus \cap \,\mathcal{M}$;
w.l.o.g. $\{p_1, \dots, p_k\}=[m+2, m+1+k]$.

Write $\m T=\{T_1, T_2, \dots, T_{\ell}\}$. Since $\mathcal{T}$ is intersection-minimal, for each $i\in [\ell]$ there exists $q_i\in \cap\,(\mathcal{T}\setminus \{T_i\})\setminus (\cap \,\mathcal{T})$;
w.l.o.g. $\{q_1, \dots, q_{\ell}\}=[t+2, t+\ell+1]$.

In the proof of Lemma \ref{11-23-3}, 
we will use the following two simple claims. 

\begin{claim}\label{minsize}
    Let $\m F \subset {[n]\choose k}$ be intersection-minimal with $|\cap \m F|=s$. Then $|\m F|\le k-s+1$.
\end{claim}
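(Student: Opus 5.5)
Write $\m F=\{F_1,\dots,F_p\}$ and set $C:=\cap\,\m F$, so $|C|=s$. We may assume $p\ge 2$, since for $p=1$ we have $s=k$ and the bound $p\le k-s+1=1$ holds trivially.

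The plan is to attach to each member of $\m F$ a private element, exactly as the paper does for $\m M$ and $\m T$. By intersection-minimality, for each $i\in[p]$ we pick
\[
x_i\in \cap\,(\m F\setminus\{F_i\})\setminus C .
\]
Such an element exists because the intersection of the remaining sets strictly contains $C$. Then $x_i\in F_j$ for every $j\ne i$, while $x_i\notin F_i$, since otherwise $x_i$ would lie in all sets and hence in $C$.

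\textbf{The $x_i$ are distinct.} If $x_i=x_j$ with $i\ne j$, then $x_i\in F_i$, because $x_j$ lies in every set other than $F_j$. This contradicts $x_i\notin F_i$. So $x_1,\dots,x_p$ are $p$ distinct elements outside $C$.

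\textbf{Counting inside one set.} Now fix $F_1$. It contains $C$, and it also contains $x_2,\dots,x_p$, which are $p-1$ distinct elements disjoint from $C$. Hence
\[
k=|F_1|\ge s+(p-1),
\]
which gives $p\le k-s+1$.

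No step here is a genuine obstacle. The only point needing care is the distinctness of the $x_i$, and that follows immediately from $x_i\notin F_i$.
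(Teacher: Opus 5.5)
Your proposal is correct and is essentially the paper's proof. The paper first removes $I=\cap\,\mathcal{F}$ to get the family $\mathcal{F}(I)$ of $(k-s)$-sets with empty common intersection, picks a private element $k_i$ for each set, and counts inside one set exactly as you do. Your explicit argument that the $x_i$ are distinct fills in a step the paper only asserts.
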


\begin{proof}
Let $I:=\cap \, \m F$ and $\m G:=\m F(I):=\{G_1, G_2, \dots, G_m\}$. 
Then $\cap \, \m G=\emptyset$, and for every $i\in [m]$, there exists $k_i\in \cap (\m G\setminus G_i)$. Moreover, $k_i\ne k_j$ for each $i\ne j$ and $i,j \in [m]$. 
Thus, for every $i\in [m]$, we have $\{k_1, k_2, \dots, k_m\}\setminus \{k_i\} \subset G_i$. Thereby, $|G_i|\ge m-1$. 
Note that $G_i \in \m F(I)$ and $|I|=s$, we have $|G_i|=k-s$.
Thus, $|\m G|=m\le k-s+1$.
\end{proof}

We know that for integers $a>b$, if $n\ge a+b$, then ${n\choose a}\ge {n\choose b}$, with equality if and only if $n=a+b$. From this, we immediately obtain the following claim.
\begin{claim}
    If $n\ge 2k+1$ and $1\le i<j\le k$, then 
    \begin{equation}\label{n>2k}
    {n-i-j\choose k-i}-{n-i-j\choose k-j}\ge k-i.
    \end{equation}
\end{claim}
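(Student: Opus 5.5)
We reduce to a statement about a single row of Pascal's triangle. Set $N:=n-i-j$, $\alpha:=k-i$ and $\beta:=k-j$. Then $\alpha>\beta\ge 0$, and $n\ge 2k+1$ gives
\[
N\ge 2k+1-i-j=\alpha+\beta+1 .
\]
The claim becomes $\binom{N}{\alpha}-\binom{N}{\beta}\ge \alpha$ whenever $N\ge\alpha+\beta+1$. The tool is unimodality and symmetry of $s\mapsto\binom{N}{s}$, plus the explicit ratio $\binom{N}{s+1}/\binom{N}{s}=(N-s)/(s+1)$.

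\emph{Case $\beta=0$.} We need $\binom{N}{\alpha}-1\ge\alpha$. Since $1\le\alpha\le N-1$, we have $\binom{N}{\alpha}\ge N\ge \alpha+1$, which finishes this case.

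\emph{Case $\beta\ge1$, first reduction.} Write $\binom{N}{\alpha}=\binom{N}{s}$ with $s:=\min\{\alpha,N-\alpha\}\le N/2$. Both $\alpha\ge\beta+1$ and $N-\alpha\ge\beta+1$ hold, so $\beta+1\le s\le N/2$. Unimodality then gives $\binom{N}{\alpha}\ge\binom{N}{\beta+1}$.

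\emph{Case $\beta\ge1$, consecutive difference.} By the ratio formula,
\[
\binom{N}{\beta+1}-\binom{N}{\beta}=\binom{N}{\beta}\frac{N-2\beta-1}{\beta+1}\ge \binom{N}{\beta}\frac{\alpha-\beta}{\beta+1},
\]
using $N-2\beta-1\ge\alpha-\beta$. It therefore suffices to show
\[
\binom{N}{\beta}(\alpha-\beta)\ge\alpha(\beta+1).
\]

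\emph{Case $\beta\ge1$, final inequality.} Since $N\ge\alpha+\beta+1$ and $\beta\le N/2$, we have $\binom{N}{\beta}\ge\binom{\alpha+\beta+1}{\beta}$. Two subcases remain.
\begin{itemize}
\item If $\beta=1$: the target reads $(\alpha+2)(\alpha-1)\ge 2\alpha$, i.e. $\alpha^2-\alpha-2\ge0$, i.e. $(\alpha-2)(\alpha+1)\ge0$. This holds because $\alpha\ge2$.
\item If $\beta\ge2$: use $\alpha-\beta\ge1$ and $\binom{\alpha+\beta+1}{\beta}\ge\binom{\alpha+3}{2}$. The latter is valid because $2\le\beta\le(\alpha+\beta+1)/2$ and $\alpha+3\le\alpha+\beta+1$. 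It then suffices that $\binom{\alpha+3}{2}\ge\alpha(\beta+1)$.
\end{itemize}

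The main obstacle is this last elementary estimate in the subcase $\beta\ge2$. The crude bound $\binom{\alpha+3}{2}\ge\alpha(\beta+1)$ needs $(\alpha+3)(\alpha+2)\ge 2\alpha(\beta+1)$. This holds when $\alpha\ge 2\beta$, but it can fail for $\beta$ close to $\alpha$; for example $\alpha=10$, $\beta=9$ gives $78<100$. In that regime I would not drop the factor $\alpha-\beta$. Instead I would use the growth of $\binom{\alpha+\beta+1}{\beta}$ in $\beta$ directly, e.g. via $\binom{\alpha+\beta+1}{\beta}\ge \binom{\alpha+\beta+1}{2}\ge\alpha(\beta+1)$, which needs $(\alpha+\beta+1)(\alpha+\beta)\ge 2\alpha(\beta+1)$. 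This inequality expands to $\alpha^2+\beta^2-\alpha\beta\ge0$, which holds always; it uses $2\le\beta\le\alpha+\beta-1$ to justify the binomial comparison. So the step is settled by a routine final check.
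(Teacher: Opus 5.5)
Your argument is correct, but it takes a different route from the paper. The paper gives no computation: it cites the fact that $\binom{N}{\alpha}\ge\binom{N}{\beta}$ for $\alpha>\beta$ and $N\ge\alpha+\beta$ (equality only when $N=\alpha+\beta$) and calls the claim immediate. Taken literally, that fact only shows the difference is positive, not that it is at least $k-i$. Your proof supplies the missing quantitative step and is self-contained: symmetry and unimodality reduce to $\binom{N}{\beta+1}$, the ratio formula handles the consecutive gap, and an elementary estimate (split into $\beta=0$, $\beta=1$, $\beta\ge2$) finishes.

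There is one algebra slip. $(\alpha+\beta+1)(\alpha+\beta)-2\alpha(\beta+1)$ equals $\alpha^2-\alpha+\beta^2+\beta$, not $\alpha^2+\beta^2-\alpha\beta$. It is still nonnegative because $\alpha\ge1$, so the conclusion stands. In a write-up, drop the abandoned detour through $\binom{\alpha+3}{2}$. For $\beta\ge2$, state the chain $\binom{N}{\beta}(\alpha-\beta)\ge\binom{\alpha+\beta+1}{\beta}\ge\binom{\alpha+\beta+1}{2}\ge\alpha(\beta+1)$ directly.

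For comparison, the paper's cited fact does suffice once combined with Pascal's rule. Iterating Pascal's rule $\beta$ times on both binomials gives
\[
\binom{N}{\alpha}-\binom{N}{\beta}=\sum_{s=0}^{\beta-1}\left[\binom{N-1-s}{\alpha-s}-\binom{N-1-s}{\beta-s}\right]+\binom{N-\beta}{\alpha-\beta}-1 .
\]
Each bracket is nonnegative by that fact, since $N-1-s\ge(\alpha-s)+(\beta-s)$. The last term is at least $(N-\beta)-1\ge\alpha$, because $1\le\alpha-\beta\le N-\beta-1$. This two-line version avoids all case distinctions; yours is longer but equally elementary and fully correct.
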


Define 
\begin{equation}\label{hki}
\m H_{k,i}:=\{H\in \tbinom{[n]}{k}: 1\in H, H\cap [2,i+1]\ne \emptyset\}\cup \{H\in \tbinom{[2,n]}{k}: [2, i+1]\subset H\}.
\end{equation}

The family $\m H_{k,i}$ is intersecting, and 
\begin{equation}\label{sizeH}
    |\m H_{k,i}|={n-1\choose k-1}-{n-i-1\choose k-1}+{n-i-1\choose k-i}.
\end{equation}

For a family $\m F$ and an element $x$ of maximal degree in $\m F$, let us write $$\m F_\Delta = \m F(x),\ \ \ \ \ \m F_\gamma = \m F(\bar x).$$ 

Denote by $\mathcal{H}\subset {[n]\choose a}$ the (unique) inclusion-maximal intersecting family such that $\m H_{\Delta}=\m H(1)$ and $\mathcal{H}_{\gamma}=\mathcal{M}$.
Then if $a=b$, we have
$|\mathcal{H}(1)|=|\mathcal{B}'(1)|$ and 
\begin{equation}\label{12-6-1}
|\mathcal{H}|=|\mathcal{B}'(1)|+|\mathcal{M}|.
\end{equation} 
Note that $\m H_{a,3}$ is an intersecting family with $\Delta(\m H_{a,3})=|\m H_{a,3}(1)|$ and $\m M \subset \m H_{a,3}(\bar{1})$. 
By Theorem~\ref{11-23-2}, we have 
\begin{equation}\label{5-7-1} 
    |\mathcal{H}_{a,3}| \le |\mathcal{H}|
\end{equation}
The following proposition strengthens this bound needed in  the proof of Lemma~\ref{11-23-3}.

\begin{proposition}
  If $n>2a$, $m=3$ and $a\ge 5$, then 
  \begin{equation}\label{compare}
      |\m H|-|\m H_{a,3}|\ge a-3.
  \end{equation}
\end{proposition}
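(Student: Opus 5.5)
The plan is to compute $|\m H|-|\m H_{a,3}|$ explicitly and compare it with $a-3$ using binomial inequalities of the type \eqref{n>2k}. Both families have $1$ as the centre of the star part. We have $\m H(\bar 1)=\m M$ and $\m H_{a,3}(\bar 1)=\{H\in\binom{[2,n]}{a}:[2,4]\subset H\}$, the latter of size $\binom{n-4}{a-3}$.

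Every $M_i$ contains $[2,4]$, so every $(a-1)$-subset of $[2,n]$ meeting $[2,4]$ lies in $\m H(1)$. Hence $\m H_{a,3}(1)\subset \m H(1)$. The difference $\m H(1)\setminus \m H_{a,3}(1)$ is the family
\[
\m G:=\Big\{G\in \tbinom{[5,n]}{a-1}: G\cap M_i'\neq\emptyset \text{ for all } i\Big\},\qquad M_i':=M_i\setminus[2,4]\in\tbinom{[5,n]}{a-3}.
\]
Writing $N:=n-4\ge 2a-3$ and $k:=|\m M|$, this gives the identity
\[
|\m H|-|\m H_{a,3}| = |\m G|+k-\binom{N}{a-3}.
\]
It therefore suffices to show $|\m G|-\binom{N}{a-3}\ge a-3-k$.

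\textbf{Case $k=1$.} Here $\m G$ is the family of $(a-1)$-subsets of $[5,n]$ meeting the fixed $(a-3)$-set $M_1'$. Ordering the elements of $M_1'$ and classifying each $G$ by the first element of $M_1'$ it contains gives
\[
|\m G|=\sum_{j=1}^{a-3}\binom{N-j}{a-2}.
\]
Iterating Pascal's rule on the last term gives
\[
\binom{N}{a-3}=\sum_{j=1}^{a-4}\binom{N-j}{a-3-j+1}\ \text{(suitably indexed)}+\text{(a last binomial)}.
\]
I then pair the $j$-th summand of $|\m G|$ with the $j$-th summand of this expansion. Since $N-j\ge 2a-3-j$ exceeds the sum of the two lower indices, each pair contributes at least $1$ by the observation preceding \eqref{n>2k}. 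This should give at least $a-4$ units of surplus from the pairs, and at least one more from the first pair $\binom{N-1}{a-2}-\binom{N-1}{a-3}$, using \eqref{n>2k} with $i=1$. Together that yields the required $a-4=a-3-k$. The bookkeeping of the Pascal expansion so that the indices align is routine but must be done carefully.

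\textbf{Case $k\ge 2$.} Let $P=\{p_1,\dots,p_k\}\subset[5,n]$ be the private elements, so that $M_i'\supset P\setminus\{p_i\}$. Then $\m G$ contains every $(a-1)$-subset of $[5,n]$ containing at least two elements of $P$. It also contains every set containing exactly one $p_j$ and meeting $M_j'\setminus P$, a set of size $a-2-k\ge 0$ by Claim~\ref{minsize}. The plan is to lower-bound $|\m G|$ by these two explicit counts and compare with $\binom{N}{a-3}$.

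The cleanest route is to show that this lower bound is at least the $k=1$ value $\sum_{j=1}^{a-3}\binom{N-j}{a-2}$ minus $k-1$. An alternative is to dominate $\binom{N}{a-3}$ directly. For that, I would split $(a-3)$-sets according to their trace on $P$ and map each class injectively into a class of $\m G$ by adding two fresh elements, which is possible since $N\ge 2a-3$. Inequality \eqref{n>2k} then supplies the slack of $a-3-k$ or more.

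I expect the main obstacle to be the case $k\ge 2$ with $k$ close to $a-2$. There $M_j'\setminus P$ is nearly empty and the ``exactly one $p_j$'' sets are scarce, so the surplus must come almost entirely from sets with at least two elements of $P$. There I would try first the injection $T\mapsto T\cup\{x,y\}$ from $(a-3)$-sets into $\m G$, choosing $x,y\in P\setminus T$ canonically, for example as the two smallest. I would use $n>2a$ to guarantee that at least $a-3-k$ sets of $\m G$ are missed by the image.
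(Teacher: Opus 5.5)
Your reduction is correct: $\mathcal H_{a,3}(1)\subset\mathcal H(1)$, and the claim is equivalent to $|\mathcal G|\ge\binom{N}{a-3}+a-3-k$. However, your Case $k=1$ cannot occur. A one-member family $\{M_1\}$ has $|\cap\mathcal M|=a\ne 3$, so $m=3<a$ forces $k\ge 2$; the paper notes this at the start. So the whole proposition sits in the case you only sketch, and each concrete ingredient you offer there fails.

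\textbf{The ``cleanest route'' is false.} The inequality $|\mathcal G|\ge\sum_{j=1}^{a-3}\binom{N-j}{a-2}-(k-1)$ cannot hold. Since $\cap_i M_i'=\emptyset$, every $G\in\mathcal G$ contains at least two points of $\bigcup_i M_i'$, a set of size at most $k(a-3)$. Hence $|\mathcal G|=O(n^{a-3})$, whereas the right-hand side has order $n^{a-2}$. Already for $a=5$, $n=11$, $M_1=\{2,3,4,5,6\}$, $M_2=\{2,3,4,7,8\}$ you get $|\mathcal G|=35-5-5=25<30-1$.

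\textbf{Your two explicit classes are too small.} The classes you count are the sets with $|G\cap P|\ge 2$ and the sets with $G\cap P=\{p_j\}$ meeting $M_j'\setminus P$. Together they have size $\binom{N}{a-1}-\binom{N-k}{a-1}-k\binom{N-a+2}{a-2}$, which is not enough in general. For $k=2$, $a=10$, $n=21$ it equals $\binom{15}{7}+2\bigl(\binom{15}{8}-\binom{9}{8}\bigr)=19287$, while you need $\binom{17}{7}+5=19453$. The deficit can only be made up by the class $G\cap P=\emptyset$ (here $5003$ sets). Your plan ignores that class, and its size depends on how the sets $M_j'\setminus P$ overlap.

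\textbf{The injection is neither defined nor injective.} The map $T\mapsto T\cup\{x,y\}$, with $x,y$ the two smallest elements of $P\setminus T$, is undefined when $|P\setminus T|\le 1$, e.g.\ whenever $k=2$ and $T\cap P\ne\emptyset$. It is also not injective: if $p_1<p_2<p_3$ and $T_1,T_2$ agree outside $P$ with $T_1\cap P=\{p_3\}$ and $T_2\cap P=\{p_1\}$, both go to $(T_1\setminus P)\cup\{p_1,p_2,p_3\}$. Finally, the $a-3-k$ missed sets are never exhibited.

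\textbf{How the paper closes this.} The paper avoids counting $\mathcal G$ altogether and lets Theorem~\ref{11-23-2} absorb the dependence on the structure of $\mathcal M$. It uses two $2$-element covers $\{5,6\}$ and $\{5,7\}$ of $\mathcal M([2,4])$. After relabeling these are $\{p_1,p_2\}$ and either $\{p_1,p_3\}$ if $k\ge 3$, or $\{p_1,q\}$ with $q\in M_1\setminus[2,4]$, $q\ne p_2$, if $k=2$; this uses $a\ge 5$. The paper makes two exchanges:
\begin{itemize}
\item swap $\mathcal H_{a,3}[[2,4],[6]]$ for $\mathcal H[\{1,5,6\},[6]]$, both of size $\binom{n-6}{a-3}$;
\item swap the part with trace $\{2,3,4,6\}$ on $[7]$, of size $\binom{n-7}{a-4}$, for $\mathcal H[\{1,5,7\},[7]]$, of size $\binom{n-7}{a-3}$.
\end{itemize}
This produces an intersecting family $\mathcal H''_{a,3}$ with $\mathcal M\subset\mathcal H''_{a,3}(\bar 1)$ and $|\mathcal H''_{a,3}|=|\mathcal H_{a,3}|+\binom{n-7}{a-3}-\binom{n-7}{a-4}$. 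Theorem~\ref{11-23-2} gives $|\mathcal H''_{a,3}|\le|\mathcal H|$, and \eqref{n>2k} bounds the gain below by $a-3$.

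If you prefer to stay with direct counting, you would have to bound all three classes $|G\cap P|=0,1,\ge 2$. That bound must hold uniformly over every admissible overlap pattern of the sets $M_j'\setminus P$ and every $n>2a$, and your sketch does not attempt this.
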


\begin{proof}
Since $m=3$ and $a\ge 5$, we have $k\ge 2$, $[2,4]=\cap \, \m M$ and $M_i \cap \{5,6\}\ne \emptyset$ for each $i\in [k]$. 
Since $\m H[\{1,5,6\},[6]] \cap \m H_{a,3}[[2,4],[6]]=\emptyset$, we may define 
\[
\m H'_{a,3}:=\m H_{a,3}\setminus \m H_{a,3}[[2,4],[6]] \cup \m H[\{1,5,6\},[6]].
\]
One checks that $\m H'_{a,3}$ is an intersecting family and $\m M \subset \m H'_{a,3}(\bar{1})$. 
Moreover, 
$$\m H(\{1,5,6\},[6])={[7,n]\choose a-3}=\m H_{a,3}([2,4],[6]),$$
hence,
\begin{equation}\label{H'a3}
|\m H'_{a,3}|=|\m H_{a,3}|. 
\end{equation}

We claim that we may assume 
\begin{equation}\label{H7}
    \m H(\{1,5,7\},[7])={[8,n]\choose a-3}.
\end{equation}
     
Indeed, if $k\ge 3$, then since $\{p_1, \dots, p_k\}=[5, 4+k]$ and in particular $\{5,7\}$ is a cover of $\m M([2,4])$, therefore every $a$-set $H\subset [n]$ with $\{1,5,7\}\subset H$ belongs to $\m H$, which gives (\ref{H7}). 
If $k=2$, then $5\in M_2$, $6\in M_1$ and $5\not\in M_1$.
since $a\ge 5$, we have $|M_1\setminus [2,4]|\ge 2$.  Thus, $M_1$ contains some element $q\not\in \{5,6\}=\{p_1,p_2\}$; relabeling $q$ as 7, we get $\{5,7\}$ covering $\m M([2,4])$.
This proves (\ref{H7}). 

Now every set of $\m H'_{a,3}(\bar{1})$ contains $[2,4]$ and intersects $\{5,6\}$. 
Define 
\[
\m H''_{a,3}:=\m H'_{a,3}\setminus \m H'_{a,3}[\{2,3,4,6\},[7] ]\cup \m H[\{1,5,7\},[7]].
\]
Then $\m H''_{a,3}$ is intersecting and $\m M \subset \m H''_{a,3}(\bar{1})$. By Theorem \ref{11-23-2}, we have
\begin{equation}\label{H''a3}
|\m H''_{a,3}|\le |\m H|.
\end{equation}

Note that $\m H'_{a,3}(\{2,3,4,6\},[7] )={[8,n]\choose a-4}$. 
Applying (\ref{H7}), we obtain

\begin{equation}\label{H-H}
    |\m H''_{a,3}|-|\m H'_{a,3}|={n-7\choose a-3}-{n-7\choose a-4}\overset{(\ref{n>2k})}{\ge} a-3.
\end{equation}

Combining (\ref{H''a3}), (\ref{H'a3}), and (\ref{H-H}) yields ( \ref{compare}).
\end{proof}

Now we are ready to give the proof of Lemma~\ref{11-23-3}.
\begin{proof}[{\bf Proof of Lemma~\ref{11-23-3}}]  

First we prove inequality \eqref{11-30-2} and the description of the exceptional cases.

Recall that $|\cap \, \m M|=m\ge a-b+3$. 
If $m\ge b$, then since $n>2b$ and $b\ge 4$, we obtain
\[
|\mathcal{B}'|\ge {n-1\choose b-1}-{n-1-b\choose b-1}+1>{n-1\choose b-1}-{n-4\choose b-1}+{n-4\choose b-3},
\]
which gives (\ref{11-30-2}),
so we may assume $m \le b-1$ in what follows.

{\bf Case 1: $m=3$.} In this case, we have $a=b$. 

If $|\mathcal{M}|\le |\mathcal{T}|$, then by \eqref{12-6-1} and \eqref{5-7-1}, we have
\[
|\mathcal{B}'| = |\mathcal{B}'(1)| + |\mathcal{T}| 
               \ge |\mathcal{B}'(1)| + |\mathcal{M}|
               = |\mathcal{H}|
               \ge |\mathcal{H}_{b,3}|
               \ge \tbinom{n-1}{b-1} - \tbinom{n-4}{b-1} + \tbinom{n-4}{b-3},
\]
which is exactly \eqref{11-30-2}. 

We may therefore assume $|\mathcal{M}| > |\mathcal{T}|$.

Consider first the case $a=b=4$. Since $\m M$ and $\m T$ are intersection-minimal,
Claim~\ref{minsize} gives $|\m M|, |\m T|\le a-m+1=2$; combined with $|\mathcal{M}|> |\mathcal{T}|$, we get $|\mathcal{M}|=2$ and $|\mathcal{T}|=1$. 
Consequently, 
\[
|\mathcal{A}'| > \tbinom{n-1}{a-1} - \tbinom{n-4}{a-1} + \tbinom{n-4}{a-3}
\quad\text{and}\quad
|\mathcal{B}'| = \tbinom{n-1}{b-1} - \tbinom{n-4}{b-1} + \tbinom{n-4}{b-3} - 1.
\]
This establishes the exceptional case in the lemma.

Now let $a=b\ge 5$. By Claim~\ref{minsize}, we have $|\m M|\le a-m+1=b-2$, and so by (\ref{12-6-1}), we obtain
\begin{align*}
|\mathcal{B}'| 
   &= |\mathcal{H}(1)| + |\mathcal{T}|
     = |\mathcal{H}| - |\mathcal{M}| + |\mathcal{T}|\ge |\mathcal{H}| - |\mathcal{M}| + 1
     \ge |\mathcal{H}| - (b-3) \\
   &\overset{(\ref{compare})}{\ge} |\mathcal{H}_{b,3}|\ge \tbinom{n-1}{b-1} - \tbinom{n-4}{b-1} + \tbinom{n-4}{b-3}.
\end{align*}
Thus inequality \eqref{11-30-2} is established in this case as well.

{\bf Case 2: $m\ge 4$.}
Recall that we also have $m \le b-1$. Also, recall that we assumed that $\m M$ has the following specific form: 
$\mathcal{M}=\{M_1, \dots, M_{k}\}$, and $[2,m+1+k]\setminus M_i =\{m+1+i\}$.

Define a $b$-uniform family $\mathcal{M}'$ such that 
\begin{itemize}
    \item $\mathcal{M}'$ is intersection‑minimal with $\cap \,\mathcal{M}' = [2, m+1]$;
    \item If $b\le m+k-1$, then let $\m M'=\{[2,m+1]\cup R: R\in {[m+2,b+2]\choose b-m}\}$;
    \item If $b\ge m+k$, then just delete any $(a-b)$ elements outside $[2,m+1+k]$ from each $M\in \m M$.
\end{itemize}
It is easy to see that
\begin{equation}\label{coverm'}
\text{If $I$ is a cover of $\m M'$, then $I$ is a cover of $\m M$ as well.} 
\end{equation}

Since $a\ge b$, define  by replacing each $M\in M$ with a $b$-set containing $[2, m+1]$, chosen so that 
Since $4\le m\le b-1$ and $\Delta(\m H_{b,4})=|\m H_{b,4}(1)|$, we have $\mathcal{M}' \subset \mathcal{H}_{b,4}(\bar{1})$. 

Let $\mathcal{H}'\subset {[n]\choose b}$ be the (unique) inclusion-maximal intersecting family for which $\mathcal{H}'_{\Delta} = \mathcal{H}(1)$ and $\mathcal{H}'_{\gamma} = \mathcal{M}'$.
By Theorem~\ref{11-23-2} we have $|\mathcal H
_{b,4}|\le |\mathcal{H}'|$. 
Consequently,  
\begin{align*}
|\mathcal{H}'| &\ge |\mathcal{H}_{b,4}|
   = \binom{n-1}{b-1} - \binom{n-4}{b-1} + \binom{n-5}{b-2} + \binom{n-5}{b-4} \\
   &\stackrel{\eqref{n>2k}}{\ge} 
      \binom{n-1}{b-1} - \binom{n-4}{b-1} 
      + \binom{n-5}{b-3} + \binom{n-5}{b-4} + b-2 \\
   &= \binom{n-1}{b-1} - \binom{n-4}{b-1} + \binom{n-4}{b-3} + b-2.
\end{align*}

 Claim~\ref{minsize} gives $|\mathcal{M}'|\le b-m+1< b-2$.
By (\ref{coverm'}), $|\mathcal{B}'(1)|\ge|\mathcal{H}'(1)|$.
Therefore
\begin{align*}
|\mathcal{B}'|
   &= |\mathcal{B}'(1)| + |\mathcal{T}| \\
   &\ge |\mathcal{H}'(1)| + |\mathcal{M}'| - |\mathcal{M}'| + |\mathcal{T}| \\
   &= |\mathcal{H}'| - \bigl(|\mathcal{M}'| - |\mathcal{T}|\bigr) \\
   &\ge \binom{n-1}{b-1} - \binom{n-4}{b-1} + \binom{n-4}{b-3} + b-2
        - \bigl(|\mathcal{M}'| - |\mathcal{T}|\bigr) \\
   &> \binom{n-1}{b-1} - \binom{n-4}{b-1} + \binom{n-4}{b-3},
\end{align*}
where the last inequality holds since $|\mathcal{M}'| < b-2$. 
Thus \eqref{11-30-2} holds also in this case.

Next, we turn to \eqref{11-30-1}. When $a=b$, this follows from \eqref{11-30-2} by symmetry, so we only need to consider the case $a>b$. 

If $t\ge 4$, then 
\begin{align*}
|\mathcal{A}'|
&>{n-1\choose a-1}-{n-5\choose a-1}\\
&\ge {n-1\choose a-1}-{n-4\choose a-1}+{n-5\choose b-3}\\
&\ge {n-1\choose a-1}-{n-4\choose a-1}+{n-1-(a-b+3)\choose a-(a-b+3)},
\end{align*}
where the second inequality uses $n\ge a+b$  and $a>b$.
This  gives \eqref{11-30-1}.

Recall that, by our assumption, $\m T = \{T_1,\ldots, T_\ell\}$ and $[2,t+\ell+1]\setminus T_i=\{t+i+1\}$. Now suppose $t=3$. 
Since $b\ge 4$, we have $\ell\ge 2.$
By maximality, every $a$-set containing $\{1,5,6\}$ belongs to $\mathcal{A}'$,
hence
\begin{equation}\label{11-30-4}
|\mathcal{A}'(1)|\ge {n-1\choose a-1}-{n-4\choose a-1}+{n-6\choose a-3}.
\end{equation}

If $a-b\ge 2$, then 
\[
{n-1-(a-b+3)\choose a-(a-b+3)}\le {n-6\choose b-3}< {n-6\choose a-3},
\]
where the last inequality uses $n>a+b$. Thus, \eqref{11-30-1} follows from \eqref{11-30-4}.

Finally, assume $a=b+1$, so that $n\ge 2a$. 
If $b=4$, then the inequality
${n-6\choose a-3}\ge {n-5\choose b-3}$ together with \eqref{11-30-4} yields \eqref{11-30-1}. 

Write $\m T=\{T_1, T_2, \dots, T_{\ell}\}$. Since $\mathcal{T}$ is intersection-minimal, for each $i\in [\ell]$ there exists $q_i\in \cap\,(\mathcal{T}\setminus \{T_i\})\setminus (\cap \,\mathcal{T})$;
w.l.o.g. $\{q_1, \dots, q_{\ell}\}=[t+2, t+\ell+1]$. 
For $b\ge 5$ and $t=3$ we distinguish three cases.
\begin{itemize}
\item[($\ell=1$).] 
Then $|\mathcal{A}'| > |\mathcal{A}'(1)| 
   \ge \binom{n-1}{a-1} - \binom{n-6}{a-1}
   > \binom{n-1}{a-1} - \binom{n-4}{a-1} + \binom{n-5}{a-4}$ (since $n\ge 2a$),
   which implies \eqref{11-30-1}.

\item[($\ell=2$).] 
We may assume $\{2,3,4,5,8\}\subset T_1$ and $\{2,3,4,6,7\}\subset T_2$.
Consequently, every $a$-set containing at least one of the triples
$\{1,5,6\},\{1,5,7\},\{1,6,8\},\{1,7,8\}$ belongs to $\mathcal{A}'$. 
Hence
$$|\mathcal{A}'| 
   > |\mathcal{A}'(1)| \ge \tbinom{n-1}{a-1} - \tbinom{n-4}{a-1} 
        + \tbinom{n-6}{a-3} + \tbinom{n-7}{a-3} + 2\tbinom{n-8}{a-3} 
   \ge \tbinom{n-1}{a-1} - \tbinom{n-4}{a-1} + \tbinom{n-5}{b-3},
$$
which gives \eqref{11-30-1}.

\item[($\ell\ge 3$).] 
Now every $a$-set containing at least one of the triples
$\{1,5,6\},\{1,5,7\},\{1,6,7\}$ lies in $\mathcal{A}'$. 
Therefore
$$
|\mathcal{A}'|
   > |\mathcal{A}'(1)|
   \ge \tbinom{n-1}{a-1} - \tbinom{n-4}{a-1} 
        + \tbinom{n-6}{a-3} + 2\tbinom{n-7}{a-3}
   \ge \tbinom{n-1}{a-1} - \tbinom{n-4}{a-1} + \tbinom{n-5}{b-3},
$$
and \eqref{11-30-1} follows.
\end{itemize}
This completes the proof of Lemma~\ref{11-23-3}.
\end{proof}

\section{Small diversity: Proof of Lemmas~\ref{lemkey} and \ref{10-25-2}}\label{secsmall}

We first show the proof of Lemma \ref{10-25-2} by assuming Lemma \ref{lemkey}, and later we give the detailed  proof of Lemma \ref{lemkey}.

\begin{proof}[{\bf Proof of Lemma \ref{10-25-2}}]
Let $\mathcal{A}, \mathcal{B}, \mathcal{M}, \mathcal{T}, \mathcal{A}', \mathcal{B}'$, $m$ and $t$ be as in the statement of Lemma~\ref{10-25-2}.
Since $\m A$ and $\m B$ share the same element of maximum degree, we may assume
w.l.o.g. $\m A_\Delta = \m A(1)$ and $\m B_\Delta = \m B(1)$, so that $|\m A(\bar1)|=\gamma(\m A)$ and $|\m B(\bar1)|=\gamma(\m B)$.
Applying Lemma~\ref{lemkey} (ii) with
\[
\m F=\m A(\bar 1), \m G= \m B(1), \m G'=\m B'(1), \m Y=\m M, X=[2,n], k=a, r=b-1, 
\]
we verify the hypotheses of Lemma~\ref{lemkey}(ii): $|X|>k+r$; $|\cap \m Y|=m\ge a-b+3= k-r+2$; $|\m F|=|\m A(\bar1)|=\gamma(\m A)\le{n-1-(a-b+2)\choose a-(a-b+2)}= {n-1-(a-b+2)\choose b-2}={|X|-(k-r+1)\choose r-1}$. 
Hence Lemma~\ref{lemkey}(ii) yields
$$|\m A(\bar 1)|+|\m B(1)|\le |\m A'(\bar 1)|+|\m B'(1)|.$$

Applying Lemma~\ref{lemkey} with
\[
\m F=\m B(\bar 1), \m G= \m A(1), \m G'=\m A'(1), \m Y=\m T, X=[2,n], k=b, r=a-1, 
\]
we see that $k=b\le a-1+1= r+1$. 
If $k<r$, then Lemma~\ref{lemkey} (i) applies directly. Otherwise $k\in \{r,r+1\}$, in which case $|\cap\m Y|=t\ge 3 \ge k-r+2$ and $|\m F|=\gamma(\m B)\le {n-3\choose b-2}\le {|X|-(k-r+1)\choose r-1}$. In either case, Lemma~\ref{lemkey} yields  
$$|\m B(\bar 1)|+|\m A(1)|\le |\m B'(\bar 1)|+|\m A'(1)|.$$

Adding these two inequalities gives $|\m A|+|\m B|\le |\m A'|+|\m B'|$.
\end{proof}

\begin{proof}[{\bf Proof of Lemma~\ref{lemkey}}]
Choose $\m F, \m G$ satisfying the hypothesis of the lemma with $|\m F|+|\m G|$ being the largest. Moreover, among those, we choose a pair with the smallest  $|\m F|$. Put $s:=|\cap \m Y|$.
We first make some simplifying assumptions:
\begin{itemize}
\item Assume $X = [n]$ and $\cap \m Y=[s]$. 
\item Assume that $\mathcal{F},\mathcal{G}$ are $S_{i,j}$-shifted for all $i\in[s]$, $j>i$. 
\end{itemize}

Let us explain why we can make such simplification. 
The first is straightforward.
For the second, since $[s]=\cap \m Y$, shifting $S_{i,j}$ with $i\in [s]$, $j>i$ leaves $\m Y$ unchanged while preserving the cross-intersection property.

Our goal is to show that $\m F=\m Y$, from which $|\m G|\le |\m G'|$ follows.  

For each $i\in [n]$ define 
\[
    \m F_i:=\m F([i-1],[i]) \quad \text{and} \quad \m G^i:=\m G(\{i\},[i]).
\]

\begin{claim}\label{1-8-1}
For each $i\in [n]$ we have $ |\mathcal{F}_i|\le {n-1-(k-r+1)\choose r-1}$.
\end{claim}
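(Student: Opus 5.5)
The plan is to compare the two "layers" $\m F_i$ and $\m G^i$ and use the extremal choice of $(\m F,\m G)$: $|\m F|+|\m G|$ maximal and, subject to this, $|\m F|$ minimal. If the claim failed for some $i$, we would trade the part of $\m F$ that is isolated by $[i]$ for new sets of $\m G$, keeping cross-intersection and the subfamily $\m Y$. That would either increase $|\m F|+|\m G|$ or keep it and decrease $|\m F|$, a contradiction. I will write $N:=n-i$, and note that $\m F_i\subset\binom{[i+1,n]}{k-i+1}$ and $\m G^i\subset\binom{[i+1,n]}{r-1}$.

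First I dispose of the trivial cases. If $\m F_i=\emptyset$ there is nothing to prove. In case (i), $k<r$, so $k-i+1\le k\le r-1$ and $n-1\ge k+r-1$. Then $|\m F_i|\le\binom{n-1}{k}\le\binom{n-1}{r-1}\le\binom{n-1-(k-r+1)}{r-1}$, since $k-r+1\le 0$. So the real content is case (ii), $k\ge r$, where $\m Y\subset\m F$ has $\cap\,\m Y=[s]$ with $s\ge k-r+2$. Every $F\in\m F[[i-1],[i]]$ meets every $G\in\m G[\{i\},[i]]$ outside $[i]$. Hence $\m F_i$ and $\m G^i$ are cross-intersecting on the $N$-element ground set $[i+1,n]$, with uniformities $k-i+1$ and $r-1$, and $N\ge (k-i+1)+(r-1)$.

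Next comes the exchange step. Assume $|\m F_i|>\binom{n-1-(k-r+1)}{r-1}$. Let $\m F^*:=\m F\setminus\m F[[i-1],[i]]$ and $\m G^*:=\m G\cup\{G\in\binom{[n]}{r}: G\cap[i]=\{i\}\}$, restricting the added sets to those meeting every member of $\m F^*$. I would verify two things. First, $\m Y\subset\m F^*$: for $i\le s$ every $Y\in\m Y$ contains $i$, so $Y$ is not removed; for $i>s$ I need an argument based on the shiftedness $S_{j,\ell}$, $j\in[s]$, showing that the removed sets cannot contain $[s]$ unless the bound is trivial. Second, the sets lost from $\m F$ number $|\m F_i|$, while the sets gained in $\m G$ are at least $\binom{N}{r-1}-|\m G^i|$ minus those killed by $\m F^*$. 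Here I would use that $\m F$ is $S_{j,\ell}$-shifted for $j\le s$, so any $F\in\m F^*$ with $F\cap[i]\ne[i-1]$ already meets $[i-1]\cup\{\cdots\}$ appropriately, and new sets containing $i$ meet $\m F^*$ as soon as they meet a suitable shadow. The net comparison then reduces to an inequality of the form $|\m F_i|+|\m G^i|\le\binom{N}{r-1}$. This is exactly Theorem~\ref{10-12-1} applied on $[i+1,n]$ with the larger and smaller uniformity playing the roles of $a$ and $b$. Its size hypothesis $|\m A|\le\binom{N-a+b}{b}$ is what the numerical threshold $\binom{n-1-(k-r+1)}{r-1}$ should match after the index bookkeeping. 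When that hypothesis fails instead, I would fall back on Theorem~\ref{12-3-3} or Proposition~\ref{1-4-1} to show that $\m G^i$, hence $\m G$, is tiny, and again gain by the exchange.

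The hard part is the exchange step: checking that the added $r$-sets really cross-intersect all of $\m F^*$, not just $\m F_i$'s complement layer, and that $\m Y$ survives when $i>s$. I would try first to handle it by induction on $i$ through the shifted structure. Since $\m F$ is $S_{j,\ell}$-shifted for $j\in[s]$, the layers satisfy a shadow-type monotonicity ($\m F_{i+1}$ is controlled by $\m F_i$ via $S_{i,\ell}$). So it should suffice to prove the bound for the first $i$ at which it could fail, and there the sets of $\m F^*$ all contain an element of $[i-1]$ that the new $\m G$-sets must avoid. That is exactly where the minimality of $|\m F|$ is needed to rule out the equality case.
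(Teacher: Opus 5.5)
Your treatment of case (i) is fine. In case (ii) there is a genuine gap, and it sits exactly at the step you deferred to ``index bookkeeping''.

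Your exchange needs $|\mathcal F_i|+|\mathcal G^i|\le\binom{n-i}{r-1}$, which you want from Theorem~\ref{10-12-1} on $[i+1,n]$ with uniformities $k-i+1$ (at least $r$) and $r-1$. But that theorem's size hypothesis reads $|\mathcal F_i|\le\binom{(n-i)-(k-i+1)+(r-1)}{r-1}=\binom{n-1-(k-r+1)}{r-1}$. This is the claim itself, so it is unavailable under your contrary assumption. That is why the paper proves Claim~\ref{1-8-1} first and runs your exchange only afterwards, in Claim~\ref{1-8-2}.

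Your fallbacks do not close the gap. Theorem~\ref{12-3-3} gives only $\binom{n-i}{k-i+1}$, which exceeds $\binom{n-i}{r-1}$. A small $\mathcal G^i$ does not help either: deleting a large layer $\mathcal F_i$ can cost more than the at most $\binom{n-i}{r-1}$ sets you can add to $\mathcal G$.

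More fundamentally, your plan never uses the hypothesis $|\mathcal F|\le\binom{|X|-(k-r+1)}{r-1}$ of Lemma~\ref{lemkey}(ii), and without it the claim is false. Take $\mathcal Y=\{[k]\}$, $\mathcal G=\{[r]\}$, and $\mathcal F$ the family of all $k$-sets meeting $[r]$. This pair is shifted, contains $\mathcal Y$, and is extremal by Theorem~\ref{12-3-2} once the cap is dropped. Yet $|\mathcal F_1|=\binom{n-1}{k}-\binom{n-r}{k}$; for instance $30>\binom{5}{2}=10$ when $n=8$, $k=4$, $r=3$. So no argument built only from extremality, shiftedness and cross-intersection can prove the claim.

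The missing idea is to use the cap directly; no exchange and no extremality are needed.
\begin{itemize}
\item For $i\ge k-r+2$ one has $|\mathcal F_i|\le\binom{n-i}{k-i+1}\le\binom{n-(k-r+2)}{r-1}$. Since $s\ge k-r+2$, this already covers every $i>s$, so your worry about $\mathcal Y$ surviving is moot.
\item For $i\le k-r+1<s$, $S_{i,j}$-shiftedness gives $\partial\mathcal F_i\subset\mathcal F([i])$, hence $|\mathcal F|\ge|\mathcal F_i|+|\partial\mathcal F_i|$.
\item If $|\mathcal F_i|>\binom{n-k+r-2}{r-1}$, the Lov\'asz form of Kruskal--Katona (Theorem~\ref{12-2-2}), plus a short computation using $k-i+1\ge r$ and $n>k+r$, gives $|\partial\mathcal F_i|\ge\binom{n-k+r-2}{r-2}$.
\item Then $|\mathcal F|>\binom{n-k+r-1}{r-1}=\binom{|X|-(k-r+1)}{r-1}$, contradicting the cap.
\end{itemize}
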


\begin{proof}
Note that $\m F_i\subset {[i+1,n]\choose k-i+1}$.
If $i\ge k-r+2$, then 
$|\m F_i|\le {n-i\choose k-i+1}\le {n-(k-r+2)\choose k-(k-r+1)}={n-1-(k-r+1)\choose r-1}$. 
In particular, if $k<r$, then Claim \ref{1-8-1} holds for all $i$. So it remains to consider the case $k\ge r$. In this case, $s\ge k-r+2>0$.

Take any $i\in [k-r+1]$ and suppose, on the contrary, that 
\begin{equation}\label{fi>}
    |\mathcal{F}_i|> {n-1-(k-r+1)\choose r-1}.
\end{equation}

Since $k-r+1\le s$, the family $\m F$ is $S_{i,j}$-shifted for $i\le k-r+1$ and $j>i$. This implies that every $A\in \partial \mathcal{F}_i$ belongs to $\m F([i])$.

Assume that 
\begin{equation}\label{eqshadow}
|\partial \mathcal{F}_i |\ge {n-1-(k-r+1)\choose r-2}.
\end{equation} 
Then combining (\ref{fi>}) and (\ref{eqshadow}) yields
\[
|\mathcal{F}|\ge |\mathcal{F}_i|+|\mathcal{F}([i])|
\ge|\mathcal{F}_i|+|\partial\mathcal{F}_i|
>{n-(k-r+1)\choose r-1},
\]
a contradiction.

It remains to prove \eqref{eqshadow}. Choose $\mathcal{X}\subset \mathcal{F}_i$ and a real number $x$ such that
\[
|\mathcal{X}|= {n-1-(k-r+1)\choose r-1}={x\choose k-i+1}.
\]
We first claim that $x< n-i-1$. Indeed, since $i< k-r+2$, we have 
\[
{n-i-1\choose k-i+1} > {n-(k-r+2)-1\choose k-(k-r+2)+1}
={n-1-(k-r+1)\choose r-1}.
\]

By the Kruskal--Katona Theorem~\ref{12-2-2} we have   
\[
|\partial \mathcal{F}_i | \ge |\partial \mathcal{X}|\ge {x\choose k-i}.
\]
Furthermore,
\begin{align*}
{x\choose k-i}
&=\frac{k-i+1}{x+i-k}{x\choose k-i+1}
=\frac{k-i+1}{x+i-k}{n-1-(k-r+1)\choose r-1}\\
&=\frac{k-i+1}{x+i-k}\cdot \frac{n-k}{r-1}{n-1-(k-r+1)\choose r-2}\\
&>{n-1-(k-r+1)\choose r-2},
\end{align*}
where the last inequality holds because $i<k-r+2$ implies $k-i+1\ge r$, and $x< n-i-1$ implies $x+i-k<n-k-1<n-k$. This proves \eqref{eqshadow}.
\end{proof}

\begin{claim}\label{1-8-2}
We have $[s]\subset A$ for every $A\in \m F$. 
\end{claim}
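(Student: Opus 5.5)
We argue by contradiction, using the extremal choice of $(\m F,\m G)$: among pairs maximizing $|\m F|+|\m G|$ we took one with $|\m F|$ smallest. Suppose some $F\in\m F$ misses an element of $[s]$. Since $\m F$ is $S_{i,j}$-shifted for all $i\in[s]$ and $j>i$, we may pick such an $F$ with the smallest missing element $i\le s$. Put $i_0:=\min\{i\in[s]:\m F_i\neq\emptyset\}$, so that $\m F_{i_0}$ consists of the sets of $\m F$ containing $[i_0-1]$ and avoiding $i_0$. The plan is to delete $\m F[[i_0-1],[i_0]]$ from $\m F$ and show that $\m G$ can be enlarged by at least as many sets while cross-intersection is kept. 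This produces a pair with the same or larger sum and strictly smaller $|\m F|$, contradicting the choice. The deletion does not touch $\m Y$, because every member of $\m Y$ contains $[s]\ni i_0$. Also $\m F$ stays non-empty, since $\m Y\ne\emptyset$.

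Step 1 identifies the sets that can be added. After the deletion, every remaining $F\in\m F$ contains $[i_0]$ or misses some element of $[i_0-1]$. By minimality of $i_0$ and shiftedness, all remaining sets in fact contain $[i_0]$. Hence every $r$-set $G$ with $i_0\in G$ now meets all of $\m F$. Among these, the new ones are the sets in $\m G$-complement that contain $i_0$ and avoid $[i_0-1]$, which should be counted via $\m G^{i_0}=\m G(\{i_0\},[i_0])$. Before the deletion, such a set $\{i_0\}\cup H$ was excluded exactly when $H$ was disjoint from some member of $\m F_{i_0}$ (viewed in $[i_0+1,n]$). So the number of addable sets is at least
\[
\Big|\Big\{H\in\tbinom{[i_0+1,n]}{r-1}: H\cap F'=\emptyset \text{ for some } F'\in\m F_{i_0}\Big\}\Big|.
\]
By the cross-intersection of $\m F_{i_0}$ with $\m G^{i_0}$ on ground set $[i_0+1,n]$, this count is $\binom{n-i_0}{r-1}-|\m G^{i_0}|$ or more.

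Step 2 compares $|\m F_{i_0}|$ with this quantity. We need $|\m F_{i_0}|+|\m G^{i_0}|\le \binom{n-i_0}{r-1}$. The families $\m F_{i_0}\subset\binom{[i_0+1,n]}{k-i_0+1}$ and $\m G^{i_0}\subset\binom{[i_0+1,n]}{r-1}$ are cross-intersecting on $n-i_0$ points, and $n-i_0> (k-i_0+1)+(r-1)$. Claim~\ref{1-8-1} gives $|\m F_{i_0}|\le\binom{n-1-(k-r+1)}{r-1}$. I would apply Theorem~\ref{10-12-1} with the larger uniformity playing the role of $a$. When $k-i_0+1\ge r-1$, its hypothesis reads $|\m F_{i_0}|\le\binom{(n-i_0)-(k-i_0+1)+(r-1)}{r-1}=\binom{n-k+r-2}{r-1}$, which is exactly the bound of Claim~\ref{1-8-1}. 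The conclusion is then $|\m F_{i_0}|+|\m G^{i_0}|\le\binom{n-i_0}{r-1}$, as desired. When $k-i_0+1<r-1$, I would apply Theorem~\ref{12-3-3} (or Theorem~\ref{10-12-1} with roles swapped) to get the sum at most $\binom{n-i_0}{r-1}$ directly. Edge cases with $\m G^{i_0}$ or $\m F_{i_0}$ empty are trivial.

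Step 3 concludes. The new pair $(\m F\setminus\m F[[i_0-1],[i_0]],\ \m G\cup\{\text{added sets}\})$ is non-empty and cross-intersecting, still contains $\m Y$, has sum at least the old sum, and has strictly smaller $|\m F|$. This contradicts the extremal choice, so $[s]\subset A$ for all $A\in\m F$.

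The main obstacle is Step 1. I must check that, after removing only the level-$i_0$ part, every remaining set really contains $i_0$, so that added sets through $i_0$ need not meet anything else. This uses the shiftedness $S_{j,i_0}$ for $j<i_0$, and also shiftedness with respect to elements beyond $s$, to rule out sets missing an element of $[i_0-1]$. If that fails, I would instead induct on $i$, peeling the levels $\m F_1,\m F_2,\dots$ in order, so that at stage $i$ all remaining sets already contain $[i-1]$.
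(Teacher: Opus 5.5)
Your proof is correct and is essentially the paper's argument: you take the least $i\in[s]$ with $\m F_i\neq\emptyset$, bound $|\m F_i|+|\m G^i|\le\binom{n-i}{r-1}$ via Theorem~\ref{12-3-3} or Theorem~\ref{10-12-1} (the latter using Claim~\ref{1-8-1}), then delete $\m F[[i-1],[i]]$ and add all $r$-sets $G$ with $G\cap[i]=\{i\}$, contradicting the minimality of $|\m F|$.

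Two remarks. First, the step you flag as the main obstacle needs no shiftedness: $\m F[[j-1],[j]]$ consists exactly of the members whose smallest missing element is $j$, so minimality of $i_0$ alone forces every remaining set to contain $[i_0]$. Second, the number of added sets is exactly $\binom{n-i_0}{r-1}-|\m G^{i_0}|$. Your detour through disjointness from $\m F_{i_0}$ is therefore unnecessary, and it is mis-justified: cross-intersection gives the reverse inequality for that count.
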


\begin{proof}
Note that $s>0$. 
We will show that for each $i\in [s]$, we have $ \mathcal{F}_i= \emptyset$. This will imply the claim. 

Arguing indirectly, choose $i\in [s]$ that is the smallest integer such that $ \mathcal{F}_i\ne \emptyset$.

Note that $\mathcal{F}_i\subset {[i+1,n]\choose k-i+1}$ and $\mathcal{G}^i\subset {[i+1,n]\choose r-1}$ are cross-intersecting. We claim that 
\begin{equation}
    \label{eqcross1} |\m F_i|+|\mathcal{G}^i|\le {n-i\choose r-1}.
\end{equation}

Indeed, if $r-1\ge k-i+1$, then \eqref{eqcross1} is guaranteed by Theorem \ref{12-3-3}. Otherwise $k\ge r+i-1\ge r$ and Claim~\ref{1-8-1} gives $|\m F_i|\le {n-1-(k-r+1)\choose r-1}$, so Theorem \ref{10-12-1} (with $\m F_i, \m G^i, n-i, k-i+1, r-1$ playing the roles of $\m A, \m B,n,a,b$, respectively) yields \eqref{eqcross1}.

Recall the notation $\m F\big[[i-1], [i]\big]=\{F\in \m F: [i-1]\subset F, i\not\in F\}$.
Define the new families 
\[
\tilde{\m F}:=\m F\setminus \m F\big[[i-1], [i]\big], \quad 
\tilde{\m G}:=\m G\cup \big\{G\in {[n]\choose r}: G\cap [i]=\{i\}\big\}.
\]
The families $\tilde{\m F}, \tilde{\m G}$ are also cross-intersecting since all the remaining sets in $\m F$ contain $[i]$ and thus intersect the sets newly added to $\m G$. Moreover, \eqref{eqcross1} implies that $|\tilde{\m F}|+|\tilde{\m G}|\ge |\m F|+|\m G|$. Finally, $|\tilde{\m F}|<|\m F|$. This contradicts the choice of $\m F, \m G$, proving the claim.
\end{proof}

Put $\mathcal{Y}=\{Y_1, \dots, Y_{\ell}\}$. Since $\mathcal{Y}$ is intersection-minimal, for each $Y_j\in \mathcal{Y}$, $j\in [\ell]$, there exists
\[
v_j\in \Big(\cap (\mathcal{Y}\setminus \{Y_j\})\Big)\setminus \Big(\cap\mathcal{Y}\Big).
\]
We may assume that $\{v_1, \dots, v_\ell\}=[s+1, s+\ell]$. In this notation, for each $j\in [\ell]$, we have $[s+\ell]\setminus \{s+j\}\subset Y_j$, and  $$Y_j\setminus [s+j-1] \in \m F_{s+j}.$$
So $\m F_{s+j}\ne \emptyset$ for all $j\in [\ell]$. 
Next we show that $Y_j\setminus [s+j-1]$ is the unique member of $\m F_{s+j}$. 

\begin{claim}\label{12-14-1}
For every $i\in [\ell]$, we have $|\m F_{s+i}|=1$.  
\end{claim}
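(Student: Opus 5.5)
We argue by the same extremality trick as in Claim~\ref{1-8-2}: we show that any extra member of $\m F_{s+i}$ can be traded for sets in $\m G$ without decreasing $|\m F|+|\m G|$. That would contradict the choice of $(\m F,\m G)$ with maximal sum and then minimal $|\m F|$.

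Fix $i\in[\ell]$ and put $j:=s+i$. By Claim~\ref{1-8-2} every member of $\m F$ contains $[s]$. So $\m F_j=\m F([j-1],[j])$ consists of the sets $F\setminus[j]$ with $F\cap[j]=[j-1]$, and lies in ${[j+1,n]\choose k-j+1}$. The set $Y_i\setminus[j-1]$ belongs to $\m F_j$, since $[s+\ell]\setminus\{s+i\}\subset Y_i$. Suppose for contradiction that $|\m F_j|\ge 2$.

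\textbf{Step 1: the relevant part of $\m G$.} Consider $\m G([s+i-1]\setminus\{\text{?}\})$; more precisely, consider the sets $G\in\m G$ with $G\cap[j]=\{j\}$, i.e.\ $\m G^j=\m G(\{j\},[j])\subset{[j+1,n]\choose r-1}$. These sets miss $[j-1]$, so they must intersect every set of $\m F$ containing $[j-1]$ but not $j$. Hence $\m F_j$ and $\m G^j$ are cross-intersecting on $[j+1,n]$.

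\textbf{Step 2: the replacement.} Take the new pair
\[
\tilde{\m F}:=\m F\setminus\big(\m F[[j-1],[j]]\setminus\{Y_i\}\big),\qquad
\tilde{\m G}:=\m G\cup\Big\{G\in{[n]\choose r}: G\cap[j]=\{j\},\ G\cap (Y_i\setminus[j-1])\neq\emptyset\Big\}.
\]
Every remaining set of $\tilde{\m F}$ either contains $[j]$, or equals $Y_i$, or misses some element of $[j-1]$. We check that the added sets of $\tilde{\m G}$ meet each of these:
\begin{itemize}
\item sets containing $[j]$ meet them in $j$;
\item $Y_i$ is met by construction;
\item a set $F\in\m F$ missing some element of $[j-1]$ must, by Claim~\ref{1-8-2}, miss some $s+i'$ with $i'<i$. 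The shifting assumptions $S_{p,q}$ for $p\in[s]$ do not cover this directly. Here we must use that $F\cap[s+\ell]$ has a specific form. We may also need the extremal choice to restrict to sets with $F\cap[j]\supseteq[s]$, and inductively on $i$ (having established $|\m F_{s+i'}|=1$ for $i'<i$) such $F$ equals $Y_{i'}$, which contains $j=s+i$ and hence meets every added set.
\end{itemize}
So we prove the claim by induction on $i$, and this inductive step is what makes cross-intersection survive.

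\textbf{Step 3: the counting.} It remains to show
\[
|\m F_j|+|\m G^j|\le 1+\#\Big\{G'\in{[j+1,n]\choose r-1}: G'\cap Y'\ne\emptyset\Big\},\qquad Y':=Y_i\setminus[j-1],
\]
where $Y'$ is a $(k-j+1)$-set. This is a Hilton--Milner/Frankl--Tokushige type inequality for the non-empty cross-intersecting pair $\m F_j,\m G^j$ on ground set $[j+1,n]$ of size $n-j$, and I would apply Theorem~\ref{12-3-2}. The right-hand side equals ${n-j\choose r-1}-{n-j-(k-j+1)\choose r-1}+1$. When $r-1\ge k-j+1$, Theorem~\ref{12-3-2} (with roles $a=r-1$, $b=k-j+1$) gives exactly this bound, provided $n-j>k+r-j$, which holds since $n>k+r$. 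When $k-j+1>r-1$, Theorem~\ref{12-3-2} bounds the sum by ${n-j\choose k-j+1}-{n-j-(r-1)\choose r-1}+1$, which is the wrong shape. In that regime I would instead use Claim~\ref{1-8-1} ($|\m F_j|\le{n-1-(k-r+1)\choose r-1}$) together with Theorem~\ref{10-12-1} and a refinement.

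Given the counting bound, $|\tilde{\m F}|+|\tilde{\m G}|\ge|\m F|+|\m G|$ and $|\tilde{\m F}|<|\m F|$, which contradicts the choice. \textbf{The main obstacle} is this last regime $k-j+1>r-1$ (only possible in case (ii)). There one needs a version of Theorem~\ref{10-12-1} for non-empty families, in which the extremal configuration is a single set versus everything meeting it. I expect this to follow from the size bound of Claim~\ref{1-8-1} combined with the unimodality argument behind Theorem~\ref{10-12-1}.
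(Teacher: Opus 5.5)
Your replacement pair $(\tilde{\mathcal F},\tilde{\mathcal G})$, the induction on $i$ (the paper equivalently takes the smallest $i$ with $|\mathcal F_{s+i}|\ge 2$), and the reduction to $|\mathcal F_j|+|\mathcal G^j|\le\binom{n-j}{r-1}-\binom{n-k-1}{r-1}+1$ via Theorem~\ref{12-3-2} are exactly the paper's proof. The gap is that you leave the regime $k-j+1>r-1$ open and defer it to an unproved ``non-empty version'' of Theorem~\ref{10-12-1}. The observation you are missing is that this regime cannot occur. In case (ii), the hypothesis $s=|\cap\,\mathcal Y|\ge k-r+2$ gives $j=s+i\ge k-r+3$, hence $k-j+1\le r-2$. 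In case (i), $k<r$ and $j\ge s+1\ge 2$, hence $k-j+1\le k-1\le r-2$. So Theorem~\ref{12-3-2} always applies, with $r-1>k-j+1$ on a ground set of size $n-j>(r-1)+(k-j+1)$. Neither Claim~\ref{1-8-1} nor any unimodality argument is needed; the paper disposes of the complementary case in exactly this way.

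A few smaller points also need filling in.
\begin{itemize}
\item Theorem~\ref{12-3-2} requires both families to be non-empty. $\mathcal F_j$ is non-empty since it contains $Y_i\setminus[j-1]$, but $\mathcal G^j$ may be empty. In that case you need directly $\binom{n-j}{k-j+1}\le\binom{n-j}{r-1}-\binom{n-k-1}{r-1}+1$. This follows from $k-j+1\le r-2$ and $n>k+r$: write $\binom{n-j}{k-j+1}=\sum_{p=1}^{k-j+2}\binom{n-j-p}{k-j+2-p}$ and $\binom{n-j}{r-1}-\binom{n-k-1}{r-1}=\sum_{p=1}^{k-j+1}\binom{n-j-p}{r-2}$, and compare termwise (the last term of the first sum is $1$).
\item In Step 2, take $i'$ to be the smallest index with $s+i'\notin F$, not just some such index. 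Then $F\setminus[s+i']\in\mathcal F_{s+i'}$, which forces $F=Y_{i'}\ni j$.
\item To conclude that $(\tilde{\mathcal F},\tilde{\mathcal G})$ is an admissible pair, you must check $\mathcal Y\subset\tilde{\mathcal F}$. This holds because for $i'<i$ the set $Y_{i'}$ misses $s+i'\in[j-1]$, and for $i'>i$ it contains $[j]$. So $Y_i$ is the only member of $\mathcal Y$ in $\mathcal F[[j-1],[j]]$.
\end{itemize}
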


\begin{proof}
Suppose for contradiction that $|\mathcal{F}_{s+i}|\ge 2$ for some $i\in [\ell]$, and take the smallest such $i$. Then for every $j\in [i-1]$, we have $\m F_{s+j}=\{Y_j\setminus [s+j-1]\}$ and $s+i\in Y_j\setminus [s+j-1]$ since $[s+\ell]\setminus \{s+j\}\subset Y_j$ and $j<i$. Consequently, we have
\begin{equation}\label{contains+i}
    \text{Every set in $\m F\setminus \m F\big[[s+i-1],[s+i]\big]$ contains $s+i$.}
\end{equation}

We first show that Claim \ref{12-14-1} holds provided that  
\begin{equation}\label{1-15-2}
|\mathcal{F}_{s+i}|+|\mathcal{G}^{s+i}|\le {n-s-i\choose r-1}-{n-k-1\choose r-1}+1.
\end{equation}
To this end, define 
\begin{align*}
    \hat{\m G}&:=\{G\in \tbinom{[n]}{r}: G\cap [s+i]=\{s+i\}, G\cap Y_{i}\ne \emptyset\},\\
    \tilde{\m F}&:=(\m F\setminus \m F\big[[s+i-1], [s+i]\big])\cup \{Y_{i}\},\\
    \tilde{\m G}&:=\m G\cup \hat{\m G}.
\end{align*}
Note that $[s+i-1]\subset Y_i$ and $s+i\not\in Y_i$.
We have $\m Y \cap \m F\big[[s+i-1], [s+i]\big]=\{Y_i\}$, hence $\m Y \subset \tilde{\m F}$.
By (\ref{contains+i}), every set in $\m F\setminus \m F\big[[s+i-1],[s+i]\big]$ intersects every set in $\hat{\m G}$. 
Thus $\tilde{\m F}, \tilde{\m G}$ are cross-intersecting and $\m Y \subset \tilde{\m F}$,
so they satisfy the hypotheses of  Lemma \ref{lemkey}. 
Since $|\m F|+|\m G|$ is maximum, we have 
\begin{equation}\label{sumlower}
    |\tilde{\m F}|+|\tilde{\m G}|\le |\m F|+|\m G|.
\end{equation}
On the other hand, since $[s+i-1]\subset Y_i$ and $s+i\not\in Y_i$,
\begin{equation}\label{hatg}
    |\hat{\m G}|={n-s-i\choose r-1}-{n-k-1\choose r-1}.
\end{equation}
Note that $\{G\cup \{s+i\}: G\in \m G^{s+i}\}= \hat{\m G} \cap \m G$.
We have 
$$|\tilde{\m G}|=|\m G|+|\hat{\m G}|-|\m G^{s+i}|.$$
As $|\m F\big[[s+i-1], [s+i]\big]|=|\m F_{s+i}|$ and $Y_i\in \m F\big[[s+i-1], [s+i]\big]$, we have 
$$|\tilde{\m F}|=|\m F|-|\m F_{s+i}|+1.$$
 Combining (\ref{1-15-2}) and (\ref{hatg}), we obtain 
\[
|\tilde{\m F}|+|\tilde{\m G}|=|\m F|-|\m F_{s+i}|+1+|\m G|+|\hat{\m G}|-|\m G^{s+i}|\ge |\m F|+|\m G|.
\]
By (\ref{sumlower}), $|\tilde{\m F}|+|\tilde{\m G}|=|\m F|+|\m G|$.
However, $|\tilde{\m F}|<|\m F|$ and $\m Y \subset \tilde{\m F}$, which contradicts the minimal choice of $\m F$. 

In the remainder of the proof, we verify (\ref{1-15-2}). 
Note that $\m F_{s+i}\subset\tbinom{[s+i+1,n]}{k-(s+i-1)}$ and $\m G^{s+i}\subset \tbinom{[s+i+1,n]}{r-1}$ are cross-intersecting. 

Suppose $k-(s+i-1)<r-1$. 
If $\m G^{s+i}=\emptyset$, then since $n>k+r$ and $r\ge 3$, we obtain
\[
|\mathcal{F}_{s+i}|+|\mathcal{G}^{s+i}|\le {n-(s+i)\choose k-(s+i-1)}< {n-s-i\choose r-1}-{n-k-1\choose r-1}+1,
\]
where the last inequality uses ${n-(s+i)\choose k-(s+i-1)}=\sum_{j=1}^{k-(s+i-2)}{n-(s+i)-j\choose k-(s+i-2)-j}$ and ${n-s-i\choose r-1}-{n-k-1\choose r-1}=\sum_{j=1}^{k-(s+i-1)}{n-(s+i)-j\choose r-2}$.
Next, assume that $\m G^{s+i}\ne \emptyset$. Since $\m F_{s+i}\ne \emptyset$, the two families are non-empty cross-intersecting families. By Theorem \ref{12-3-2} (with $r-1,k-(s+i-1),n-(s+i)$ in the roles of $a,b,n$, respectively), (\ref{1-15-2}) holds. 

Next, assume that $k-(s+i-1)\ge r-1$. Then $s\le k-r+2-i$. 
If $k\ge r$, then by the hypotheses of Lemma \ref{lemkey}, 
$s\ge k-r+2$, a contradiction. Therefore,  $k<r$, so $s+i\le k-r+2\le 1$. Notice that $s\ge 1$ and $i\ge 1$.  
A contradiction. This proves (\ref{1-15-2}) and Claim \ref{12-14-1}.
\end{proof}


By Claim \ref{12-14-1} and the observation that $Y_i\setminus [s+i-1]\in \m F_{s+i}$, we have $\m F_{s+i}=\{Y_i\setminus [s+i-1]\}$ for each $i\in [\ell]$. 
Combined with Claim~\ref{1-8-2}, every $A\in \m F$ contains $[s]$, and every $A\in \m F\setminus \m Y$ contains $[s+\ell]$ (since $A$ does not contribute to any $\m F_{s+i}$). 
We now show  $\m F=\m Y$.  
For each $Y_i$, $i\in [\ell]$, select one element $w_i\in Y_i\setminus [s+\ell]$. (Note that these $w_i$ may coincide.) 
Put
$I=\{w_1, \dots, w_\ell\}$ and denote
\begin{align*}
\mathcal{F}_I&:=\{ A\setminus [s+\ell]: A\in   \mathcal{F}, A\cap I=\emptyset \},\\
\mathcal{G}^I&:=\{B\setminus I: B\in \mathcal{G}, I\subset B, B\cap [s+\ell] =\emptyset\}.
\end{align*}
The families $\mathcal{F}_I \subset {[s+\ell+1,n]\setminus I \choose k-s-\ell}$ and 
$\mathcal{G}^I\subset {[s+\ell+1,n]\setminus I \choose r-|I|}$ are cross-intersecting. 
Moreover, we have $k-s-\ell< r-|I|$. Indeed, if $s\ge k-r+2$, then $k-s-\ell\le k-s-|I|\le k-(k-r+2)-|I|<r-|I|$; if $s\le k-r+1$, then $k<r$ (by the condition of Lemma \ref{lemkey}), so $s=0$, and hence $k-s-\ell=k-|I|<r-|I|$ again.  Thus, 
\[
|\mathcal{F}_I|+|\mathcal{G}^I|\le {n-s-\ell-|I|\choose r-|I|}
\]
by Theorem~\ref{12-3-3}. We claim that $\mathcal{F}_I=\emptyset$. 

Indeed, suppose that $\mathcal{F}_I\neq\emptyset$. Define
\[
\tilde{\m F}:=\m F\setminus \{A\in \m F: A\cap I=\emptyset\}, 
\qquad 
\tilde{\m G}:=\m G\cup \{B\in \tbinom{[n]}{r}: I\subset B,\ B\cap [s+\ell]=\emptyset\}.
\]
Then $\tilde{\m F},\tilde{\m G}$ are cross-intersecting, and
\[
|\tilde{\m F}|+|\tilde{\m G}|\ge |\m F|+|\m G|,
\]
while $|\tilde{\m F}|<|\m F|$, contradicting the choice of $\m F,\m G$. Thus $\mathcal{F}_I=\emptyset$.

It remains to show that for any set $A\in \m F\setminus \m Y$ we can find $I$ as above so that $A\cap I = \emptyset$. Indeed, since 
\[
|Y_i\setminus [s+\ell]| = k-s-(\ell-1) > k-s-\ell = |A\setminus [s+\ell]|,
\]
the set $Y_i\setminus [s+\ell]$ is not contained in $A$, so
we may pick $f_i\in Y_i\setminus (A\cup [s+\ell])$. Set $I = \{f_1,\ldots, f_\ell\}$; then $A\cap I = \emptyset.$ 
Since the above conclusion holds for any valid choice of $I$, we must conclude that $\m F\setminus \m Y$ is empty. Thus $\m F = \m Y$, and by the definition of $\m G'$, $|\m G| \le |\m G'|$. Hence $|\m F|+|\m G|\le |\m Y|+|\m G'|$, completing the proof.
\end{proof}

\section{Large diversity: Proof of Lemma \ref{lemmain}}\label{seclarge}
We begin with the following result. 
\begin{proposition}\label{1-4-3}
Let $n>a+b$, $\m A \subset {[n]\choose a}$, $\m B \subset {[n]\choose b}$ be cross-intersecting families. If $|\m A|\ge |\m A'|$ and 
 $\gamma(\mathcal{B})> {n-3\choose b-2}$, then $\gamma(\mathcal{A})> {n-1-(a-b+2)\choose a-(a-b+2)}$. 
\end{proposition}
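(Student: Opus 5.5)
We argue by contraposition. Assume that $|\m A|\ge|\m A'|$, that $\gamma(\m B)>{n-3\choose b-2}$, and that
\[
\gamma(\m A)\le N:={n-1-(a-b+2)\choose a-(a-b+2)}={n-a+b-3\choose b-2}.
\]
The plan is to derive an upper bound on $|\m A|$ that contradicts the lower bound \eqref{11-30-1} of Lemma~\ref{11-23-3}.

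\textbf{Step 1 (set-up).} Let $x$ be an element of maximum degree in $\m A$, so $|\m A(\bar x)|=\gamma(\m A)\le N$. The key observation is
\[
|\m B(\bar x)|=|\m B|-|\m B(x)|\ge |\m B|-\Delta(\m B)=\gamma(\m B)>{n-3\choose b-2}.
\]
Thus $\m B(\bar x)$ is large regardless of which element is the centre of $\m B$. Moreover, the families
\[
\m A(x)\subset {[n]\setminus\{x\}\choose a-1},\qquad \m B(\bar x)\subset {[n]\setminus\{x\}\choose b}
\]
are cross-intersecting on a ground set of size $n-1\ge (a-1)+b$.

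\textbf{Step 2 (bound $|\m A(x)|$ via Theorem~\ref{12-1-1}).} By Theorem~\ref{12-1-1} we may replace both families by their lex-initial segments on $Y=[n]\setminus\{x\}$; relabel the elements of $Y$ as $y_1<y_2<\dots$.

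Since $|\m B(\bar x)|>{|Y|-2\choose b-2}$, its lex-initial segment contains every $b$-set containing $\{y_1,y_2\}$. It also contains the next set in lex order, namely $B_0=\{y_1,y_3,\dots,y_{b+1}\}$.

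Hence every set of $\m A(x)$ meets $\{y_1,y_2\}$, and every such set avoiding $y_1$ must meet $B_0\setminus\{y_1\}$, which has $b-1$ elements. Counting gives
\[
|\m A(x)|\le {n-1\choose a-1}-{n-3\choose a-1}-{n-b-2\choose a-2}.
\]

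\textbf{Step 3 (compare with Lemma~\ref{11-23-3}).} Combining Step 2 with $\gamma(\m A)\le N$ yields
\[
|\m A|\le {n-1\choose a-1}-{n-3\choose a-1}-{n-b-2\choose a-2}+{n-a+b-3\choose b-2}.
\]
On the other hand, \eqref{11-30-1} gives
\[
|\m A|\ge|\m A'|\ge {n-1\choose a-1}-{n-4\choose a-1}+{n-a+b-4\choose b-3}.
\]
Subtracting, and using ${n-3\choose a-1}-{n-4\choose a-1}={n-4\choose a-2}$ and ${n-a+b-3\choose b-2}-{n-a+b-4\choose b-3}={n-a+b-4\choose b-2}$, a contradiction follows once
\[
{n-4\choose a-2}+{n-b-2\choose a-2}>{n-a+b-4\choose b-2}.
\]
This holds for the following reasons:
\begin{itemize}
\item Since $n-4\ge a+b-4$ and $a\ge b$, we have ${n-4\choose a-2}\ge{n-4\choose b-2}\ge{n-a+b-4\choose b-2}$.
\item The term ${n-b-2\choose a-2}$ is positive because $n>a+b$, which supplies the strictness.
\end{itemize}

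\textbf{Main obstacle: the exceptional case.} The step needing most care is the exceptional case $a=b=4$ of Lemma~\ref{11-23-3}, where the lower bound on $|\m A'|$ is weaker by $1$. There I would rely on the slack ${n-b-2\choose a-2}\ge 1$, together with the strict part of the inequality ${n-4\choose a-2}>{n-a+b-4\choose b-2}$ when $n>a+b$, to absorb the missing $1$.

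A secondary point is the correct description of the lex-initial segment when $|\m B(\bar x)|$ only barely exceeds ${n-3\choose b-2}$; Step 2 uses only the first set beyond the full $\{y_1,y_2\}$-star, so it is robust to this.
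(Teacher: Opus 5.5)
Your argument is correct and is essentially the paper's proof. The paper runs exactly your lex-initial computation in the case $a=b$: the full $\{y_1,y_2\}$-star plus the extra set $B_0$, giving $|\mathcal{A}(1)|\le\binom{n-2}{a-2}+\binom{n-3}{a-2}-\binom{n-b-2}{a-2}$. For $a>b$ it uses only the star, via Proposition~\ref{1-4-1}, since there $\binom{n-4}{a-2}$ already strictly dominates the bound on $\gamma(\mathcal{A})$. You simply run the finer computation uniformly for all $a\ge b$, which works.

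One correction concerns the exceptional case $a=b=4$. When $a=b$, the inequality $\binom{n-4}{a-2}\ge\binom{n-a+b-4}{b-2}$ is an equality, so there is no ``strict part'' to use. Also, $\binom{n-b-2}{a-2}\ge 1$ by itself would not absorb the missing $1$. What actually works, and what the paper uses, is $\binom{n-b-2}{a-2}=\binom{n-6}{2}\ge\binom{3}{2}=3>1$, which holds since $n\ge 9$.
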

\begin{proof}
 Suppose for contradiction that $\gamma(\mathcal{A})\le {n-1-(a-b+2) \choose a-(a-b+2)}$. 
 W.l.o.g., we may assume $\mathcal{A}_{\Delta}=\mathcal{A}(1)$. 
 
 First, consider the case $a>b$. 
 By Lemma \ref{11-23-3},
 we have $|\m A|\ge |\m A'|> {n-2\choose a-2}+{n-3\choose a-2}+{n-4\choose a-2}$, combining this with  
  $$\gamma(\mathcal{A})\le {n-1-(a-b+2) \choose a-(a-b+2)}\le {n-4 \choose b-2}\le {n-4 \choose a-2},$$ where the last inequality holds by $a>b$ and $n>a+b$, we  have 
$|\mathcal{A}(1)|\ge{n-2\choose a-2}+{n-3\choose a-2}$.
Note that $\m A(1), \m B(\bar1)$ are cross-intersecting.
Applying Proposition \ref{1-4-1} to $\m A(1), \m B(\bar1)$ on $[2,n]$ with $i=2$ gives $\gamma(\mathcal{B})\le |\mathcal{B}(\bar{1})|\le{n-3\choose b-2}$, a contradiction.

Next, assume $a=b$. In this case, our assumption is $\gamma(\mathcal{A})=|\m A(\bar{1})|\le {n-3 \choose a-2}$. 
By Lemma \ref{11-23-3}, we have $|\m A|\ge {n-2\choose a-2}+2{n-3\choose a-2}-1$ (the $-1$ from the exceptional case in Lemma \ref{11-23-3}), therefore, 
\begin{equation}\label{a1upper}
    |\mathcal{A}(1)|\ge{n-2\choose a-2}+{n-3\choose a-2}-1.
\end{equation} 
To end the proof of Proposition \ref{1-4-3}, it suffices to
show that $\gamma(\mathcal{B})\le |\mathcal{B}(\bar{1})|\le{n-3\choose b-2}$. 

Suppose for contradiction that $ |\mathcal{B}(\bar{1})|\ge{n-3\choose b-2}+1$.
Let 
$$\m F=\m L([2,n], |\m A(1)|, a-1), \,\,\m G=\m L([2,n],|\m B(\bar{1})|,b).$$ 
Since $\m B(\bar{1})\subset {[2,n]\choose b}$ and $\m A(1)\subset {[2,n]\choose a-1}$ are cross-intersecting,
by Theorem \ref{12-1-1}, $\m F, \, \m G$ are cross-intersecting. 
Since $|\m G|=|\m B(\bar1)|\ge {n-3\choose b-2}+1$,
all sets $B\subset [2,n]$ with $\{2,3\}\subset B$ belong to $\m G$, and $\{2,4, 5, \dots, b+2\}\in \m G$. 
Since $n>a+b=2a$, for every $A\in \m F$, we have  $A\cap \{2,3\}\ne \emptyset$ and $A\cap \{2,4,5,\dots,b+2\}\ne \emptyset$, which yields 
$|\mathcal{A}(1)|\le {n-2\choose a-2}+{n-3\choose a-2}-{n-b-2\choose a-2}<{n-2\choose a-2}+{n-3\choose a-2}-1$ (since ${n-b-2\choose a-2}>1$ when $a\ge 4$ and $n>2a$), contradicting \eqref{a1upper}. 
\end{proof}

We will use Proposition~\ref{1-4-3} in the proof of Lemma~\ref{lemmain} to reduce case analysis.
In the proof, we also need the following observation.
\begin{obser}\label{11-27-1}
Let $\mathcal{H}\subset {[n]\choose h}$ satisfy $\Delta(\mathcal{H})=|\mathcal{H}(1)|\ge {n-2\choose h-2}+\gamma(\mathcal{H})$. Define $\mathcal{H}'$ by 
$\mathcal{H}'(1)=\mathcal{H}(1)$ and $\mathcal{H}'(\bar{1})=\mathcal{L}([2,n], \gamma(\mathcal{H}), h)$.
Then $\Delta(\mathcal{H}')=|\mathcal{H}'(1)|$. 
Moreover, if we further replace $\mathcal{H}'(1)$ by $\mathcal{L}([2,n], |\mathcal{H}(1)|, h-1)$, then the new family still has maximum degree at element $1$. 
\end{obser}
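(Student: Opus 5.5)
The plan is a direct degree count; no shifting or Kruskal--Katona machinery should be needed. Keeping the $\mathcal{H}'(1)$ notation of the Observation, the degree of $1$ in $\mathcal{H}'$ is $|\mathcal{H}'(1)|=|\mathcal{H}(1)|$. Write $\gamma:=\gamma(\mathcal{H})$. Since $\mathcal{H}'(\bar 1)=\mathcal{L}([2,n],\gamma,h)$ consists of exactly $\gamma$ sets, we have $|\mathcal{H}'|=|\mathcal{H}(1)|+\gamma$. It therefore suffices to show that every $x\in[2,n]$ has degree at most $|\mathcal{H}(1)|$ in $\mathcal{H}'$.

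Fix $x\neq 1$ and split the sets of $\mathcal{H}'$ containing $x$ according to whether they contain $1$.

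\emph{Sets containing both $1$ and $x$.} Each is an $h$-subset of $[n]$ containing $\{1,x\}$, so there are at most ${n-2\choose h-2}$ of them, whatever $\mathcal{H}'(1)$ is.

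\emph{Sets containing $x$ but not $1$.} These lie in $\mathcal{H}'(\bar 1)$, which has exactly $\gamma$ members, so there are at most $\gamma$ of them.

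Hence the degree of $x$ in $\mathcal{H}'$ is at most ${n-2\choose h-2}+\gamma\le |\mathcal{H}(1)|$, where the last inequality is the hypothesis $\Delta(\mathcal{H})=|\mathcal{H}(1)|\ge {n-2\choose h-2}+\gamma(\mathcal{H})$. So $\Delta(\mathcal{H}')=|\mathcal{H}'(1)|$.

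For the second statement, replace $\mathcal{H}'(1)$ by $\mathcal{L}([2,n],|\mathcal{H}(1)|,h-1)$. This leaves the degree of $1$ equal to $|\mathcal{H}(1)|$ and leaves the diversity part of size $\gamma$. The count above used nothing about the particular sets in either part; it used only the trivial bound ${n-2\choose h-2}$ on sets through $\{1,x\}$ and the size $\gamma$ of the part avoiding $1$. The same inequality therefore gives that the new family still has maximum degree at $1$.

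I do not expect any real obstacle. The only point to check is that the hypothesis is stated with the universal bound ${n-2\choose h-2}$ rather than a structure-dependent quantity, which is exactly what makes the argument insensitive to replacing either part by a lex-initial family.
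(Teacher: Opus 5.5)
Your proof is correct and is essentially the paper's argument. The paper gives no separate proof of this observation, but the same degree count, $|\mathcal{F}(j)|\le\binom{n-2}{k-2}+|\mathcal{F}(\bar i)|\le|\mathcal{F}(i)|$, is exactly the proof of Observation~\ref{obser2}; as you note, it uses only the sizes of the two parts, so it also covers the lex-initial replacement.
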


To prove Lemma \ref{lemmain}, we need the following three propositions. Their  proofs are postponed to the next sections. 

\begin{proposition}\label{12-15-1}
In Assumption \ref{12-14-2}, if, additionally, $|\mathcal{A}|\ge |\mathcal{A}'|$, $|\mathcal{B}|\ge |\mathcal{B}'|$, $\gamma(\mathcal{A})\le {n-1-(a-b+2)\choose a-(a-b+2)}$ and $\gamma(\mathcal{B})\le {n-3\choose b-2}$, then 
$\Delta(\mathcal{A})\ge {n-2\choose a-2}+\gamma(\mathcal{A})$ and $\Delta(\mathcal{B})\ge {n-2\choose b-2}+\gamma(\mathcal{B})$, and $\mathcal{A}$ and $\mathcal{B}$ share the same element of maximum degree.
\end{proposition}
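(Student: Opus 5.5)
Write $\gamma_A:=\gamma(\mathcal{A})$ and $\gamma_B:=\gamma(\mathcal{B})$. The plan is to combine the lower bounds of Lemma~\ref{11-23-3} with the assumed upper bounds on $\gamma_A,\gamma_B$. This gives lower bounds on the degree parts. We then use the cross-intersection of the degree part of one family with the diversity part of the other, via Theorem~\ref{12-1-1} and Proposition~\ref{1-4-1}, to force a common center.

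\textbf{Step 1 (degree of each family).} Let $x$ and $y$ be elements of maximum degree of $\mathcal{A}$ and $\mathcal{B}$. By Lemma~\ref{11-23-3},
\[
|\mathcal{A}|\ge\tbinom{n-1}{a-1}-\tbinom{n-4}{a-1}+\tbinom{n-1-(a-b+3)}{a-(a-b+3)} .
\]
The first two terms equal $\binom{n-2}{a-2}+\binom{n-3}{a-2}+\binom{n-4}{a-2}$. Since $\gamma_A\le\binom{n-1-(a-b+2)}{b-2}$, we get
\[
\Delta(\mathcal{A})=|\mathcal{A}|-\gamma_A\ge\tbinom{n-2}{a-2}+\tbinom{n-3}{a-2}+\tbinom{n-4}{a-2}+\tbinom{n-1-(a-b+3)}{b-3}-\tbinom{n-1-(a-b+2)}{b-2}.
\]
We then need to check that this is at least $\binom{n-2}{a-2}+\gamma_A$. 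It suffices that
\[
\tbinom{n-3}{a-2}+\tbinom{n-4}{a-2}+\tbinom{n-1-(a-b+3)}{b-3}\ge 2\tbinom{n-1-(a-b+2)}{b-2}.
\]
This should follow from $a\ge b$ and $n>a+b$, using $\binom{n-3}{a-2}\ge\binom{n-3-(a-b)}{b-2}$ and Pascal's rule. The case $b=a$ with the ``$-1$'' exception of Lemma~\ref{11-23-3} needs care, since the slack there is only $\binom{n-4}{a-3}$ versus $1$. The same computation for $\mathcal{B}$ uses $\gamma_B\le\binom{n-3}{b-2}$ and \eqref{11-30-2}:
\[
\Delta(\mathcal{B})\ge\tbinom{n-2}{b-2}+\tbinom{n-3}{b-2}+\tbinom{n-4}{b-2}+\tbinom{n-4}{b-3}-\tbinom{n-3}{b-2}=\tbinom{n-2}{b-2}+\tbinom{n-3}{b-2}.
\]
This is at least $\binom{n-2}{b-2}+\gamma_B$, with the exceptional $-1$ case absorbed when $a=b=4$ by a separate small check.

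\textbf{Step 2 (common center).} Suppose $x\ne y$, say $x=1$ and $y=2$. Since $\gamma_B\le\binom{n-3}{b-2}$, all but at most $\binom{n-3}{b-2}$ sets of $\mathcal{B}$ contain $2$. Hence $|\mathcal{B}(2)|\ge\binom{n-2}{b-2}+\binom{n-3}{b-2}$ by Step~1. In particular $\mathcal{B}(\bar1)\supset\mathcal{B}[\{2\},\{1,2\}]$ is large: it has at least $\binom{n-3}{b-2}$ plus roughly $\binom{n-4}{b-2}$ sets on $[3,n]$ minus what could contain $1$. More simply, $|\mathcal{B}(\bar 1)|\ge |\mathcal{B}|-|\mathcal{B}(1)|$, and $|\mathcal{B}(1)|\le\Delta$ is small relative to it once $1$ is not the center.

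Now $\mathcal{A}(1)\subset\binom{[2,n]}{a-1}$ and $\mathcal{B}(\bar1)\subset\binom{[2,n]}{b}$ are cross-intersecting, with $|\mathcal{A}(1)|\ge\binom{n-2}{a-2}+\binom{n-3}{a-2}=\binom{n-1}{a-1}-\binom{n-3}{a-1}$. Proposition~\ref{1-4-1} on $[2,n]$ with $i=2$ then gives $|\mathcal{B}(\bar1)|\le\binom{n-3}{b-2}$. Hence
\[
|\mathcal{B}|\le |\mathcal{B}(1)|+\tbinom{n-3}{b-2}\le \gamma_B+|\mathcal{B}[\{1,2\},\{1,2\}]|+\tbinom{n-3}{b-2}\le 2\tbinom{n-3}{b-2}+\tbinom{n-3}{b-3}.
\]
Here $\mathcal{B}(1)$ splits into the sets avoiding $2$, counted in $\gamma_B$, and those containing $\{1,2\}$. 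This is smaller than the lower bound \eqref{11-30-2} for $n>a+b$, a contradiction.

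\textbf{Main obstacle.} We expect the main obstacle to be the binomial inequality in Step~1 for $\mathcal{A}$ when $a>b$ is close to $b$, together with the exceptional $a=b=4$ cases of Lemma~\ref{11-23-3}. If the direct estimate fails there, the fallback is to apply Theorem~\ref{12-1-1} to replace $\mathcal{A}(\bar x)$ by an $L$-initial family, as in Observation~\ref{11-27-1}. One would then bound the degree of a competing element directly.
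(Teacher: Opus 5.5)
There are two genuine gaps: the contradiction claimed in Step~2 is false as written, and the exceptional case in Step~1 is misdiagnosed and left unproved.

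\textbf{Step 2.} You bound $|\mathcal{B}[\{1,2\},\{1,2\}]|$ by $\binom{n-3}{b-3}$, but there are $\binom{n-2}{b-2}$ $b$-sets containing $\{1,2\}$. With the correct count your chain only gives $|\mathcal{B}|\le\binom{n-2}{b-2}+\gamma(\mathcal{B})+|\mathcal{B}(\bar1)|\le\binom{n-2}{b-2}+2\binom{n-3}{b-2}$. This is exactly the right-hand side of \eqref{11-30-2}, so no contradiction follows.

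In fact no contradiction is possible. Take $a=b=4$ and $\mathcal{M}=\mathcal{T}=\{\{2,3,4,5\},\{2,3,4,6\}\}$, with $\mathcal{A}=\mathcal{B}=\mathcal{H}_{4,2}$. This pair is maximal cross-intersecting, has $|\mathcal{A}|=|\mathcal{A}'|$ and $|\mathcal{B}|=|\mathcal{B}'|$, and meets both diversity bounds with equality. Here $1,2,3$ are tied maximum-degree elements, so $x=1$, $y=2$ is a legitimate choice. What you must show is that $x$ is \emph{also} a maximum-degree element of $\mathcal{B}$, not that $x\ne y$ is impossible.

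Your chain does give this once you analyse tightness. Since $\gamma(\mathcal{B})$ and $|\mathcal{B}(\bar1)|$ are both at most $\binom{n-3}{b-2}$, the lower bound \eqref{11-30-2} forces both to equal $\binom{n-3}{b-2}$. Hence $|\mathcal{B}(1)|=|\mathcal{B}|-\gamma(\mathcal{B})=\Delta(\mathcal{B})$. The $a=b=4$ exceptions of Lemma~\ref{11-23-3}, where one of your inputs is off by one, need separate treatment, e.g.\ by also running the argument from $\mathcal{A}$'s side. Repaired this way, your route is more elementary than the paper's. The paper applies Theorem~\ref{thm:FK} to $\mathcal{A}(1\bar2),\mathcal{B}(2\bar1)$ to get a common element $3$, and then identifies $\mathcal{H}_{a,2},\mathcal{H}_{b,2}$ in the equality case.

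\textbf{Step 1.} For $a=b$ the inequality you need is an identity, because $\binom{n-4}{a-2}+\binom{n-4}{a-3}=\binom{n-3}{a-2}$. So the slack is $0$, not $\binom{n-4}{a-3}$. Likewise, your bound $\Delta(\mathcal{B})\ge\binom{n-2}{b-2}+\binom{n-3}{b-2}$ has zero slack when $\gamma(\mathcal{B})=\binom{n-3}{b-2}$.

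Consequently, when $a=b=4$, $|\mathcal{M}|=2$, $|\mathcal{T}|=1$, counting leaves open exactly the configuration $\gamma(\mathcal{B})=\binom{n-3}{b-2}$, $\Delta(\mathcal{B})=\binom{n-2}{b-2}+\binom{n-3}{b-2}-1$. No ``small check'' closes this; you need a structural argument.

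The paper proceeds as follows. It replaces the parts of $\mathcal{B}$ by $L$-initial families, which turns $\mathcal{B}$ into $\mathcal{H}_{b,2}$ minus one set. By Theorem~\ref{12-1-1}, $|\mathcal{A}|$ is at most the size of the largest family cross-intersecting with it, namely $|\mathcal{H}_{a,2}|$. Adding the missing set back then contradicts the choice of $(\mathcal{A},\mathcal{B})$ as a maximal-sum pair in Assumption~\ref{12-14-2}. Alternatively, $|\mathcal{A}|\le|\mathcal{H}_{a,2}|$ already contradicts the strict lower bound on $|\mathcal{A}'|$ that Lemma~\ref{11-23-3} gives in this exceptional case. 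The symmetric exception for $\mathcal{A}$ is handled the same way.
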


\begin{proposition}\label{11-29-1}
Let $n>a+b$, $\m A \subset {[n]\choose a}, \m B \subset {[n]\choose b}$ be cross-intersecting. If $|\mathcal{A}|\ge |\mathcal{A}'|$ and $|\mathcal{B}|\ge |\mathcal{B}'|$, then there exist cross-intersecting families $\mathcal{A}_a, \mathcal{B}_b$ such that the following hold:
\begin{enumerate}
\item $|\mathcal{A}_a|\ge |\mathcal{A}|$, $|\mathcal{B}_b|\ge |\mathcal{B}|$;
\item $\Delta(\mathcal{A}_a)=|\m A_a(1)|\ge {n-2\choose a-2}+\gamma(\mathcal{A}_a)$, $\Delta(\mathcal{B}_b)=|\m B_b(1)|\ge {n-2\choose b-2}+\gamma(\mathcal{B}_b)$;
\item $\gamma(\mathcal{A}_a) \ge \min \{{n-1-(a-b+3)\choose a-(a-b+3)}, \gamma(\mathcal{A})\}$,
 $\gamma(\mathcal{B}_b) \ge \min \{{n-4\choose b-3}, \gamma(\mathcal{B})\} $.
\end{enumerate}
\end{proposition}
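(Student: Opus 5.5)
We construct $\mathcal{A}_a,\mathcal{B}_b$ by first moving to lex-initial families and then repairing the diversity parts. Start from the lower bounds of Lemma~\ref{11-23-3}. They give $|\mathcal{A}|\ge|\mathcal{A}'|\ge {n-2\choose a-2}+2{n-3\choose a-2}$ up to lower-order terms, and similarly for $\mathcal{B}$; the exceptional case $a=b=4$ costs at most $1$. The aim is to build new families whose degree part at $1$ is large and lex-structured, and whose diversity part is replaced by a lex-initial family on $[2,n]$ of a prescribed size. Observation~\ref{11-27-1} then guarantees that the maximum degree stays at $1$.

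First step: fix an element of maximum degree in each family. If the two families do not share a max-degree element, or if some diversity is already large, use Proposition~\ref{1-4-3} to conclude that both diversities are large simultaneously. This reduces the analysis to two regimes.

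\emph{Regime (a): both diversities are small.} Here $\gamma(\mathcal{A})\le{n-1-(a-b+2)\choose a-(a-b+2)}$ and $\gamma(\mathcal{B})\le{n-3\choose b-2}$. Proposition~\ref{12-15-1} gives a common max-degree element $1$ with $\Delta\ge{n-2\choose \cdot-2}+\gamma$. Take $\mathcal{A}_a=\mathcal{A}$, $\mathcal{B}_b=\mathcal{B}$; condition 3 is then trivial.

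\emph{Regime (b): both diversities are large.} Define
\[
\gamma_A:=\min\Big\{{n-1-(a-b+3)\choose a-(a-b+3)},\gamma(\mathcal{A})\Big\},\qquad \gamma_B:=\min\Big\{{n-4\choose b-3},\gamma(\mathcal{B})\Big\},
\]
and construct the new families explicitly:
\begin{align*}
\mathcal{A}_a(\bar 1)&:=\mathcal{L}([2,n],\gamma_A,a), & \mathcal{A}_a(1)&:=\text{all $(a-1)$-sets in $[2,n]$ meeting every set of }\mathcal{B}_b(\bar1),\\
\mathcal{B}_b(\bar 1)&:=\mathcal{L}([2,n],\gamma_B,b), & \mathcal{B}_b(1)&:=\text{all $(b-1)$-sets in $[2,n]$ meeting every set of }\mathcal{A}_a(\bar1).
\end{align*}
The diversity parts are initial lex segments, so they are contained in $\{2,3,4\}$-heavy stars. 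The sets of $\mathcal{A}_a(\bar1)$ all contain $\{2,3\}$ (and $4$ in most cases), so $\mathcal{A}_a(\bar1)$ and $\mathcal{B}_b(\bar1)$ cross-intersect because their sets share $2$. The degree parts then are
\[
|\mathcal{A}_a(1)|\ge {n-1\choose a-1}-{n-4\choose a-1}-O(\cdot),
\]
which gives condition 2 by the same binomial comparisons used in Lemma~\ref{11-23-3}. Condition 3 holds by construction.

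The main obstacle is condition 1, i.e.\ $|\mathcal{A}_a|\ge|\mathcal{A}|$ and $|\mathcal{B}_b|\ge|\mathcal{B}|$ separately. The plan is to bound each of $|\mathcal{A}(1)|$ and $|\mathcal{A}(\bar1)|$ in terms of the part of $\mathcal{B}$ it must cross-intersect. Apply Theorem~\ref{12-1-1} to the pair $(\mathcal{A}(1),\mathcal{B}(\bar1))$ on $[2,n]$. Since $|\mathcal{B}(\bar1)|\ge\gamma_B$, the lex segment $\mathcal{L}([2,n],|\mathcal{B}(\bar1)|,b)$ contains $\mathcal{B}_b(\bar1)$. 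Hence $|\mathcal{A}(1)|$ is at most the number of $(a-1)$-sets crossing that lex segment, and this is at most $|\mathcal{A}_a(1)|$. The delicate point is the trade-off: when $\gamma(\mathcal{A})$ exceeds $\gamma_A$, the surplus $\gamma(\mathcal{A})-\gamma_A$ must be paid for by a deficit in $\mathcal{A}(1)$ relative to $\mathcal{A}_a(1)$. For this I would use Theorem~\ref{10-12-1} or Theorem~\ref{12-3-3} on the link families
\[
\mathcal{A}(\bar1)\big(\{2,3,4\},[4]\big)\ \text{ versus }\ \mathcal{B}(1)\text{ restricted to sets avoiding }[2,4],
\]
to show $|\mathcal{A}(1)|+|\mathcal{A}(\bar1)|\le|\mathcal{A}_a(1)|+\gamma_A$. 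Checking the boundary cases, namely $a=b$, the case $a-b=1$, and the $-1$ exceptions, is where I expect most of the work.
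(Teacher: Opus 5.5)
The gap is condition~1 in your regime (b). You flag it as the main obstacle, but it is the whole difficulty, and your construction cannot meet it.

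In regime (b) your minima are $\gamma_A=\binom{n-1-(a-b+3)}{a-(a-b+3)}$ and $\gamma_B=\binom{n-4}{b-3}$. So $\mathcal{B}_b(\bar 1)$ is the family of all $b$-subsets of $[2,n]$ containing $\{2,3,4\}$, and $\mathcal{A}_a(1)$ is the family of all $(a-1)$-subsets of $[2,n]$ meeting $\{2,3,4\}$ (using $n\ge a+b$). Hence $|\mathcal{A}_a|$ is \emph{exactly} the right-hand side of \eqref{11-30-1}. For $a>b$, however, the proof of Lemma~\ref{11-23-3} shows that $|\mathcal{A}'|$ is \emph{strictly} larger than that right-hand side in every case. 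Therefore $|\mathcal{A}_a|<|\mathcal{A}'|\le|\mathcal{A}|$ for every pair in regime (b).

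So condition~1 can hold only if regime (b) contains no pair at all. You would have to prove that $|\mathcal{A}|\ge|\mathcal{A}'|$ and $|\mathcal{B}|\ge|\mathcal{B}'|$ force one of the diversities below your thresholds. That is a Frankl--Kupavskii type stability statement under exactly the size hypotheses that, as the paper notes after Lemma~\ref{lemkey}, are too weak to apply Theorem~\ref{thm:FK}. Nothing in your sketch provides it.

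The link-family comparison you propose cannot provide it either. $\mathcal{A}(1)$ is constrained only by $\mathcal{B}(\bar 1)$, while $\mathcal{A}(\bar 1)$ is constrained mainly by $\mathcal{B}(1)$. Comparing $\mathcal{A}(\bar 1)(\{2,3,4\},[4])$ with a piece of $\mathcal{B}(1)$ therefore bounds $|\mathcal{A}(\bar 1)|$ against $|\mathcal{B}(1)|$, not against $|\mathcal{A}(1)|$. A trade-off inside $\mathcal{A}$ can only come from using $|\mathcal{B}|\ge|\mathcal{B}'|$ globally, and not from the bound \eqref{11-30-2} alone.

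Here is a concrete check. For $a>b$ take $\mathcal{A}=\{A\in\binom{[n]}{a}:|A\cap[3]|\ge 2\}$ and $\mathcal{B}=\{B\in\binom{[n]}{b}:|B\cap[3]|\ge 2\}$. They cross-intersect, $|\mathcal{B}|$ equals the right-hand side of \eqref{11-30-2}, and both diversities are at least your thresholds. Yet your construction gives $|\mathcal{A}|-|\mathcal{A}_a|=\binom{n-4}{a-3}-\binom{n-a+b-4}{b-3}>0$. This pair is excluded from the proposition only through the exact values of $|\mathcal{A}'|,|\mathcal{B}'|$, not through Lemma~\ref{11-23-3}.

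Two further problems. First, Proposition~\ref{1-4-3} only says that a large $\gamma(\mathcal{B})$ forces a large $\gamma(\mathcal{A})$, not the converse. So ``both small or both large'' is not a dichotomy: the pair above has $\gamma(\mathcal{A})$ large and $\gamma(\mathcal{B})=\binom{n-3}{b-2}$. Second, Proposition~\ref{12-15-1}, which you use in regime (a), presupposes Assumption~\ref{12-14-2}, and that is not among the hypotheses here.

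The paper avoids all of this by never lowering a diversity. It takes a counterexample that is minimal with respect to $\precneqq$ and splits into two cases.
\begin{itemize}
\item If $\gamma(\mathcal{A})\ge\binom{n-3}{a-2}+\binom{n-4}{a-2}$ and $\gamma(\mathcal{B})\ge\binom{n-3}{b-2}+\binom{n-4}{b-2}$, the shift supplied by property \textbf{P} must be an $S_{i,j}$. The pair is then shifted for all $S_{k,\ell}$ with $\{k,\ell\}$ disjoint from $\{i,j\}$, and the correlation inequality of Lemma~\ref{10-28-3} contradicts $|\mathcal{B}(i\bar j)\cap\mathcal{B}(\bar i j)|\le\binom{n-4}{b-3}$.
\item Otherwise, it replaces $\mathcal{A}(1),\mathcal{A}(\bar 1),\mathcal{B}(1),\mathcal{B}(\bar 1)$ by lex segments on $[2,n]$ of the same sizes, and enlarges the other side to a maximal cross-intersecting family. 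Condition~1 is then automatic. Moreover $\{1,2\}$ becomes full and the centre is $1$ or $2$. The new diversity $\min\{|\mathcal{C}(\bar 1 2)|,|\mathcal{C}(1\bar 2)|\}$ is bounded below either by $\gamma(\mathcal{A})$ itself or, via the degree bound from Lemma~\ref{11-23-3}, by $|\mathcal{A}(1)|-\binom{n-2}{a-2}$.
\end{itemize}
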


\begin{proposition}\label{10-25-3}
Lemma \ref{lemmain} holds provided that $\Delta(\mathcal{A})=|\mathcal{A}(1)|\ge {n-2\choose a-2}+\gamma(\mathcal{A})$ and 
$\Delta(\mathcal{B})=|\mathcal{B}(1)|\ge {n-2\choose b-2}+\gamma(\mathcal{B})$.
\end{proposition}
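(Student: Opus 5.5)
We need to prove Proposition \ref{10-25-3}: under the extra hypotheses that $1$ is the common element of maximum degree with $|\m A(1)|\ge {n-2\choose a-2}+\gamma(\m A)$ and $|\m B(1)|\ge {n-2\choose b-2}+\gamma(\m B)$, we must produce $\m A^a,\m B^b$ satisfying conditions 1--4 of Lemma~\ref{lemmain}.

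If already $\gamma(\m A)\le {n-1-(a-b+2)\choose b-2}$ and $\gamma(\m B)\le{n-3\choose b-2}$, take $\m A^a=\m A$, $\m B^b=\m B$. Otherwise, by Proposition~\ref{1-4-3}, at least one and typically both diversities are large. The plan is to shift the diversity parts toward lex-initial form while keeping the copies of $\m M$ and $\m T$. Concretely, I would work on $[2,n]$ with the pairs $(\m A(1),\m B(\bar 1))$ and $(\m B(1),\m A(\bar 1))$, which are cross-intersecting. I would apply a sequence of $S^Q_{U,V}$-shifts (the refined Daykin shifts of Section~\ref{secpro3}) simultaneously to all four parts. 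Each shift uses a pair $(U,V)$ satisfying {\bf P}, so Lemma~\ref{SUV} keeps all cross-intersections, including those between $\m A(\bar1)$ and $\m B(\bar1)$, since $\m A\cup\{\text{$\m B$ viewed on $[n]$}\}$ shifts coherently. The shift is restricted by a protected set $Q$ so that the sets of $\m M\subset\m A(\bar1)$ and $\m T\subset\m B(\bar1)$ are not moved, or are moved to isomorphic copies with the same intersection pattern. Sizes are preserved, and because the degree parts dominate, with $|\m A(1)|\ge{n-2\choose a-2}+\gamma$, Observation~\ref{11-27-1} guarantees that $1$ remains the maximum-degree element throughout. This gives conditions 1--3 after each step.

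Once no admissible shift acts, the diversity parts are essentially lex-initial on $[2,n]$ apart from the protected copies of $\m M,\m T$. I would then argue condition 4 by contradiction. Suppose $\gamma(\m B)>{n-3\choose b-2}$. Then $\m B(\bar1)$ contains almost all $b$-sets through $\{2,3\}$ plus further sets. Proposition~\ref{1-4-1} and the Kruskal--Katona bounds (Theorem~\ref{12-2-2}) applied to $\m A(1)$ versus $\m B(\bar1)$ would then force $|\m A(1)|$ below ${n-2\choose a-2}+{n-3\choose a-2}$ plus small terms. Combined with Lemma~\ref{11-23-3}, this contradicts $|\m A|\ge|\m A'|$, much as in the proof of Proposition~\ref{1-4-3}. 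Symmetrically, Proposition~\ref{1-4-3} transfers largeness from $\gamma(\m B)$ to $\gamma(\m A)$. If direct shifting does not reduce the diversity enough, I would instead move sets from $\m A(\bar1)$ into $\m A(1)$, replacing lex-last diversity sets by degree sets that remain cross-intersecting. The condition $\gamma\ge{n-4\choose b-3}$-type lower bounds from Proposition~\ref{11-29-1} ensure there is room to do so without losing $\m M,\m T$.

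The main obstacle is designing the protected shift so that it simultaneously (a) preserves cross-intersection of all four parts, (b) fixes isomorphic copies of $\m M$ and $\m T$, whose common intersections $[2,m+1]$ and $[2,t+1]$ and private elements must survive, and (c) strictly decreases the $\prec$-potential, so that the process terminates. I would choose $Q=\cap\m M\cup\cap\m T\cup\{p_i\}\cup\{q_j\}$. I would then allow only $(U,V)$ with $U,V$ disjoint from $Q$, or with $V\cap Q$ mapped order-preservingly, and verify minimality property {\bf P} relative to this restricted class. This verification is the step I expect to require the most case analysis.
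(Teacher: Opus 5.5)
Your overall frame matches the paper's. You push the pair down in $\prec$ by shifts that keep $\mathcal{M},\mathcal{T}$ and the hypotheses, then analyse the terminal pair; this is the paper's $\precneqq$-minimal counterexample with Claim~\ref{11-3-4}. But there is a genuine gap in both substantive ingredients.

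\textbf{The structure-preserving shift is not actually constructed.} In the paper the superscript in $S^Q_{U,V}$ denotes property \textbf{Q}, not a protected set, and your protected-set rule does not work in either reading.
\begin{itemize}
\item If the shifts avoid the element $1$, as ``work on $[2,n]$'' suggests, then $|\mathcal{A}(\bar 1)|$ and $|\mathcal{B}(\bar 1)|$ never change. Shifting alone can then never give condition 4.
\item If $1\in U$ is allowed, requiring $U,V$ disjoint from $Q$ is compatible with \textbf{P} and Lemma~\ref{SUV}, because that class is closed under passing to sub-pairs. But it protects nothing. A set $M\in\mathcal{M}$ with $V\subset M\setminus Q$ and $(M\setminus V)\cup U\notin\mathcal{A}$ is sent into $\mathcal{A}(1)$ and leaves the diversity part.
\end{itemize}
What keeps $\mathcal{M},\mathcal{T}$ in place is the opposite kind of constraint. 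Either $U$ must contain prescribed elements $R$ (e.g.\ $1$ together with an element of $\cap\mathcal{M}$, or a cover of $\mathcal{T}$), or $V$ must avoid elements $x$ for which $\{1,x\}$ (or a larger set through $1$) is full, so that the would-be image is already present. Pairs with $R\subset U$ are not closed under sub-pairs, so a minimal such pair need not satisfy \textbf{P}.

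This is exactly why the paper introduces the weaker property \textbf{Q}, shows it still preserves cross-intersection (Lemma~\ref{SUV'}), and constructs \textbf{Q}-shifts with $R\subset U$ and $T\cap V=\emptyset$ via Lemma~\ref{1-7-1}. The hypothesis (P1) of that lemma must itself be extracted from the structure of the extremal pair, as in Claim~\ref{1-11-1}. In Step 4 even this is insufficient: a shift satisfying neither \textbf{P} nor \textbf{Q} is used, and cross-intersection is checked by hand.

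\textbf{The endgame does not establish condition 4.} After Proposition~\ref{1-4-3} the remaining case is $\gamma(\mathcal{A})>\binom{n-1-(a-b+2)}{b-2}$, and your sketch gives no upper bound on $\gamma(\mathcal{A})$. ``Proposition~\ref{1-4-3} transfers largeness from $\gamma(\mathcal{B})$ to $\gamma(\mathcal{A})$'' points the wrong way. Your imitation of Proposition~\ref{1-4-3} for $\gamma(\mathcal{B})$ only gives a contradiction there because $\gamma(\mathcal{A})$ is assumed small. The claim that the terminal diversity parts are ``essentially lex-initial apart from $\mathcal{M},\mathcal{T}$'' is asserted, not proved, and it is where all the work lies.

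The missing argument runs as follows in the paper:
\begin{itemize}
\item In the minimal counterexample, show $\mathcal{A}(\bar 1)$ and $\mathcal{B}(1)$ are L-initial on $[2,n]$. The L-initial replacement of $\mathcal{A}(\bar1)$ keeps $\mathcal{M}$ since $|\mathcal{A}(\bar 1)|>\binom{n-1-m}{a-m}$.
\item Let $m'<m$ be minimal with $\gamma(\mathcal{A})>\binom{n-1-m'}{a-m'}$. Then $|\mathcal{B}(1)|<\binom{n-1}{b-1}-\binom{n-1-m'}{b-1}$, and together with $|\mathcal{B}|\ge\binom{n-1}{b-1}-\binom{n-1-m}{b-1}+|\mathcal{T}|$ this gives $|\mathcal{B}(\bar 1)|>\binom{n-2-m'}{b-2}+|\mathcal{T}|$.
\item Using structure-preserving shifts, show successively that every set of $\mathcal{B}(\bar 1)$ contains $[2,t+1]$, and every set of $\mathcal{B}(\bar 1)\setminus\mathcal{T}$ contains $[2,t+\ell+1]$ and then $[2,m'+2]$.
\item This gives $|\mathcal{B}(\bar 1)|\le\binom{n-2-m'}{b-m'-1}+|\mathcal{T}|$, contradicting the lower bound.
\end{itemize}
Your fallback of moving lex-last sets of $\mathcal{A}(\bar 1)$ into $\mathcal{A}(1)$ presupposes that the new sets still meet all of $\mathcal{B}(\bar 1)$ and that $\mathcal{M}$ survives. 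That is the very difficulty the above argument resolves.
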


Now we give the proof of Lemma \ref{lemmain}, assuming Propositions \ref{12-15-1}, \ref{11-29-1} and \ref{10-25-3}. 
For $\m F\subset 2^{[n]}$ and $i,j\in [n]$, we write
$\m F(i\bar j):=\{F\setminus \{i\}: i\in F, j\not\in F, F\in \m F\}$.

\begin{proof}[{\bf Proof of Lemma \ref{lemmain}}]
Suppose first that $\gamma(\mathcal{A})\le {n-1-(a-b+2)\choose a-(a-b+2)}$ and $\gamma(\mathcal{B})\le {n-3\choose b-2}$.
Then setting $\m A^a=\m A$ and $\m B^b=\m B$, all four conclusions of Lemma \ref{lemmain} follow directly from 
 Proposition \ref{12-15-1} and Assumption~\ref{12-14-2}. 
 
Otherwise, $\gamma(\mathcal{A})> {n-1-(a-b+2)\choose a-(a-b+2)}$ or $\gamma(\mathcal{B})> {n-3\choose b-2}$.  
By Proposition \ref{1-4-3}, the latter implies the former, so we may assume $\gamma(\mathcal{A})> {n-1-(a-b+2)\choose a-(a-b+2)}$ in the sequel.

Suppose that $\gamma(\mathcal{B})<{n-4\choose b-3}$. W.l.o.g. assume $\m B_{\Delta}=\m B(1)$.
By Lemma \ref{11-23-3}, $\Delta(\mathcal{B})=|\mathcal{B}(1)|\ge {n-1\choose b-1}-{n-4\choose b-1}$. 
 Since $\mathcal{B}(1)$ and $\mathcal{A}(\bar{1})$ are cross-intersecting, by Proposition \ref{1-4-1},
 $\gamma(\mathcal{A})\le |\mathcal{A}(\bar{1})|\le {n-4\choose a-3}$. Therefore, 
 $\Delta(\mathcal{A})\ge |\mathcal{A}(1)|>{n-2\choose a-2}+{n-3\choose a-2}+{n-4\choose a-2}-1$ (by Lemma \ref{11-23-3}). 
 We claim that $\m A_{\Delta}=\m A(1)$. 
 Indeed, for any $i\ne 1$, we have $|\m A(i)|\le |\m A(\{1,i\})|+|\m A(i\bar{1})|\le {n-2\choose a-2}+|\m A(\bar{1})|\le {n-2\choose a-2}+{n-4\choose a-3}<|\mathcal{A}(1)|$.
Since $\gamma(\mathcal{A})\le {n-4\choose a-3}$, we have $\Delta(\mathcal{A})= |\mathcal{A}(1)|>{n-2\choose a-2}+\gamma(\mathcal{A})$. Similarly, $\Delta(\mathcal{B})= |\mathcal{B}(1)|>{n-2\choose b-2}+\gamma(\mathcal{B})$. 
 By Proposition \ref{10-25-3}, we are done. 

Suppose next $\gamma(\mathcal{B})\ge{n-4\choose b-3}$.  
Recall that  $\gamma(\mathcal{A})> {n-1-(a-b+2)\choose a-(a-b+2)}$. 
By Proposition \ref{11-29-1}, there exist cross-intersecting families $\mathcal{A}_a, \mathcal{B}_b$ (note the subscripts, used here to distinguish from $\mathcal{A}^a, \mathcal{B}^b$ in Lemma~\ref{lemmain}) such that
\begin{itemize}
    \item $|\mathcal{A}_a|\ge |\mathcal{A}|$ and $|\mathcal{B}_b|\ge |\mathcal{B}|$;
    \item $\Delta(\mathcal{A}_a)=|\m A_a(1)|\ge {n-2\choose a-2}+\gamma(\mathcal{A}_a)$ and 
$\Delta(\mathcal{B}_b)=|\m B_b(1)|\ge {n-2\choose b-2}+\gamma(\mathcal{B}_b)$;
    \item $\gamma(\mathcal{A}_a)\ge {n-1-(a-b+3)\choose a-(a-b+3)}$ and $\gamma(\mathcal{B}_b)\ge {n-4\choose b-3}$,
\end{itemize}
where the third item uses $\gamma(\mathcal{A})> {n-1-(a-b+2)\choose a-(a-b+2)}\ge {n-1-(a-b+3)\choose a-(a-b+3)}$ and $\gamma(\mathcal{B})\ge{n-4\choose b-3}$. 

Define $\mathcal{A}'_a, \mathcal{B}'_b$ such that $\mathcal{A}'_a(1)=\mathcal{L}([2,n], |\mathcal{A}_a(1)|, a-1)$, $\mathcal{A}'_a(\bar{1})=\mathcal{L}([2,n], |\mathcal{A}_a(\bar{1})|, a)$, and similarly for   $\mathcal{B}'_b$. By Theorem \ref{12-1-1}, $\mathcal{A}'_a, \mathcal{B}'_b$ are cross-intersecting. 
By Observation \ref{11-27-1}, $\Delta(\mathcal{A}'_a)=|\mathcal{A}'_a(1)|$, $\Delta(\mathcal{B}'_b)=|\mathcal{B}'_b(1)|$. 
Moreover, as $\gamma(\mathcal{A}_a)\ge {n-1-(a-b+3)\choose a-(a-b+3)}$ and $\gamma(\mathcal{B}_b)\ge {n-4\choose b-3}$, we have $\gamma(\mathcal{A}'_a)\ge {n-1-(a-b+3)\choose a-(a-b+3)}$ and $\gamma(\mathcal{B}'_b)\ge {n-4\choose b-3}$.
Hence, $\mathcal{A}'_a(\bar1)$ contains all $a$-sets $A$ with $[2,a-b+4]\subset A \subset [2,n]$, and consequently it contains an isomorphic copy of $\m M$ (since $|\cap \m M|\ge a-b+3$). 
Similarly,
$\mathcal{B}'_b(\bar1)$ contains a copy of $\m T$.
Thus, $\m A'_a, \m B'_b$ satisfy the conditions of Proposition \ref{10-25-3}; taking $\m A^a=\m A'_a$ and $\m B^b=\m B'_b$ completes the proof. 
\end{proof}

\section{Proof of Proposition \ref{12-15-1}}\label{secpro1} 
In this section, we assume Assumption \ref{12-14-2} together with $|\mathcal{A}|\ge |\mathcal{A}'|$, $|\mathcal{B}|\ge |\mathcal{B}'|$, $\gamma(\m A)\le{n-1-(a-b+2)\choose a-(a-b+2)}$ and $\gamma(\m B)\le {n-3\choose b-2}$.


\begin{claim}\label{5-1-1}
    $\Delta(\mathcal{A})\ge {n-2\choose a-2}+\gamma(\mathcal{A}) \,\text{ and }\, \Delta(\mathcal{B})\ge {n-2\choose b-2}+\gamma(\mathcal{B}).$
\end{claim}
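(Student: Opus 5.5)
We argue by a direct size count: combine the lower bounds on $|\mathcal{A}|,|\mathcal{B}|$ from Lemma~\ref{11-23-3} with the assumed upper bounds on the diversities. This gives a lower bound on the degree parts that already exceeds ${n-2\choose a-2}+\gamma(\mathcal{A})$ (resp.\ ${n-2\choose b-2}+\gamma(\mathcal{B})$).

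\textbf{Step 1 (degree part of $\mathcal{A}$).} Since $\Delta(\mathcal{A})=|\mathcal{A}|-\gamma(\mathcal{A})$, the claim $\Delta(\mathcal{A})\ge {n-2\choose a-2}+\gamma(\mathcal{A})$ is equivalent to
\[
|\mathcal{A}|\ge {n-2\choose a-2}+2\gamma(\mathcal{A}).
\]
By Lemma~\ref{11-23-3} and $|\mathcal{A}|\ge|\mathcal{A}'|$, we have
\[
|\mathcal{A}|\ge {n-1\choose a-1}-{n-4\choose a-1}+{n-1-(a-b+3)\choose b-3}={n-2\choose a-2}+{n-3\choose a-2}+{n-4\choose a-2}+{n-1-(a-b+3)\choose b-3},
\]
up to a possible $-1$ in the exceptional case $a=b=4$. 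By hypothesis, $2\gamma(\mathcal{A})\le 2{n-1-(a-b+2)\choose b-2}$. It therefore suffices to show
\[
{n-3\choose a-2}+{n-4\choose a-2}\ge 2{n-1-(a-b+2)\choose b-2}.
\]
For $a=b$ the right-hand side is $2{n-3\choose a-2}$, which is too large. So in that case we instead use the exact decomposition ${n-3\choose a-2}={n-4\choose a-2}+{n-4\choose a-3}$ together with the extra term ${n-4\choose a-3}$ from Lemma~\ref{11-23-3}. Then
\[
{n-2\choose a-2}+{n-3\choose a-2}+{n-4\choose a-2}+{n-4\choose a-3}={n-2\choose a-2}+2{n-3\choose a-2},
\]
which matches exactly (apart from the $\pm1$ of the exceptional case). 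For $a>b$ we use ${n-a+b-3\choose b-2}\le{n-5\choose b-2}\le{n-4\choose a-2}$, valid since $n>a+b$, and the inequality holds with room to spare.

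\textbf{Step 2 (family $\mathcal{B}$).} The same computation with ${n-1\choose b-1}-{n-4\choose b-1}+{n-4\choose b-3}={n-2\choose b-2}+2{n-3\choose b-2}$ and $\gamma(\mathcal{B})\le{n-3\choose b-2}$ gives the bound for $\mathcal{B}$.

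\textbf{Main obstacle: the tight exceptional case.} The hard part is the case where the count is tight, namely the exceptional case $a=b=4$, where Lemma~\ref{11-23-3} loses $1$, and the case where both diversities are extremal. There the equality $|\mathcal{A}|={n-2\choose a-2}+2\gamma(\mathcal{A})-1$ would have to be excluded. The plan is to use the maximality in Assumption~\ref{12-14-2} together with Lemma~\ref{11-23-3}, which in the exceptional case gives the strict inequality for the other family. If $|\mathcal{A}'|$ is lowered by $1$, then $|\mathcal{B}'|$ is strictly larger than the bound, which forces $\gamma(\mathcal{A})<{n-3\choose a-2}$ through Proposition~\ref{1-4-1}: a large degree part of $\mathcal{B}$ bounds $\mathcal{A}(\bar x)$. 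This slack of at least $1$ in $\gamma(\mathcal{A})$ compensates for the $-1$ in $|\mathcal{A}'|$.

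Finally, note that the degree is realized at the element of maximum degree, so no further identification is needed for this claim. The common-center statement is treated separately afterwards.
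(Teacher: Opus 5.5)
Your treatment of the non-exceptional cases is the paper's argument. The paper phrases it by contradiction and you phrase it directly. For $a=b$ the count is exactly tight, but a non-strict inequality is all that is needed, so extremal diversities are not an obstacle there.

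There is one slip. For $a=b+1$ you have $n-a+b-3=n-4$, so $\binom{n-a+b-3}{b-2}\le\binom{n-5}{b-2}$ is false. Use $\binom{n-a+b-3}{b-2}\le\binom{n-4}{b-2}\le\binom{n-4}{a-2}$ instead; the last step holds because $n>a+b$.

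For the exceptional case $a=b=4$ your route differs from the paper's. The paper's argument runs as follows:
\begin{enumerate}
\item assume $\Delta(\mathcal B)<\binom{n-2}{2}+\gamma(\mathcal B)$ and force equality everywhere;
\item pass to lex-initial parts, so that $\mathcal B$ becomes $\mathcal H_{b,2}$ minus one set;
\item argue that the largest partner of this family is $\mathcal H_{a,2}$;
\item contradict the maximality of $|\mathcal A|+|\mathcal B|$ in Assumption~\ref{12-14-2}.
\end{enumerate}
You instead exploit the \emph{strict} bound that Lemma~\ref{11-23-3} gives for the other family in the exceptional case, which the paper never uses.

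That route works, but the strictness you need does not come out of Proposition~\ref{1-4-1} as stated. It only gives $\gamma(\mathcal A)\le\binom{n-3}{2}$, which leaves the $-1$ uncompensated. You can repair this by rerunning the Daykin argument.
\begin{itemize}
\item Say $\mathcal A'$ carries the $-1$, and let $y$ be the maximum-degree element of $\mathcal B$. Then $|\mathcal B(y)|=|\mathcal B|-\gamma(\mathcal B)\ge\binom{n-1}{3}-\binom{n-3}{3}+1$.
\item Let $u_1<u_2<\cdots$ be the elements of $[n]\setminus\{y\}$. By Theorem~\ref{12-1-1} on this set, you may pass to the lex-initial $3$-uniform family of this size. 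It contains all $3$-sets meeting $\{u_1,u_2\}$, and also $\{u_3,u_4,u_5\}$.
\item Hence every member of the lex-initial copy of $\mathcal A(\bar y)$ contains $\{u_1,u_2\}$ and meets $\{u_3,u_4,u_5\}$. This gives $|\mathcal A(\bar y)|\le\binom{n-3}{2}-\binom{n-6}{2}<\binom{n-3}{2}$.
\item You do not yet know that $\mathcal A$ and $\mathcal B$ share a centre, so use $\gamma(\mathcal A)\le|\mathcal A(\bar z)|$ for every $z$. This yields $\gamma(\mathcal A)\le\binom{n-3}{2}-1$.
\item Therefore $|\mathcal A|\ge\binom{n-2}{2}+2\binom{n-3}{2}-1>\binom{n-2}{2}+2\gamma(\mathcal A)$.
\end{itemize}

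With this repair your argument is shorter than the paper's. It avoids both the sum-maximality and the identification of $(\mathcal H_{a,2},\mathcal H_{b,2})$ as the extremal completion. The price is that it relies on the strict inequality in the exceptional case of Lemma~\ref{11-23-3} and on the diversity hypothesis for the other family. The paper's argument does not need that strictness.
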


\begin{proof}[\bf{Proof of Claim \ref{5-1-1}}]
Consider first the case $a>b$ or $a=b>4$. 
If $\Delta(\m B)<{n-2\choose b-2}+\gamma(\m B)$, then 
$$|\m B|=\Delta(\m B)+\gamma(\m B)<{n-2\choose b-2}+2{n-3\choose b-2} = {n-1\choose b-1}-{n-4\choose b-1}+{n-4\choose b-3}\overset{\eqref{11-30-2}}{\le}|\m B'|,$$
contradicting $|\m B|\ge |\m B'|$. An analogous argument applied to $\Delta(\m A)$ contradicts \eqref{11-30-1}. 

Now consider the case $a=b=4.$ By the $\m A \leftrightarrow \m B$ symmetry when $a=b$, it suffices to prove $\Delta(\mathcal{B})\ge {n-2\choose b-2}+\gamma(\mathcal{B})$. 
Suppose for contradiction that $\Delta(\mathcal{B})< {n-2\choose b-2}+\gamma(\mathcal{B})$. By Lemma \ref{11-23-3}, we obtain
\begin{align*}
&{n-1\choose b-1}-{n-4\choose b-1}+{n-4\choose b-3}-1\le |\m B'|\le |\m B|=\Delta(\m B)+\gamma(\m B)\\
&<{n-2\choose b-2}+2\gamma(\m B)\le {n-2\choose b-2}+2{n-3\choose b-2}={n-1\choose b-1}-{n-4\choose b-1}+{n-4\choose b-3}.
\end{align*}
Hence $|\mathcal{B}|=\binom{n-1}{b-1}-\binom{n-4}{b-1}+\binom{n-4}{b-3}-1$, 
$\Delta(\mathcal{B})=\binom{n-2}{b-2}+\binom{n-3}{b-2}-1$, and 
$\gamma(\mathcal{B})=\binom{n-3}{b-2}$.
Assume w.l.o.g. $\m B_\Delta = \m B(1)$.

Let $\m B_1$ be the family obtained by replacing $\m B(1)$ with $\m L([2,n],|\m B(1)| ,b-1)$ and  $\m B(\bar 1)$  with  $\m L([2,n],|\m B(\bar 1),b)$. Then $\m B_1$ is exactly $\m H_{b,2}$ with one set removed. 
Let $\m A_1\subset \binom{[n]}{a}$ be the largest family cross-intersecting with $\m B_1$.
We claim $\m A_1=\m H_{a,2}$. Indeed, the maximum family cross-intersecting with $\m H_{b,2}$ is $\m H_{a,2}$, and since $n>a+b$, removing a single set from $\m H_{b,2}$ does not enlarge this maximum family.
Moreover, $|\m A_1|\ge |\m A|$. To see this, we replace $\m A(1)$ and $\m A(\bar 1)$ by $\m L([2,n],|\m A(1)| ,a-1)$ and $\m L([2,n],|\m A(\bar 1)| ,a)$, respectively, to obtain a family $\m A_2$. 
By Theorem~\ref{12-1-1}, the pair $(\m A_2, \m B_1)$ remains cross-intersecting. 
Since $\m A_1$ is maximum, $|\m A_1|\ge |\m A_2|=|\m A|$, as claimed.
However, $(\m A_1,\m B_1)$ is not inclusion-maximal: 
regardless of which set is missing from $\m B_1$, adding it back yields the cross-intersecting pair $(\m H_{a,2}$, $\m H_{b,2})$.
Moreover, $\m H_{a,2}$, $\m H_{b,2}$ contain isomorphic copies of $\m M, \m T$ in their diversity parts, respectively. 
This contradicts the maximality of $|\m A|+|\m B|$. 
\end{proof}

\begin{claim}\label{5-1-2}
Both families $\m A$ and $\m B$ share same element of maximum degree.
\end{claim}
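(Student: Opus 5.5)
We argue by contradiction, using the strong degree bounds of Claim~\ref{5-1-1}. Assume w.l.o.g.\ that $\m A_\Delta=\m A(1)$, and suppose that $1$ is not an element of maximum degree of $\m B$. Let $j\ne 1$ satisfy $\Delta(\m B)=|\m B(j)|>|\m B(1)|$.

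By Claim~\ref{5-1-1} we have
\[
|\m B(j)|\ge {n-2\choose b-2}+\gamma(\m B)\ge {n-2\choose b-2}+|\m B(\bar j)|.
\]
Since $|\m B(\{1,j\})|\le{n-2\choose b-2}$, this forces $|\m B(j\bar 1)|\ge |\m B(\bar j)|\ge|\m B(1\bar j)|$. The same holds for $\m A$ with the roles of $1$ and $j$ exchanged, so
\[
|\m A(1\bar j)|\ge {n-2\choose a-2}-|\m A(\{1,j\})|+|\m A(\bar 1)|.
\]

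The key cross-intersecting pairs are $\m A(1\bar j)\subset{[n]\setminus\{1,j\}\choose a-1}$ and $\m B(j\bar 1)\subset{[n]\setminus\{1,j\}\choose b-1}$, on a ground set of size $n-2>(a-1)+(b-1)$. We need both to be large. Write $x=|\m A(\{1,j\})|$ and $y=|\m B(\{1,j\})|$. Then
\[
|\m A(1\bar j)|=\Delta(\m A)-x\ge {n-2\choose a-2}-x+\gamma(\m A),
\]
and similarly $|\m B(j\bar 1)|\ge{n-2\choose b-2}-y+\gamma(\m B)$. Moreover $\m A(\bar 1)$ contains $\m A(j\bar 1)$ and $\m B(\bar j)\supset \m B(1\bar j)$.

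Use Theorem~\ref{12-3-3}. Since $a-1\ge b-1$ and $n-2>a+b-2$, we get
\[
|\m A(1\bar j)|+|\m B(j\bar 1)|\le{n-2\choose a-1}.
\]
This is not immediately contradictory, so we also bring in the sizes. From $|\m A|\ge|\m A'|$ and Lemma~\ref{11-23-3}, we have $|\m A|\ge{n-2\choose a-2}+{n-3\choose a-2}+{n-4\choose a-2}$. This gives $|\m A(1\bar j)|\ge {n-3\choose a-2}+{n-4\choose a-2}-|\m A(j\bar1)|+(\text{small terms})$, which says $\m A(1\bar j)$ is close to containing all $(a-1)$-sets meeting a pair. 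Apply Proposition~\ref{1-4-1} on the ground set $[n]\setminus\{1,j\}$ with $i=2$ (or $i=1$ when $a$ is large). It bounds $|\m B(j\bar 1)|\le{n-4\choose b-3}$. Combined with $|\m B(j\bar1)|\ge {n-2\choose b-2}-y+\gamma(\m B)$, this gives $y\ge{n-2\choose b-2}-{n-4\choose b-3}+\gamma(\m B)$. Then $|\m B(1)|=y+|\m B(1\bar j)|$ is nearly ${n-2\choose b-2}$ plus more. Compare this with $|\m B(j)|=y+|\m B(j\bar1)|$: since $|\m B(j\bar 1)|$ is small (at most ${n-4\choose b-3}$) while $|\m B(1\bar j)|$ is controlled symmetrically, we aim to show $|\m B(1\bar j)|\ge|\m B(j\bar 1)|$. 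That yields $|\m B(1)|\ge|\m B(j)|$, so $1$ is also a maximum-degree element of $\m B$. If ties occur, we simply choose $1$ as the maximum-degree element for both, which is what the claim needs.

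The main obstacle is making the Proposition~\ref{1-4-1}/Theorem~\ref{12-1-1} step quantitatively tight enough, especially for $a=b=4$ where Lemma~\ref{11-23-3} loses $1$. There I would instead pass to $L$-initial families via Theorem~\ref{12-1-1} on $[n]\setminus\{1,j\}$. I would then compute directly that if $|\m A(1\bar j)|>{n-3\choose a-2}$, every set of the $L$-initial version of $\m B(j\bar 1)$ must contain $2$, and iterate. If a boundary case remains, I would use the maximality of $|\m A|+|\m B|$ from Assumption~\ref{12-14-2}, in the spirit of the end of Claim~\ref{5-1-1}.
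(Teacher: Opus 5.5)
There is a genuine gap at the step where you apply Proposition~\ref{1-4-1} with $i=2$ to get $|\m B(j\bar 1)|\le\binom{n-4}{b-3}$. That step needs $|\m A(1\bar j)|\ge\binom{n-2}{a-1}-\binom{n-4}{a-1}=\binom{n-3}{a-2}+\binom{n-4}{a-2}$, but the term $|\m A(j\bar 1)|$ you subtract is not small. It is bounded only by $\gamma(\m A)\le\binom{n-1-(a-b+2)}{b-2}$, which equals $\binom{n-3}{a-2}$ when $a=b$. Write $|\m A|=|\m A(\{1,j\})|+|\m A(1\bar j)|+\gamma(\m A)$ and use \eqref{11-30-1}. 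All you can extract is $|\m A(1\bar j)|\ge\binom{n-3}{a-2}+\binom{n-4}{a-2}-\binom{n-a+b-4}{b-2}$, which is exactly $\binom{n-3}{a-2}$ for $a=b$ (minus $1$ in the exceptional case $a=b=4$). Symmetrically, \eqref{11-30-2} together with $\gamma(\m B)\le\binom{n-3}{b-2}$ gives $|\m B(j\bar1)|\ge\binom{n-3}{b-2}$ (minus $1$). You never write this lower bound down, although it is the one that matters.

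So both parts have about full-star size on the $(n-2)$-set $[n]\setminus\{1,j\}$. Proposition~\ref{1-4-1} with $i=1$ then only yields $|\m B(j\bar1)|\le\binom{n-3}{b-2}$, which is no contradiction. In fact the bound $\binom{n-4}{b-3}$ cannot follow from the inequalities you use. For $a=b$, the pair $\m H_{a,2},\m H_{b,2}$ with $j=2$ satisfies all of them; only the strict inequality $|\m B(j)|>|\m B(1)|$ fails, and you never use it there. Yet $\m A(1\bar2)$ and $\m B(2\bar1)$ are both the full star centred at $3$. Likewise, your final aim $|\m B(1\bar j)|\ge|\m B(j\bar1)|$ reverses the chain $|\m B(j\bar1)|\ge|\m B(\bar j)|\ge|\m B(1\bar j)|$ you derived. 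It can only hold in the all-equality case, and that requires structural information you never obtain.

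The missing ingredient is a stability statement for the cross-intersecting pair $\m A(1\bar j)$, $\m B(j\bar1)$; you did identify the right pair. The paper applies Theorem~\ref{thm:FK} on $[n]\setminus\{1,j\}$ with uniformities $a-1,b-1$ and $u=a-1$, $v=b-1$; this is where $a,b\ge4$ enters. It concludes that both families have diversity $0$, i.e.\ lie in stars with a common centre $c$. This caps them at $\binom{n-3}{a-2}$ and $\binom{n-3}{b-2}$, so $|\m A|\le\binom{n-2}{a-2}+\binom{n-3}{a-2}+\binom{n-1-(a-b+2)}{b-2}$. When $a>b$ this contradicts \eqref{11-30-1}. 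When $a=b$ all bounds must be tight, which forces the $\m H_{a,2},\m H_{b,2}$ structure in which $1$, $j$, $c$ have equal degrees, so a common maximum-degree element exists.

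Your $L$-initial fallback can replace Theorem~\ref{thm:FK} when $a>b$. There $|\m A(1\bar j)|>\binom{n-3}{a-2}$ strictly, so Theorem~\ref{12-1-1} forces $|\m B(j\bar1)|\le\binom{n-3}{b-2}-\binom{n-a-2}{b-2}<\binom{n-3}{b-2}$, contradicting the lower bound above. For $a=b$, however, sizes alone only give equality. You still need the structural step: Daykin's theorem from the introduction when both parts have at least full-star size, and a Hilton--Milner-type result such as Theorem~\ref{thm:FK} when one side may fall one short ($a=b=4$). After that comes the equality analysis.
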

\begin{proof}[\bf{Proof of Claim \ref{5-1-2}}]
We argue by contradiction. Without loss of generality, assume $\m A_\Delta = \m A(1)$ and $\m B_\Delta = \m B(2)$. 
We have
\begin{align*}
|\m B|&=|\m B(\{1,2\})|+|\m B(2 \bar 1)|+\gamma(\m B)\le {n-2\choose b-2}+|\m B(2 \bar 1)|+{n-3\choose b-2},\\
|\m A|&=|\m A(\{1,2\})|+|\m A(1 \bar 2)|+\gamma(\m A)\le {n-2\choose a-2}+|\m A(1 \bar 2)|+{n-1-(a-b+2)\choose a-(a-b+2)}.
\end{align*}
Comparing these upper bounds with the lower bounds given by \eqref{11-30-1} and \eqref{11-30-2} yields $|\m A(1\bar 2)|\ge {n-3\choose a-2}-1$ and $|\m B (2\bar{1})|\ge {n-3\choose b-2}-1.$ 
Moreover, both inequalities are strict unless $a=b=4$ (in which case at least one of them is strict).
The families $\m A(1\bar 2)\subset {[3,n]\choose a-1}$ and $\m B(2\bar 1)\subset {[3,n]\choose b-1}$ are cross-intersecting.
Applying Theorem~\ref{thm:FK} to $\m A(1\bar 2)$ and $\m B(2\bar 1)$ with $n-2, a-1, b-1$ in the roles of $n, a, b$, and $u=a-1, v=b-1$ (using $n>a+b$ and $a,b\ge 4$), we conclude that both families have diversity $0$. 
Hence all sets in $\m A(1\bar 2)\cup \m B(2\bar 1)$ contain a common element, which we may take to be $3$. Then 
\begin{align*}
    |\m B|&\le {n-2\choose b-2}+2{n-3\choose b-2},\\
    |\m A|&\le {n-2\choose a-2}+{n-3\choose a-2}+{n-1-(a-b+2)\choose a-(a-b+2)}.
\end{align*}
If $a>b$, then using $n>a+b$, we have 
${n-1-(a-b+2)\choose a-(a-b+2)}={n-1-(a-b+2)\choose b-2}\le {n-4\choose b-2}<{n-4\choose a-2}$. 
Then, by (\ref{11-30-1}), 
$|\m A|<{n-2\choose a-2}+{n-3\choose a-2}+{n-4\choose a-2}< |\m A'|$, contradicting $|\m A|\ge |\m A'|$.  

Now suppose $a=b$. 
As $|\m A|\ge |\m A'|$ and $ |\m B|\ge |\m B'|$, equality must hold in \eqref{11-30-1}, \eqref{11-30-2}, and in the two upper bounds displayed above.
This forces every set in $\m A_\gamma \cup \m B_\gamma$ to intersect $[3]$ in at least two elements; otherwise some of the upper bounds on $|\m A(\{1,2\})|$, $|\m A(1\bar 2)|$, $|\m B(\{1,2\})|$, $|\m B(2\bar 1)|$ would be strict, violating the equality. 
In fact, equality forces $\m A$ to contain every $a$-set  intersecting $[3]$ in at least two elements, and similarly $\m B$ for $b$-sets. 
Therefore, all elements in $[3]$ have the same degree, and the families are isomorphic to $\m H_{a,2}$ and $\m H_{b,2}$. 
We may select the same element of $[3]$ as the maximum-degree element for both $\m A$ and $\m B$, contradicting the assumption.
\end{proof}  
Combining Claims \ref{5-1-2} and \ref{5-1-1} completes the proof of Proposition \ref{12-15-1}.

\section{Proof of Proposition \ref{11-29-1}}\label{secpro2}
For $\mathcal{F}\subset\binom{[n]}{k}$ we write 
$\mathcal{F}^c:=\{[n]\setminus F:F\in\mathcal{F}\}$. Recall that for  $i\in[k]$ we write 
$\partial_i(\mathcal{F}):=\{A\in\binom{[n]}{i}:A\subset F\text{ for some }F\in\mathcal{F}\}$ 
for the $i$-shadow of $\mathcal{F}$.

We argue by contradiction. Suppose there exists a pair $(\mathcal{A},\mathcal{B})$ satisfying the hypotheses of Proposition~\ref{11-29-1} but for which no $(\mathcal{A}_a,\mathcal{B}_b)$ as described exists. Among all such counterexamples, choose $(\mathcal{A},\mathcal{B})$ that is minimal in the following sense: no other counterexample $(\mathcal{C},\mathcal{D})$ satisfies $(\mathcal{C},\mathcal{D})\precneqq(\mathcal{A},\mathcal{B})$.

{\bf Case 1: $\gamma(\mathcal{A})\ge {n-3\choose a-2}+{n-4\choose a-2}$ and $\gamma(\mathcal{B})\ge {n-3\choose b-2}+{n-4\choose b-2}$.}

If $\m A$ and $\m B$ are L-initial, then
$|\m A(\bar1)|=\gamma(\m A)>0$ and $|\m B(\bar1)|=\gamma(\m B)>0$, 
we get $|\m A|>{n-1\choose a-1}$ and $|\m B|>{n-1\choose b-1}$, contradicting the fact that they are cross-intersecting.  
Hence $(\m A, \m B)$ is not L-initial, and there exists a pair $(U,V)$ satisfying {\bf P} for $(\m A, \m B)$. 

\begin{claim}\label{keyclaim}
 $\gamma(S_{U,V}(\m A))<{n-1-(a-b+3)\choose a-(a-b+3)}$ or $\gamma(S_{U,V}(\m B))<{n-4\choose b-3}$.
\end{claim}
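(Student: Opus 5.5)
We argue by contradiction, using the minimality of $(\mathcal A,\mathcal B)$ with respect to $\precneqq$. Suppose that both $\gamma(S_{U,V}(\mathcal A))\ge \binom{n-1-(a-b+3)}{a-(a-b+3)}$ and $\gamma(S_{U,V}(\mathcal B))\ge\binom{n-4}{b-3}$.

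\textbf{Step 1: the shifted pair satisfies the hypotheses.} Put $\mathcal C:=S_{U,V}(\mathcal A)$ and $\mathcal D:=S_{U,V}(\mathcal B)$. By Lemma~\ref{SUV} and property {\bf P}, the pair $(\mathcal C,\mathcal D)$ is cross-intersecting and $(\mathcal C,\mathcal D)\precneqq(\mathcal A,\mathcal B)$. The shift preserves sizes, so $|\mathcal C|=|\mathcal A|\ge|\mathcal A'|$ and $|\mathcal D|=|\mathcal B|\ge|\mathcal B'|$. Hence $(\mathcal C,\mathcal D)$ satisfies the hypotheses of Proposition~\ref{11-29-1}.

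\textbf{Step 2: it is not a counterexample.} By minimality, $(\mathcal C,\mathcal D)$ is not a counterexample, so there are families $\mathcal A_a,\mathcal B_b$ satisfying items 1--3 for $(\mathcal C,\mathcal D)$. We check that they also work for $(\mathcal A,\mathcal B)$.
\begin{itemize}
\item Item 1 carries over because $|\mathcal C|=|\mathcal A|$ and $|\mathcal D|=|\mathcal B|$.
\item Item 2 concerns only $\mathcal A_a,\mathcal B_b$ themselves, so it carries over unchanged.
\item For item 3, our assumption gives $\gamma(\mathcal C)\ge\binom{n-1-(a-b+3)}{a-(a-b+3)}$. Therefore
\[
\gamma(\mathcal A_a)\ge \min\Bigl\{\tbinom{n-1-(a-b+3)}{a-(a-b+3)},\gamma(\mathcal C)\Bigr\}=\tbinom{n-1-(a-b+3)}{a-(a-b+3)}\ge \min\Bigl\{\tbinom{n-1-(a-b+3)}{a-(a-b+3)},\gamma(\mathcal A)\Bigr\}.
\]
The same argument gives the bound for $\mathcal B_b$.
\end{itemize}
So $(\mathcal A_a,\mathcal B_b)$ witnesses that $(\mathcal A,\mathcal B)$ is not a counterexample, which is the desired contradiction.

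\textbf{Main subtlety.} The load-bearing point is that the diversity lower bounds transfer even though shifting can decrease $\gamma$. This is exactly why the claim is phrased with the thresholds $\binom{n-1-(a-b+3)}{a-(a-b+3)}$ and $\binom{n-4}{b-3}$: once the shifted diversities stay at or above these values, the minimum in item 3 is determined by the threshold alone. The remaining things to verify are the strictness $\precneqq$ from {\bf P} and cross-intersection from Lemma~\ref{SUV}, both of which are already available.
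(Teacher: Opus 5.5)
Your proposal is correct and follows the paper's proof essentially verbatim: you apply minimality to the shifted pair $(S_{U,V}(\mathcal A),S_{U,V}(\mathcal B))$, which is cross-intersecting by Lemma~\ref{SUV} and has the same sizes, and you observe that under the assumed lower bounds on the shifted diversities the minimum in condition~3 equals the threshold, so the witnesses for the shifted pair also work for $(\mathcal A,\mathcal B)$. One point is left implicit in both your write-up and the paper's: the relation $(S_{U,V}(\mathcal A),S_{U,V}(\mathcal B))\precneqq(\mathcal A,\mathcal B)$ needs both coordinates to be weakly lex-decreasing, which holds because $U\prec V$, while strictness in one coordinate comes from \textbf{P}.
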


\begin{proof}
Suppose otherwise.  
By Lemma \ref{SUV}, $S_{U,V}(\mathcal{A})$ and $ S_{U,V}(\mathcal{B})$ are cross-intersecting. 
Clearly, $|S_{U,V}(\m A)|=|\m A|\ge |\m A'|$ and $|S_{U,V}(\m B)|=|\m B|\ge |\m B'|$. Then $S_{U,V}(\m A), S_{U,V}(\m B)$ satisfy the hypotheses of Proposition~\ref{11-29-1}. By the minimality of $(\mathcal{A},\mathcal{B})$ and $(S_{U,V}(\mathcal{A}),S_{U,V}(\mathcal{B}))\precneqq(\mathcal{A},\mathcal{B})$, the pair $(S_{U,V}(\mathcal{A}),S_{U,V}(\mathcal{B}))$ is not a counterexample, so there exist $\mathcal{A}_a,\mathcal{B}_b$ satisfying conditions (1)–(3) for it.
Since $\gamma(S_{U,V}(\mathcal{A}))\ge\binom{n-1-(a-b+3)}{a-(a-b+3)}$, condition (3) for $S_{U,V}(\mathcal{A})$ gives $\gamma(\mathcal{A}_a)\ge\binom{n-1-(a-b+3)}{a-(a-b+3)}\ge\min\{\binom{n-1-(a-b+3)}{a-(a-b+3)},\gamma(\mathcal{A})\}$, so condition (3) for the original $\mathcal{A}$ also holds. Similarly for $\mathcal{B}$. Hence $\mathcal{A}_a,\mathcal{B}_b$ satisfy (1)–(3) for $(\mathcal{A},\mathcal{B})$, contradicting the choice of $(\mathcal{A},\mathcal{B})$ as a counterexample.
\end{proof}

Given that $\gamma(\mathcal{A})\ge {n-3\choose a-2}+{n-4\choose a-2}$ and $\gamma(\mathcal{B})\ge {n-3\choose b-2}+{n-4\choose b-2}$ (the condition of Case 1), 
if $|U|=|V|\ge 2$, then $\gamma(S_{U,V}(\mathcal{A}))\ge {n-3\choose a-2}$ and  $\gamma(S_{U,V}(\mathcal{B}))\ge {n-3\choose b-2}$, contradicting Claim \ref{keyclaim}. Thus $|U|=|V|=1$, i.e. this $S_{U,V}$-shift is an $S_{i,j}$-shift of $(\m A, \m B)$ for some $1\le i<j\le n$. 
We show that 
\begin{itemize}
\item[(i)] $\gamma(S_{i, j}(\mathcal{B}))\le {n-4\choose b-3}$;
\item[(ii)] $|\mathcal{B}(i \bar{j})|, |\mathcal{B}(\bar{i} j)|\ge 2{n-4\choose b-2}$;
\item[(iii)] $|\mathcal{B}(i \bar{j}) \cap \mathcal{B}(\bar{i} j)|\le {n-4\choose b-3}$.
\end{itemize}

To prove (i), assume $\gamma(S_{i, j}(\mathcal{B}))> {n-4\choose b-3}$.
Then Claim \ref{keyclaim} gives $\gamma(S_{i, j}(\mathcal{A}))< {n-1-(a-b+3)\choose a-(a-b+3)}$. 
By (\ref{11-30-1}), $\Delta(S_{i, j}(\mathcal{A})):=|S_{i, j}(\mathcal{A})(1)|\ge{n-1\choose a-1}-{n-4\choose a-1}$. 
As $S_{i, j}(\mathcal{A})$ and $S_{i, j}(\mathcal{B})$ are cross-intersecting,  
$\gamma(S_{i, j}(\mathcal{B}))\le |S_{i, j}(\mathcal{B})(\bar{1})|\le{n-4\choose b-3}$, a contradiction. Thus (i) holds. 
Using (i) together with $\gamma(\mathcal{B})\ge {n-3\choose b-2}+{n-4\choose b-2}$, yields $\gamma(\mathcal{B})-\gamma(S_{i,j}(\mathcal{B}))\ge 2{n-4\choose b-2}$. 
Since the $S_{i,j}$-shift only increases the degree of $i$, the strict reduction of $\gamma(\m B)$ implies that  $i$ becomes the unique  maximum-degree element in $S_{i,j}(\m B)$. 
The sets in $\mathcal{B}(\bar{i}\bar{j})$ and $\mathcal{B}(i \bar{j}) \cap \mathcal{B}(\bar{i} j)$ stay intact after $S_{i,j}$-shift, and provide lower bounds on  $\gamma(S_{i,j}(\mathcal{B}))$, i.e. $|\mathcal{B}(\bar{i}\bar{j})|\le \gamma(S_{i,j}(\mathcal{B}))\le{n-4\choose b-3}$ and $|\mathcal{B}(i \bar{j}) \cap \mathcal{B}(\bar{i} j)|\le \gamma(S_{i,j}(\mathcal{B}))\le{n-4\choose b-3}$.  This establishes (iii). Consequently, we have 
$
|\mathcal{B}(\bar{i}j)|=|\mathcal{B}(\bar{i})|-|\mathcal{B}(\bar{i}\bar{j})|\ge \gamma(\mathcal{B})-|\mathcal{B}(\bar{i}\bar{j})|\ge 2{n-4\choose b-2}, 
$
and similarly, $|\mathcal{B}(i \bar{j})|\ge 2{n-4\choose b-2}$, proving (ii).

We remind the reader of the notation $\mathcal{F}[X, Y]=\big\{F: F\cap Y = X,\; F\in \mathcal{F} \big\}$. 
\begin{claim}\label{10-28-2}
The pair $(\m A, \m B)$ is shifted for all $S_{k,\ell}$ with $\{i, j\}\cap \{k,\ell\}=\emptyset$.
\end{claim}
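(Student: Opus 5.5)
We argue by contradiction, using the minimality of the counterexample $(\m A,\m B)$ with respect to $\precneqq$, in the same way as in Claim~\ref{keyclaim}. Suppose some $S_{k,\ell}$ with $k<\ell$ and $\{k,\ell\}\cap\{i,j\}=\emptyset$ acts nontrivially on $(\m A,\m B)$. We aim to show that the pair $(S_{k,\ell}(\m A),S_{k,\ell}(\m B))$ satisfies the hypotheses of Proposition~\ref{11-29-1} and also the lower bounds $\gamma(S_{k,\ell}(\m A))\ge {n-1-(a-b+3)\choose a-(a-b+3)}$ and $\gamma(S_{k,\ell}(\m B))\ge {n-4\choose b-3}$.

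The routine facts come first. An $S_{k,\ell}$-shift preserves sizes and cross-intersection; here no condition {\bf P} is needed, since the classical $(k,\ell)$-shift applied to both families preserves cross-intersection. It also strictly decreases the pair in the order $\precneqq$, because each moved set goes to a lex-smaller set. Once the diversity bounds above are established, minimality says that the shifted pair is not a counterexample. The verbatim argument of Claim~\ref{keyclaim} then transfers conditions (1)--(3) back to $(\m A,\m B)$, which is a contradiction.

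The main step is therefore to control the diversity of the shifted families. Here we use the structure obtained from (i)--(iii). By (ii), $|\m B(i\bar j)|$ and $|\m B(\bar i j)|$ are both at least $2{n-4\choose b-2}$. The degrees of $i$ and $j$ in $\m B$ are each at least $2{n-4\choose b-2}$, and $\gamma(\m B)\ge{n-3\choose b-2}+{n-4\choose b-2}$. An $S_{k,\ell}$-shift changes only the degrees of $k$ and $\ell$: the degree of $k$ rises and that of $\ell$ falls, while every other degree, in particular those of $i$ and $j$, is unchanged.

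We then argue as follows. If the maximum-degree element of $S_{k,\ell}(\m B)$ is not $k$, then $\gamma$ cannot drop below the value it would take with that element, and that value is still controlled by $\gamma(\m B)$. If it is $k$, then the sets avoiding $k$ include all of $S_{k,\ell}(\m B)(i\bar j)\cup S_{k,\ell}(\m B)(\bar i j)$ that avoid $k$. Since $k\notin\{i,j\}$, the sets of $\m B(i\bar j)$ that avoid both $k$ and $\ell$ are untouched by the shift. Counting them via (ii) and subtracting at most ${n-4\choose b-3}$ for the sets containing $k$ or $\ell$ should give $\gamma(S_{k,\ell}(\m B))\ge {n-4\choose b-3}$. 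For $\m A$ we would argue symmetrically, or else use the contrapositive of Proposition~\ref{1-4-3}: if $\gamma(S_{k,\ell}(\m A))$ were small, then by Lemma~\ref{11-23-3} the degree part of $S_{k,\ell}(\m A)$ would be nearly full, and Proposition~\ref{1-4-1} would force $\gamma(S_{k,\ell}(\m B))\le{n-4\choose b-3}$, contradicting what was just shown, with an extra unit of slack needed for the boundary case.

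The hard part is this diversity estimate. The difficulty is to show that $\m B(i\bar j)$ has many sets avoiding both $k$ and $\ell$, since $(k,\ell)$ is arbitrary. My first attempt would bound the number of sets in $\m B(i\bar j)$ containing $k$ or $\ell$ by $2{n-4\choose b-2}$, which is too weak. I would refine it by restricting to sets containing $\ell$ but not $k$, because only these move, which gives a bound of ${n-4\choose b-2}$ at most. I would combine this with (ii) and (iii), where (iii) shows that $i$ and $j$ cannot both be absent simultaneously on many sets. If this still fails, I would fall back on first taking $\m B$ to be maximal, as in Assumption~\ref{12-14-2}.
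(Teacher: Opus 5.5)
Your frame matches the paper's. A non-trivial $S_{k,\ell}$ with $k<\ell$ preserves sizes and cross-intersection and strictly lowers the pair, so the argument that proved (i) applies to it verbatim. Everything then reduces to showing $\gamma(S_{k,\ell}(\m B))>\binom{n-4}{b-3}$, and your split according to whether $k$ becomes the maximum-degree element is correct.

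No separate estimate for $\m A$ is needed. Once $\gamma(S_{k,\ell}(\m B))>\binom{n-4}{b-3}$, Claim~\ref{keyclaim} forces $\gamma(S_{k,\ell}(\m A))<\binom{n-1-(a-b+3)}{a-(a-b+3)}$. Lemma~\ref{11-23-3} together with Proposition~\ref{1-4-1} then gives $\gamma(S_{k,\ell}(\m B))\le\binom{n-4}{b-3}$, a contradiction. In particular, a shift acting trivially on $\m B$ is ruled out at once.

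\textbf{The gap.} The one non-routine step is the lower bound for $\gamma(S_{k,\ell}(\m B))$. You leave it open, and the estimates you sketch for it fail.
\begin{itemize}
\item The members of $\m B(i\bar j)$ meeting $\{k,\ell\}$ are not bounded by $\binom{n-4}{b-3}$, nor by $2\binom{n-4}{b-2}$. There can be as many as $\binom{n-3}{b-2}+\binom{n-4}{b-2}$ of them (all $(b-1)$-subsets of $[n]\setminus\{i,j\}$ meeting $\{k,\ell\}$), and (i)--(iii) do not exclude this.
\item Restricting to sets that contain $\ell$ but not $k$ does not help. Sets containing $k$ are not moved, but when $k$ is the maximum-degree element of $S_{k,\ell}(\m B)$ they are not in its diversity part, so they must be discarded too.
\item Working with the single half $\m B(i\bar j)$, (ii) then only yields $2\binom{n-4}{b-2}-\binom{n-3}{b-2}-\binom{n-4}{b-2}=-\binom{n-4}{b-3}$, which is useless.
\end{itemize}

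\textbf{The missing idea.} Following the paper's route, use both halves simultaneously and charge the loss to the pair rather than to each half.
\begin{itemize}
\item The family $\m B(\bar k\bar\ell)$ is untouched by $S_{k,\ell}$, so it lies in $S_{k,\ell}(\m B)(\bar k)$. It contains every member of $\m B$ meeting $\{i,j,k,\ell\}$ in exactly $\{i\}$ or exactly $\{j\}$.
\item Put $P=\m B(i\bar j)$ and $Q=\m B(\bar i j)$, and let $\m K$ be the family of $(b-1)$-subsets of $[n]\setminus\{i,j\}$ meeting $\{k,\ell\}$. Then $|P\cap\m K|+|Q\cap\m K|\le|\m K|+|P\cap Q|$.
\item $P$ and $Q$ are almost disjoint: $|P|,|Q|\ge\gamma(\m B)-|\m B(\bar i\bar j)|$, while $|\m B(\bar i\bar j)|+|P\cap Q|\le\gamma(S_{i,j}(\m B))\le\binom{n-4}{b-3}$. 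The last bound holds because both are disjoint parts of $S_{i,j}(\m B)(\bar i)$, as in the proof of (iii).
\end{itemize}
Hence
\[
|\m B(\bar k\bar\ell)|\ge |P|+|Q|-|P\cap Q|-|\m K|\ge 2\gamma(\m B)-2\tbinom{n-4}{b-3}-\tbinom{n-3}{b-2}-\tbinom{n-4}{b-2}\ge 2\tbinom{n-4}{b-2}-\tbinom{n-4}{b-3},
\]
which exceeds $\binom{n-4}{b-3}$ because $n\ge 2b$. This is the same final bound the paper's chain of inequalities reaches. It gives $\gamma(S_{k,\ell}(\m B))>\binom{n-4}{b-3}$ in the case where $k$ is the new maximum-degree element, and hence the contradiction.
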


\begin{proof}
Suppose for contradiction that there exist $k,\ell\in [n]\setminus \{i,j\}$ and $k<\ell$ such that $S_{k,\ell}(\m B)\ne \m B$. Then by (i), we have 
$\gamma(S_{k, \ell}(\mathcal{B}))\le {n-4\choose b-3}.$
Since the $S_{k,\ell}$-shift only increases the degree of $k$, the strict reduction of $\gamma(\m B)$ implies that  $k$ becomes the unique  maximum-degree element in $S_{k,\ell}(\m B)$. Thus $|\m B(\bar k)|\le {n-4\choose b-3}$.
On the other hand,
\begin{align*}
|\m B(\bar k)|&\ge |\m B(\bar k \bar \ell)|\ge |\m B[\{i\},\{i,j,k,\ell\}]|+|\m B[\{j\},\{i,j,k,\ell\}]|\\
& \ge |\m B[\{i\},\{i,j,k\}]|+|\m B[\{j\},\{i,j,k\}]|-\tbinom{n-4}{b-2}\\
& \ge |\m B[\{i\},\{i,j\}]|+|\m B[\{j\},\{i,j\}]|-\tbinom{n-3}{b-2}-\tbinom{n-4}{b-2}\\
&\overset{(ii)}{\ge}4\tbinom{n-4}{b-2}-\tbinom{n-3}{b-2}-\tbinom{n-4}{b-2}\\
&>\tbinom{n-4}{b-3},
\end{align*}
where the last inequality holds by $n\ge 2b$. This contradicts $|\m B(\bar k)|\le {n-4\choose b-3}$.
\end{proof}

We now invoke a result of Frankl and Kupavskii \cite{FK2021jcta}, which shows that any two shifted families are positively correlated. 
\begin{lemma}[Frankl--Kupavskii \cite{FK2021jcta}]\label{10-28-3}
Let $\mathcal{F}_1, \mathcal{F}_2\subset {[n]\choose k}$ be shifted. Then 
\begin{equation}
|\mathcal{F}_1\cap \mathcal{F}_2|\ge |\mathcal{F}_1| |\mathcal{F}_2|/{n\choose k}.
\end{equation}
\end{lemma}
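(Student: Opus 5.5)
The approach is to recognize shifted families as down-sets in a distributive lattice and apply the Harris--FKG inequality with the uniform measure.

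\textbf{The lattice.} Identify each $k$-set $F=\{f_1<\dots<f_k\}\subset[n]$ with the vector $(f_1,\dots,f_k)$. Order $\binom{[n]}{k}$ by $F\le G$ iff $f_t\le g_t$ for all $t$ (the Gale, or shifting, order). The componentwise minimum and maximum of two strictly increasing integer vectors are again strictly increasing. Hence $\binom{[n]}{k}$ is closed under $F\wedge G=(\min(f_t,g_t))_t$ and $F\vee G=(\max(f_t,g_t))_t$. It is therefore a sublattice of the product of $k$ chains, and in particular a finite distributive lattice.

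\textbf{Shifted means down-set.} Here ``shifted'' means $S_{i,j}(\mathcal F)=\mathcal F$ for all $i<j$; equivalently, $F\in\mathcal F$, $j\in F$, $i\notin F$, $i<j$ imply $(F\setminus\{j\})\cup\{i\}\in\mathcal F$. Suppose $A\le B$ with $B\in\mathcal F$. I move from $B$ to $A$ by elementary steps: pick the smallest index $t$ with $b_t>a_t$ and replace $b_t$ by $b_t-1$. This is allowed because $b_{t-1}=a_{t-1}<a_t\le b_t-1$, so $b_t-1\notin B$. Each step is an $(i,j)$-shift with $i=j-1$, so it stays inside $\mathcal F$. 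The intermediate vectors stay sorted and stay $\ge A$, and the total $\sum_t(b_t-a_t)$ strictly decreases, so the process ends at $A$. Thus $A\in\mathcal F$, and both $\mathcal F_1$ and $\mathcal F_2$ are down-sets (decreasing families) in this lattice.

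\textbf{Correlation.} On a finite distributive lattice, the uniform measure $\mu$ trivially satisfies $\mu(x\wedge y)\mu(x\vee y)\ge\mu(x)\mu(y)$. The FKG inequality then gives $\mu(\mathcal D_1\cap\mathcal D_2)\ge\mu(\mathcal D_1)\mu(\mathcal D_2)$ for any two down-sets $\mathcal D_1,\mathcal D_2$; the decreasing case follows from the increasing one applied to their complements. With $\mu$ uniform on $\binom{[n]}{k}$ this reads
\[
\frac{|\mathcal F_1\cap\mathcal F_2|}{\binom nk}\ge\frac{|\mathcal F_1|}{\binom nk}\cdot\frac{|\mathcal F_2|}{\binom nk},
\]
which is exactly the claim.

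\textbf{Main obstacle.} The only step needing care is the down-set verification, namely that an arbitrary Gale-smaller set is reached by legitimate single shifts. The ``smallest index $t$ with $b_t>a_t$'' rule handles this. If one prefers to avoid quoting FKG, an alternative is induction on $n$ via the split $\mathcal F(n)$, $\mathcal F(\bar n)$, in the style of Harris's proof; I would use FKG directly.
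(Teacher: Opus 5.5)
Your proof is correct and complete. The Gale-order lattice is distributive, as your componentwise $\min/\max$ check shows. The descent via the smallest index $t$ with $b_t>a_t$ uses only the adjacent shifts $S_{b_t-1,b_t}$ (for $t=1$, note $b_1-1\ge a_1\ge 1$), so a shifted family is a down-set. FKG with the uniform, trivially log-supermodular, measure applied to two decreasing indicators is then exactly $\binom{n}{k}|\mathcal F_1\cap\mathcal F_2|\ge|\mathcal F_1||\mathcal F_2|$.

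There is no in-paper argument to compare with: the paper quotes this lemma from \cite{FK2021jcta} without proof and uses it once, for $\mathcal B(1\bar 2)$ and $\mathcal B(\bar 1 2)$ on $[3,n]$. Your route is the shortest, and it proves slightly more: any two down-sets of the Gale lattice are positively correlated, and invariance under adjacent shifts alone suffices. The price is quoting FKG as a black box.

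If you prefer the fully elementary induction on $n$ you mention, you need one extra ingredient. Shiftedness gives $|\mathcal F(n)|/\binom{n-1}{k-1}\le|\mathcal F(\bar n)|/\binom{n-1}{k}$. To see this, double count the pairs $(G,j)$ with $G\in\mathcal F(n)$ and $j\in[n-1]\setminus G$, using $G\cup\{j\}\in\mathcal F(\bar n)$ via $S_{j,n}$; each set of $\mathcal F(\bar n)$ arises at most $k$ times. Write $x_i,y_i$ for these two densities of $\mathcal F_i$ and apply the induction hypothesis to both parts. The claim then reduces to the identity-based inequality $\binom{n-1}{k}\binom{n-1}{k-1}(x_1-y_1)(x_2-y_2)\ge 0$, which holds because $x_i\ge y_i$.
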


We may w.l.o.g. assume $\{i,j\}=\{1,2\}$. 
The pair $(\m A, \m B)$ is shifted for all $S_{k,\ell}$ with $\{i, j\}\cap \{k,\ell\}=\emptyset$.
By Claim~\ref{10-28-2}, $\mathcal{B}$ is $S_{k,\ell}$-shifted for all ${k,\ell}\subset[3,n]$, which implies that both $\mathcal{B}(1\bar{2})$ and $\mathcal{B}(\bar{1}2)$ are shifted families on $[3,n]$.
Combining Lemma \ref{10-28-3} with (ii) gives 
\[
|\mathcal{B}(1\bar{2})\cap\mathcal{B}(\bar{1}2)| \ge \frac{|\mathcal{B}(1\bar{2})| |\mathcal{B}(\bar{1}2)|}{\binom{n-2}{b-1}} \ge \frac{4\binom{n-4}{b-2}^2}{\binom{n-2}{b-1}} > \binom{n-4}{b-3},
\]
where the last inequality follows from $n\ge 2b$ and ${n-4\choose b-3}{n-2\choose b-1}\le {n-3\choose b-2}^2<4{n-4\choose b-2}^2$. This contradicts (iii), completing Case 1.

{\bf Case 2:
$
\gamma(\mathcal{A})\le {n-3\choose a-2}+{n-4\choose a-2}-1$ or $\gamma(\mathcal{B})\le {n-3\choose b-2}+{n-4\choose b-2}-1$.}

{\bf Case 2.1: 
$\gamma(\mathcal{A})\le {n-3\choose a-2}+{n-4\choose a-2}-1$.} 
W.l.o.g. assume $\m A_{\Delta}=\m A(1)$.
Using Lemma~\ref{11-23-3} together with $|\mathcal{A}|\ge|\mathcal{A}'|$ yields
\begin{equation}\label{a1lower}
    |\m A(1)|=|\mathcal{A}|-\gamma(\m A)\ge\binom{n-2}{a-2}+\binom{n-1-(a-b+3)}{a-(a-b+3)}.
\end{equation}

Define $\mathcal{C}, \mathcal{D}$ as follows: 
$$\mathcal{C}(1)=\mathcal{L}([2,n],|\mathcal{A}(1)|, a-1), \,\,\mathcal{C}(\bar{1})=\mathcal{L}([2,n],|\mathcal{A}(\bar{1})|, a),$$ 
and similarly for $\mathcal{D}$. Then the following hold:
\begin{itemize}
    \item[(a)] Every set of $\mathcal{C}(\bar{1})$ contains element $2$ (by $|\m C(\bar{1})|=|\m A(\bar1)|=\gamma(\m A)<{n-3\choose a-2}+{n-4\choose a-2}\le {n-2\choose a-1}$).
    \item[(b)] $\gamma(\mathcal{C})=\min \{|\mathcal{C}(\bar12)|, |\mathcal{C}(1\bar{2})|\}$ (by (a)). 
    \item[(c)] $\mathcal{C}(\{1,2\})={[3,n]\choose a-2}$ (by $|\m C(1)|=|\m A(1)|$ and (\ref{a1lower})). 
\end{itemize}

By Theorem \ref{12-1-1}, $\mathcal{C}, \mathcal{D}$ are cross-intersecting.
Let $\mathcal{D}'\subset {[n]\choose b}$ be such that 
$\m D'(1)$ and $\m D'(\bar1)$ are L-initial on $[2,n]$, and 
$\m D'$ is maximal cross-intersecting with $\m C$.  
Then $\mathcal{D}\subset \mathcal{D}'$. Moreover, 
\begin{itemize}
    \item[(d)] $\mathcal{D}'(\{1,2\})={[3,n]\choose b-2}$ (by (a) and since $\m D'$ is maximal).
    \item[(e)] The families $\mathcal{D}'(1\bar{2})$ and $ \mathcal{D}'(\bar{1}2)$ are maximal cross-intersecting with $\mathcal{C}(\bar{1}2)$ and $\mathcal{C}(1\bar{2})$, respectively (since $\m D'$ is maximal).
    \item[(f)] Every set of $\mathcal{D}'(\bar{1})$ contains element $2$ (by (c) and $n>a+b$).
    \item[(g)] $\gamma(\mathcal{D}')= \min \{ |\mathcal{D}'(\bar{1}2)|, |\mathcal{D}'(1\bar{2})|\}\ge \min \{ |\mathcal{D}(\bar{1}2)|, |\mathcal{D}(1\bar{2})|\}$ (by (f)). 
    \item[(h)] $|\mathcal{C}(\bar12)|\le |\mathcal{C}(1\bar{2})| \Leftrightarrow|\mathcal{D}'(\bar{1}2)|\le |\mathcal{D}'(1\bar{2})|$.
\end{itemize}

\begin{proof}[Proof of (h)]
Assume $|\mathcal{C}(\bar12)|\le |\mathcal{C}(1\bar{2})|$.
Since $\m C(\bar12)$ and $\m C(\bar12)$ are L-initial on $[3,n]$, $\mathcal{C}(\bar12)\subset \mathcal{C}(1\bar{2})$.
Consequently, 
$(\mathcal{C}(\bar12))^c\subset (\mathcal{C}(1\bar{2}))^c$.
Furthermore, 
$\partial_{b-1}(\mathcal{C}(\bar12)^c)\subset \partial_{b-1}(\mathcal{C}(1\bar{2})^c)$.
So
$|\partial_{b-1}(\mathcal{C}(\bar{1}2)^c)| \le |\partial_{b-1}(\mathcal{C}(1\bar{2})^c)|$. Using (e), 
\begin{align*}
    &|\mathcal{D}'(1\bar{2})|=|{[3,n]\choose b-1}\setminus \partial_{b-1}(\mathcal{C}(\bar{1}2)^c)|={n-2\choose b-1}-|\partial_{b-1}(\mathcal{C}(\bar{1}2)^c)|,\\
    &|\mathcal{D}'(\bar{1}2)|=|{[3,n]\choose b-1}\setminus \partial_{b-1}(\mathcal{C}(1\bar{2})^c)|={n-2\choose b-1}-|\partial_{b-1}(\mathcal{C}(1\bar{2})^c)|.
\end{align*}
Combining these with $|\partial_{b-1}(\mathcal{C}(\bar{1}2)^c)| \le |\partial_{b-1}(\mathcal{C}(1\bar{2})^c)|$, we obtain $|\mathcal{D}'(\bar{1}2)|\le |\mathcal{D}'(1\bar{2})|$.
A symmetric argument shows that if  $|\mathcal{C}(\bar12)|> |\mathcal{C}(1\bar{2})|$, then $|\mathcal{D}'(\bar{1}2)|> |\mathcal{D}'(1\bar{2})|$. 
\end{proof}

We now show that $\m C, \m D'$ can play the roles of $\m A_a, \m B_b$ for $\m A, \m B$, thereby contradicting the choice of $(\m A, \m B)$. Since  $|\m C|=|\m A|$ and $|\m D'|\ge |\m D|=|\m B|$, we only need to verify conditions 2 and 3 in Proposition \ref{11-29-1}. 

Suppose $|\mathcal{C}(\bar12)|\le |\mathcal{C}(1\bar{2})|$. 
By (h), $|\mathcal{D}'(\bar12)|\le |\mathcal{D}'(1\bar{2})|$.
Then $\m C_{\Delta}=\m C(1)$ and $\m D_{\Delta}=\m D(1)$. 
By (b) and (c), 
\begin{align*}
    &\gamma(\mathcal{C})=|\mathcal{C}(\bar12)|=|\mathcal{A}(\bar{1})|=\gamma(\m A)\ge \min \{{n-1-(a-b+3)\choose a-(a-b+3)}, \gamma(\mathcal{A})\},\\
    &\Delta(\mathcal{C})=|\m C(1)|={n-2\choose a-2}+|\m C(1\bar2)|\ge {n-2\choose a-2}+|\m C(\bar12)|= {n-2\choose a-2}+\gamma(\mathcal{C}).
\end{align*}
By (d) and (f), 
\begin{align*}
    &\gamma(\mathcal{D}')=|\mathcal{D}'(\bar1)|\ge |\mathcal{D}(\bar1)|=|\m B(\bar1)|\ge \gamma(\m B)\ge \min \{{n-4\choose b-3}, \gamma(\mathcal{B})\},\\
    &\Delta(\mathcal{D}')=|\m D'(1)|={n-2\choose b-2}+|\m D'(1\bar2)|\ge {n-2\choose b-2}+|\m D'(\bar12)|= {n-2\choose b-2}+\gamma(\mathcal{D}').
\end{align*}

Suppose $|\mathcal{C}(\bar12)|> |\mathcal{C}(1\bar{2})|$. 
By (h), $|\mathcal{D}'(\bar12)|> |\mathcal{D}'(1\bar{2})|$.
Then $\m C_{\Delta}=\m C(2)$ and $\m D_{\Delta}=\m D(2)$. 
By (\ref{a1lower}), $|\m C(1)|=|\m A(1)|=\Delta(\mathcal{A})\ge {n-2\choose a-2}+{n-1-(a-b+3)\choose a-(a-b+3)}$. Using (c), 
\begin{align*}
    &\gamma(\mathcal{C})=|\mathcal{C}(1\bar2)|= |\mathcal{C}(1)|-{n-2\choose a-2}\ge {n-1-(a-b+3)\choose a-(a-b+3)} \ge \min \{{n-1-(a-b+3)\choose a-(a-b+3)}, \gamma(\mathcal{A})\},\\
    &\Delta(\mathcal{C})={n-2\choose a-2}+|\m C(\bar12)|> {n-2\choose a-2}+|\m C(1\bar2)|= {n-2\choose a-2}+\gamma(\mathcal{C}).
\end{align*}
Since $\m D_{\Delta}=\m D(2)$, $\gamma(\mathcal{D}')=|\mathcal{D}'(1\bar{2})|$. Using (d), 
\begin{align*}
    &\Delta(\mathcal{D}')=|\m D'(2)|={n-2\choose b-2}+|\m D'(\bar12)|> {n-2\choose b-2}+|\m D'(1\bar2)|= {n-2\choose b-2}+\gamma(\mathcal{D}').
\end{align*}
Note that $\m C(\bar1)$ is L-initial on $[2,n]$, $|\m C(\bar12)|=|\m A(\bar{1})|=\gamma(\mathcal{A})< {n-3\choose a-2}+{n-4\choose a-2}$. 
Then every set in $\m C(\bar12)$ must intersect $\{3,4\}$.
Since (e) holds, all $b$-sets $B$ with $\{3,4\}\subset B\subset [3, n]$ belong to $\m D'(1\bar2)$. 
We obtain 
$$\gamma(\mathcal{D}')=|\mathcal{D}'(1\bar{2})|\ge {n-4\choose b-3}\ge \min \{{n-4\choose b-3}, \gamma(\mathcal{B})\}.$$ 

Hence, in either case, taking $\m A_a:=\m C$ and $\m B_b:=\m D'$ verifies conditions 1,2,3 a contradiction. 

\textbf{Case 2.2: $\gamma(\mathcal{B})\le\binom{n-3}{b-2}+\binom{n-4}{b-2}-1$.}
W.l.o.g. $\mathcal{B}_\Delta=\mathcal{B}(1)$. By Lemma~\ref{11-23-3} and the bound 
on $\gamma(\mathcal{B})$, we replace (\ref{a1lower}) by
$|\m B(1)|\ge\binom{n-2}{b-2}+\binom{n-4}{b-3}.$
Keeping $\mathcal{C}, \m D$ as defined in Case 2.1.  
Let $\mathcal{C}'$ be the maximum family cross-intersecting with $\mathcal{D}$. 
Then for $\m D$, (a)--(c) hold; for $\m C'$, (d)--(h) hold. 
The argument now proceeds exactly as in Case 2.1, with $\binom{n-4}{b-3}$ 
playing the role of $\binom{n-1-(a-b+3)}{a-(a-b+3)}$ in the case analysis on whether
$|\m D(\bar12)|$ is bigger or smaller than $|\mathcal{D}(1\bar{2})|$. We obtain 
cross-intersecting $\mathcal{A}_a:=\mathcal{C}',\mathcal{B}_b:=\mathcal{D}$ 
satisfying conditions 1--3, contradicting the choice of $(\mathcal{A},\mathcal{B})$.
Full details are provided in the
Appendix.

\section{Proof of Proposition \ref{10-25-3}}\label{secpro3}
We begin in Subsection \ref{outline} with a high‑level overview, splitting the proof into four stages, and describing the objective of each stage. Subsection \ref{tool} establishes the technical tool--Lemmas \ref{SUV'} and \ref{1-7-1} that will be frequently used. The complete, detailed proof is then presented in Subsection \ref{proof}. 

We start with a lower bound of $|\mathcal{B}|$. 
Since $\mathcal{B}'(1)$ is maximally cross‑intersecting with $\mathcal{M}$ and $|\cap\, \mathcal{M}| = m$, 
we have $|\mathcal{B}'(1)| \ge \binom{n-1}{b-1} - \binom{n-1-m}{b-1}$; 
together with $|\mathcal{T}| = |\mathcal{B}'(\bar1)|$ this yields
\begin{equation}\label{lowerbound}
    |\m B|\ge \mathcal{B}'| \ge \binom{n-1}{b-1} - \binom{n-1-m}{b-1} + |\mathcal{T}|.
\end{equation}

\subsection{Outline and Strategy}\label{outline}
The proof of Proposition~\ref{10-25-3} resembles that of Lemma~\ref{lemkey} at a high level, but rests on a completely different underlying reason. Instead of doing bipartite switching and using the upper bound of the sum of the sizes of cross-intersecting families, we perform certain $S_{U,V}$-shifts to preserve the sizes of both $\m A$ and $\m B$ while keeping $\m M$ and $\m T$ in the diversity part of the resulting families. This is more complicated to control. Before giving the proof, we develop several tools required for the argument in Subsection \ref{tool}.

We argue by contradiction. The proof proceeds in four steps.
Step 1: establishes a lower bound for $|\m B(\bar1)|$;
Steps 2,3, and 4 successively tighten the upper bound for $|\m B(\bar1)|$.
A contradiction between the resulting lower and upper bounds of $|\m B(\bar1)|$ then completes the argument. More specifically, 

\begin{itemize}
    \item 
{\bf Step 1.}
We will determine an integer $m'$ such that $m'\le m-1$,
$|\m A(\bar{1})|>{n-1-m'\choose a-m'}$ and $|\m B(1)|<{n-1\choose b-1}-{n-1-m'\choose b-1}$. 
By (\ref{lowerbound}), we have 
$
|\m B(\bar1)|=|\m B|-|\m B(1)|>{n-1-(m'+1)\choose b-2}+|\m T|.
$

\item 
{\bf Step 2.}
We will prove Claim \ref{11-10-2} showing that for any $B\in \m B(\bar1)$, $[2,t+1]\subset B$. Then $|\m B(\bar1)|\le {n-1-t\choose b-t}$. 

\item 
{\bf Step 3.}
We will prove Claim \ref{11-13-1} showing that for any $B\in \m B(\bar1)\setminus \m T$, $[2,t+1+\ell]\subset B$. Then $|\m B(\bar1)|\le {n-1-(t+\ell)\choose b-(t+\ell)}+|\m T|$. 

\item 
{\bf Step 4.}
We will prove Claim \ref{11-19-1} showing that for any $B\in \m B(\bar1)\setminus \m T$, $[2,m'+2]\subset B$. Then $|\m B(\bar1)|\le {n-1-(m'+1)\choose b-(m'+1)}+|\m T|$. 
\end{itemize} 

Since $n>a+b$ and $a\ge b$, $n-1-(m'+1)>b-(m'+1)+b-2$,   
 there is a contradiction between the lower and upper bounds in Step 1 and Step 4.

We will apply shifting tools:
Daykin's $S_{U,V}$-shift and
a refined version of Lemma \ref{SUV} (see Lemma \ref{SUV'}).
The refinement gives a new approach to preserve structures (see Lemma \ref{1-7-1}).

\subsection{Technical tools: a new approach for preserving structures}\label{tool}
Recall that for two families $\m A\subset {[n]\choose a}, \m B\subset {[n]\choose b}$, if $(\m A,\m B)$ is not L-initial, then there exists $(U,V)$ satisfying {\bf P}. 
Lemma \ref{SUV} states that if $\m A, \m B$ are cross-intersecting, then $S_{U,V}(\m A), S_{U,V}(\m B)$ are also cross-intersecting.
Now we give a stronger result---Lemma \ref{SUV'}---showing that a weaker property {\bf Q} also guarantees cross-intersection. 
Furthermore, we give the `structure-preserving lemma'---Lemma \ref{1-7-1}.

Let us first define this property.

\begin{itemize}
\item[{\bf Q}] Let $U,V\subset [n]$ satisfy 
$U\cap V=\emptyset$, $|U|=|V|$ and $U\prec V$.
We say that $(U,V)$ satisfies {\bf Q} for a family $\m F$ if $S_{U,V}(\m F)\precneqq \m F$, and 
for every $V'\subsetneqq V$, there exists $U'\subsetneqq U$ such that $U'\prec V'$, $|U'|=|V'|$, and $S_{U',V'}(\m F)=\m F$. 
\end{itemize}

Note the difference with the property {\bf P}: instead of `for any $V'$ and any $U'$' we have `for any $V'$ there exists $U'$'.

To emphasize the choice of $(U,V)$ that satisfies {\bf Q}, we denote the corresponding $S_{U,V}$-shift by $S_{U,V}^Q$ (note that $S_{U,V}^Q$ and $S_{U,V}$ act identically).
For a family $\m F$, an $S_{U,V}^Q$-shift of $\m F$ means that $(U,V)$ satisfies {\bf Q} for $\m F$;
for a pair of families $(\mathcal A,\mathcal B)$,
an $S_{U,V}^Q$-shift of $(\mathcal A,\mathcal B)$
means that $(U,V)$ satisfies {\bf Q} 
for at least one of $\mathcal A$ and $\mathcal B$.
Throughout, an $S_{U,V}^Q$-shift of $\m F$ (or $(\mathcal A,\mathcal B)$) is understood to act non-trivially. 
From the definition, if $S_{i,j}$-shift acts on $\m F$ non-trivially, then it is also an $S_{\{i\},\{j\}}^Q$-shift of the family.

\begin{lemma}\label{SUV'}
Let $n\ge a+b$, \(\mathcal{A}\subset {[n]\choose a}\) and \(\mathcal{B}\subset {[n]\choose b}\) be cross-intersecting. 
Let $(U,V)$ satisfy {\bf Q} for $(\m A, \m B)$. Then \(S_{U,V}^{Q}(\mathcal{A})\) and \(S_{U,V}^{Q}(\mathcal{B})\) are also cross-intersecting.
\end{lemma}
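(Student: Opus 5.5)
We argue by contradiction, following Daykin's argument behind Lemma~\ref{SUV}. Suppose $A\in S_{U,V}(\m A)$ and $B\in S_{U,V}(\m B)$ are disjoint. Since $\m A,\m B$ are cross-intersecting, at least one of $A,B$ is a new set, i.e. the image of a shifted set. First we dispose of the case where both are new: then $U\subset A$ and $U\subset B$, so $A\cap B\ne\emptyset$. So w.l.o.g. $A=S_{U,V}(A_0)$ with $A_0\in\m A$, $A_0\cap(U\cup V)=V$, $A=(A_0\setminus V)\cup U\notin\m A$, while $B\in\m B$ is an old set that survives in $S_{U,V}(\m B)$. The symmetric case, with $B$ new and $A$ old, is identical.

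Next we locate $B$. Since $B$ survives, either $S_{U,V}(B)=B$ or $S_{U,V}(B)\in\m B$. Because $A_0\cap B\neq\emptyset$ but $A\cap B=\emptyset$, and $A_0\setminus V\subset A$, we get $B\cap V\ne\emptyset$ and $B\cap U=\emptyset$ (as $U\subset A$). If $V\subset B$, then $B\cap(U\cup V)=V$, so $S_{U,V}(B)=(B\setminus V)\cup U$ is not $B$ and therefore lies in $\m B$. But it is disjoint from $A_0$: indeed $A_0\cap U=\emptyset$, and $(B\setminus V)\cap A_0\subset (B\setminus V)\cap A=\emptyset$. This contradicts cross-intersection. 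Hence $V':=B\cap V$ satisfies $\emptyset\ne V'\subsetneq V$.

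Now we use property {\bf Q}. For this $V'$, {\bf Q} gives $U'\subsetneq U$ with $|U'|=|V'|$, $U'\prec V'$ and $S_{U',V'}$ acting trivially on the relevant family, which is the family for which $(U,V)$ satisfies {\bf Q}. This is the main obstacle: {\bf Q} is only assumed for one of $\m A,\m B$, so we must handle two sub-cases.

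\emph{Sub-case $(U,V)$ satisfies {\bf Q} for $\m B$.} We have $B\cap(U'\cup V')=V'$, since $B\cap U=\emptyset$. Shiftedness gives $B':=(B\setminus V')\cup U'\in\m B$. Now $B'\cap A_0=\emptyset$: the set $A_0$ misses $U\supset U'$, and $(B\setminus V')\cap A_0\subset (B\setminus V)\cap A_0=\emptyset$, using $B\cap V=V'$. This contradicts cross-intersection.

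\emph{Sub-case $(U,V)$ satisfies {\bf Q} for $\m A$.} Consider $A_1:=(A_0\setminus (V\setminus V'))\cup(U\setminus U')$. Since $A_0\cap(U\cup V)=V$ and $S_{U',V'}$ is trivial on $\m A$, we would like to show $A_1\in\m A$ and derive a contradiction through $B$. The natural route is to factor $S_{U,V}=S_{U\setminus U',V\setminus V'}\circ S_{U',V'}$. The intended contradiction is via $A'':=(A_0\setminus V')\cup U'$: using $S_{U',V'}$-shiftedness of $\m A$, $A''\in\m A$. However, $A''\cap B\supset$ nothing forced: $B\cap A''\subset (B\cap A_0\setminus V')\cup(B\cap U')$, and $B\cap A_0\subset V$ with $B\cap V=V'$, so $B\cap A_0\subset V'$; also $B\cap U'=\emptyset$. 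Hence $A''\cap B=\emptyset$, a contradiction.

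So both sub-cases close by applying the single shift $S_{U',V'}$ supplied by {\bf Q} to whichever family is shifted, using the identity $B\cap A_0\subset V'$ (equivalently $(B\setminus V)\cap A_0=\emptyset$). The condition $U'\prec V'$ and $n\ge a+b$ are not actually needed beyond guaranteeing that {\bf Q} makes sense. We will double-check the bookkeeping: that ``$S_{U',V'}(\m F)=\m F$'' indeed means every $F$ with $F\cap(U'\cup V')=V'$ has its image in $\m F$, and that the roles of old and new sets are exhausted by the case split above.
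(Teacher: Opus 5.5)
Your proof is correct and is essentially the paper's argument. You show $\emptyset\ne V'=B\cap V\subsetneq V$ by the same observation the paper uses: if $V'=V$, the surviving set's $S_{U,V}$-image would lie in its family and be disjoint from the preimage. You then apply the shift $S_{U',V'}$ supplied by {\bf Q} to whichever of the two sets lies in the {\bf Q}-family. The only difference is bookkeeping: the paper fixes $\m A$ as the {\bf Q}-family and splits on which set is new, while you fix $A$ as the new set and split on the {\bf Q}-family. You also explicitly dispatch the both-new case, which the paper leaves implicit; that case uses $U\neq\emptyset$, which follows from non-triviality of the shift.

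In the write-up of the $\m A$ sub-case, delete the abandoned detour through $A_1$ and the factorization of $S_{U,V}$. State directly that $A_0\cap(U'\cup V')=V'$, so $A''=(A_0\setminus V')\cup U'\in\m A$ while $A''\cap B=\emptyset$.
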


\begin{proof}
Suppose for contradiction that \(S_{U,V}^{Q}(\mathcal{A})\) and \(S_{U,V}^{Q}(\mathcal{B})\) are not cross-intersecting.
Then there exist $A\in S_{U,V}^Q(\mathcal{A})$ and $B\in S_{U,V}^Q(\mathcal{B})$ such that $A\cap B=\emptyset$.
By the definition of $\bf Q$, $(S_{U,V}^Q(\mathcal{A}),S_{U,V}^Q(\mathcal{B}))\precneqq (\mathcal{A},\mathcal{B})$, so at least one of $\m A, \m B$ is changed. 
We may w.l.o.g. assume this $S_{U,V}^Q$-shift is of $\m A$.

Consider the case $A\in S_{U,V}^Q(\mathcal{A})\setminus \m A$ and $B\in S_{U,V}^Q(\mathcal{B})\cap \m B$. Let $A':=(A\setminus U )\cup V$ and $V':=B\cap A'$. So $A'\in \m A$, and $V' \ne \emptyset$. 
 We show that $V'\subsetneqq V$. 
As $B\cap A=\emptyset$ and $A=(A'\setminus V) \cup U$, we have
$B\cap (A'\setminus V)=\emptyset$ and $B\cap U=\emptyset$. Thus, $V'\subset V$. 
If $V'=V$, then $S_{U,V}(B)=(B\setminus V) \cup U$; thus, $S_{U,V}(B)\cap A'=\emptyset$. This implies $S_{U,V}(B)\not\in \m B$. 
However, given that $B\in S_{U,V}^Q(\mathcal{B})\cap \m B$, we have $S_{U,V}(B)\in \m B$, 
a contradiction. Thus  $V'\subsetneqq V$. 
Fix an arbitrary $U'$ satisfying: $U'\subsetneqq U$, $|U'|=|V'|$ and $U'\prec V'$. Let $A'':=A'\setminus V' \cup U'$. Then $A''\not\in \m A$, since otherwise, $A''\cap B=\emptyset$, contradicting the cross-intersection. So $S_{U',V'}(\mathcal{A})\precneqq \m A$ and $(S_{U',V'}(\m A), S_{U',V'}(\m B))\precneqq (\m A, \m B)$. Since $U'$ was arbitrary, contradicting that $(U,V)$ satisfies {\bf Q} for $\m A$.  

Consider the case $B\in S_{U,V}^Q(\mathcal{B})\setminus \m B$ and $A\in S_{U,V}^Q(\mathcal{A})\cap \m A$. Write $B':=(B\setminus U )\cup V$ and $V':=B'\cap A$. So $B'\in \m B$, and $V' \ne \emptyset$. 
We show that $V'\subsetneqq V$. 
As $B\cap A=\emptyset$ and $B=(B'\setminus V) \cup U$, we have
$A\cap (B'\setminus V)=\emptyset$ and $A\cap U=\emptyset$. Thus, $V'\subset V$. 
If $V'=V$, then $S_{U,V}(A)=(A\setminus V) \cup U$, and hence $S_{U,V}(A)\cap B'=\emptyset$. This implies $S_{U,V}(A)\not\in \m A$. 
However, since $A\in S_{U,V}^Q(\mathcal{A})\cap \m A$, $S_{U,V}(A)\in \m A$, 
a contradiction. Thus  $V'\subsetneqq V$. 
Fix an arbitrary $U'$ satisfying: $U'\subsetneqq U$, $|U'|=|V'|$ and $U'\prec V'$. Let $A':=A\setminus V' \cup U'$. Then $A'\not\in \m A$, since otherwise, $A'\cap B'=\emptyset$, contradicting the cross-intersection. So $S_{U',V'}(\mathcal{A})\precneqq \m A$ and $(S_{U',V'}(\m A), S_{U',V'}(\m B))\precneqq (\m A, \m B)$. Since $U'$ was arbitrary, contradicting that $(U,V)$ satisfies {\bf Q} for $\m A$. 
\end{proof}
Recall that for  a set $C$ with $C\subset [n]$ and $|C|\le k$, denote by $\max C$ the largest element of $C$,
with the convention that $\max \emptyset =0$.
Define
\begin{equation}\label{ec}
    \mathcal{E}^k_C:=\{C\cup T: T\in \tbinom{[\max C+1, n]}{ k-|C|}\}.
\end{equation}
The following definitions will be used frequently in the remainder of the proof.
\begin{itemize}
    \item We say that $C$ is \emph{full in $\mathcal{F}$} if every $k$-set $F$ with $C \subset F \subset [n]$ belongs to $\mathcal{F}$.
    \item We say that $C$ is \emph{empty in $\mathcal{F}$} if 
    $
    \mathcal{F} \cap \mathcal{E}^k_C = \emptyset.
    $
    \item We say that $C$ is {\it non-full in $\mathcal{F}$} if $\mathcal{E}^k_C \setminus \mathcal{F} \ne \emptyset $. 

Note that non-full is not the opposite of full.  
\end{itemize}

Finally, consider an $S_{U,V}^Q$-shift acting on $\mathcal{F}$, and let $\mathcal{S} \subset \mathcal{F}$ be a subfamily. We say $\mathcal{S}$ is \emph{stable under the $S_{U,V}^Q$-shift} if an isomorphic copy of $\mathcal{S}$ is contained in $S_{U,V}^Q(\mathcal{F})$. 
When the shift is clear from context, we simply say $\mathcal{S}$ is \emph{stable}.

Take a family $\m F\subset {X\choose k}$, two sets $R, T\subset X$ and an element $i\not\in R,$ that satisfy the following: $|R|, |T|<k$, $r:=\min X\in R$, $R\cap T=\emptyset$. We define the following properties w.r.t. $\m F, R,T, i$: 

\begin{itemize}
\item[(P1)] For any $S_{U,V}$-shift of $\mathcal{F}$ with $r\in U$, if $S_{U,V}(\m F)\ne \m F$, then $|V|\ge |R|$;
\end{itemize}
\vspace{-0.3em}
\noindent
\begin{minipage}[t]{0.48\textwidth}
\begin{itemize}
\item[(P2)] $\exists F\in\mathcal{F}$ with $R\cap F=\emptyset$; 
\item[(P4)] $R\cup\{i\}$ is non-full in $\mathcal{F}$;
\item[(P6)] $\exists F\in\m F$ with $(R\cup T)\cap F=\emptyset$;
\end{itemize}
\end{minipage}
\hfill
\begin{minipage}[t]{0.48\textwidth}
\begin{itemize}
\item[(P3)] $R$ is non-full in $\mathcal{F}$;
\item[(P5)] $\exists F\in\m F$ with $R\cap F=\emptyset$ and $i\in F$;
\item[(P7)] Every $F$ satisfying (P6) has $i\in F$.
\end{itemize}
\end{minipage}

\begin{lemma}[Structure-preserving lemma]\label{1-7-1}
Suppose $\m F, R, T, i,r$ are defined as above.  
\begin{itemize}
\item[(i)] If (P1)--(P3) hold, then  there exists an $S_{U,V}^Q$-shift of $\m F$ with $R\subset U$. 
\item[(ii)] If (P1)--(P5) hold, then there exists an $S_{U,V}^Q$-shift of $\m F$ with  $R\subset U$ and $i\not\in V$.
\item[(iii)] If (P1)--(P3) and (P6) hold, then there exists an $S_{U,V}^Q$-shift of $\m F$ with $T\cap V=\emptyset$ and $R\subset U$. 
\item[(iv)] If (P1)--(P4) and (P6)--(P7) hold, then there exists an $S_{U,V}^Q$-shift of $\m F$ with $(T\cup\{i\})\cap V=\emptyset$ and $R\subset U$. 
\end{itemize}
\end{lemma}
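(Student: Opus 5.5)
The approach is a minimality (descent) argument on the shifting pair. For (i), first find some pair $(U_0,V_0)$ with $R\subset U_0$ and $S_{U_0,V_0}(\m F)\precneqq\m F$. Then shrink $V_0$ within the allowed range until property {\bf Q} holds.

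To get the starting pair, use (P3) and (P2). By (P3) there is a set $E\in\m E^k_R\setminus\m F$. By (P2) there is $F\in\m F$ with $F\cap R=\emptyset$. Put $U_0:=E\setminus F$ and $V_0:=F\setminus E$. These are disjoint, have equal size, and satisfy $R\subset U_0$, since $R\subset E$ and $R\cap F=\emptyset$. Because $r=\min X\in U_0$ and $r\notin V_0$, we get $U_0\prec V_0$. Moreover $S_{U_0,V_0}(F)=E\notin\m F$, so the shift acts nontrivially and strictly decreases $\m F$ in the lex order of families. Hence the family $\mathcal P$ of pairs $(U,V)$ with $R\subset U$, $U\prec V$, and $S_{U,V}(\m F)\precneqq\m F$ is nonempty.

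Choose $(U,V)\in\mathcal P$ with $|V|$ minimal. I claim it satisfies {\bf Q}. Take any $V'\subsetneqq V$.
\begin{itemize}
\item If $|V'|<|R|$, take any $U'\subsetneqq U$ of the same size containing $r$ (possible since $r\in U$). Then $U'\prec V'$, and (P1) forces $S_{U',V'}(\m F)=\m F$.
\item If $|V'|\ge|R|$, take $U'$ with $R\subset U'\subsetneqq U$ and $|U'|=|V'|$. This is possible since $|U'|<|U|$ and $R\subset U$. Then $U'\prec V'$, because $r\in U'$ and $r=\min X\notin V'$. Minimality of $|V|$ in $\mathcal P$ gives $S_{U',V'}(\m F)=\m F$, since a nontrivial shift of this kind automatically lies in $\mathcal P$.
\end{itemize}
In both cases the required $U'$ exists, so {\bf Q} holds and (i) is proved. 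The role of (P1) is exactly to handle the first case, where $U'$ is too small to contain $R$.

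For (ii)--(iv), refine the class $\mathcal P$ by the extra constraint on $V$: $i\notin V$ in (ii), $T\cap V=\emptyset$ in (iii), and $(T\cup\{i\})\cap V=\emptyset$ in (iv). The descent step carries over unchanged, since $V'\subset V$ inherits the disjointness. So all the work lies in building a starting pair that respects the constraint.
\begin{itemize}
\item (ii): By (P4) there is $E\in\m E^k_{R\cup\{i\}}\setminus\m F$, and by (P5) there is $F\in\m F$ with $F\cap R=\emptyset$ and $i\in F$. Then $i\in E\cap F$, so $i\notin V_0=F\setminus E$.
\item (iii): Take $F$ from (P6) and $E\in\m E^k_R\setminus\m F$ from (P3). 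The set $V_0=F\setminus E$ avoids $T$, because $F\cap T=\emptyset$.
\item (iv): Take $F$ from (P6) and $E$ from (P4). By (P7), $F$ contains $i$, so $V_0$ avoids both $T$ and $i$.
\end{itemize}
One subtlety is that $E$ must contain $R$ but must not be forced to meet $F$ in a way that breaks $|U_0|=|V_0|$. This is automatic, since both sets are $k$-sets.

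I expect the main obstacle to be the lex-comparison bookkeeping: checking that $U'\prec V'$ in every descent case, including when $U'$ must keep $R$ and $|R|$ equals $|V'|$. I also need to check that the elements of $\m E^k_C$ use only elements above $\max C$. This does not affect the argument beyond guaranteeing that $E$ exists. If the lex comparison causes trouble, the fallback is to always keep $r\in U'$ and rely on $r=\min X$.
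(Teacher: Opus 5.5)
Your proposal is correct and follows essentially the paper's argument. You build an initial pair $(E\setminus F,\,F\setminus E)$ from a set $F\in\mathcal F$ avoiding $R$ (with the extra constraints from (P5)--(P7)) and a set $E\supset R$ (resp.\ $R\cup\{i\}$) missing from $\mathcal F$, pass to a minimal pair, and use (P1), with $r\in U'$, to dispose of sub-pairs with $|V'|<|R|$. The one difference is that you minimise $|V|$ over all nontrivial shifts with $R\subset U$ and the required disjointness of $V$, rather than over the paper's $\mathcal W_j$; your class is visibly closed under the descent step, whereas for $j\ge 2$ the witness of a sub-shift need not lie in $\mathcal F_j$, though the constraint on $V'$ is inherited anyway.
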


\begin{proof}
By (P2) and (P3), 
we observe that $\m F$ is not L-initial on $X$. 
We prove (i) by the following three steps.
\begin{itemize}
\item[Step 1.] 
Define
\[
\m F_1:=\{F\in \m F: R\cap F =\emptyset\}, \,\, \m G_1:=\{G\in \tbinom{X}{k}: R\subset G, G\not\in \m F\}.
\]
By (P2) and (P3), $\m F_1\ne \emptyset$ and $\m G_1\ne \emptyset$.

\item[Step 2.] For each $F\in\mathcal F_1$ and $G\in\mathcal G_1$, define
\[
U:=G\setminus F, \,\, V:=F\setminus G.
\]
Then $U\cap V=\emptyset$, $|U|=|V|$, and
$
G=(F\setminus V)\cup U=S_{U,V}(F).
$
Denote by $\m W_1$ the collection of all such pairs $(U,V)$.

As $R\subset G$ and $R\cap F=\emptyset$, $R\subset U$. 
Note that $r=\min X\in R$, $F\cap R=\emptyset$ and $R\subset G$.
We have $G\precneqq F$. 
Consequently, $S_{U,V}(\m F)\precneqq \m F$ for every pair $(U,V)\in \m W_1$, and this provides an $S_{U,V}$-shift of $\m F$. Moreover, by (P1), we have $|V|=|U|\ge |R|$. 

\item[Step 3.] 
Choose $(U,V)\in \m W_1$ with $V$ inclusion-minimal in the following sense: for any $V'\subsetneqq V$ with $|V'|\ge |R|$, there exists $U'\subsetneqq U$ with  $R\subset U'$ and $|U'|=|V'|$ such that $S_{U',V'}$ acts trivially on $\m F$. 
We claim that $(U,V)$ satisfies {\bf Q}. 
Indeed, fix an arbitrary $V'\subsetneq V$. 
If $|V'|\ge |R|$, then by the inclusion-minimal choice of $V$,
every $S_{U',V'}$-shift with
$R\subset U'\subset U$ and $|U'|=|V'|$
acts trivially on $\mathcal F$.
If $|V'|<|R|$, then (P1) shows that every such shift acts trivially as well. 
Hence $(U,V)$ satisfies {\bf Q}.
\end{itemize}

Guided by the above three steps:
Step 1: Define $(\m F_2, \m G_2), (\m F_3, \m G_3)$, $(\m F_4, \m G_4)$.
$\longrightarrow$
Step 2: Construct $\m W_2$, $\m W_3$, $\m W_4$.
$\longrightarrow$
Step 3: Choose $(U,V)\in \m W_2$, $(U,V)\in \m W_3$, $(U,V)\in \m W_4$.

To prove (ii), we define
\[
\m F_2:=\{F\in \m F: R\cap F =\emptyset, i\in F\}, \,\, \m G_2:=\{G\not\in \m F: R\cup \{i\}\subset G\}.
\]
By (P4) and (P5), 
$\m F_2\ne \emptyset$ and $\m G_2\ne \emptyset$. Applying the same three-step argument as in (i) to $\m F_2,\m G_2$ in place of $\m F_1,\m G_1$, there exists an $S_{U,V}^Q$-shift of $\m F$ with $R\subset U$. 
From the definition of $\m F_2$ and $\m G_2$, we have $i\in F$ and $i \in G$. Since $G=(F\setminus V)\cup U$ and $V\cap U=\emptyset$, $i\not\in V$. This proves (ii).

To prove (iii), define
\[
\m F_3:=\{F\in \m F: R\cap F =\emptyset, T\cap F =\emptyset\}, \,\, \m G_3:=\{G\not\in \m F: R\subset G\}.
\]
By (P3) and (P6), 
$\m F_3\ne \emptyset$ and $\m G_3\ne \emptyset$. Applying the same three-step argument to $\m F_3,\m G_3$ as in (i), there exists an $S_{U,V}^Q$-shift of $\m F$ with $R\subset U$. Since every $F\in\mathcal F_3$ satisfies $T\cap F=\emptyset$ and $V\subset F$, we have $T\cap V=\emptyset$. This proves (iii).

To prove (iv), define
\[
\m F_4:=\{F\in \m F: (R\cup T)\cap F =\emptyset\}, \,\, \m G_4:=\{G\not\in \m F: R\cup \{i\}\subset G\}.
\]
By (P4) and (P6),
$\m F_4\ne \emptyset$ and $\m G_4\ne \emptyset$. By (P7), every $F\in \m F_4$ satisfies $i\in F$, and thus, arguing as in (ii), we get that $i\notin V.$ Applying the same three-step argument as in (i), there exists an $S_{U,V}^Q$-shift of $\m F$ with $R\subset U$ and $i\not\in V$. Clearly, from the definition of $\m F_4$, $T\cap V=\emptyset$. This proves (iv).
\end{proof}

\subsection{Proof of Proposition \ref{10-25-3}}\label{proof}

Let $\m A, \m B, \m A', \m B', \m M, \m T$ satisfy Assumption \ref{12-14-2}, and $|\m A|\ge |\m A'|$, $|\m B|\ge |\m B'|$, 
$\Delta(\mathcal{A})=|\mathcal{A}(1)|\ge {n-2\choose a-2}+\gamma(\mathcal{A})$ and 
$\Delta(\mathcal{B})=|\mathcal{B}(1)|\ge {n-2\choose b-2}+\gamma(\mathcal{B})$.
Proposition \ref{10-25-3} states that there exist $\m A^a, \m B^b$ satisfying conditions 1--4 in Lemma \ref{lemmain}.
Arguing by contradiction, among all pairs of families that violate the statement of Proposition \ref{10-25-3}, choose $(\mathcal{A}, \mathcal{B})$ minimal in the following sense: there is no other counterexample pair $(\mathcal{C}, \mathcal{D})$  such that $(\mathcal{C}, \mathcal{D}) \precneqq (\mathcal{A}, \mathcal{B})$. We refer to $(\m A, \m B)$ as the \emph{minimal counterexample} (or simply \emph{minimal}). 
Clearly, $\mathcal{A}$ and $\mathcal{B}$ are not L-initial; otherwise, one of the families would be contained in a star, contradicting Assumption \ref{12-14-2}. Throughout this section, we identify isomorphic copies of
$\mathcal M$ and $\mathcal T$.

\begin{obser}\label{obser2}
    For a family $\m F\subset {[n]\choose k}$ and $i\in [n]$, if $|\m F(i)|\ge {n-2\choose k-2}+|\m F(\bar i)|$, then we may assume that $\m F_{\Delta}=\m F(i)$.
\end{obser}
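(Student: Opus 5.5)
We must show that the hypothesis lets us take element $i$ as the maximum-degree element of $\m F$. Concretely: if $|\m F(i)|\ge {n-2\choose k-2}+|\m F(\bar i)|$, then $|\m F(i)|\ge |\m F(j)|$ for every $j\ne i$. Hence $i$ is an element of maximum degree, and by the convention that ties may be broken arbitrarily we may declare $\m F_\Delta=\m F(i)$ and $\m F_\gamma=\m F(\bar i)$.

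The plan is a one-line double count. Fix $j\ne i$ and split the sets containing $j$ according to whether they contain $i$:
\[
|\m F(j)| = |\m F(\{i,j\})| + |\m F(j\bar i)|.
\]
For the first term we use the trivial bound $|\m F(\{i,j\})|\le {n-2\choose k-2}$, since these are $(k-2)$-subsets of $[n]\setminus\{i,j\}$. For the second term, the sets of $\m F$ counted by $\m F(j\bar i)$ avoid $i$, so $|\m F(j\bar i)|\le |\m F(\bar i)|$.

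Combining the two bounds with the hypothesis gives
\[
|\m F(j)|\le {n-2\choose k-2}+|\m F(\bar i)|\le |\m F(i)|,
\]
so $\Delta(\m F)=|\m F(i)|$. If equality holds for some $j$, then $j$ also attains the maximum degree, but choosing $i$ remains legitimate. This is all that the subsequent arguments use: they need $\gamma(\m F)=|\m F(\bar i)|$ and $\m F_\gamma=\m F(\bar i)$, and both hold for this choice.

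There is no real obstacle here. The only point requiring care is the tie case, which is handled by the paper's convention that the maximum-degree element may be chosen arbitrarily among those attaining $\Delta(\m F)$.
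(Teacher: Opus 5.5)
Your proposal is correct and is essentially the paper's own argument: you split $|\m F(j)|=|\m F(\{i,j\})|+|\m F(j\bar i)|$, bound the two terms by ${n-2\choose k-2}$ and $|\m F(\bar i)|$, and apply the hypothesis to get $|\m F(j)|\le|\m F(i)|$ for every $j\ne i$. Your remark on ties, resolved by the convention that the maximum-degree element may be chosen arbitrarily, is a correct detail that the paper leaves implicit.
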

\begin{proof}
    Let $j\ne i$. Then $|\m F(j)|=|\m F(\{i,j\})|+|\m F(\bar i j)|\le {n-2\choose k-2}+|\m F(\bar i)|\le |\m F(i)|$. Thus, $|\m F(i)| = \Delta(\m F)$.
\end{proof}

Notice that for a disjoint pair $U,V$, if $U\prec V$ and $1\not\in U$, then $1\not\in V$. We have the following observation.
\begin{obser}\label{obser}
Let $\m F\subset {[n]\choose k}$. Then for any $S_{U,V}$-shift of $\m F$ with $U\prec V$, we have $|S_{U,V}(\m F)(1)|\ge |\m F(1)|$. 
\end{obser}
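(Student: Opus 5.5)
The key fact is that an admissible shift can only move the element $1$ into a set, never out of it. Suppose $S_{U,V}$ acts on $\mathcal{F}\subset\binom{[n]}{k}$ with $U\prec V$ and $U\cap V=\emptyset$. The plan is to show the map $F\mapsto S_{U,V}(F)$ never removes $1$ from a set. Since $S_{U,V}(\mathcal{F})$ and $\mathcal{F}$ have the same size, it then suffices to show that the number of sets containing $1$ cannot decrease.

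The first step is the remark preceding the statement: if $1\in V$, then $1\in U$. Indeed, $U\prec V$ in the lex order with $U\cap V=\emptyset$ and $|U|=|V|$ means $\min(U\setminus V)<\min(V\setminus U)$, that is, $\min U<\min V$. If $1\in V$, then $\min V=1$, which is impossible. So $1\notin V$.

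Next, take $F\in\mathcal{F}$ with $1\in F$. Since $1\notin V$, either $S_{U,V}(F)=F$, or $S_{U,V}(F)=(F\setminus V)\cup U$. In the second case $1$ is still present, because $1\in F\setminus V$. Hence $S_{U,V}(F)$ contains $1$ in either case.

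Finally, we check the count using the definition
\[
S_{U,V}(\mathcal{F})=\{S_{U,V}(F):F\in\mathcal{F}\}\cup\{F\in\mathcal{F}:S_{U,V}(F)\in\mathcal{F}\}.
\]
In the standard way, every $F\in\mathcal{F}$ is matched to a distinct member of $S_{U,V}(\mathcal{F})$: $F$ is matched to $F$ itself if $F$ is kept, and to $S_{U,V}(F)$ otherwise. This matching is injective, and $|S_{U,V}(\mathcal{F})|=|\mathcal{F}|$.

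By the previous step, each set of $\mathcal{F}(1)$ is matched to a set containing $1$. Therefore $|S_{U,V}(\mathcal{F})(1)|\ge|\mathcal{F}(1)|$.

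No step is a real obstacle. The only point needing care is the lex convention $A\prec B$ and the fact that it reduces to $\min U<\min V$ for disjoint sets of equal size.
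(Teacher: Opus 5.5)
Your proof is correct and follows the paper's reasoning. The paper's entire justification is the remark before the statement: for disjoint $U,V$ with $U\prec V$ we have $1\notin V$, so no set of $\mathcal{F}$ loses the element $1$ under $S_{U,V}$. You spell out the routine injection $\mathcal{F}\to S_{U,V}(\mathcal{F})$ that the paper leaves implicit.
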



The following claim is a consequence of the minimality of $(\m A, \m B)$ and will be  used frequently. 

\begin{claim}\label{11-3-4}
Let $\mathcal{C}\subset{[n]\choose a}, \mathcal{D}\subset {[n]\choose b}$ be cross-intersecting families satisfying  $|\mathcal{C}|= |\mathcal{A}|$, $|\mathcal{D}|= |\mathcal{B}|$,
$|\m C(1)|\ge |\m A(1)|$, $|\m D(1)|\ge |\m B(1)|$,
 and $(\mathcal{C}, \mathcal{D})\precneqq (\mathcal{A},\mathcal{B})$. 
 Then $\mathcal{M}\not\subset \mathcal{C}(\bar1)$ or $\mathcal{T}\not\subset \mathcal{D}(\bar1)$. 
 In particular, 
for any $S^Q_{U,V}$-shift  of $(\m A, \m B)$, 
we have $\mathcal{M}\not\subset S^Q_{U,V}(\mathcal{A})(\bar1)$ or $\mathcal{T}\not\subset S^Q_{U,V}(\mathcal{B})(\bar1)$.  
\end{claim}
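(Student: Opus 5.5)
The claim follows from the minimality of $(\mathcal A,\mathcal B)$ by contradiction, so the plan is to check that such a pair $(\mathcal C,\mathcal D)$ would itself violate Proposition~\ref{10-25-3}.

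Suppose $\mathcal C,\mathcal D$ are as in the statement but $\mathcal M\subset\mathcal C(\bar1)$ and $\mathcal T\subset\mathcal D(\bar1)$. I will check that $(\mathcal C,\mathcal D)$ meets every hypothesis of Proposition~\ref{10-25-3}.
\begin{enumerate}
\item Cross-intersection is given.
\item $|\mathcal C|=|\mathcal A|\ge|\mathcal A'|$ and $|\mathcal D|=|\mathcal B|\ge|\mathcal B'|$.
\item For the degree condition, $|\mathcal C(1)|\ge|\mathcal A(1)|$ and $|\mathcal C|=|\mathcal A|$ give $|\mathcal C(\bar1)|\le|\mathcal A(\bar1)|$. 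Hence
\[
|\mathcal C(1)|\ge|\mathcal A(1)|\ge\tbinom{n-2}{a-2}+\gamma(\mathcal A)=\tbinom{n-2}{a-2}+|\mathcal A(\bar1)|\ge\tbinom{n-2}{a-2}+|\mathcal C(\bar1)|.
\]
By Observation~\ref{obser2}, $\Delta(\mathcal C)=|\mathcal C(1)|$ and $\gamma(\mathcal C)=|\mathcal C(\bar1)|$, so the condition $\Delta(\mathcal C)=|\mathcal C(1)|\ge\binom{n-2}{a-2}+\gamma(\mathcal C)$ holds. The same argument works for $\mathcal D$.
\item Since $\mathcal M\subset\mathcal C(\bar1)=\mathcal C_\gamma$ and $\mathcal T\subset\mathcal D_\gamma$, the structural part of Assumption~\ref{12-14-2} holds for $(\mathcal C,\mathcal D)$ with the same $\mathcal M,\mathcal T,\mathcal A',\mathcal B'$.
\end{enumerate}

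One point needs care: Assumption~\ref{12-14-2} also requires maximality of the pair and maximality of $|\mathcal A|+|\mathcal B|$. The sizes are unchanged, so the sum is still maximal. Any cross-intersecting extension of $(\mathcal C,\mathcal D)$ would give a pair with a larger sum that still contains $\mathcal M,\mathcal T$ in its diversity parts, which is a contradiction. So $(\mathcal C,\mathcal D)$ is maximal cross-intersecting. Since $(\mathcal C,\mathcal D)\precneqq(\mathcal A,\mathcal B)$, minimality says $(\mathcal C,\mathcal D)$ is not a counterexample. It therefore has families $\mathcal A^a,\mathcal B^b$ satisfying conditions 1--4 of Lemma~\ref{lemmain}. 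Because $|\mathcal C|=|\mathcal A|$ and $|\mathcal D|=|\mathcal B|$, these same families work for $(\mathcal A,\mathcal B)$, contradicting that $(\mathcal A,\mathcal B)$ is a counterexample.

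For the ``in particular'' part, take $\mathcal C=S^Q_{U,V}(\mathcal A)$ and $\mathcal D=S^Q_{U,V}(\mathcal B)$.
\begin{enumerate}
\item By Lemma~\ref{SUV'}, $\mathcal C$ and $\mathcal D$ are cross-intersecting.
\item Shifts preserve sizes.
\item Observation~\ref{obser} gives $|\mathcal C(1)|\ge|\mathcal A(1)|$ and $|\mathcal D(1)|\ge|\mathcal B(1)|$.
\item Property \textbf{Q} gives $(\mathcal C,\mathcal D)\precneqq(\mathcal A,\mathcal B)$.
\end{enumerate}
The first part then applies. The only delicate step is carrying the maximality clauses of Assumption~\ref{12-14-2} over to the new pair, which the sum-maximality argument above handles.
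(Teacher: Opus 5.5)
Your proposal is correct and follows the paper's proof essentially step for step: the same degree chain plus Observation~\ref{obser2} to keep $1$ as the maximum-degree element of $\mathcal C$ and $\mathcal D$, the same appeal to minimality of the counterexample with the witnesses $\mathcal A^a,\mathcal B^b$ transferred back via $|\mathcal C|=|\mathcal A|$, $|\mathcal D|=|\mathcal B|$, and the same use of Lemma~\ref{SUV'} and Observation~\ref{obser} for the ``in particular'' part. The paper simply asserts that $(\mathcal C,\mathcal D)$ satisfies Assumption~\ref{12-14-2} because the sizes are unchanged, and your extension argument for maximality is the natural way to unpack this; note, however, that it tacitly requires $1$ to remain a maximum-degree element of the extended families, which is not automatic, so on this point your argument is no more rigorous than the paper's.
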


\begin{proof}
Assume for contradiction that $\mathcal{M}\subset \mathcal{C}(\bar1)$ and $\mathcal{T}\subset\mathcal{D}(\bar1)$.
Since $|\mathcal{C}|= |\mathcal{A}|$ and $|\m C(1)|\ge |\m A(1)|$, $|\m C(\bar1)|\le |\m A(\bar1)|$.
By $\Delta(\m A)=|\m A(1)|$, we have $|\m C(1)|\ge |\m A(1)|\ge {n-2\choose a-2}+|\m A(\bar1)|\ge {n-2\choose a-2}+|\m C(\bar1)|$.
By Observation \ref{obser2}, $\m C_{\Delta}=\m C(1)$ and $\m C_{\gamma}=\m C(\bar1)$.
Therefore, $\Delta(\m C)=|\m C(1)|\ge {n-2\choose a-2}+\gamma(\m C)$.
Similarly, we have $\m D_{\Delta}=\m D(1)$, $\m D_{\gamma}=\m D(\bar1)$ and $\Delta(\m D)=|\m D(1)|\ge {n-2\choose b-2}+\gamma(\m D)$. 
Then $\mathcal{M}\subset \mathcal{C}_{\gamma}$ and $\mathcal{T}\subset\mathcal{D}_{\gamma}$.
Since $(\mathcal A,\mathcal B)$ was chosen to maximize $|\mathcal A|+|\mathcal B|$ among all pairs satisfying
Assumption~\ref{12-14-2}, 
and since $|\mathcal{C}|= |\mathcal{A}|$ and $|\mathcal{D}|= |\mathcal{B}|$,
$\mathcal{C}, \mathcal{D}$ also satisfy Assumption \ref{12-14-2}, and hence the hypotheses of Proposition \ref{10-25-3}. 
Since $(\mathcal{A}, \mathcal{B})$ is minimal and $(\mathcal{C}, \mathcal{D})\precneqq (\mathcal{A},\mathcal{B})$, Proposition \ref{10-25-3} holds for $\mathcal{C}$, $\mathcal{D}$. 
However, any families $\mathcal{A}^a$ and $\mathcal{B}^b$ satisfying the conclusion of Proposition~\ref{10-25-3} for $\mathcal{C}$ and $\mathcal{D}$ would also satisfy it for the original pair $\mathcal{A}$ and $\mathcal{B}$, a contradiction.

In particular, 
consider an $S_{U,V}^Q$-shift of $(\m A, \m B)$, and 
let
$
\mathcal C=S_{U,V}^Q(\mathcal A)$ and $
\mathcal D=S_{U,V}^Q(\mathcal B).
$
By Observation~\ref{obser} and Lemma~\ref{SUV'},
the pair
$(\mathcal C,\mathcal D)$
satisfies the hypotheses above.
Hence the desired conclusion follows from the first part.
\end{proof}

Observe that if $\gamma(\mathcal{A})\le {n-1-(a-b+2)
\choose a-(a-b+2)}$ and $\gamma(\mathcal{B})\le {n-3\choose b-2}$, then $\m A, \m B$ themselves satisfy the statement of Proposition \ref{10-25-3}, a contradiction. So  $\gamma(\mathcal{A})> {n-1-(a-b+2)
\choose a-(a-b+2)}$ or $\gamma(\mathcal{B})> {n-3\choose b-2}$.
 By Proposition \ref{1-4-3}, 
 we may assume that $$\gamma(\mathcal{A})> {n-1-(a-b+2)
\choose a-(a-b+2)}.$$ 

\begin{claim}\label{step1.1}
Every set in $\mathcal{A}(\bar{1})\cup \mathcal{B}(\bar{1})$ contains $2$, and we may assume that $\cap \m M=[2,m+1]$, $\cap \m T=[2,t+1]$.
\end{claim}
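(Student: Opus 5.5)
The plan is to exploit the minimality of the counterexample $(\m A,\m B)$ via the classical shifts $S_{2,j}$, $j\ge 3$, together with Claim~\ref{11-3-4}. The first step is to show that the pair is $S_{2,j}$-shifted for every $j\ge 3$ after a relabeling of $[2,n]$. Such a shift never moves $1$, so $|S_{2,j}(\m A)(1)|\ge|\m A(1)|$ and similarly for $\m B$ (Observation~\ref{obser}). A nontrivial $S_{2,j}$ is an $S^Q_{\{2\},\{j\}}$-shift, so Lemma~\ref{SUV'} keeps the pair cross-intersecting, and the shifted pair is $\precneqq(\m A,\m B)$. By Claim~\ref{11-3-4} it therefore suffices to check that copies of $\m M$ and $\m T$ survive in the diversity parts.

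For survival, the key observation is the following. An $(i,j)$-shift restricted to $\m A(\bar1)$ maps each set $M\in\m M$ either to itself or to $S_{i,j}(M)$. If we choose $i$ so that $i\in\cap\m M\cap\cap\m T$, every member of $\m M\cup\m T$ contains $i$ and is fixed. Thus the plan is to relabel so that a common element of $\cap\m M$ and $\cap\m T$, if one exists, is $2$. Then every $S_{2,j}$ fixes $\m M$ and $\m T$ setwise. Hence, by minimality, $(\m A,\m B)$ is $S_{2,j}$-shifted for all $j\ge3$.

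The second step is to deduce that every set of $\m A(\bar1)\cup\m B(\bar1)$ contains $2$. Suppose $B\in\m B(\bar1)$ with $2\notin B$. By shiftedness, every set $(B\setminus\{j\})\cup\{2\}$ with $j\in B$ lies in $\m B(\bar1)$. Now combine this with the large degree part: $|\m A(1)|\ge\binom{n-2}{a-2}+\gamma(\m A)$, and $\gamma(\m A)$ is large (exceeding $\binom{n-1-(a-b+2)}{a-(a-b+2)}$). Cross-intersection between $\m A(1)$ and $\m B(\bar1)$ then forces, via Theorem~\ref{12-1-1}, the L-initial segment of length $|\m A(1)|$ on $[2,n]$ to meet every L-initial set of length $|\m B(\bar1)|$. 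I expect that with both $\gamma(\m A)$ and $\gamma(\m B)$ positive, some $(a-1)$-set of $\m A(1)$ avoiding $2$ must exist, and the shifted structure then yields a contradiction with $\m A(\bar1)$ containing sets both with and without $2$.

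A cleaner route is to use the structure directly: if some member of the diversity avoids $2$, then since $\m A(\bar1)$ is $2$-shifted, the element $2$ has degree in $\m A$ at least that of any other element of $[3,n]$. Combining $|\m A(\bar 12)|$ with cross-intersection against $\m B(1)\supset\binom{[3,n]}{b-2}$-type sets, which we get from $|\m B(1)|\ge\binom{n-2}{b-2}+\gamma(\m B)$, should bound $|\m A(\bar1\bar2)|=0$. Once all diversity sets contain $2$, both $\cap\m M$ and $\cap\m T$ contain $2$, and a further relabeling of $[3,n]$ gives $\cap\m M=[2,m+1]$ and $\cap\m T=[2,t+1]$.

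The main obstacle is the initial choice making $2$ common to $\cap\m M$ and $\cap\m T$. A priori these intersections could be disjoint, and then some shift $S_{2,j}$ could destroy $\m T$. I would handle this first by using cross-intersection of $\m M\subset\m A(\bar1)$ with $\m B(1)$ and of $\m T$ with $\m A(1)$, together with the huge degree parts, to argue that $\cap\m M$ and $\cap\m T$ must meet. If that fails, I would instead invoke Lemma~\ref{1-7-1}(iii) with $R=\{2\}$ and $T$ chosen as the relevant intersection, producing a $Q$-shift whose $V$ avoids it, so that the copies are stable.
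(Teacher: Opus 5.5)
Your proposal has a genuine gap in Step~2, which is where the claim actually lives. Knowing that the pair is $S_{2,j}$-shifted for every $j\ge 3$ does not by itself force the members of $\m A(\bar 1)$ to contain $2$. For example, the family of all $a$-subsets of $[2,n]$ meeting $\{2,3,4\}$ in at least two elements is $S_{i,j}$-shifted for all $i<j$ and has $\{2\}$ non-full, yet it has members avoiding $2$. Ruling this out needs a shift that sends a $2$-avoiding member onto a \emph{missing} set containing $2$. Such a shift typically has $|U|=|V|\ge 2$, so single-element shifts cannot detect it.

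The paper supplies exactly this ingredient. It first notes $|\m A(\bar 1)|<\binom{n-2}{a-1}$, which follows from $|\m A|<\binom{n-1}{a-1}$ and $|\m A(1)|\ge\binom{n-2}{a-2}+|\m A(\bar 1)|$; hence $\{2\}$ is non-full. Together with a member of $\m A(\bar 1)$ avoiding $2$, Lemma~\ref{1-7-1}(i) with $R=\{2\}$ then produces an $S^Q_{U,V}$-shift with $2\in U$. A set meeting $U$ is never moved, so every member of $\m M\cup\m T$ is fixed (since $2$ lies in both intersections). Lemma~\ref{SUV'} and Claim~\ref{11-3-4} then give the contradiction.

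Your size-based alternative is not an argument as written. The bound $|\m B(1)|\ge\binom{n-2}{b-2}+\gamma(\m B)$ does not by itself make $\{1,2\}$ full in $\m B$. Nothing in your sketch singles out one element common to \emph{both} $\m A(\bar 1)$ and $\m B(\bar 1)$, let alone the specific label $2$. The statement is a property of the minimal counterexample and has to use that minimality.

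The two normalizations also cannot be done by relabelling. Minimality refers to the lex order of a fixed labelling, so after relabelling you can no longer invoke Claim~\ref{11-3-4}. For the obstacle you left open, the paper instead (rather briefly) applies $S_{2,i}$ with $i\in\cap\m M$ (resp.\ $i\in\cap\m T$). On a family all of whose members contain $i$, this shift acts as the transposition $(2\,i)$, and together with Claim~\ref{11-3-4} this lets one pass to the transposed copy. (The paper asserts without detail that the other family's copy also survives the same shift.)

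More seriously, your final ``relabelling of $[3,n]$'' cannot in general give both $\cap\m M=[2,m+1]$ and $\cap\m T=[2,t+1]$. A single permutation achieves this only if $\cap\m M$ and $\cap\m T$ are nested, which fails for instance for $\{2,3,4\}$ and $\{2,5,6\}$. The paper normalizes the two intersections independently:
\begin{itemize}
\item it applies $S_{i,j}$ with $i\in[3,m+1]$, $j\in\cap\m M$ only to $\m A(\bar 1)$ and $\m B(1)$;
\item it applies the analogous shifts for $\m T$ only to $\m B(\bar 1)$ and $\m A(1)$.
\end{itemize}
This preserves cross-intersection precisely because every diversity set already contains $2$, which is why the first part must be proved first. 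Claim~\ref{11-3-4} then forces these partial shifts to be trivial, so the normalized copies already lie in $\m A(\bar 1)$ and $\m B(\bar 1)$.
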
 

\begin{proof}
Note that $n>a+b$ and neither $\m A$ nor $\m B$ is a star.
We have $|\m A|<{n-1\choose a-1}$ and $|\m B|<{n-1\choose b-1}$. 
Combining this together with $\Delta(\m A)=|\m A(1)|\ge {n-2\choose a-2}+\gamma(\m A)={n-2\choose a-2}+|\m A(\bar1)|$,
yields $|\m A(\bar1)|<{n-2\choose a-1}$. Hence,
$\{2\}$ is non-full in $\mathcal{A}$, and a same conclusion for $\m B$. 

If the pair $(\mathcal{A}(\bar{1}), \mathcal{B}(\bar{1}))$ is L-initial on $[2,n]$, then we may assume that $\cap\, \m M=[2,m+1]$, $\cap\, \m T=[2,t+1]$, and every set in $\mathcal{A}(\bar{1})\cup \mathcal{B}(\bar{1})$ contains element $2$ (since $|\m A(\bar1)|<{n-2\choose a-1}$ and $|\m B(\bar1)|<{n-2\choose b-1}$).  
Assume that $(\mathcal{A}(\bar{1}), \mathcal{B}(\bar{1}))$ is not L-initial.  

First, we may assume that $2 \in (\cap \m M)\cap (\cap \m T)$. Indeed, if $2 \not \in \cap \m M$ (resp. $2 \not \in \cap \m T$), then fix some  $i\in \cap \m M$ (resp. $i\in \cap \m T$); clearly $i>2$, and up to isomorphism we have $\m M \subset S_{2,i}(\m A)(\bar1)$ and  $\m T \subset S_{2,i}(\m B)(\bar{1})$.  

Next, we show that  every set in $\mathcal{A}(\bar{1})\cup \mathcal{B}(\bar{1})$ contains $2$. 
Suppose for contradiction, w.l.o.g. that there exists a set $A\in \m A(\bar1)$ with $2\not\in A$. 
Apply Lemma \ref{1-7-1}(i) to $\m A$ with $r=2$ and $R=\{2\}$. 
Conditions (P1)--(P3) are verified as follows: since $\m A$ is not L-initial (otherwise the positive diversity of $\m A$ makes $\m B$ be a star)  and $|R|=|\{2\}|=1$, P1 holds; since $R=\{2\}$, P2 holds by the existence of $A$; P3 holds since $\{2\}$ is non-full in $\mathcal{A}$. 
Hence, there exists an $S_{U,V}^Q$-shift of $\mathcal{A}$ such that $2\in U$. This also gives an $S_{U,V}^Q$-shift of $(\m A, \m B)$. 
Since $2 \in (\cap \m M)\cap (\cap \m T)$ and $2\in U$, $\m M=S_{U,V}^Q(\m M)\subset S_{U,V}^Q(\m A)(\bar{1})$ and $\m T=S_{U,V}^Q(\m T)\subset S_{U,V}^Q(\m B)(\bar1)$, contradicting Claim \ref{11-3-4}. So every set in $\mathcal{A}(\bar{1})\cup \mathcal{B}(\bar{1})$ contains $2$.

Finally, if $\cap \m M=[2,m+1]$ and $\cap \m T=[2,t+1]$, then we are done.
Otherwise, 
we successively apply the $S_{i,j}$-shifts for all $i\in [3,m+1]$, $j>i$ with $j\in \cap \m M$ to $\mathcal{A}(\bar{1})$ and $\mathcal{B}(1)$,  
and for all $i\in [3,t+1]$, $j>i$ with $j\in \cap \m T$ to $\mathcal{B}(\bar{1})$ and $\mathcal{A}(1)$;
and denote by $\m C, \m D$ the resulting families. 
Then $\mathcal{M}'\subset \m C(\bar1)$, $\mathcal{T}'\subset \m D(\bar1)$  are isomorphic to $\mathcal{M}$, $\m T$ with $[2,m+1]=\cap \m M'$, $[2,t+1]=\cap \m T'$, respectively. Note that each set in $\mathcal{A}(\bar{1})\cup \mathcal{B}(\bar{1})$ contains element $2$, and these $S_{i,j}$-shifts preserve cross-intersection. After doing these shifts, $\m C, \m D$ are cross-intersecting.
Moreover, $|\m C|=|\m A|$, 
$|\m D|=|\m B|$,
$|\m C(1)|=|\m A(1)|$,
$|\m D(1)|=|\m B(1)|$.
Since Claim \ref{11-3-4}, $(\m C, \m D)=(\m A, \m B)$, and hence we may take $\m M=\m M'$ and$\m T=\m T'$.
\end{proof}

\begin{claim}\label{12-11-1}
$\mathcal{A}(\bar{1})=\mathcal{L}([2, n], |\mathcal{A}(\bar{1})|, a)$ and $\mathcal{B}(1)=\mathcal{L}([2, n], |\mathcal{B}(1)|, b-1)$.
\end{claim}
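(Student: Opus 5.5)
We argue by contradiction using the minimality of the counterexample $(\m A,\m B)$ together with Claim~\ref{11-3-4}. Suppose $\m A(\bar1)$ is not L-initial on $[2,n]$. We first look for an $S^Q_{U,V}$-shift of $\m A$, acting only inside $[2,n]$ (so $1\notin U\cup V$), that leaves $\m M$ stable. By Claim~\ref{step1.1}, every set of $\m A(\bar1)$ contains $2$ and $\cap\m M=[2,m+1]$. The natural choice is therefore to take $R=[2,m+1]$, or more precisely the longest initial segment $[2,p]$ that is full in $\m A(\bar1)$ up to the first defect. We would then apply Lemma~\ref{1-7-1}(i) or (iii) on $X=[2,n]$ with this $R$ (so $r=2=\min X$), and with $T$ chosen to contain the elements $p_1,\dots,p_k$ witnessing intersection-minimality of $\m M$.

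Such a shift has $R\subset U$. Every $M\in\m M$ contains $R$, so $S_{U,V}(M)=M$, or $M$ is replaced by an isomorphic copy. Hence $\m M\subset S^Q_{U,V}(\m A)(\bar1)$ up to isomorphism. The sets of $\m B$ are either unchanged or shifted. Since the shift acts inside $[2,n]$, Observation~\ref{obser} keeps $|\cdot(1)|$ from decreasing, and $\m T\subset\m B(\bar1)$ is preserved provided the shift of $\m B$ also fixes $\m T$. To arrange this, we would choose $R$ so that it also lies in $[2,t+1]=\cap\m T$, or we would verify that sets of $\m T$ are moved only to isomorphic copies. By Lemma~\ref{SUV'}, the shifted pair is still cross-intersecting and strictly smaller in $\precneqq$. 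This contradicts Claim~\ref{11-3-4}.

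Verifying (P1)--(P3) is the bulk of the work. (P2) comes from non-L-initiality: some set of $\m A(\bar1)$ misses an element that an earlier lex set contains. (P3) follows from $|\m A(\bar1)|<\binom{n-2}{a-1}$ and the fact that the first defect occurs inside $\mathcal E_R$. (P1) is the main obstacle. We need that any nontrivial shift with $2\in U$ has $|V|\ge|R|$. We would prove this by choosing $R$ minimal: if some shift $S_{U,V}$ with $2\in U$ and $|V|<|R|$ acted nontrivially, then a shorter $R'$ would already work, and we would restart with $R'$. Here one must check that $R'\subset\cap\m M$, which holds because $R'\subset R\subset[2,m+1]$.

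For the second statement, suppose $\m B(1)$ is not L-initial on $[2,n]$ in $\binom{[2,n]}{b-1}$. We apply the same scheme to $\m B$, but now the shift must not enter $\m B(\bar1)$ harmfully. We would use Lemma~\ref{1-7-1} with $X=[2,n]$ restricted to the sets containing $1$, with $U,V\subset[2,n]$. Here $\m M$ and $\m T$ are untouched in $\m B$, since $\m T\subset\m B(\bar1)$ consists of sets not containing $1$. They are also untouched in $\m A$ by the same containment argument as above. Then Lemma~\ref{SUV'} together with Claim~\ref{11-3-4} again gives a contradiction. The hardest point throughout remains ensuring (P1), i.e.\ the minimality bookkeeping on $|V|$, simultaneously with the requirement that the induced shift on the other family does not destroy $\m T$.
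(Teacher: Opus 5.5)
Your plan breaks at (P2). You apply Lemma~\ref{1-7-1} to $\m A(\bar1)$ on $X=[2,n]$ with $r=2\in R$. But (P2) requires some $F\in\m A(\bar1)$ with $F\cap R=\emptyset$, and Claim~\ref{step1.1} has just shown that every set of $\m A(\bar1)$ contains $2$. In fact Claim~\ref{step1.1} is proved by exactly this shift with $R=\{2\}$, so that route is already used up. Your paraphrase of (P2) as ``some set misses an element that an earlier lex set contains'' is not the condition in the lemma.

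The remaining hypotheses are not secured either:
\begin{itemize}
\item (P3) rests on an unexplained ``first defect inside $\mathcal E_R$''.
\item (P1) is deferred to a ``restart with a shorter $R'$'' loop, in which (P2) and (P3) would have to be rechecked.
\item The restriction $R\subset[2,m+1]$, which is what keeps $\m M$ fixed, is never justified. Nothing in your argument stops the first deviation from lex order from lying beyond $[2,m+1]$. What rules this out is the bound $|\m A(\bar1)|=\gamma(\m A)>\binom{n-1-(a-b+2)}{a-(a-b+2)}\ge\binom{n-1-m}{a-m}$, which you never use.
\end{itemize}
Finally, a shift satisfying \textbf{Q} for the subfamily $\m A(\bar1)$ need not satisfy \textbf{Q} for $\m A$ itself. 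So Lemma~\ref{SUV'} and the ``in particular'' part of Claim~\ref{11-3-4} do not apply directly. You would have to reassemble the pair and check cross-intersection by hand, as in the proof of Claim~\ref{11-10-2}.

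The missing idea is that no structure-preserving shift is needed here. Since every set of $\m A(\bar1)\cup\m B(\bar1)$ contains $2$, replace the cross-intersecting pair $\m A(\bar1)\subset\binom{[2,n]}{a}$, $\m B(1)\subset\binom{[2,n]}{b-1}$ by $\m L([2,n],|\m A(\bar1)|,a)$ and $\m L([2,n],|\m B(1)|,b-1)$, keeping $\m A(1)$ and $\m B(\bar1)$ unchanged. Then:
\begin{itemize}
\item Theorem~\ref{12-1-1}, applied on the $(n-1)$-element set $[2,n]$, keeps the two replaced parts cross-intersecting.
\item The new $\m A(\bar1)$ still consists of sets containing $2$, because $|\m A(\bar1)|<\binom{n-2}{a-1}$. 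Hence it meets every set of $\m B(\bar1)$.
\item All four part sizes are unchanged, and the pair strictly decreases in $\precneqq$.
\item Since $|\m A(\bar1)|>\binom{n-1-m}{a-m}$, the lex-initial family contains every $a$-subset of $[2,n]$ containing $[2,m+1]=\cap\m M$, hence it contains $\m M$.
\item $\m T\subset\m B(\bar1)$ is untouched.
\end{itemize}
This contradicts the first part of Claim~\ref{11-3-4}. Once $\m A(\bar1)$ is L-initial, replacing $\m B(1)$ alone by its L-initial version gives the second statement in the same way.
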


\begin{proof}
Assume for contradiction that $\mathcal{A}(\bar{1})\ne \mathcal{L}([2, n], |\mathcal{A}(\bar{1})|, a)$. Construct new families $\m C$ and $\m D$ as follows:  replace $\mathcal{B}(1), \mathcal{A}(\bar{1})$ by $\mathcal{L}([2, n], |\mathcal{B}(1)|, b-1)$, $\mathcal{L}([2, n], |\mathcal{A}(\bar{1})|, a)$, respectively, 
while keeping the parts $\mathcal{A}(1)$ and $\mathcal{B}(\bar{1})$ unchanged.
By Theorem \ref{12-1-1}, together with the fact that every set in $\m A(\bar1)\cup \m B(\bar1)$ contains $2$ (by Claim \ref{step1.1}), the families $\m C$ and $\m D$ are cross-intersecting.  

Note that $|\m C|=|\m A|$, $|\m D|=|\m B|$, $|\m C(1)|=|\m A(1)|$, $|\m D(1)|=|\m B(1)|$, and $(\m C, \m D)\precneqq (\m A, \m B)$. 
Note that $|\mathcal{A}(\bar{1})|\ge {n-1-(a-b+2)\choose a-(a-b+2)}>{n-1-(a-b+3)\choose a-(a-b+3)}\ge {n-1-m \choose a-m}$ and $[2,m+1]=\cap \m M$ (by Claim \ref{step1.1}). 
Since $\m C(\bar{1})=\m L([2,n], |\m A(\bar1)|,a)$, we obtain $\mathcal{M}\subset \m C(\bar{1})$. 
Clearly, $\mathcal{T}\subset \m D(\bar{1})$.
This contradicts Claim \ref{11-3-4}. Thus, $\mathcal{A}(\bar{1})=\mathcal{L}([2, n], |\mathcal{A}(\bar{1})|, a)$. This forces $\mathcal{B}(1)=\mathcal{L}([2, n], |\mathcal{B}(1)|, b-1)$. Otherwise, replace  $\mathcal B(1)$ by
$\mathcal L([2,n],|\mathcal B(1)|,b-1)$ while keeping the other
three parts unchanged. By the same argument, we arrive at a contradiction with Claim \ref{11-3-4}. 
\end{proof}

Note that the proof of Claim~\ref{12-11-1}
uses only the condition $\gamma(\m A)> {n-1-m\choose a-m}$. 
The following claim follows directly by a symmetric argument.
\begin{claim}\label{gammaB>}
 If $\gamma(\m B)> {n-1-t\choose b-t}$, then 
$\mathcal{B}(\bar{1})=\mathcal{L}([2, n], |\mathcal{B}(\bar{1})|, b)$
and
$\mathcal{A}(1)=\mathcal{L}([2, n], |\mathcal{A}(1)|, a-1)$.
\end{claim}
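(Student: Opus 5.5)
We mirror the proof of Claim~\ref{12-11-1} with the roles of $(\m A,\m M,a)$ and $(\m B,\m T,b)$ swapped. Assume $\gamma(\m B)>{n-1-t\choose b-t}$ and, for contradiction, $\m B(\bar1)\ne \m L([2,n],|\m B(\bar1)|,b)$. We build $\m C,\m D$ as follows. Replace $\m B(\bar1)$ by $\m L([2,n],|\m B(\bar1)|,b)$ and $\m A(1)$ by $\m L([2,n],|\m A(1)|,a-1)$. Keep $\m A(\bar1)$ and $\m B(1)$ unchanged.

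\emph{Cross-intersection.} The pairs $(\m A(1),\m B(\bar1))$ and $(\m A(\bar1),\m B(1))$ are the only nontrivial ones, since pairs through the element $1$ meet there. For the modified pair, $\m A(1)\subset{[2,n]\choose a-1}$ and $\m B(\bar1)\subset{[2,n]\choose b}$ are cross-intersecting. On the ground set $[2,n]$ we have $n-1\ge (a-1)+b$, so Theorem~\ref{12-1-1} shows that their $L$-initial replacements remain cross-intersecting. The other pair is untouched.

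\emph{The new pair is a competitor.} Sizes are preserved: $|\m C|=|\m A|$, $|\m D|=|\m B|$, $|\m C(1)|=|\m A(1)|$ and $|\m D(1)|=|\m B(1)|$. Replacing a family by its lex-initial segment of the same size does not increase the lex-position sum, and the inequality is strict for $\m D$ because $\m B(\bar1)$ was assumed not $L$-initial. Hence $(\m C,\m D)\precneqq(\m A,\m B)$.

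\emph{Preservation of $\m M$ and $\m T$.} Clearly $\m M\subset\m A(\bar1)=\m C(\bar1)$. For $\m T$, Claim~\ref{step1.1} gives $\cap\m T=[2,t+1]$. Since $|\m D(\bar1)|=\gamma(\m B)>{n-1-t\choose b-t}$, the lex-initial family $\m D(\bar1)$ contains every $b$-set $B$ with $[2,t+1]\subset B\subset[2,n]$, and hence contains $\m T$. Claim~\ref{11-3-4} then yields a contradiction. Therefore $\m B(\bar1)$ is $L$-initial.

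\emph{$L$-initiality of $\m A(1)$.} Now suppose $\m A(1)$ is not $L$-initial. Replace only $\m A(1)$ by $\m L([2,n],|\m A(1)|,a-1)$. The pair stays cross-intersecting by Theorem~\ref{12-1-1}: $\m B(\bar1)$ is already $L$-initial, so it equals its own replacement. The new pair is strictly $\precneqq$ and keeps $\m M$ and $\m T$ in the diversity parts, so Claim~\ref{11-3-4} again gives a contradiction.

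The only delicate point is the containment $\m T\subset\m D(\bar1)$. This requires the normalization $\cap\m T=[2,t+1]$, supplied by Claim~\ref{step1.1}, together with the strict inequality $\gamma(\m B)>{n-1-t\choose b-t}$. Both are available, so the argument is routine.
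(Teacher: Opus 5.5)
Your strategy is the paper's: the claim is proved there by the argument of Claim~\ref{12-11-1} with the roles of $(\mathcal A,\mathcal M)$ and $(\mathcal B,\mathcal T)$ swapped. Your size, $\precneqq$, and $\mathcal T\subset\mathcal D(\bar1)$ steps are fine.

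There is, however, a gap in the cross-intersection step of the first part. You assert that $(\mathcal A(1),\mathcal B(\bar1))$ and $(\mathcal A(\bar1),\mathcal B(1))$ are the only nontrivial pairs, and this is false. A member of $\mathcal C$ avoiding $1$ must also meet every member of $\mathcal D$ avoiding $1$. Since you changed $\mathcal B(\bar1)$, the pair $\bigl(\mathcal A(\bar1),\mathcal L([2,n],|\mathcal B(\bar1)|,b)\bigr)$ is new. Theorem~\ref{12-1-1} applied to $(\mathcal A(1),\mathcal B(\bar1))$ says nothing about it. A priori, the lex-initial replacement of $\mathcal B(\bar1)$ could contain a set disjoint from some member of $\mathcal A(\bar1)$.

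This is exactly where the paper invokes the other half of Claim~\ref{step1.1}. Every set of $\mathcal A(\bar1)$ contains $2$. Moreover, $|\mathcal B(\bar1)|<\binom{n-2}{b-1}$, as shown in the proof of Claim~\ref{step1.1}, so every member of $\mathcal L([2,n],|\mathcal B(\bar1)|,b)$ also contains $2$, and the pair meets in $2$.

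Alternatively, $\mathcal A(\bar1)$ is already $L$-initial on $[2,n]$ by Claim~\ref{12-11-1}. Applying Theorem~\ref{12-1-1} to the original cross-intersecting pair $(\mathcal A(\bar1),\mathcal B(\bar1))$ on $[2,n]$, using $n-1\ge a+b$, gives the same conclusion.

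With this line added, your proof coincides with the paper's. The step you flagged as the only delicate one, $\mathcal T\subset\mathcal D(\bar1)$, is routine. The one you dismissed is the one that actually needs Claim~\ref{step1.1}. Your second part, making $\mathcal A(1)$ $L$-initial, is correct as written, since there only the pair $(\mathcal A(1),\mathcal B(\bar1))$ changes.
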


Let $m'$ be the smallest integer such that $\gamma(\mathcal{A})> {n-1-m' \choose a-m'}$. Since $\gamma(\mathcal{A})> {n-1-(a-b+2)\choose a-(a-b+2)}$ and $m\ge a-b+3$, 
we have 
\begin{equation}\label{m'<m}
    m'\le a-b+2< m.
\end{equation}

Since $\mathcal A(\bar1)$ is L-initial on $[2,n]$  by Claim \ref{12-11-1}, together with maximality of $(\m A, \m B)$,  the minimality of $m'$ implies the following.
\begin{claim}\label{abcd}
\begin{itemize}
\item[(i)] For each $A\in \mathcal{A}(\bar{1})$, we have $[2, m']\subset A$, and there exists $A'\in \mathcal{A}(\bar{1})$ such that $m'+1\not\in A'$.
\item[(ii)] For each $i \in [2, m']$, $\{1, i\}$ is full in $\mathcal{B}$.
\item[(iii)] $\{1, m'+1\}$ is non-full in $\mathcal{B}$.
\item[(iv)] For each $i\ge m'+2$, $\{1, i\}$ is empty in $\mathcal{B}$. 
\end{itemize}
\end{claim}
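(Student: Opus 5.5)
The plan is to read off all four items from two facts: Claim~\ref{12-11-1}, which says $\mathcal{A}(\bar1)$ is $L$-initial on $[2,n]$ and $\mathcal{B}(1)$ is $L$-initial on $[2,n]$; and the maximality built into Assumption~\ref{12-14-2}, which says that, given $\mathcal{A}(\bar1)$, the family $\mathcal{B}(1)$ is the full family of $(b-1)$-subsets of $[2,n]$ meeting every member of $\mathcal{A}(\bar1)$. Indeed, enlarging $\mathcal{B}(1)$ cannot destroy cross-intersection with $\mathcal{A}(1)$, since all those sets contain $1$. It also keeps $1$ as the maximum-degree element, keeps $\mathcal{T}\subset\mathcal{B}(\bar1)$, and increases $|\mathcal{A}|+|\mathcal{B}|$, which is forbidden. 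So $\mathcal{B}(1)$ consists exactly of the covers of $\mathcal{A}(\bar1)$.

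\textbf{Item (i).} By the minimality of $m'$ we have $\gamma(\mathcal{A})\le\binom{n-1-(m'-1)}{a-(m'-1)}=\binom{n-m'}{a-m'+1}$. This is the number of $a$-sets in $[2,n]$ containing $[2,m']$, and these form an initial segment of the lex order on $\binom{[2,n]}{a}$. An $L$-initial family of at most that size is therefore contained in it, so every $A\in\mathcal{A}(\bar1)$ contains $[2,m']$. (For $m'\le 2$ the condition is vacuous.)

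On the other hand, $\gamma(\mathcal{A})>\binom{n-1-m'}{a-m'}$, which is the number of $a$-sets in $[2,n]$ containing $[2,m'+1]$. These again form a lex-initial segment, so $\mathcal{A}(\bar1)$ strictly contains it and must include some set $A'$ with $m'+1\notin A'$. I should double-check that $\gamma(\mathcal{A})=|\mathcal{A}(\bar1)|$; this holds because $\mathcal{A}_\Delta=\mathcal{A}(1)$.

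\textbf{Items (ii)--(iv).} I will use the description of $\mathcal{B}(1)$ as the covers of $\mathcal{A}(\bar1)$.

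For (ii), take $i\in[2,m']$. Every $A\in\mathcal{A}(\bar1)$ contains $i$ by (i), so every $(b-1)$-set containing $i$ is a cover. Hence $\{1,i\}$ is full in $\mathcal{B}$.

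For (iii), I need a $b$-set $B\ni 1,m'+1$ with $B\notin\mathcal{B}$; it lies in $\mathcal{E}^b_{\{1,m'+1\}}$ as soon as $B\setminus\{1,m'+1\}\subset[m'+2,n]$. Take $A'$ from (i), which avoids $m'+1$. Since $|A'|=a$, $n>a+b$ and $A'\subset[2,n]$, the set $[m'+2,n]\setminus A'$ has at least $b-2$ elements. Choosing $b-2$ of them gives $B=\{1,m'+1\}\cup S$ with $B\cap A'=\emptyset$, so $B\setminus\{1\}$ is not a cover and $B\notin\mathcal{B}$. The elements $2,\dots,m'$ are not needed here, so the counting is comfortable.

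For (iv), take $i\ge m'+2$; I must show that no $b$-set $\{1,i\}\cup S$ with $S\subset[i+1,n]$ is a cover. The lex-initial structure of $\mathcal{A}(\bar1)$ provides, for any such $S$, a disjoint member. Because $\mathcal{A}(\bar1)$ is lex-initial and strictly larger than the segment $\{A:[2,m'+1]\subset A\}$, it contains the first set after that segment, namely $A^*=[2,m']\cup[m'+2,a+2]$ (shifted appropriately inside $[2,n]$). It also contains every set of the form $[2,m']\cup\{m'+1\}\cup W$.

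Now suppose $\{i\}\cup S$ misses $[2,m'+1]$, which holds since $i\ge m'+2$ and $S\subset[i+1,n]$. Then choose $W\subset[m'+2,n]\setminus(\{i\}\cup S)$ of size $a-m'$; this is possible because $n>a+b$. The set $[2,m'+1]\cup W$ lies in $\mathcal{A}(\bar1)$ and is disjoint from $\{i\}\cup S$. Hence $\{1,i\}$ is empty in $\mathcal{B}$.

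Note that (iv) only uses that $\mathcal{A}(\bar1)$ contains the full segment above $[2,m'+1]$; it does not use $A'$.

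\textbf{Main obstacle.} The step needing the most care is justifying that $\mathcal{B}(1)$ is exactly the set of covers of $\mathcal{A}(\bar1)$. This rests on the maximality of $|\mathcal{A}|+|\mathcal{B}|$ in Assumption~\ref{12-14-2}, and I have to check that the enlarged pair still satisfies the hypotheses: $1$ remains the maximum degree, and $\mathcal{M}$, $\mathcal{T}$ stay in the diversity parts. Both are clear, since only sets containing $1$ are added. Everything else is routine lex-order bookkeeping.
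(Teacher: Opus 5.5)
Your proof is correct and takes essentially the paper's approach. The paper only asserts that the claim follows from the $L$-initiality of $\mathcal{A}(\bar1)$ (Claim~\ref{12-11-1}), the minimality of $m'$, and maximality; your description of $\mathcal{B}(1)$ as exactly the family of covers of $\mathcal{A}(\bar1)$ already follows from $(\mathcal{A},\mathcal{B})$ being a maximal cross-intersecting pair, so you need not invoke the maximization of $|\mathcal{A}|+|\mathcal{B}|$.

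One justification in (iii) needs fixing. From $|A'|=a$ alone you only get $|[m'+2,n]\setminus A'|\ge n-m'-1-a\ge b-m'$, which is below $b-2$ when $m'>2$. Contrary to your remark, you do need $[2,m']\subset A'$ from (i); it gives $|[m'+2,n]\setminus A'|=n-a-2\ge b-1$, which suffices.
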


Since $\mathcal{B}(1)=\mathcal{L}([2, n], |\mathcal{B}(1)|, b-1)$, 
 by Claim \ref{abcd}(ii)--(iii),  we have
 \begin{equation}\label{12-12-2}
|\mathcal{B}(1)|< {n-1\choose b-1}-{n-1-m' \choose b-1}.
\end{equation}
Now, we have finished Step 1. We are going into Step 2.   

\begin{claim}
\begin{equation}\label{gammaB<}
\gamma(\m B)\le {n-1-t\choose b-t}. 
\end{equation}
\end{claim}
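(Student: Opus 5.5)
We argue by contradiction and assume $\gamma(\m B)>{n-1-t\choose b-t}$. The plan is to use Claim~\ref{gammaB>} to make all four parts of the pair lex-initial on $[2,n]$, and then show that the two diversity parts are too large to cross-intersect the two degree parts. By Claim~\ref{gammaB>}, the assumption gives $\m B(\bar1)=\m L([2,n],|\m B(\bar1)|,b)$ and $\m A(1)=\m L([2,n],|\m A(1)|,a-1)$. Together with Claim~\ref{12-11-1}, all four parts $\m A(1),\m A(\bar1),\m B(1),\m B(\bar1)$ are then L-initial on $[2,n]$. By Claim~\ref{step1.1}, every set of $\m A(\bar1)\cup\m B(\bar1)$ contains $2$.

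The first step is to extract structure from $\m B$. Define $t'$ as the smallest integer with $\gamma(\m B)>{n-1-t'\choose b-t'}$, so that $t'\le t$. Since $\m B(\bar1)$ is L-initial, we get the analogue of Claim~\ref{abcd} for $\m B$.
\begin{itemize}
\item[(i)] Every $B\in\m B(\bar1)$ contains $[2,t']$, and some member of $\m B(\bar1)$ omits $t'+1$.
\item[(ii)] Because $\m B(\bar1)$ contains every $b$-set through $[2,t'+1]$ and $\m A(1)$ is maximal with respect to it, $\{1,i\}$ is full in $\m A$ for $i\in[2,t']$, and $\{1,i\}$ is empty in $\m A$ for $i\ge t'+2$.
\end{itemize}
From Claim~\ref{abcd}, $\m B(1)$ likewise consists of sets meeting $[2,m'+1]$, with $\{1,i\}$ empty in $\m B$ for $i\ge m'+2$.

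The second step is to derive the contradiction. We now have $A\cap[2,m'+1]\neq\emptyset$ for every $A\in\m A(\bar1)$ and $B\cap[2,t'+1]\neq\emptyset$ for every $B\in\m B(\bar1)$. Moreover, $\m A(\bar1)$ contains all $a$-sets in $[2,n]$ that contain $[2,m'+1]$, together with one set $A'$ omitting $m'+1$, and $\m B(\bar1)$ has the analogous property. The key cross-intersection to test is between $\m A(\bar1)$ and $\m B(\bar1)$, using $n>a+b$.
\begin{itemize}
\item Since $m'\le a-b+2<m$ and $t'\le t$, the two lex-initial diversity families intersect only in the shared element $2$ and in the prefixes.
\item To force a disjoint pair, take $A'\in\m A(\bar1)$ with $m'+1\notin A'$. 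By lex-initiality, $A'$ can be chosen to contain $[2,m']$ together with elements far to the right.
\item Find $B\in\m B(\bar1)$ avoiding $A'$. This requires both prefixes to exclude $2$, which fails, so instead I would compare sizes.
\end{itemize}
Concretely, the size comparison runs as follows. Combine $|\m A|\ge|\m A'|$ with $|\m A(1)|\le{n-1\choose a-1}-{n-1-t'\choose a-1}$, and $|\m B|\ge|\m B'|$ with \eqref{12-12-2}. This gives $\gamma(\m A)>{n-1-t'\choose a-1}-{n-1-t\choose a-1}+|\m M|$ plus the corresponding lower bound for $\gamma(\m B)$, where the deficit of $\m A'(1)$ relative to its maximum is controlled using $t$ and the structure of $\m T$. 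I would then feed these into Proposition~\ref{1-4-1} applied to the pairs $(\m A(1),\m B(\bar1))$ and $(\m B(1),\m A(\bar1))$ on $[2,n]$. The goal is to show the bounds are incompatible unless $t'=t$ and $\m B(\bar1)$ is the full $[2,t+1]$-star, which would contradict $\gamma(\m B)>{n-1-t\choose b-t}$.

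The main obstacle is this final counting step. It has to reconcile the exact form of $\m A'(1)$, which is maximal cross-intersecting with $\m T$, with the lex-initial $\m A(1)$, and show that the surplus ${n-1-t\choose b-t}$ in $\m B(\bar1)$ costs more in $\m A(1)$ than $\m A$ can afford. If direct counting fails, the first alternative I would try is an $S^Q_{U,V}$-shift from Lemma~\ref{1-7-1}(iii) with $R=[2,t+1]$, which preserves $\m T$ and contradicts Claim~\ref{11-3-4}.
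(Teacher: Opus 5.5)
Your setup step agrees with the paper's: all four parts are L-initial on $[2,n]$, you define $t'$, and you record the fullness and emptiness of the sets $\{1,i\}$ in $\mathcal A$. You should also note that $\{1,t'+1\}$ is non-full in $\mathcal A$, because it is needed later. The genuine gap is the second step, where no contradiction is actually produced.

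\begin{itemize}
\item The disjoint-pair idea fails, as you note: every member of $\mathcal A(\bar1)\cup\mathcal B(\bar1)$ contains $2$.
\item The counting you propose is circular. Proposition~\ref{1-4-1} applied to $(\mathcal A(1),\mathcal B(\bar1))$ uses only the fullness of $\{1,j\}$ for $j\le t'$. It therefore returns just $\gamma(\mathcal B)\le\binom{n-t'}{b-t'+1}$, which is the minimality of $t'$. The same happens with $m'$ on the other pair.
\item Your bound $\gamma(\mathcal A)>\binom{n-1-t'}{a-1}-\binom{n-1-t}{a-1}+|\mathcal M|$ collapses to $\gamma(\mathcal A)>|\mathcal M|$ when $t'=t$.
\end{itemize}

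More fundamentally, the paper's contradiction is not a size contradiction. $(\mathcal A,\mathcal B)$ is a $\prec$-minimal counterexample. The proof exhibits a nontrivial $S^Q_{U,V}$-shift of the pair under which both $\mathcal M$ and $\mathcal T$ survive, which contradicts Claim~\ref{11-3-4}. Your plan never uses this minimality.

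The missing ingredients are as follows.

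\textbf{Restricting the shifts.} Since all four parts are L-initial on $[2,n]$, every $S^Q_{U,V}$-shift of the pair has $1\in U$. Since $\{1,2\}$ is full in both families (by the degree hypothesis), each $S_{1,i}$ acts trivially, so $|V|\ge2$. This gives (P1) of Lemma~\ref{1-7-1} for any $R\ni1$ with $|R|\le2$.

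\textbf{Case $t'=m'$.} Apply Lemma~\ref{1-7-1}(i) to $\mathcal A$ with $R=\{1,m'+1\}$. Here (P2) comes from a set of $\mathcal A(\bar1)$ omitting $m'+1$, and (P3) from the non-fullness of $\{1,t'+1\}$. Since $m'+1\in(\cap\mathcal M)\cap(\cap\mathcal T)$ lies in $U$, both families are stable.

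\textbf{Case $t'<m'$} (the other case is symmetric). Split on whether $\{1,t'+1,m'+1\}$ is full in $\mathcal A$.
\begin{itemize}
\item If it is non-full, use Lemma~\ref{1-7-1}(ii) with $R=\{1,m'+1\}$ and $i=t'+1$.
\item If it is full, use Lemma~\ref{1-7-1}(iv) with $R=\{1\}$, $T=\{m'+1\}$ and $i=t'+1$.
\end{itemize}
In both sub-cases $\mathcal T$ is stable: $t'+1\in\cap\mathcal T$ avoids $V$, and $\{1,j\}$ is full in $\mathcal B$ for all $j\le m'$. $\mathcal M$ is stable because either $m'+1\in U$, or $\{t'+1,m'+1\}\subset\cap\mathcal M$ avoids $V$ while $\{1,t'+1,m'+1\}$ is full in $\mathcal A$.

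Your fallback shift with $R=[2,t+1]$ is not admissible, for three reasons.
\begin{itemize}
\item Lemma~\ref{1-7-1} requires $\min X\in R$. On $[n]$ this forces $1\in R$. On $X=[2,n]$, condition (P2) fails for $\mathcal B(\bar1)$, since all its sets contain $2$.
\item (P1) would require $|V|\ge t$ for every such shift, which is not available.
\item Keeping only $\mathcal T$ does not contradict Claim~\ref{11-3-4}; $\mathcal M$ must be stable as well.
\end{itemize}
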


\begin{proof}
Assume for contradiction that $\gamma(\m B)> {n-1-t\choose b-t}$. By Claims \ref{12-11-1} and \ref{gammaB>}, $\m A(1), \m A(\bar{1}), \m B(1), \m B(\bar{1})$ are L-initial on $[2,n]$. 
Let $t'$ be the smallest integer such that  $\gamma(\mathcal{B})> {n-1-t' \choose b-t'}$. Then $t'\le t$. Moreover, since $\m B(\bar{1})$ is L-initial, 
 for each $B\in \mathcal{B}(\bar{1})$, we have $[2, t']\subset B$, and there exists $B'\in  \mathcal{B}(\bar{1})$ such that $t'+1\not\in B'$. By the maximality of $(\m A, \m B)$,  the minimality of $t'$ implies the following.
 \begin{itemize}
 \item[(a)] For each $i\in [2,t']$, $\{1,i\}$ is full in $\mathcal{A}$;
 \item[(b)] $\{1,t'+1\}$ is  non-full in $\mathcal{A}$;
 \item[(c)] For each $i\ge t'+2$, $\{1,i\}$ is empty in $\m A$ ( and is also non-full in $\m A$).
 \end{itemize}
We claim
\begin{equation}\label{1-6-1}
\text{For every $S_{U,V}^Q$-shift of $(\m A, \m B)$, we have $1\in U$ and $|V|\ge 2$.}
\end{equation}
 
To prove~\eqref{1-6-1},
fix an $S_{U,V}^Q$-shift of $(\m A,\m B)$. 
As $\m A(1), \m A(\bar{1}), \m B(1), \m B(\bar{1})$ are L-initial on $[2,n]$, $1\in U$. 
Since $|\m A(1)|\ge {n-2\choose a-2}+\gamma(\m A)={n-2\choose a-2}+|\m A(\bar{1})|$, $\{1,2\}$ is full in $\mathcal{A}$ and $\m A(2\bar{1})\subset \m A(1\bar{2})$, and similarly for $\m B$. This implies that $S_{1,i}(\m A)=\m A$,  and $S_{1,i}(\m B)=\m B$. Thus, $|V|\ge 2$, as claimed.

In the rest of the proof of (\ref{gammaB<}), when we consider an $S_{U,V}^Q$-shift of $(\m A, \m B)$, by symmetry, we may assume that it is an $S_{U,V}^Q$-shift of $\m A$. In all subsequent applications of Lemma~\ref{1-7-1},
we take $r=1$.

Consider the case $t'= m'$. We apply Lemma \ref{1-7-1} (i) to $\m A$ with $R=\{1,m'+1\}$. The required conditions are verified as follows: condition (P1) follows from (\ref{1-6-1}); condition (P2) holds by Claim \ref{abcd} (i); condition (P3) holds by (b) and $m'=t'$. By Lemma \ref{1-7-1}, there exists an $S^Q_{U,V}$-shift of $\m A$ (and hence of $(\m A, \m B)$) such that $\{1,m'+1\}\subset U$. 
Consequently, $\m M$ and $\m T$ are stable, since $m'+1=t'+1 \le \min \{m, t\}+1 \in [2,m+1]\cap [2,t+1]= (\cap \m M)\cap (\cap \m T)$. This contradicts Claim \ref{11-3-4}.

Consider the case $m' \ne t'$. By symmetry, we may assume $m' > t'$ (the case $m' < t'$ is analogous).  
In light of (b), the rest of the proof splits into two cases.

{\bf Case 1: $\{1,t'+1,m'+1\}$  is non-full in $\m A$.} We apply Lemma \ref{1-7-1}(ii) to $\m A$ with $R=\{1, m'+1\}$ and $i=t'+1$.  
The required conditions are verified as follows: (P1) follows from (\ref{1-6-1});   condition (P2) is given by Claim \ref{abcd}(i); (P3) holds by (c) and $m'+1\ge t'+2$; (P4) is exactly the hypothesis of this case; (P5) follows from Claim \ref{abcd} (i) and $t'+1\le m'$. 
Hence, by Lemma \ref{1-7-1}, there exists an $S^Q_{U,V}$-shift of $\m A$, and hence of $(\m A, \m B)$ such that $\{1,m'+1\}\subset U$ and $t'+1\not\in V$. We claim that $\m T$ is stable under this $S^Q_{U,V}$-shift. Indeed, on the one hand, Claim \ref{abcd}(ii) together with $t'+1\le m'$ yields that $\{1,j\}$ is full in $\m B$ for each $j\in [2,t'+1]$. On the other hand, we have $t'+1\in \cap \m T$, $t'+1\not\in V$ and $1\in U$. Thus, $T$ were to shift into a set containing $\{1,t'+1\}$, but we have all of those in $\m B.$ Thus, $S_{U,V}^Q (T)=T$ for each $T\in \m T$, as claimed.  
Note that $m'+1\in \cap \m M$ and $m'+1\in U$, which implies that $\m M$ is also stable. This contradicts Claim \ref{11-3-4}.

{\bf Case 2: $\{1,t'+1,m'+1\}$  is full in $\m A$.}
Since $\m A(1)$ is L-initial, $\{1,t'+1,j\}$  is full in $\m A$ for each $j\in [t'+2, m'+1]$. 
We apply Lemma \ref{1-7-1} (iv) to $\m A$ with $R=\{1\}$, $T=\{m'+1\}$ and $i=t'+1$. 
The required conditions are verified as follows: 
Condition (P1) follows from (\ref{1-6-1});
conditions (P2) and (P3) are immediate since $R=\{1\}$; (P4) is given by (b); (P6) follows from Claim \ref{abcd}(i); (P7) follows from Claim \ref{abcd}(i) together with $t'+1\le m'$. 
Hence, by Lemma \ref{1-7-1}, there exists an $S^Q_{U,V}$-shift of $\m A$ (and consequently of the pair $(\m A, \m B)$) such that $1\in U$ and $\{t'+1,m'+1\}\cap V=\emptyset$. 
From Claim \ref{abcd}(ii) and $t'+1\le m'$, we know that $\{1,j\}$ is full in $\m B$ for each $j\in [2,t'+1]$. 
Since $t'+1 \in [2,t+1]= \cap \m T$ and $t'+1\not\in V$, 
$\m T$ is stable under this $S^Q_{U,V}$-shift. 
Since $m'<m$ and $t'<m'$, we have $\{t'+1, m'+1\} \subset \cap \mathcal{M}$. By hypothesis, $\{1, t'+1, m'+1\}$ is full in $\mathcal{A}$. 
As $\{t'+1,m'+1\}\cap V=\emptyset$ and $1\in U$, we conclude that  $\m M$ is stable, contradicting Claim \ref{11-3-4}.
\end{proof}

\begin{claim}\label{11-10-2}
For every $B\in \mathcal{B}(\bar{1})$, we have $[2, t+1]\subset B$.
\end{claim}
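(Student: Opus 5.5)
We argue by contradiction, in the same spirit as the proof of \eqref{gammaB<}: we produce an $S^Q_{U,V}$-shift of $(\m A,\m B)$ under which both $\m M$ and $\m T$ are stable, contradicting Claim~\ref{11-3-4}. Suppose some $B_0\in\m B(\bar1)$ misses an element of $[2,t+1]$, and let $j\in[2,t+1]$ be the smallest element not contained in some member of $\m B(\bar1)$. By Claim~\ref{step1.1}, $j\ge 3$ and every member of $\m B(\bar1)$ contains $[2,j-1]$. The shift will be found inside $\m B$ by applying Lemma~\ref{1-7-1} with $r=\min X$ of an appropriate ground set. Two choices are possible: $X=[2,n]$ applied to $\m B(\bar1)$, or $X=[n]$ with $r=1$, depending on which makes (P1) checkable.

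\textbf{First attempt: shift inside $\m B(\bar1)$.} We work on the ground set $[2,n]$ and take $R=[2,j]$, or simply $R=\{j\}$ after first performing $S_{j,x}$-type moves. This requires:
\begin{itemize}
\item (P2): a set of $\m B(\bar1)$ avoiding $R$, supplied by $B_0$ (modulo the elements $[2,j-1]$, which we can absorb by working in the link of $[2,j-1]$);
\item (P3): that $R$ is non-full in $\m B(\bar1)$, which follows from \eqref{gammaB<}, since $\gamma(\m B)\le\binom{n-1-t}{b-t}$ is too small for $\{2,\dots,j\}$ to be full when $j\le t+1$.
\end{itemize}
Lemma~\ref{1-7-1}(i) then gives an $S^Q_{U,V}$-shift with $R\subset U$ and $1\notin U\cup V$. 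Such a shift leaves $\m B(1)$ and $\m A(1)$ untouched, and acts only on the $\bar1$-parts.

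\textbf{Stability.} We then check that $\m M$ and $\m T$ are stable. For $\m T$, every $T\in\m T$ contains $[2,t+1]\supset R$. Hence $S_{U,V}(T)\ne T$ only if $U\cap T=\emptyset$, which is impossible because $R\subset U\cap T$. So $\m T$ is fixed. For $\m M$, we use Claim~\ref{12-11-1}: $\m A(\bar1)$ is L-initial on $[2,n]$ with $|\m A(\bar1)|>\binom{n-1-m}{a-m}$. Hence every $a$-subset of $[2,n]$ containing $[2,m+1]$ lies in $\m A(\bar1)$. The shifted $\m A(\bar1)$ is still a family whose first $\binom{n-1-m}{a-m}$ lex sets, or an isomorphic copy of that configuration, remain present, because an $S_{U,V}$ shift with $U\prec V$ only moves sets earlier in lex order. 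Therefore a copy of $\m M$ survives, giving the contradiction with Claim~\ref{11-3-4}.

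\textbf{Main obstacle: verifying (P1).} The difficulty is to show that every nontrivial shift of $\m B(\bar1)$ with $r\in U$ has $|V|\ge|R|$. If $R$ is a single element this is automatic, so the plan is to reduce to $R=\{j\}$ in the link of $[2,j-1]$. Cross-intersection must be maintained with $\m A(1)$, and this is exactly what Lemma~\ref{SUV'} guarantees for {\bf Q}-shifts. If the link reduction fails, we fall back on case distinctions mirroring Cases~1--2 in the proof of \eqref{gammaB<}. Those cases use Claim~\ref{abcd}(ii)--(iv) to control which $\{1,i\}$ are full in $\m B$, together with Lemma~\ref{1-7-1}(iii)/(iv) to keep $[2,t+1]$ out of $V$, so that $\m T$ stays stable while $\m M$ stays stable by the L-initiality of $\m A(\bar1)$.
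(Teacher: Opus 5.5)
Your eventual plan is exactly the paper's proof. You pass to the link of $[2,j-1]$ on the ground set $[j,n]$ and apply Lemma~\ref{1-7-1}(i) with $R=\{j\}$: (P1) is automatic, (P2) comes from $B_0$, and (P3) holds because $|\mathcal E^b_{[2,j]}|=\binom{n-j}{b-j+1}\ge\binom{n-1-t}{b-t}\ge|\mathcal B(\bar1)|$ while $B_0\notin\mathcal E^b_{[2,j]}$. Then $\mathcal T$ is fixed since $j\in U\cap(\cap\mathcal T)$, and you contradict Claim~\ref{11-3-4}. So the detour through $R=[2,j]$ and the fallback case analysis are unnecessary.

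One assertion needs correcting: the claim that the shift leaves $\mathcal A(1)$ untouched. $\mathcal A(1)$ is not known to be L-initial at this point, so Lemma~\ref{SUV'} gives cross-intersection only if you shift $\mathcal A(1)\cap\binom{[j,n]}{a-1}$ together with the link family. This is the paper's $\mathcal F$. Doing so is harmless, because $1\notin U\cup V$ preserves $|\mathcal A(1)|$ and leaves $\mathcal A(\bar1)\supset\mathcal M$ alone.
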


\begin{proof}
Recall that every set in $\m B(\bar1)$ contains $2$ (by Claim \ref{step1.1}).
Let $i$ be the smallest integer such that every set in $\m B(\bar1)$ contains $[2,i]$. 
Since $\m T \subset \m B(\bar1)$, $i\le t+1$.
If $i= t+1$, then we are done. Assume $i\le t$.
Then there exists $B\in \m B(\bar1)$ satisfying $i+1\not\in B$ and $[2,i]\subset B$.
Write 
\begin{equation*}
\m B_1:=\m B[[2,i],[i+1]], \, \,\m B_2:=\mathcal{E}^b_{[2,i+1]} \setminus \mathcal{B}(\bar{1}).
\end{equation*}
Recall that $\mathcal{B}[[2,i],[i+1]]$ denotes the collection of sets in $\mathcal{B}(\bar{1})$ that contain $[2,i]$ but avoid $i+1$, and $\mathcal{E}^b_{[2,i+1]}$ is defined in (\ref{ec}).
Then $B \in \mathcal{B}_1$, and hence $\mathcal{B}_1 \neq \emptyset$.
Note that $i\le t$, $|\m E^b_{[2,i+1]}|=\tbinom{n-1-i}{b-i}$ and $\gamma(\mathcal{B})\le {n-1-t \choose b-t}$ by (\ref{gammaB<}). 
Since $B\in\mathcal B(\bar1)\setminus \mathcal E^b_{[2,i+1]}$,
not all members of $\mathcal E^b_{[2,i+1]}$ can belong to
$\mathcal B(\bar1)$.
Hence $\mathcal B_2\neq\emptyset$.
Denote 
\[
\m F:=\m A(1)\cap {[i+1,n]\choose a-1} \text{ and } \m G:=\m B([2,i],[i]).
\]
Then $\m F \subset {[i+1, n]\choose a-1}$ and $\m G\subset {[i+1,n]\choose b-i+1}$ are cross-intersecting. Note that $|\m G|=|\m B(\bar 1)|$. 
We apply Lemma \ref{1-7-1}(i) to $\m G$ with $r=i+1$, $R=\{i+1\}$. The required conditions are verified as follows: condition (P1) follows from $|R|=|\{i+1\}|=1$; condition (P2) holds by the existence of $B$ (note that $i+1\not\in B\setminus [2,i] \in \m G$);
since $\m B_2\ne \emptyset$,  $\{i+1\}$ is non-full in $\m G$, so condition (P3) holds. 
Hence, there exists an $S^Q_{U,V}$-shift of $\m G$, and hence of $(\m F, \m G)$ such that $i+1\in U$. 
Now in the pair $(\m A, \m B)$, replace $\m F, \m G$ by $S_{U,V}^Q(\m F), S_{U,V}^Q(\m G)$, respectively, and keep other parts of $\m A, \m B$ unchanged. This yields a new pair, denoted by $(\m I, \m J)$.

We claim that $\m I, \m J$ are cross-intersecting. 
Indeed, by Lemma \ref{SUV'}, $S_{U,V}^Q(\m F), S_{U,V}^Q(\m G)$ are cross-intersecting; combined with the assumption that every set in $\m B(\bar1)$ contains $[2,i]$, shows that $\m I(1)$ and $\m J(\bar1)$ are cross-intersecting. 
Since every set in both $\m A(\bar1)$ and $\m J(\bar1)$ contains $2$, and $\m A(\bar1)=\m I(\bar1)$, it follows that  $\m I$ and $\m J(\bar1)$ are cross-intersecting. We have $\m B[\{1\}]=\m J[\{1\}]$, thus $\m J(1)$ cross-intersects $\m A(\bar 1)$. Also, $\m J[\{1\}]$ and $\m I[\{1\}]$ are clearly cross-intersecting. We conclude that $\m I$ and  $\m J$ are cross-intersecting as well. 

Clearly, $|\m I|=|\m A|$, $|\m J|=|\m B|$, $|\m I(1)|=|\m A(1)|$, $|\m J(1)|=|\m B(1)|$ and $(\m I, \m J)\precneqq (\m A, \m B)$. 
Moreover, $\m M\subset \m A(\bar1)=\m I(\bar1)$. 
As $i+1\in U$ and $i+1\in [2,t+1] =\cap \m T$, $\m T\subset \m J(\bar1)$, contradicting Claim \ref{11-3-4}.
\end{proof}

Now, we have finished Step 2. We are going into Step 3. 

Recall that $(\m A, \m B)$ is a pair of maximal cross-intersecting families.
In view of Claim \ref{11-10-2}, the first statement of the following claim holds since $n>a+b$.
\begin{claim}\label{propertyA}
\begin{itemize}
\item[(i)] For each $i \in [2, t+1]$, $\{1, i\}$ is full in $\mathcal{A}$.
\item[(ii)] $\{1, t+2\}$ is non-full in $\mathcal{A}$.
\end{itemize}
\end{claim}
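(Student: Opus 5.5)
Both parts will come from the maximality of the pair $(\mathcal A,\mathcal B)$ in Assumption~\ref{12-14-2}, combined with the structure of $\mathcal B(\bar1)$ established in Claim~\ref{11-10-2}.

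For (i), fix $i\in[2,t+1]$ and an $a$-set $A\supset\{1,i\}$. I will show that $A$ intersects every member of $\mathcal B$. Members of $\mathcal B(1)$ contain $1\in A$. Every $B\in\mathcal B(\bar1)$ contains $[2,t+1]\ni i$ by Claim~\ref{11-10-2}. Hence $\mathcal A\cup\{A\}$ and $\mathcal B$ are cross-intersecting, and maximality of $\mathcal A$ (the pair is maximal cross-intersecting) forces $A\in\mathcal A$. So $\{1,i\}$ is full in $\mathcal A$. This is the routine part; the remark about $n>a+b$ only matters for (ii).

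For (ii) I argue by contradiction. Suppose $\{1,t+2\}$ is full in $\mathcal A$. By (i) and the definition of $q_1,\dots,q_\ell$, the set $T_1\in\mathcal T$ satisfies $[2,t+1+\ell]\setminus\{t+2\}\subset T_1$ and $t+2\notin T_1$, so $T_1\cap\{t+2\}=\emptyset$. I then pick an $a$-set $A\ni1$ with $A\cap T_1=\emptyset$ and $t+2\in A$. This choice is possible because $n-1-b\ge a$, which is where $n>a+b$ enters. Since $\{1,t+2\}$ is full, $A\in\mathcal A(1)$, while $T_1\in\mathcal B(\bar1)$ is disjoint from it. This contradicts cross-intersection, so $\{1,t+2\}$ is non-full. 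In the sense of $\mathcal E^a$ this means some set $\{1,t+2\}\cup T$ with $T\subset[t+3,n]$ is missing from $\mathcal A$.

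I should check that non-full is meant via $\mathcal E^a_{\{1,t+2\}}$, i.e.\ with the remaining elements above $t+2$. I need $A\setminus\{1,t+2\}\subset[t+3,n]\setminus T_1$, and this set has size at least $n-(t+2)-(b-t)\ge a-2$ because $|T_1\cap[t+3,n]|\le b-t$. Hence $n-b-2\ge a-2$, which holds. If $\ell=0$, so that $\mathcal T$ is a single set $T_1$ with $\cap\,\mathcal T=T_1$, then $t=b$ and the same count works with any element outside $T_1$ renamed $t+2$. The main obstacle is only this bookkeeping about which $T\in\mathcal T$ avoids $t+2$: it is $T_1$ when $\ell\ge1$, and the labelling fixed before Lemma~\ref{11-23-3} guarantees this.
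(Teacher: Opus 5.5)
Your proof is correct and is the paper's argument: (i) follows from the maximality of the pair $(\mathcal A,\mathcal B)$ together with Claim~\ref{11-10-2}, and for (ii) you exhibit a member of $\mathcal E^a_{\{1,t+2\}}$ that is disjoint from a set of $\mathcal T$ avoiding $t+2$, which is exactly what the paper indicates. One simplification: such a set exists simply because $t+2\notin\cap\,\mathcal T=[2,t+1]$. So you need neither the $q_i$-labelling nor the ``suppose full'' detour. The separate single-set case is also unnecessary; in the paper's notation it is $\ell=1$ rather than $\ell=0$, and no renaming is needed there since $T_1=[2,t+1]$.
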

The second statement follows from the assumption that $\m T \subset \m B(\bar1)$, and that there exists a set in $\m T$ not containing $t+2$.

By \eqref{12-12-2} and \eqref{gammaB<} we obtain the upper bound
\begin{equation}\label{B<}
    |\mathcal{B}|<\binom{n-1}{b-1}-\binom{n-1-m'}{b-1}+\binom{n-1-t}{b-t}.
\end{equation} 

On the other hand, since $m'<m$ and $|\m B|\ge |\m B'|$, inequality (\ref{lowerbound}) gives
\begin{equation}\label{12-12-3}
|\mathcal{B}|\ge {n-1\choose b-1}-{n-1-m\choose b-1}+|\mathcal{T}|\ge {n-1\choose b-1}-{n-1-m'\choose b-1}+{n-2-m'\choose b-2}+|\mathcal{T}|.
\end{equation}
Comparing (\ref{B<}) and (\ref{12-12-3}), we have $t\le m'$. Moreover, we claim that if $m'=t$, then $t\ge 4$.
Indeed, if $t=m'=3$ (note that $t\ge 3$), then (\ref{B<}) gives 
$|\m B|<{n-1\choose b-1}-{n-4\choose b-1}+{n-4\choose b-3}$; combining this with Lemma \ref{11-23-3}, we obtain $|\m B|={n-1\choose b-1}-{n-4\choose b-1}+{n-4\choose b-3}-1$, $b=4$ and $|\m T|=1$, 
which implies $t=4$, a contradiction. Hence, we conclude
\begin{equation*}\label{m'>t}
m'>t\,\, \text{or}\,\, m'=t\ge 4.
\end{equation*}

\begin{claim}\label{1-11-1}
There exists an $S_{U,V}^Q$-shift of $(\m A, \m B)$  such that $1\in U$. Moreover, for every $S_{U,V}^Q$-shift of $(\m A, \m B)$ with $1\in U$, we have $|V|=|U|\ge 3$.   
\end{claim}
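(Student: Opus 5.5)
We split the claim into an existence part, handled by Lemma~\ref{1-7-1}(i), and a lower bound on $|V|$, handled by the fullness properties collected so far (Claims~\ref{step1.1}, \ref{abcd}, \ref{11-10-2}, \ref{propertyA} and the relation $m'\ge t$).

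\emph{Existence.} We apply Lemma~\ref{1-7-1}(i) to $\m A$ with $X=[n]$, $r=1$ and $R=\{1\}$.
\begin{itemize}
\item[(P1)] This is automatic, since $|R|=1$ and every nontrivial shift has $|V|\ge 1$.
\item[(P2)] This holds because $\gamma(\m A)>0$: there is $A\in\m A$ with $1\notin A$.
\item[(P3)] This holds because $\{1\}$ is non-full in $\m A$. Indeed, $\{1,t+2\}$ is non-full in $\m A$ by Claim~\ref{propertyA}(ii), so some $a$-set containing $1$ is missing from $\m A$.
\end{itemize}
Lemma~\ref{1-7-1}(i) then gives an $S^Q_{U,V}$-shift of $\m A$ with $1\in U$. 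By definition, this is also an $S^Q_{U,V}$-shift of the pair $(\m A,\m B)$.

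\emph{Lower bound on $|V|$.} Fix any $U,V$ with $1\in U$, $|U|=|V|$, $U\cap V=\emptyset$. We show that $S_{U,V}$ acts trivially on both $\m A$ and $\m B$ whenever $|V|\le 2$. This contradicts the requirement that an $S^Q_{U,V}$-shift acts nontrivially, and so forces $|V|\ge 3$.

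A set $F$ is moved only if $F\cap(U\cup V)=V$, so in particular $1\notin F$. The shift is trivial provided the image $(F\setminus V)\cup U$ already lies in the family for every such $F$. For this it suffices that the image contains $1$ together with some $i$ for which $\{1,i\}$ is full in the family.

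\emph{The family $\m A$.} By Claim~\ref{abcd}(i), every $F\in\m A(\bar 1)$ contains $[2,m']$. Since $m'\ge t$, this includes $[2,t+1]$. By Claim~\ref{propertyA}(i), $\{1,i\}$ is full in $\m A$ for every $i\in[2,t+1]$. Because $t\ge3$, the interval $[2,t+1]$ has at least $3$ elements, so removing $V$ with $|V|\le 2$ leaves some $i\in[2,t+1]$ in the image. Hence the image contains $\{1,i\}$ and belongs to $\m A$.

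\emph{The family $\m B$.} By Claim~\ref{11-10-2}, every $F\in\m B(\bar1)$ contains $[2,t+1]$. By Claim~\ref{abcd}(ii), $\{1,i\}$ is full in $\m B$ for every $i\in[2,m']\supseteq[2,t+1]$. The same counting shows that the image of any moved set contains $1$ and some surviving $i\in[2,t+1]$, so it lies in $\m B$.

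This already rules out $|V|=1$ and $|V|=2$, so $|U|=|V|\ge 3$. The case $|V|=1$ can also be seen directly as in the proof of~\eqref{1-6-1}.

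\emph{Main obstacle.} The delicate point is the bookkeeping of which pairs $\{1,i\}$ are full in which family. For $\m A$ this relies on $m'\ge t$, obtained above from comparing (\ref{B<}) and (\ref{12-12-3}). For $\m B$ it relies on Claim~\ref{abcd}(ii), which needs $m'\ge t$ so that $[2,t+1]\subseteq[2,m']$. We also need $t\ge 3$, so that two deletions cannot exhaust $[2,t+1]$. We must check carefully that both hypotheses are available at this stage, and that Claim~\ref{abcd}(ii) does apply to the whole range $[2,t+1]$, including the borderline case $m'=t\ge4$.
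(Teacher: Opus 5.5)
Your route is the paper's: Lemma~\ref{1-7-1}(i) with $R=\{1\}$ for existence, and, for the bound, showing that the image of any moved set contains $1$ together with some $i$ for which $\{1,i\}$ is full. The existence part is fine, including your derivation of (P3) from Claim~\ref{propertyA}(ii). The gap is in the counting.

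Twice you use the inclusion $[2,t+1]\subseteq[2,m']$, but this needs $m'\ge t+1$, not just $m'\ge t$. The case $m'=t$ is genuinely open at this point: what was established before the claim is only that $m'>t$ or $m'=t\ge 4$. When $m'=t$, both halves of your argument fail at the element $t+1=m'+1$.
\begin{itemize}
\item For $\m A$, Claim~\ref{abcd}(i) actually provides a set of $\m A(\bar1)$ avoiding $m'+1$. So a moved set is only known to contain $[2,t]$.
\item For $\m B$, the pair $\{1,m'+1\}$ is non-full by Claim~\ref{abcd}(iii). So fullness of $\{1,i\}$ is only available for $i\in[2,t]$.
\end{itemize}
In this case you have only $t-1$ usable elements, and $t\ge 3$ does not suffice. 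With $t=m'=3$ you would have just $\{2,3\}$, and $V=\{2,3\}$ is not ruled out. The check you postpone in your last paragraph, that Claim~\ref{abcd}(ii) covers all of $[2,t+1]$ when $m'=t\ge 4$, is in fact false.

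The repair is the paper's case split.
\begin{itemize}
\item If $m'\ge t+1$, your argument works verbatim with $[2,t+1]$, which has $t\ge 3$ elements.
\item If $m'=t$, run the same argument with $[2,t]$. This set has $t-1\ge 3$ elements, because $m'=t$ forces $t\ge 4$.
\end{itemize}
The fact $t\ge 4$ is derived just before the claim by comparing (\ref{B<}) with Lemma~\ref{11-23-3}: $t=m'=3$ would force $b=4$ and $|\m T|=1$, hence $t=4$. With this case distinction, your proof coincides with the paper's.
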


\begin{proof}
Since $\{1\}$ is non-full in $\mathcal A$ and
$\mathcal A(\bar1)\ne\emptyset$, Lemma~\ref{1-7-1}(i)
applied with $R=\{1\}$ gives an $S^Q_{U,V}$-shift of
$\mathcal A$ with $1\in U$, and hence also an
$S^Q_{U,V}$-shift of the pair.

Now take any $S_{U,V}^Q$-shift of $(\mathcal{A},\mathcal{B})$ with $1\in U$.
By definition the $S_{U,V}^Q$-shift acts non-trivially on at least one of $\mathcal{A}$ and $\mathcal{B}$.

If $S_{U,V}^Q$-shift acts on $\mathcal{A}$, then there are $A\in\mathcal{A}(\bar1)$, $A'\notin\mathcal{A}$ with $S_{U,V}^Q(A)=(A\setminus V)\cup U=A'$. Since $1\in U$, $A\in \m A(\bar1)$, and hence $[2,m']\subset A$ (by Claim \ref{abcd}(i)). 
Recall that  $t\le m'$, and   
If $t=m'$, then  $[2,t]\subset A$. Combining this with Claim~\ref{propertyA}(i),  yields 
$[2,m']=[2,t]\subset V$ (since otherwise, $A'\in \m A$, contradicting $A'\notin\mathcal{A}$). 
If $t+1\le m'$, then $[2,t+1]\subset A$, combining this with~\ref{propertyA}(i), yield $[2,t+1]\subset V$.
Note that $t\ge 3$, and $t=m'$ holds only if $t\ge 4$. We conclude  $|U|=|V|\ge 3$.

If $S_{U,V}^Q$-shift acts on $\mathcal{B}$, the situation is completely symmetric: 
Claims~\ref{abcd}(ii) and~\ref{11-10-2} give  
$[2,t]\subset V$ (when $t=m'$) or $[2,t+1]\subset V$ (when $t+1\le m'$), 
which again forces $|U|=|V|\ge 3$.

In all possible scenarios we obtain $|U|\ge 3$, completing the proof.
\end{proof}

Write $\mathcal{M}=\{M_1, \dots, M_{k}\}$ and $\mathcal{T}=\{T_1, \dots, T_{\ell}\}$.
Since $\mathcal{M}, \mathcal{T}$ are intersection-minimal, for each $i\in [k]$, $i\in [\ell]$ there exists 
$
p_i\in \cap\,(\mathcal{M}\setminus \{M_i\})\setminus \cap \,\mathcal{M}$ and $q_i\in \cap\,(\mathcal{T}\setminus \{T_i\})\setminus (\cap \,\mathcal{T})$.
We may w.l.o.g. assume $\{p_1, \dots, p_k\}=[m+2, m+1+k]$ and $\{q_1, \dots, q_{\ell}\}=[t+2, t+\ell+1]$.
So for each $i\in [k]$ and $j\in [\ell]$, we have
$m+1+i\not\in M_i$, $t+1+j\not\in T_j$,
$[2, m+1+k]\setminus \{m+1+i\}\subset M_i$ and $[2,t+1+\ell]\setminus \{t+1+j\}\subset T_j$. 

\begin{claim}\label{11-13-1}
Every set of $\mathcal{B}(\bar{1})\setminus \mathcal{T}$ contains $[2, t+1+\ell]$. 
\end{claim}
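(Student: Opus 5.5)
We argue by contradiction, following the pattern of Claim \ref{11-10-2}: we find an $S^Q_{U,V}$-shift acting on a suitable subfamily that keeps both $\mathcal M$ and $\mathcal T$ stable, and then invoke Claim \ref{11-3-4}. By Claim \ref{11-10-2}, every $B\in\mathcal B(\bar1)$ contains $[2,t+1]$. Suppose some $B\in\mathcal B(\bar1)\setminus\mathcal T$ misses an element of $[t+2,t+1+\ell]$. Let $j\in[\ell]$ be the smallest index for which some $B\in\mathcal B(\bar1)\setminus\mathcal T$ with $[2,t+j]\subset B$ satisfies $t+1+j\notin B$. The order of the $q_j$ is at our disposal, so we may assume this minimal index is as convenient as possible. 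Write $P:=[2,t+j]$.

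We work with $\mathcal G:=\mathcal B(P,[t+1+j]\setminus\{1\})$ on the ground set $[t+1+j,n]$, restricted to sets avoiding $1$. We pair it with $\mathcal F:=\mathcal A(1)\cap\binom{[t+1+j,n]}{a-1}$, exactly as in the proof of Claim \ref{11-10-2}. The set $B\setminus P$ lies in $\mathcal G$ and avoids $t+1+j$, so (P2) holds for $R=\{t+1+j\}$.

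For (P3) we need some $b$-set containing $[2,t+1+j]$ and avoiding $1$ to be absent from $\mathcal B$. Count: $|\mathcal B(\bar1)|\le\gamma$-type bounds from \eqref{gammaB<}, and $|\mathcal E^b_{[2,t+1+j]}|=\binom{n-1-t-j}{b-t-j}$. This requires care, since $j$ may be large. If instead all such sets were present, we would compare with $|\mathcal B|$ via \eqref{B<} and \eqref{12-12-3} together with $m'\ge t$. When that counting fails, we choose $R=\{t+1+j\}$ inside a larger window.

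Lemma \ref{1-7-1}(i) with $R=\{t+1+j\}$ ((P1) is trivial since $|R|=1$) gives an $S^Q_{U,V}$-shift of $\mathcal G$ with $t+1+j\in U$. We then rebuild $(\mathcal I,\mathcal J)$ by replacing $\mathcal F,\mathcal G$ with their shifts. Cross-intersection follows as in Claim \ref{11-10-2}: Lemma \ref{SUV'} handles the pair $(\mathcal F,\mathcal G)$, every set of $\mathcal A(\bar1)\cup\mathcal B(\bar1)$ contains $2$, and all sets of $\mathcal B(\bar1)$ contain $P$. The sizes and degrees at $1$ are unchanged, and $(\mathcal I,\mathcal J)\precneqq(\mathcal A,\mathcal B)$. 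Also $\mathcal M\subset\mathcal A(\bar1)=\mathcal I(\bar1)$ is untouched.

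The main obstacle is stability of $\mathcal T$. The members $T_{j'}$ with $j'\ne j$ contain $t+1+j\in U$, so they are fixed by the shift. But $T_j$ avoids $t+1+j$ and contains $[2,t+1+\ell]\setminus\{t+1+j\}$, so it may move. We repair this in one of two ways.

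\emph{Option 1.} Use Lemma \ref{1-7-1}(iii) with $T$ chosen to separate $T_j$. Choosing $T$ so that the sets $F\in\mathcal G$ used as sources avoid $T_j\setminus[t+1+j]$ is not possible in general, because sources need $|V|=|U|$. Instead, we note that $T_j\setminus P$ moving under $S_{U,V}$ requires $V\subset T_j$ and $U\cap T_j=\emptyset$, impossible when $U\ni t+1+j$ only if... so Option 1 does not settle the case.

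\emph{Option 2.} The image of $T_j$ is again a set containing $[2,t+1]$ and intersection-minimal together with the other $T_{j'}$. Hence the shifted family still contains an isomorphic copy of $\mathcal T$, with $t+1+j$ replaced by an element of $V$.

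We expect to rely on Option 2, verifying that $\cap\,\mathcal T$ and the private elements $q_{j'}$ for $j'\ne j$ are preserved. Finally, the claim's $|U|\ge3$ information from Claim \ref{1-11-1} is not needed here, since $1\notin U$ in this shift. This contradicts Claim \ref{11-3-4}.
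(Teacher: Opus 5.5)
\textbf{Verdict: the proposal has a genuine gap, at the step you yourself flag as the main obstacle, namely stability of $T_j$.}

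Your shift has $1\notin U$, lives entirely inside $\mathcal{B}(\bar 1)$, and comes from Lemma~\ref{1-7-1}(i) with $R=\{t+1+j\}$. But $T_j\setminus P$ is itself an admissible source set for that lemma, since it avoids $t+1+j$. So nothing prevents $T_j$ from moving.

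Option~2 is false. Since $V\subset[t+2+j,n]$, the image $(T_j\setminus V)\cup U$ still contains $[2,t+1]$, and it now also contains $t+1+j\in U$. Every $T_{j'}$ with $j'\neq j$ contains $t+1+j$ as well. Hence the new family has common intersection of size at least $t+1$. It therefore cannot be isomorphic to $\mathcal{T}$, because $|\cap\,\mathcal{T}|=t$ and intersection size is an isomorphism invariant. So Claim~\ref{11-3-4} is not contradicted. You discard Option~1 yourself, which leaves the core of the claim unproved.

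There are further problems:
\begin{itemize}
\item Condition (P3) for $R=\{t+1+j\}$ is never established. The counting via \eqref{gammaB<} works in Claim~\ref{11-10-2} only because there $i\le t$. For $j\ge 1$ we have $\binom{n-1-t-j}{b-t-j}<\binom{n-1-t}{b-t}$, so it gives nothing, and ``a larger window'' is not an argument.
\item ``All sets of $\mathcal{B}(\bar1)$ contain $P$'' is false for $j\ge2$, because $T_{j'}$ with $j'<j$ misses $t+1+j'\in P$. You also cannot reorder the $q_j$ at will: the labeling is pinned down by the L-initial normalization of Claim~\ref{12-11-1} and by the lex-minimality of the counterexample. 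This particular point is repairable, since $t+1+j\in U\cap T_{j'}$.
\end{itemize}

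The missing idea is to use shifts with $1\in U$. Such shifts push diversity sets of $\mathcal{B}$ into $\mathcal{B}(1)$, and their behaviour is controlled by the rigid structure of $\mathcal{B}(1)$ from Claim~\ref{abcd}. The paper proceeds as follows.
\begin{itemize}
\item It reduces the claim to showing that $\{x,y\}$ covers $\mathcal{B}(\bar1)$ for every $x=t+1+i$ and every $y\in T_i$. Otherwise a set of $\mathcal{B}(\bar1)\setminus\mathcal{T}$ missing $x$ would contain $T_i$.
\item It takes $x$ minimal, shows $x<y$, and shows $x\le m'+2$ by comparison with \eqref{12-12-3}.
\item It applies Lemma~\ref{1-7-1} to $\mathcal{B}$ with $r=1$ and $R\ni 1$, in four cases: $x\le m'$ with $y=m'+1$; $x\le m'$ with $y\ne m'+1$; $x=m'+1$; $x=m'+2$.
\item Condition (P1) comes from Claim~\ref{1-11-1}, which gives $|V|\ge 3$.
\item Conditions (P3)/(P4) come from the non-full and empty statements in Claim~\ref{abcd}(iii),(iv). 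This is exactly why $x\le m'+2$ is needed.
\item $\mathcal{T}$ is stable for one of three reasons: a moved member would land on a set containing $\{1,x\}$ with $x\le m'$, which is full in $\mathcal{B}$ by Claim~\ref{abcd}(ii); or $\{x,y\}\subset U$; or $\{1,x,y\}$ is full in $\mathcal{B}$.
\item $\mathcal{M}$ is stable because $m'+1$ or $m'+2$, both in $\cap\,\mathcal{M}$, lies in $U$, or via fullness in $\mathcal{A}$ coming from maximality of $(\mathcal{A},\mathcal{B})$.
\end{itemize}

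By explicitly setting aside both element $1$ and Claim~\ref{1-11-1}, your approach loses the only mechanism that keeps $T_j$ in place.
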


\begin{proof}
To prove Claim \ref{11-13-1}, it is enough to show that
for every $x=t+1+i$ with $i\in[\ell]$ and every $y\in T_i$,
the pair $\{x,y\}$ is a cover of $\mathcal B(\bar1)$.

Arguing indirectly, we choose $x=t+1+i$ to be the smallest integer such that $\{x,y\}$ is not a cover of $\m B(\bar1)$ for some $y\in T_i$, i.e., there exist sets in $\m B(\bar{1})$ that are disjoint from $\{x,y\}$.

First, we show that $x<y$. Indeed, if not, then $x\ne y$ forces $y<x$. By Claim \ref{11-10-2}, $y\ge t+2$; thus, there exists some $1\le i'<i$ such that $y=t+1+i'<t+1+i=x$. 
Recall that $[2,t+1+\ell]\setminus \{t+1+i'\}\subset T_{i'}$. 
Hence, $x\in T_{i'}$. By the choice of $x$, we see that the pair $\{t+1+i', t+1+i\}=\{x,y\}$ is a cover of $\m B(\bar1)$, a contradiction. Thus $x<y$, as claimed.  

Next, we claim that
\begin{equation}\label{x<}
    x\le m'+2.
\end{equation}

\begin{proof}[Proof of (\ref{x<})]
Suppose for contradiction that $x>m'+2$. 
By the choice of $x$, we see that for all $t+1+j\in [t+1+1, m'+2]$, $T_j$ ($j\in [m'-t+1]$) is the only set that belongs to $\mathcal{B}(\overline{\{1,t+1+j\}})$.   
Then 
\begin{equation}
|\mathcal{B}(\bar{1})|\le {n-1-(m'+1)\choose b-(m'+1)}+|\{T_1, \dots, T_{m'-t+1}\}|<{n-1-(m'+1)\choose b-(m'+1)}+|\m T|.
\end{equation}
Therefore, 
\[
|\mathcal{B}|<{n-1\choose b-1}-{n-1-m'\choose b-1}+{n-1-(m'+1)\choose b-(m'+1)}+|\m T|.
\]
Combining this with (\ref{12-12-3}), we get ${n-1-(m'+1)\choose b-2}<{n-1-(m'+1)\choose b-(m'+1)}$, which is a contradiction since $m'+1>2$ and $n-1-(m'+1)>b-2+b-(m'+1)$.
\end{proof}

Now we are ready to prove Claim \ref{11-13-1}.
Since $\m B$ is not L-initial, there exist $S_{U,V}^Q$-shifts of $\m B$, and hence of $(\mathcal{A}, \mathcal{B})$. 
Our idea is to show that we can find an $S^Q_{U, V}$-shift of $\m B$ with $\mathcal{M}$, $\mathcal{T}$ stable, which contradicts Claim \ref{11-3-4}. We will apply Lemma \ref{1-7-1} with $r=1$ in the rest of the proof.

{\bf Case 1: $x\le m'$, $y=m'+1$.} 
We apply Lemma \ref{1-7-1}(iii) to $\m B$ with $R=\{1, m'+1\}$ and $T=\{x\}$. 
The required conditions are verified as follows:
 (P1) holds by Claim \ref{1-11-1}; 
(P2) and (P6) hold by the assumption that $\{x,y\}$ is not a cover of $\m B(\bar1)$; 
 (P3) holds by Claim \ref{abcd} (iii). 
 Thus, by Lemma \ref{1-7-1}, we obtain an $S_{U,V}^Q$-shift of $\m B$, and hence of $(\m A, \m B)$, such that $\{1, m'+1\}\subset U$ and $x\not\in V$. 
Note that $x\not\in V$ and $x=t+1+i \in T_j$ for each $j\ne i$. By $x\le m'$ and Claim \ref{abcd} (ii), 
$T_j$ is stable for all  $j\ne i$.  
Since $y=m'+1\in U\cap T_i$, $T_i$ is stable. 
Since $m'+1< m+1$, $m'+1\in \cap \, \m M$. 
Since $m'+1\in U$, $\m M$ is stable.

{\bf Case 2: $x\le m'$ and $y\ne m'+1$.}
Note that $\{x,y\}$ is not a cover of $\m B(\bar1)$. 
We start with the case that $\{x,y,m'+2\}$ is also not a cover of $\m B(\bar1)$, i.e., there exists $B\in \m B(\bar{1})$ with $\{x,y,m'+2\}\cap B=\emptyset$. 
We apply Lemma \ref{1-7-1} (iii) to $\m B$ with $R=\{1, m'+2, y\}$ and $T=\{x\}$ if $y\ge m'+2$; or  $R=\{1, m'+2\}$ and $T=\{x,y\}$ if $y\le m'$.
The required conditions are verified as follows:
 (P1) holds by Claim \ref{1-11-1}; 
 (P2) and (P6) hold by the existence of $B$;
 (P3) holds by Claim \ref{abcd} (iv) and $y\ge m'+2$ (note that $\{1,m'+2\}$ is empty in $\m B$).
 Thus, by Lemma \ref{1-7-1}, we obtain an $S_{U,V}^Q$-shift of $\m B$, and hence of $(\m A, \m B)$, such that $R\subset U$ and $T\cap V=\emptyset$. 
Now we show that $\m M, \m T$ are stable. 
Since $m'+2\le m+1$ (see (\ref{m'<m})), $m'+2\in \cap \m M$. 
Since $m'+2\in R\subset U$, $\m M$ is stable.
Note that $\{x,y\}$ is a cover of $\m T$ and $x\in T_j$ for each $j\ne i$. 
Note that $x\le m'$, and so $\{1,x\}$ is full in $\m B$ (see Claim \ref{abcd} (ii)) and $x\not\in V$. Thus, $T_j$ is stable for each $j\ne i$. 
Note that $y\in T_i$. 
If $y\le m'$, then $T_i$ is stable; if $y\ge m'+2$, then since $y\in R \subset U$, $T_i$ is stable as well. This proves that $\m T$ is stable. 

Next, assume that $\{x,y, m'+2\}$ is a cover of $ \mathcal{B}(\bar{1})$. This implies that all sets $B\in \m B(\bar1)$ with $B \cap \{x, y\}=\emptyset$ contain $m'+2$. 
Recall that $\m A$ and $\m B$ are maximal cross-intersecting families, and $a\ge 4$. Hence, $\{1,x,y,m'+2\}$ is full in $\m A$.

If $y\ge m'+2$, then we apply Lemma \ref{1-7-1}(iv) to $\m B$ with $R=\{1, y\}$, $T=\{x\}$ and $i=m'+2$;
if $y\le m'$, then we apply Lemma \ref{1-7-1}(iv) to $\m B$ with $R=\{1\}$, $T=\{x,y\}$ and $i=m'+2$. 
 The 
required conditions are verified as follows: 
(P1) holds by Claim \ref{1-11-1} and $|R|\le 2$; 
(P2) holds by the assumption that $\{x,y\}$ is not a cover of $\m B(\bar1)$ (trivially, there exists $B\in \m B(\bar1)$ such that $y\not\in B$); 
(P3) and (P4) hold since $\{1\}$ is non-full in $\m B$, by Claim \ref{abcd}(iv), and, in the case $y\ge m'+2,$ by $y\ge m'+2$; 
(P6) holds by the assumption that $\{x,y\}$ is not a cover of $ \mathcal{B}(\bar{1})$; (P7) holds by the assumption that $\{x,y, m'+2\}$ is a cover of $ \mathcal{B}(\bar{1})$. 
Thus, by Lemma \ref{1-7-1}, we obtain an $S_{U,V}^Q$-shift of $\m B$, and hence of $(\m A, \m B)$, such that $R\subset U$ and $(T\cup \{m'+2\})\cap V=\emptyset$.
Now we show that $\m M, \m T$ are stable. 
Note that $x\le m'$, $\{1,x\}$ is full in $\m A$ (see Claim \ref{abcd} (ii)) and $x\not\in V$. Thus, $T_j$ is stable for each $j\ne i$. 
Note that $y\in T_i$. 
If $y\le m'$, then $T_i$ is stable; if $y\ge m'+2$, then since $y\in R \subset U$, $T_i$ is stable as well. This proves that $\m T$ is stable.  
Note that $m'+2\le m+1$ and $x\le m'$. Then $\{x, m'+2\}\subset M$ for every $M\in \mathcal{M}$. 
Since $\{1,x,y,m'+2\}$ is full in $\m A$, we see that 
if $y\le m'$, $\m M$ is stable due to $1\in R\subset U$, $\{x, y, m'+2\}\subset M$ for every $M\in \mathcal{M}$ and $\{x,y,m'+2\}\cap V=\emptyset$;
if $y\ge m'+2$, then $\m M$ is stable due to $\{1, y\}\subset U$, $\{x, m'+2\}\subset M$ for every $M\in \mathcal{M}$ and $\{x,m'+2\}\cap V=\emptyset$.

{\bf Case 3: $x = m'+1$.}
We know that $\{1,x\}$ is non-full in $\m B$ (see Claim \ref{abcd} (iii)). 
Since $y>x$, we distinguish cases depending on whether $\{1,x,y\}$ is full in $\mathcal{B}$. 

Consider the case that $\{1,x,y\}$ is non-full in $\m B$. 
We apply Lemma \ref{1-7-1} (i) with $R=\{1,x,y\}$.
The required conditions are verified as follows:
(P1) holds by Claim \ref{1-11-1}; 
(P2) holds by the assumption that $\{x,y\}$ is not a cover of $\m B(\bar1)$; 
(P3) holds by the assumption that $\{1,x,y\}$ is non-full in $\m B$. 
Thus, by Lemma \ref{1-7-1}, we obtain an $S_{U,V}^Q$-shift of $\m B$, and hence of $(\m A, \m B)$, such that $\{1, x,y\}\subset U$. 
Note that $x=m'+1\in R\subset U$ and $m'+1<m+1$. Thus, $x\in \cap\, \m M$, and therefore, $\mathcal{M}$ is stable. 
Note that $\{x,y\}$ is a cover of $\m T$ and $\{x,y\}\subset U$. Thus,
$\mathcal{T}$ is stable. 

Assume that $\{1,x,y\}$ is full in $\m B$.  
We apply Lemma \ref{1-7-1} (iii) with $R=\{1,x\}$ and $T=\{y\}$. 
The required conditions are verified as follows:
 (P1) holds by Claim \ref{1-11-1}; 
 (P2) and (P6) hold by the assumption that $\{x,y\}$ is not a cover of $\m B(\bar1)$; 
 (P3) holds by Claim \ref{abcd} (iii). 
 Thus, by Lemma \ref{1-7-1}, we obtain an $S_{U,V}^Q$-shift of $\m B$, and hence of $(\m A, \m B)$, such that $\{1, x\}\subset U$ and $y\not\in V$. 
Similar to the above, since $x\in\cap\,\m M$ and $x\in U$,  $\mathcal{M}$ is stable. 
Since $x\in U$ and $x\in T_j$ for each $j\ne i$, $\m T \setminus \{T_i\}$ is stable. 
We have $y\in T_i$, so if $T_i$ is not stable, then $\{1, x, y\}\subset S_{U,V}^Q(T_i)$. However, 
 $\{1,x,y\}$ is full in $\m B$, thus, $T_i$ is stable.

{\bf Case 4: $x = m'+2$.}
We apply Lemma \ref{1-7-1} (i) with $R=\{1,x, y\}$. 
The required conditions are verified as follows:
 (P1) holds by Claim \ref{1-11-1}; 
 (P2) holds by the assumption that $\{x,y\}$ is not a cover of $\m B(\bar1)$; 
 (P3) holds by Claim \ref{abcd} (iv) ($\{1,x,y \}$ is empty in $\m B$). 
 Thus, by Lemma \ref{1-7-1}, we obtain an $S_{U,V}^Q$-shift of $\m B$, and hence of $(\m A, \m B)$, such that $\{1, x, y\}\subset U$. 
Note that $x=m'+2\le m+1$, $x\in \cap \, \m M$ and $x\in U$. 
We see that $\mathcal{M}$ is stable. 
Note that $\{x,y\}$ is a cover of $\m T$ and $\{x,y\}\subset U$. Thus,
$\mathcal{T}$ is stable. 

 This completes the proof of Claim \ref{11-13-1}.
\end{proof}

Claim \ref{11-13-1} gives  
\[
|\mathcal{B}(\bar{1})|\le {n-1-(t+\ell)\choose b-(t+\ell)}+|\mathcal{T}|.
\]

Now, we have finished Step 3. We now proceed to the final step. 
Note that we may assume $b\ge t+\ell$ in the rest of the proof; otherwise, $\mathcal{B}(\bar{1})=\mathcal{T}$, which, together with (\ref{12-12-2}), contradicts  (\ref{12-12-3}). 


Denote
\[
\m I :=\{\{x_1,\dots,x_{\ell}\}:x_i\in T_i\cap [t+2+\ell, n], i\in [\ell] \}.
\]
Then the size of a set in $\m I$ is at most $\ell$.
Note that for every $B\in \m B(\bar{1})\setminus \m T$ and $T\in \m T$, we have $|B\cap [t+2+\ell,n]|=b-t-\ell<b-t-\ell+1=|T\cap  [t+2+\ell,n]|$. 
We claim that for every $B\in \m B(\bar{1})\setminus \m T$, there exists $I\in \m I$ such that $I\cap B=\emptyset$.
Indeed, since $|T\cap  [t+2+\ell,n]|=b-t-\ell+1$ and $|B\cap [t+2+\ell,n]|=b-t-\ell$, there exists $x_i\in T_i\setminus B$ for each $i\in [\ell]$, thereby, the collection of these $x_i$ is a cover of $\m T$, which is disjoint from $B$. 

For every $B\in \m B(\bar{1})\setminus \m T$, we denote by $I_B$ a cover of $\m T$ that is disjoint from $B$ (note that the choice of $I_B$ may not be unique). 

\begin{claim}\label{11-19-1}
Every set of $\mathcal{B}(\bar{1})\setminus \mathcal{T}$ contains $[2, m'+2]$.
\end{claim}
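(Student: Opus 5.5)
We argue by contradiction, in the same style as Claim~\ref{11-13-1}. By Claim~\ref{11-13-1}, every set of $\mathcal{B}(\bar1)\setminus\mathcal{T}$ already contains $[2,t+1+\ell]$. If $t+1+\ell\ge m'+2$ there is nothing to prove, so assume $t+1+\ell<m'+2$. Choose the smallest $z\in[t+2+\ell,m'+2]$ such that some $B\in\mathcal{B}(\bar1)\setminus\mathcal{T}$ misses $z$. Fix such a $B$ and the cover $I_B\in\mathcal{I}$ of $\mathcal{T}$ with $I_B\cap B=\emptyset$. The aim is to produce an $S^Q_{U,V}$-shift of $\mathcal{B}$, and hence of $(\mathcal A,\mathcal B)$, under which both $\mathcal{M}$ and $\mathcal{T}$ are stable; this contradicts Claim~\ref{11-3-4}. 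Throughout we apply Lemma~\ref{1-7-1} with $r=1$. Property (P1) comes from Claim~\ref{1-11-1}, which says that every such shift with $1\in U$ has $|V|\ge 3$, so (P1) holds whenever $|R|\le 3$.

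\textbf{Case $z\le m'$.} The set $\{1,z\}$ is full in $\mathcal{B}$ by Claim~\ref{abcd}(ii). Apply Lemma~\ref{1-7-1}(iii) to $\mathcal{B}$ with $R=\{1,m'+1\}$ and $T=I_B\cup\{z\}$, or with $T$ restricted to its elements below $m'+1$, putting larger elements of $I_B$ into $R$ so that $|R|\le 3$ is kept where possible.
\begin{itemize}
\item (P2) and (P6) hold because of $B$.
\item (P3) holds by Claim~\ref{abcd}(iii).
\end{itemize}
The resulting shift has $m'+1\in U$. Since $m'+1\in\cap\mathcal{M}$ by \eqref{m'<m}, $\mathcal M$ is stable. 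The cover $I_B$ of $\mathcal T$ avoids $V$, and elements of $I_B$ lying below $m'+1$ have full pairs $\{1,\cdot\}$ in $\mathcal{B}$ (Claim~\ref{abcd}(ii)), so no $T\in\mathcal{T}$ can move; hence $\mathcal{T}$ is stable.

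\textbf{Case $z\in\{m'+1,m'+2\}$.} Take $R=\{1,z\}$; this pair is non-full by Claim~\ref{abcd}(iii), respectively empty by Claim~\ref{abcd}(iv). Mirroring Cases~3--4 of Claim~\ref{11-13-1}, we split according to whether $\{1,z\}\cup\{w\}$ is full in $\mathcal{B}$ for the relevant $w\in I_B$:
\begin{itemize}
\item if it is non-full, use Lemma~\ref{1-7-1}(i) with a larger $R$;
\item if it is full, use Lemma~\ref{1-7-1}(iii)/(iv) with $T=I_B$, and with $i=m'+2$ whenever $\{x,\ldots,m'+2\}$ covers the relevant part of $\mathcal{B}(\bar1)$.
\end{itemize}
Here fullness of $\{1\}\cup I_B\cup\{m'+2\}$ in $\mathcal{A}$, coming from the maximality of $(\mathcal{A},\mathcal{B})$, is what keeps $\mathcal{M}$ fixed. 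In either branch $z\in\cap\mathcal{M}$ because $z\le m'+2\le m+1$, so $z\in U$ makes $\mathcal{M}$ stable. $\mathcal{T}$ is stable either because $I_B\subset U$, or because $I_B\cap V=\emptyset$ and every image would contain a full set.

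\textbf{Main obstacle.} The hard part is that $|I_B|$ can be as large as $\ell$. Putting all of $I_B$ into $R$ is then impossible, since (P1) only guarantees $|V|\ge 3$. So $I_B$ must be placed in $T$ (avoided by $V$), and we must show that each $T_j$ is blocked from moving. Each $T_j$ contains $[2,t+1+\ell]\setminus\{t+1+j\}$, and $t+1+\ell<m'+2$, so it contains many elements $\le m'$ whose pairs with $1$ are full in $\mathcal{B}$. I would first try to show that any $T_j$ that moves under a shift with $1\in U$ must have $V\supseteq T_j\cap[2,m']$, which has size at least $3$, and then use the minimality of $z$ to exclude this. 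The final contradiction then follows: Claim~\ref{11-19-1} yields $|\mathcal{B}(\bar1)|\le\binom{n-1-(m'+1)}{b-(m'+1)}+|\mathcal{T}|$, which contradicts \eqref{12-12-3} together with \eqref{12-12-2}.
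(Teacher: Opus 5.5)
There is a genuine gap. It lies in your case $z\le m'$, and it concerns keeping $\mathcal M$ stable, not $\mathcal T$.

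With $R=\{1,m'+1\}$, conditions (P2) and (P6) need a set of $\mathcal B(\bar1)$ avoiding $m'+1$ and a cover of $\mathcal T$. Your $B$ is only known to miss $z$ and $I_B$, and the minimality of $z$ says nothing about $m'+1$. So ``(P2) and (P6) hold because of $B$'' is unjustified.

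Choosing a different witness does not help when every set of $\mathcal B(\bar1)\setminus\mathcal T$ contains $[2,z-1]\cup[m'+1,m+1]$. Take any shift from Lemma~\ref{1-7-1} with $T$ containing a cover of $\mathcal T$, so that its source set $F$ lies in $\mathcal B(\bar1)\setminus\mathcal T$. Then $U=G\setminus F$ misses $[m'+1,m+1]$, since $F$ contains it. It also misses $[2,m']$, since $G\ni1$, $G\notin\mathcal B$ and $\{1,i\}$ is full in $\mathcal B$ for $i\le m'$. So no element of $\cap\mathcal M=[2,m+1]$ lies in $U$.

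The only remaining way to keep $M\in\mathcal M$ is $S_{U,V}(M)\in\mathcal A$, so $S_{U,V}(M)$ must meet every $T_j$. But $V\supseteq F\cap[2,m']\supseteq[2,t+1+\ell]$ removes the small elements of $M\cap T_j$. In general this forces a whole cover of $\mathcal T$ into $U$: up to $\ell$ elements of $I_B$, so $|R|$ up to $\ell+2$. Property (P1) via Claim~\ref{1-11-1}, which only gives $|V|\ge 3$, cannot support that.

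The same inclusion $V\supseteq[2,t+1+\ell]$ already shows that no $T_j$ moves, since $t+1+j\in V\setminus T_j$. So the remedy sketched in your last paragraph addresses the easy part. Your $\mathcal T$-stability argument as written (``elements of $I_B$ below $m'+1$ have full pairs'') also does not cover elements of $I_B$ above $m'$.

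The missing idea is Observation~\ref{obser3}. If $B\in\mathcal B(\bar1)\setminus\mathcal T$, $1\in U$ and $S_{U,V}(B)\notin\mathcal B$, then $[2,t+\ell]\subset V$, hence $|V|\ge\ell+2$. This is what lets the paper put $R_i=\{1,m'+2\}\cup I_B$, or $I_B$ trimmed by $[2,m']$ or $[2,m'+1]$, entirely inside $U$.

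No comparable bound holds for sets of $\mathcal A$, so the chosen $(U,V)$ need not satisfy {\bf Q}. The paper therefore does not use Lemma~\ref{1-7-1} here. Instead it:
\begin{itemize}
\item picks $V$ inclusion-minimal among pairs sending some $F\in\mathcal F_i\subset\mathcal B(\bar1)\setminus\mathcal T$ to some $G\in\mathcal G_i$;
\item applies the shift to $\mathcal A$ and to $\mathcal B\setminus\mathcal T$ only;
\item verifies cross-intersection by hand, using the minimality of $V$ and $[2,m']\subset A$ for $A\in\mathcal A(\bar1)$.
\end{itemize}
This is organised in seven cases, according to whether $m'+2\in B$ and whether $\{1,m'+2\}\cup I_B$, $\{1\}\cup I_B$ and their trims are full. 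Stability of $\mathcal M$ then comes from $m'+1$ or $m'+2$ lying in $U$. In Cases 3, 5 and 7 it comes instead from three facts together: $\{1,m'+2\}\cup I_B$ is full in $\mathcal A$, $I_B\setminus[2,m'+1]\subset U$, and $m'+2\notin V$. That last configuration is exactly what your $|R|\le 3$ restriction cannot reach, so without a substitute for it the proposal does not prove the claim.
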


\begin{proof} 
Since Claim \ref{11-13-1}, every set in $\m B(\bar1) \setminus \m T$ contains $[2,t+1+\ell]$. 
If $t+\ell\ge m'+1$, then we are done. Assume 
\[
t+\ell\le m'
\]

\begin{obser}\label{obser3}
Let $B \in \m B(\bar1) \setminus \m T$, and let $U,V\subset [n]$ with $U\cap V=\emptyset$, $1\in U$ and $|U|=|V|$. If  $S_{U,V}(B)\not\in \m B$, then $[2,t+\ell]\subset V$ and $|U|=|V|\ge \ell+2$. 
\end{obser}

\begin{proof}
Since $B \in \m B(\bar1) \setminus \m T$, Claim \ref{11-13-1} gives $[2,t+1+\ell]\subset B$. 
Since $1\in U$ and $U\cap V=\emptyset$, $U\prec V$. 
By Claim \ref{abcd}(ii), $S_{U,V}(B)\cap [2,m']=\emptyset$.
If $t+1+\ell\le m'$, then $[2,t+1+\ell]\subset V$. 
If $t+1+\ell\ge m'+1$, then since $t+\ell \le m'$, we have $m'=t+\ell$. Thus $[2,m'+1]=[2,t+1+\ell]\subset B$.
As $S_{U,V}(B)\cap [2,m']=\emptyset$, $[2, t+\ell]\subset V$. Since $t\ge 3$ and $|U|=|V|$, $|V|=|U|\ge \ell+2$.
\end{proof}

We will find a pair $(U,V)$ such that $(S_{U,V}(\m A), S_{U,V}(\m B\setminus \m T)\cup \m T)\precneqq (\m A, \m B)$, $|S_{U,V}(\m B\setminus \m T)\cup \m T|=|\m B|$,
$S_{U,V}(\m A)$ and $S_{U,V}(\m B\setminus \m T)\cup \m T$ are cross-intersecting, and $\m M $, $\m T$ are stable. This contradicts Claim \ref{11-3-4}, thereby completing the proof of Claim \ref{11-19-1}. 

We choose $U$ and $V$ according to the following cases.  
We shall later verify that this $S_{U,V}$-shift preserves $\m M, \m T$ and the cross-intersecting property.

For a family $\m F\subset {[n]\choose k}$ and a set $C\subset [n]$ with $|C|\le k$, we define
\begin{equation}\label{fc}
    \mathcal{F}^k_C:=\{C\cup R: R\in \tbinom{[n]\setminus C}{ k-|C|}\}.
\end{equation}
Then $\mathcal{E}^k_C\subset \mathcal{F}^k_C$ (the family $\mathcal{E}^k_C$ is defined in (\ref{ec})). Moreover, if $C$ is non-full in $\m F$, then $\mathcal{E}^k_C\setminus \m F \ne \emptyset$, and hence
$\mathcal{F}^k_C\setminus \m F \ne \emptyset$. 

\begin{itemize}
\item[Case 1.] There exist $B\in \m B(\bar{1})\setminus \m T$ and $I_B$ such that $m'+2\not\in B $ and  $\{1,m'+2\}\cup I_B$ is non-full in $\m B$. 

Fix $B_1$ that satisfies Case 1. 
Let $R_1:=\{1,m'+2\}\cup I_{B_1}$.
Define 
\begin{equation*}
    \m F_1:=\{F\in \m B(\bar1): R_1\cap F=\emptyset\}, \,\,\,
    \m G_1:=\m F_{R_1}^b\setminus \m B.
\end{equation*}
Since $B_1\in \m F_1$, $\m F_1\ne \emptyset$.
Since $R_1$ is non-full in $\m B$, $\m G_1\ne \emptyset$. 
For any $F\in \m F_1$ and $G\in \m G_1$, there exists a pair $(U,V)$ such that $S_{U,V}(F)=G$. Among all such pairs $(U,V)$, choose one with $V$ inclusion-minimal.

\item[Case 2.] 
There exist $B\in \m B(\bar{1})\setminus \m T$ and $I_B$ such that $m'+2\not\in B $ and $\{1,m'+2\}\cup I_B$ is full in $\m B$. 

Fix $B_2$ that satisfies Case 2.
Define $I'_{B_2}$ as follows: 
if $\{1,m'+2\}\cup (I_{B_2}\setminus [2,m'])$ is non-full, then 
let $I'_{B_2}=I_{B_2}\setminus [2,m']$; otherwise, let $I'_{B_2}=I_{B_2}\setminus [2,m'+1]$. 
Note that $\{1,m'+2\}$ is empty in $\m B$. Thereby $\{1,m'+2\}\cup I'_{B_2}$ is non-full in $\m B$. Moreover, if $I'_{B_2}=I_{B_2}\setminus [2,m'+1]$, then $\{1,m'+1,m'+2\}\cup I'_{B_2}$ is full. 
Let $R_2:=\{1, m'+2\}\cup I'_{B_2}$.
Define 
\begin{equation*}
    \m F_2:=\{F\in \m B(\bar1): (\{m'+2\}\cup I_{B_2})\cap F=\emptyset\}, \,\,\,
    \m G_2:=\m F_{R_2}^b\setminus \m B.
\end{equation*}
Since $B_2\in \m F_2$, $\m F_2\ne \emptyset$. Since $R_2$ is non-full in $\m B$, $\m G_2\ne \emptyset$. 
For any $F\in \m F_2$ and $G\in \m G_2$, there exists a pair $(U,V)$ such that $S_{U,V}(F)=G$. Among all such pairs $(U,V)$, choose one with $V$ inclusion-minimal.

\item[Case 3.] Every set in $\m B(\bar{1})\setminus \m T$ contains $m'+2$, and there exist $B\in \m B(\bar{1})\setminus \m T$ and $I_B$ such that $\{1,m'+2\}\cup I_B$ is non-full in $\m B$.

Fix $B_3$ that satisfies Case 3. 
Let $R_3:=\{1\}\cup I_{B_3}$.
Define
\begin{equation*}
    \m F_3:=\{F\in \m B(\bar1): R_3\cap F=\emptyset\}, \,\,\,
    \m G_3:=\m F_{R_3\cup \{m'+2\}}^b\setminus \m B.
\end{equation*}
Since $B_3\in \m F_3$, $\m F_3\ne \emptyset$.
Since $R_3\cup \{m'+2\}$ is non-full in $\m B$, $\m G_3\ne \emptyset$. 
For any $F\in \m F_3$ and $G\in \m G_3$, there exists a pair $(U,V)$ such that $S_{U,V}(F)=G$. Among all such pairs $(U,V)$, choose one with $V$ inclusion-minimal. 

\item[Case 4.] Every set in $\m B(\bar{1})\setminus \m T$ contains $m'+2$, and there exists $B\in \m B(\bar{1})\setminus \m T$
such that $\{1,m'+2\}\cup I_B$ is full in $\m B$. 
Furthermore, assume that $\{1\}\cup I_{B}$ is non-full in $\m B$.

Fix $B_4$ that satisfies Case 4. 
Let $R_4:=\{1\}\cup I_{B_4}$. 
Define
\begin{equation*}
    \m F_4:=\{F\in \m B(\bar1): I_{B_4}\cap F=\emptyset\}, \,\,\,
    \m G_4:=\m F_{{R_4}}^b\setminus \m B.
\end{equation*}
Since $B_4\in \m F_4$, $\m F_4\ne \emptyset$.  
Since $R_4$ is non-full in $\m B$,  $\m G_4\ne \emptyset$.
For any $F\in \m F_4$ and $G\in \m G_4$, there exists a pair $(U,V)$ such that $S_{U,V}(F)=G$. Among all such pairs $(U,V)$, choose one with $V$ inclusion-minimal.

\item[Case 5.] Every set in $\m B(\bar{1})\setminus \m T$ contains $m'+2$, and there exists $B\in \m B(\bar{1})\setminus \m T$ such that $\{1\}\cup I_{B}$ is full in $\m B$ and
$\{1,m'+2\}\cup I_{B}\setminus [2,m']$ is non-full in $\m B$.

Fix $B_5$ that satisfies Case 5. 
Let $I'_{B_5}:=I_{B_5}\setminus [2,m']$ and $R_5:=\{1\}\cup I'_{B_5}$.
Define
\begin{equation*}
    \m F_5:=\{F\in \m B(\bar1): I_{B_5}\cap F=\emptyset\}, \,\,\,
    \m G_5:=\m F_{R_5\cup \{m'+2\}}^b\setminus \m B.
\end{equation*}
Since $B_5\in \m F_5$ and $\{m'+2\}\cup R_5$ is non-full in $\m B$, $\m F_5\ne \emptyset$ and $\m G_5\ne \emptyset$. 
For any $F\in \m F_5$ and $G\in \m G_5$, there exists a pair $(U,V)$ such that $S_{U,V}(F)=G$. Among all such pairs $(U,V)$, choose one with $V$ inclusion-minimal.

\item[Case 6.] Every set in $\m B(\bar{1})\setminus \m T$ contains $m'+2$, and there exists $B\in \m B(\bar{1})\setminus \m T$ such that both $\{1\}\cup I_{B}$ and
$\{1,m'+2\}\cup I_{B}\setminus [2,m']$ are full in $\m B$, and $\{1\}\cup I_{B}\setminus [2,m']$ is non-full in $\m B$.

Fix $B_6$ that satisfies Case 6. 
Let $I'_{B_6}:=I_{B_6}\setminus [2,m']$ and $R_6:=\{1\}\cup I'_{B_6}$.
Define
\begin{equation*}
    \m F_6:=\{F\in \m B(\bar1): I_{B_6}\cap F=\emptyset\}, \,\,\,
    \m G_6:=\m F_{R_6}^b\setminus \m B.
\end{equation*}
Since $B_6\in \m F_6$, $\m F_6\ne \emptyset$. Since $R_6$ is non-full in $\m B$, $\m G_6\ne \emptyset$. 
For any $F\in \m F_6$ and $G\in \m G_6$, there exists a pair $(U,V)$ such that $S_{U,V}(F)=G$. Among all such pairs $(U,V)$, choose one with $V$ inclusion-minimal.

\item[Case 7.] Every set in $\m B(\bar{1})\setminus \m T$ contains $m'+2$, and there exists $B\in \m B(\bar{1})\setminus \m T$ such that $\{1\}\cup I_{B}\setminus [2,m']$ is full in $\m B$ (so both $\{1\}\cup I_{B}$ and 
$\{1,m'+2\}\cup I_{B}\setminus [2,m']$ are full in $\m B$). 

Fix $B_7$ that satisfies Case 7. 
Let $I'_{B_7}:=I_{B_7}\setminus [2,m'+1]$ and $R_7:=\{1\}\cup I'_{B_7}$.
Define
\begin{equation*}
    \m F_7:=\{F\in \m B(\bar1): I_{B_7}\cap F=\emptyset\}, \,\,\,
    \m G_7:=\m F_{R_7\cup \{m'+2\}}^b\setminus \m B.
\end{equation*}
Since $B_7\in \m F_7$, $\m F_7\ne \emptyset$. 
Since $\{1,m'+2\}$ is empty in $\m B$ and $I'_{B_7}\subset [m'+2, n]$, $I'_{B_7}\cup \{1, m'+2\}=R_7\cup \{m'+2\}$ is empty in $\m B$, $\m G_7\ne \emptyset$. 
For any $F\in \m F_7$ and $G\in \m G_7$, there exists a pair $(U,V)$ such that $S_{U,V}(F)=G$. Among all such pairs $(U,V)$, choose one with $V$ inclusion-minimal.
\end{itemize}

We claim that
\begin{itemize}
    \item[(a)] For each $i\in [7]$, $R_i\subset U$; 
    \item[(b)] In Case 3, Case 5, Case 7, $m'+2\not\in V$; 
    \item[(c)] In all cases, $[2,t+\ell]\subset V$;
    \item[(d)] For each $j\in [7]$, $V\cap I_{B_j}=\emptyset.$
\end{itemize}

We see that (a), (b) and (d)  follow directly from 
the definition of $\m F_i, \m G_i$. 
For each $i\in [7]$, since (d) holds and $I_{B_i}$ is a cover of $\m T$, $\m F_i\subset \m B(\bar 1)\setminus \m T$.
By Observation \ref{obser3} and $1\in R_i\subset U$, (c) holds. 


{\bf Next, we show that $\m M$ and $\m T$ are stable under the chosen $S_{U,V}$-shift. }

Since sets in $\m F_i$ avoid $I_{B_i},$ which is a cover for $\m T,$ we have that $\m T$ is stable. 
We show that $\m M$ is stable. Take $M\in\m M$.
Since $[2,m+1]\subset M$ and $m'<m$, $[2,m'+2]\subset M $.
In Case 1 and Case 2, $m'+2\in U \cap M$, so $\m M$ is stable. In Case 4, since $\{1,m'+2\}$ is empty in $\m B$ (see Claim \ref{abcd}(iv)) and $\{1,m'+2\}\cup I_{B_4}$ is full in $\m B$, we have $I_{B_4}\cap [2,m'+1]\ne \emptyset$.
On the other hand, since $\{1\}\cup I_{B_4}$ is non-full in $\m B$, Claim \ref{abcd}(ii) yields $I_{B_4}\subset [m'+1,n]$. Thus, $m'+1\in I_{B_4}\subset U$. Then $m'+1\in M \cap U$, and $\m M$ is stable.  
In Case 6, since $\{1,m'+2\}$ is empty in $\m B$ and $\{1,m'+2\}\cup I_{B_6}\setminus [2,m']$ is full in $\m B$, 
we have $m'+1\in I_{B_6}\setminus [2,m']=I'_{B_6}\subset R_6\subset U$. Hence $m'+1\in M \cap U$, and $\m M$ is stable.

It remains to treat Case 3, Case 5 and Case 7 together. Let $i\in \{3,5,7\}$ and assume, for contradiction, that $S_{U,V}(M)\not\in \m A$ for some $M\in \m M$. 
  
Since every set in $\m B(\bar{1})\setminus \m T$ contains $m'+2$ and $I_{B_i}$ is a cover of $\m T$, the maximality of $(\m A, \m B)$ implies that $\{1,m'+2\} \cup I_{B_i}$ is full in $\m A$. 
Thus, once we show that
\begin{equation}\label{subsetsuvm}
    \{1,m'+2\}\cup I_{B_i}\subset S_{U,V}(M),
\end{equation}
we obtain $S_{U,V}(M)\in \m A$, a contradiction. 

We now prove (\ref{subsetsuvm}).
Note that $m'+2\in M$, $m'+2\not\in V$ (by (b)) and $I'_{B_i}\subset R_i\subset U$. We have $\{1,m'+2\}\cup I'_{B_i}\subset S_{U,V}(M)$. 
In Case 3, we have $I_{B_3}\subset R_3 \subset U$, and hence $\{1,m'+2\}\cup I_{B_3}\subset S_{U,V}(M)$. 
For $i\in \{5,7\}$, $I_{B_i}\setminus I'_{B_i}\subset [2,m'+1]\subset M$, and since $V\cap I_{B_i}=\emptyset$ (see (d)), it follows that $I_{B_i}\setminus I'_{B_i}\subset S_{U,V}(M)$. 
Thus $\{1,m'+2\}\cup I_{B_i}\subset S_{U,V}(M)$.

{\bf Finally,  we show that $S_{U,V}(\m A)$ and $S_{U,V}(\m B\setminus \m T)\cup \m T$
are cross-intersecting.} 

Note that the above chosen $(U,V)$ may satisfy neither {\bf P} nor {\bf Q}, so the cross-intersecting property does not follow directly from Lemmas \ref{SUV} and \ref{SUV'}, and an additional argument is required. 


Assume for contradiction that there exist $A'\in S_{U,V}(\m A)$ and $B'\in S_{U,V}(\m B\setminus \m T)\cup \m T$ that are disjoint.
Let $A\in \m A, B\in \m B$ be the original sets corresponding to $A', B'$, respectively. To be precise, $A'=S_{U,V}(A)$, $B'=S_{U,V}(B)$ if $B\in \m B\setminus \m T$, and  $B'=B$ if $B\in \m T$.
Put $V':=A\cap B$. 
By the definition of $S_{U,V}$-shift,
we only need to consider the following two cases:
\begin{itemize}
    \item[(I)] $B\ne B'$, $B'\not\in \m B$ and $A'=A$; 
    \item[(II)] $A\ne A'$, $A' \not\in \m A$ and $B=B'$.
\end{itemize}
Since $A'\cap B'=\emptyset$ and $A'=A$ or $B'=B$ in the above two cases,
we conclude that 
\[
A\cap U=\emptyset, \,\,B\cap U=\emptyset, \,\, V'\subsetneqq V,
\]
to see $V'\subsetneqq V$, it's clear that $V'\subset V$, and for case (I), if $V'=V$, then since $A\cap U=\emptyset$, we have $A'=(A\setminus V)\cup U\not\in \m A$, a contradiction; and similarly for case (II).

We claim that for every $i\in [7]$, 
\begin{equation}\label{bcapi}
   B\cap I_{B_i}\ne \emptyset.
\end{equation}
\begin{proof}[Proof of (\ref{bcapi})]
Let $i\in [7]$, and assume for contradiction that $B\cap I_{B_i}= \emptyset$. 
Since $1\in U$ and $B\cap U=\emptyset$, $B\in \m B(\bar1)$.
We claim that $B\in \m F_i$. Indeed, it is clear for $i\ne 2$, and for Case 2, if $B\not\in \m F_2$, then $m'+2\in B$, and hence $m'+2\in B\cap U$, contradicting $B\cap U=\emptyset$.

As $I_{B_i}$ is a cover of $\m T$, $B\in \m B(\bar1)\setminus \m T$, and hence $[2,t+\ell +1]\subset B$. 
Since $[2,m']\subset A$ and $m'\ge t+\ell$, 
$[2,t+\ell]\subset A$, and hence $[2,t+\ell]\subset V'$.
Since $B\cap U=\emptyset$ and $V'\subsetneqq V$, $S_{U',V'}(B)\not\in \m B$ for any $U'\subset U$ with $|U'|=|V'|$. Note that $R_i\subset U$ and $|R_i|\le \ell+2\le |V'|$. 
So there exists $U'$ satisfying $R_i\subset U'\subset U$ and $|U'|=|V'|$, moreover, we have $S_{U',V'}(B)\in \m G_i$. To see $S_{U',V'}(B)\in \m G_i$, it is trivial for $i\in \{1,2,4\}$, and for $i\in \{3,5,6,7\}$, $B\in \m B(\bar1)\setminus \m T$ gives $m'+2\in B$, so $S_{U',V'}(B)\in \m G_i$. This contradicts the inclusion-minimality of $V$.   
\end{proof}

Let $i\in [7]$.
By (\ref{bcapi}),
Case 1, Case 3 and Case 4 cannot occur, since in these cases $I_{B_i}\subset R_i\subset U$ and $B\cap U=\emptyset$.
So $i\in \{2,5,6,7\}$. 
Since $B\cap I_{B_i}\ne \emptyset$ and $I'_{B_i}\subset R_i\subset U$, we have $B\cap (I_{B_i}\setminus I'_{B_i})\ne \emptyset$.
If there exists $p\in B\cap (I_{B_i}\setminus I'_{B_i})\cap [2,m']$, then since $[2,m']\subset A$, we have $p\in A\cap B=V'$ and so $V'\cap I_{B_i}\ne \emptyset$. This contradicts $V'\cap I_{B_i}\subset V\cap I_{B_i}= \emptyset$ (see (d)). 
Since $I_{B_i}\setminus I'_{B_i} \subset [2,m'+1]$,
\begin{equation}\label{bcapi=m'+1}
 B\cap (I_{B_i}\setminus I'_{B_i} )=\{m'+1\}.   
\end{equation}
Consequently, $I'_{B_i}=I_{B_i}\setminus [2,m'+1]$, and hence 
$\{1,m'+1,m'+2\}\cup I'_{B_2}$ is full in $\m B$. 

We claim that Case 5 and Case 6 cannot occur, since in these cases $I_{B_i}\setminus I'_{B_i} \subset [2,m']$, contradicting (\ref{bcapi=m'+1}).

By (\ref{bcapi=m'+1}) and $V\cap I_{B_i}=\emptyset$, we conclude 
\begin{equation}\label{5-30}
m'+1\not\in A\,\, m'+1\in B, \,\,m'+1\not\in V.    
\end{equation}

Consider Case 7. In this case, $\{1\}\cup (I_{B_7}\setminus [2,m'])$ is full in $\m B$. 
So $A\cap (I_{B_7}\setminus [2,m'])\ne \emptyset$.
Note that $I'_{B_7}=I_{B_7}\setminus [2,m'+1]$.
Since $I'_{B_7}\subset R_7\subset U$ and $A\cap U=\emptyset$, 
$A\cap I'_{B_7}=\emptyset$. Thus $m'+1\in A$, contradicting (\ref{5-30}).

Consider Case 2. In this case, $\{1,m'+2\}\cup I'_{B_2}=R_2\subset U$. 
If (I) occurs, then $B'=(B \setminus V)\cup U$.  We have 
$\{1,m'+1,m'+2\}\cup I'_{B_2}\subset B'$. 
As $\{1,m'+1,m'+2\}\cup I'_{B_2}$ is full in $\m B$, $B'\in \m B$, a contradiction.
If (II) occurs, then $A'=(A \setminus V)\cup U$. 
We have $A\cap (\{1,m'+2\}\cup I'_{B_2})=\emptyset$.  Also, $m'+1\notin A$ by (\ref{5-30}).  
At the same time, $\{1,m'+1,m'+2\}\cup I'_{B_2}$ is full in $\m B$ and $n>a+b$, which implies that $A$ cannot intersect all these sets.
\end{proof}

Now, we have finished Step 4. This completes the proof of Proposition \ref{10-25-3}.

\section{Proof of Theorem \ref{10-12-1}}
In this section, we prove Theorem \ref{10-12-1} by the `local unimodality' method, which was introduced by Huang and Peng in \cite{12-3-1}. Let us make some preparations first.

Throughout this section, we always write the elements of a set in increasing order.
Let $F\in \m F$. 
There exists $k\ge 0$ such that $[n-k+1,n]\subset F$ (if $k=0$, then $[n-k+1,n]=\emptyset$).
Let $F':=F\setminus [n-k+1,n]$ and $q:=\max F'$.
There exists $c\ge 1$ such that $[q-c+1,q]\subset F'$.
Put $F'':=F'\setminus [q-c+1,q]$. So we can write 
\begin{equation}
F=F''\sqcup[q-c+1,q]\sqcup[n-k+1,n].    
\end{equation}
Note that the above $k$ and $c$ are not necessarily unique. 
For example: $n=8$, $F=\{1,2,7,8\}$, we can choose $k=1$ and $c=1$; or $k=2$ and $c=1$; or $k=2$ and $c=2$.
We define
\begin{itemize}
    \item[Type I.] We say that $F$ is of {\it Type I in $\m F$} if there exist $G, H \in \m F$ such that 
    \begin{equation}\label{writegh1}
    G=F''\sqcup[q-c,q-1]\sqcup[n-k+1,n], \,\, H=F''\sqcup[q-c+2,q+1]\sqcup[n-k+1,n].
    \end{equation}
    \item[Type II.] We say that $F$ is of {\it Type II in $\m F$} if there exist $G, H \in \m F$ such that 
    \begin{equation}\label{writegh2}
    G=F''\sqcup[q-c+1,q+1]\sqcup[n-k+2,n], \,\, H=F''\sqcup[q-c+2,q]\sqcup[n-k,n].
     \end{equation}
\end{itemize}

We need the following notation.
Let $F, H \subset [n]$ satisfy that $\max F=\max H=q$, $|F|=f$, $|H|=h$, $F\cap H=\{q\}$ and $F\cup H=[q]$. Let $k$ be such that $k\leq n-f$. 
We define the {\it $k$-partner} $K$ of $F$ such that $|K|=k$, $K\prec H$, and there is no other $k$-set $K'$ satisfying $K\precneqq K'\prec H$. By the definition, we see that $K=H$ if $k=h$; $K= H\cup [n-k+h+1, n]$ if $k>h$. 

In \cite{k_1+k_3}, Huang and Peng proved the following result.

\begin{lemma}[Huang--Peng \cite{k_1+k_3}]\label{3-12-1}
Let $n\ge a+b$, $A\subset [n]$, $|A|=a$, and let $B$ be the $b$-partner of $A$. Then
$\mathcal{L}([n], B, b)$ and $\mathcal{L}([n], A, a)$ are cross-intersecting. Moreover, $\mathcal{L}([n], B, b)$ is maximal cross-intersecting with $\mathcal{L}([n], A, a)$.
\end{lemma}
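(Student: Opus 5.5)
We argue directly from the definition of lex order. Write $q=\max A$, and let $H$ be the set with $A\cup H=[q]$ and $A\cap H=\{q\}$, so $|H|=h=q+1-a$. The partner is then $B=H$ or $B=H\cup[n-b+h+1,n]$. Since $n\ge a+b$, we have $n-b+h+1\ge a+h+1=q+2$. Hence the tail lies strictly above $q$, and
\[
B\cap[q]=H,\qquad A\cap[q]=A .
\]
So on $[q]$ the two sets $A$ and $B$ are complementary, except that both contain $q$. Everything below uses only this observation together with the following description of lex order on sets of equal size. If $X'\ne X$ and $X'\prec X$, let $j$ be the first element of $X'\triangle X$. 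Then $j\in X'\setminus X$ and $X'\cap[j-1]=X\cap[j-1]$.

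\textbf{Cross-intersection.} Take $A'\prec A$ of size $a$ and $B'\prec B$ of size $b$.
\begin{itemize}
\item If $A'=A$ and $B'=B$, they share $q$.
\item If $A'\neq A$, let $j$ be the first element of $A'\setminus A$, as above; if $A'=A$ set $j=\infty$. Define $j'$ for $B'$ in the same way. Then $j\ne j'$. Indeed, if $j=j'$ then $j<\infty$, and $j\notin A$ with $j\le q$ forces $j\in H\subset B$, contradicting $j'\notin B$. The same argument shows that if $j$ is finite then $j<q$ and $j\in H$.
\item If $j<j'$, then $j\in H\subset B$ and $B'$ agrees with $B$ below $j'$, so $j\in A'\cap B'$.
\item If $j'<j$ and $j'<q$, then $j'\notin H$, so $j'\in A$. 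Since $A'$ agrees with $A$ below $j$, we get $j'\in A'\cap B'$.
\item If $j'<j$ and $j'>q$, then both $A'$ and $B'$ agree with the originals on $[q]$, so both contain $q$.
\end{itemize}

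\textbf{Maximality.} Let $C$ be a $b$-set with $B\precneqq C$; we must produce an $a$-set $A'\prec A$ with $A'\cap C=\emptyset$. Let $p$ be the first element of $B\triangle C$. Then $p\in B\setminus C$ and $C\cap[p-1]=B\cap[p-1]$.

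First, $p\le q$. Suppose instead $p>q$. Then $p$ lies in the tail $[n-b+h+1,n]$, so every element of $[p,n]$ belongs to $B$. Since $C\cap[p-1]=B\cap[p-1]$ and $p\notin C$, the set $C$ would have to draw $|B\cap[p,n]|$ elements from $[p+1,n]\subset B$, of which there are only $|B\cap[p,n]|-1$. This is impossible.

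So $p\le q$, and $p\in H$. Then
\[
C\cap[p-1]=H\cap[p-1],
\]
which is disjoint from $A\cap[p-1]$ because $p\le q$. Set
\[
A'=(A\cap[p-1])\cup\{p\}\cup W,
\]
where $W$ is any set of $a-|A\cap[p-1]|-1$ elements of $[p+1,n]\setminus C$. By construction $A'\cap C=\emptyset$. It remains to check $A'\prec A$:
\begin{itemize}
\item If $p<q$, then $p\notin A$ (as $p\in H\setminus\{q\}$), so $A'$ first differs from $A$ at $p\in A'\setminus A$, giving $A'\prec A$.
\item If $p=q$, then $A'\cap[q]=A$, so $A'=A$. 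This is disjoint from $C$: indeed $C\cap[q-1]=H\cap[q-1]$ misses $A\setminus\{q\}$, and $q\notin C$.
\end{itemize}

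\textbf{Expected main obstacle.} The delicate step is the count guaranteeing that $W$ exists, namely
\[
|[p+1,n]\setminus C|\ \ge\ a-|A\cap[p-1]|-1 .
\]
We have $|C\cap[p+1,n]|=b-|H\cap[p-1]|$, and $|A\cap[p-1]|+|H\cap[p-1]|=p-1$ since $p\le q$. So the requirement becomes $n-p-b+|H\cap[p-1]|\ge a-|A\cap[p-1]|-1$, which after substitution is exactly $n\ge a+b$.
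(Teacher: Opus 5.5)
The paper does not prove this lemma (it is quoted from Huang--Peng), so your argument has to stand on its own. In the range $h\le b$, i.e.\ $\max A\le a+b-1$, it does. The cross-intersection case analysis is right, and so is the construction of $A'$, including the count showing that $W$ exists precisely because $n\ge a+b$. One small point: that a finite $j$ satisfies $j<q$ is not ``the same argument'' but a size count, since $j>q$ would force $A'\supseteq A\cup\{j\}$.

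The genuine gap is your opening claim that the partner is $H$ or $H\cup[n-b+h+1,n]$. That dichotomy only covers $h\le b$. The case $h>b$ occurs, for instance, whenever $\max A=n$, since then $h=n-a+1>b$. In that case the $b$-partner is the lex-last $b$-set preceding $H$. It agrees with $H$ on some $[j-1]$, then contains an element $j\in A$ with $j<q$, and is completed by a terminal segment of $[n]$. For example, $A=[a-1]\cup\{n\}$ gives $H=[a,n]$ and $B=\{a-1\}\cup[n-b+2,n]$. Then $H\not\subset B$ and $B\cap[q]\neq H$. So both ``$j\in H\subset B$'' in the cross-intersection part and ``$p\in B\cap[q]=H$'' in the maximality part lose their justification. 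This case is not marginal: in the proof of Theorem~\ref{10-12-1} the lemma is applied to $A=[a-b]\sqcup[n-b+1,n]$, for which $h=n-a+1>b$ and $B=\{a-b\}\cup[n-b+2,n]$.

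The repair is to compare with $H$ instead of with $B$. Since $\prec$ is a total order on $2^{[n]}$, the definition of the partner gives $\mathcal{L}([n],B,b)=\{K\in\binom{[n]}{b}: K\prec H\}$ in every case.

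\begin{itemize}
\item For such $K\neq H$, the first element $j'$ of $K\triangle H$ lies in $K\setminus H$, so $j'\neq q$, and $K$ agrees with $H$ below $j'$. This is all your cross-intersection analysis needs, with $H$ in place of $B\cap[q]$.
\item For a $b$-set $C$ with $C\not\prec H$, the first element $p$ of $C\triangle H$ lies in $H\setminus C$. This includes proper subsets of $H$ when $h>b$. Hence $p\le q$ and $p\in H$ hold automatically.
\end{itemize}

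The tail argument then becomes unnecessary, and your set $A'$ and the count for $W$ go through verbatim. With this change the proof covers all $A$.
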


Let $n\ge a+b$. For each $A\in {[n]\choose a}$, let $B\in {[n]\choose b}$ be the $b$-partner of $A$. Denote
\begin{equation}\label{12-3-4}
f(A)=|\mathcal{L}([n], A, a)|+|\mathcal{L}([n], B, b)|.
\end{equation}
The following two lemmas embody, in a certain sense, the `local unimodality' of the function defined in (\ref{12-3-4}).
\begin{lemma}[Huang--Peng \cite{12-3-1}]\label{clm28}
Let $n> a+b$ and $A\in \m A\subset {[n]\choose a}$. If $A$ is of Type I in $\m A$, then there exist $G, H\in \m A$ satisfying (\ref{writegh1}), and $f(A)<\max \{f(G), f(H)\}$.
\end{lemma}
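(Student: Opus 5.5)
The plan is to prove a discrete convexity statement along the chain $G\prec F\prec H$. Here $F=A$, and $G,H$ are as in (\ref{writegh1}). Note that $G$ precedes $A$ in lex order, since it contains $q-c$ in place of $q$. Likewise $A$ precedes $H$, since $A$ contains $q-c+1$ while $H$ does not. I will show that
\[
f(H)-f(A)>f(A)-f(G).
\]
This suffices. If $f(A)\ge f(G)$, the right-hand side is nonnegative, so $f(H)>f(A)$. Otherwise $f(G)>f(A)$ directly. Either way $f(A)<\max\{f(G),f(H)\}$.

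To compute the increments, write $s=|F''|$, $x=a-s-c-k$, and let $B_X$ denote the $b$-partner of $X$. For the $a$-part, $|\mathcal L([n],A,a)|-|\mathcal L([n],G,a)|$ counts the $a$-sets $X$ with $G\precneqq X\prec A$. All such sets begin with $F''\cup[q-c,q-1]$ (or with a prefix of it followed by a larger element), so the count is a sum of binomials of the form $\binom{n-p}{\cdot}$ indexed by the tail $[n-k+1,n]$. The same bookkeeping applies between $A$ and $H$. The difference between the two $a$-increments comes from exactly one extra term: the sets starting with $F''\cup[q-c+1,q]$ and having a different tail, roughly $\binom{n-q}{k}-1$, versus the analogous block one step earlier.

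For the $b$-part, I will use the explicit description of the partner: $B$ is $H'\cup[n-b+h+1,n]$, where $H'=[q]\setminus A$ together with $q$. By Lemma \ref{3-12-1}, $|\mathcal L([n],B,b)|$ equals the number of $b$-sets meeting every member of $\mathcal L([n],A,a)$. Shifting the block $[q-c+1,q]$ moves one element of $[q]\setminus A$, so the partners $B_G,B_A,B_H$ are again related by moving a single element. Their lex-positions therefore differ by sums of binomials $\binom{n-p}{b-\cdot}$, and these decrease as $A$ moves forward.

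Putting the pieces together, the required inequality reduces to a comparison of the form
\[
\binom{n-q-1}{a-s-c}-\binom{n-q}{a-s-c}+\cdots\quad\text{versus}\quad\binom{n-q-1}{b-h}-\cdots,
\]
a difference of two ``second differences''. I expect to settle it using $n>a+b$ in the form $\binom{N}{i}>\binom{N}{j}$ whenever $i<j\le N-i$, which is the same fact used in (\ref{n>2k}).

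The main obstacle is the bookkeeping of the non-unique parameters $k$ and $c$, and of the boundary cases. These include $k=0$, $c=1$, $q+1=n-k+1$ (where $H$ merges its block with the tail), and $q-c\in F''$ (which would violate Type I). I would first fix the canonical choice with $k$ maximal and $c$ maximal, then check that the second differences of $|\mathcal L(\cdot)|$ along the chain are $a$-part $\ge 0$ and $b$-part $>0$ (or the reverse). The strictness will come from $n>a+b$, not merely $n\ge a+b$.
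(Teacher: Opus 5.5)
Your argument rests entirely on the strict second-difference inequality $f(H)-f(A)>f(A)-f(G)$. That inequality is false, so the plan cannot be completed as stated.

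\textbf{Counterexample.} Take $n=8$, $a=3$, $b=2$, $A=\{1,6,8\}$. The only decomposition for which $G$ and $H$ are genuine $3$-sets is $k=1$, $c=1$, $F''=\{1\}$, so $G=\{1,5,8\}$ and $H=\{1,7,8\}$. Their lex positions are $18$, $20$, $21$. All three have $2$-partner $\{1,8\}$, of position $7$: a $2$-set avoiding $1$ is disjoint from some $\{1,y,z\}$ with $y\le 5$, so by Lemma~\ref{3-12-1} the partner family is the full star at $1$. Hence $f(G)=25$, $f(A)=27$, $f(H)=28$. The lemma holds, but the increments are $2$ and then $1$. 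Strictness fails even in the simplest case $k=0$, $c=1$: for $A=\{1,2,6\}$, $G=\{1,2,5\}$, $H=\{1,2,7\}$ one gets $16,17,18$.

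\textbf{Why your sign pattern fails.} Put $\mathcal I_1:=\mathcal L([n],A,a)\setminus\mathcal L([n],G,a)$ and $\mathcal I_2:=\mathcal L([n],H,a)\setminus\mathcal L([n],A,a)$. Lowering every element outside $F''$ by one maps $\mathcal I_2$ bijectively onto the members of $\mathcal I_1$ that avoid $n$. So the second difference of the $a$-part equals $-N\le 0$, where $N$ is the number of members of $\mathcal I_1$ containing $n$; moreover $N\ge 1$ unless $c=1$, $k=0$. The partner part is convex, but it is constant once all relevant sets have maximum larger than $a+b$. In that regime $f$ is increasing yet concave.

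\textbf{Partner formula.} Your formula $B=H'\cup[n-b+h+1,n]$ presupposes $b\ge h$. Whenever $k\ge 1$ one has $\max A=n$ and $h=n-a+1>b$, so it does not apply. Also, the partner is defined via $\max A$, not via $q=\max F'$.

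\textbf{What you actually need.} The lemma is equivalent to the implication $f(A)\ge f(G)\Rightarrow f(H)>f(A)$. This is a sign-propagation (unimodality) statement, not a convexity statement. A convenient bookkeeping device: a $b$-set is disjoint from some member of $\mathcal L([n],A,a)$ iff the $a$ smallest elements of its complement form such a member. Together with Lemma~\ref{3-12-1} this gives
\[
f(A)=\binom{n}{b}+\sum_{X\in\mathcal L([n],A,a)}\Bigl(1-\binom{n-\max X}{n-a-b}\Bigr).
\]
For $c=1$, write $M=n-q$ and $d=n-a-b$. The identity gives $f(A)-f(G)=\binom{M}{k}-\binom{M}{k+d}$ and $f(H)-f(A)=\binom{M-1}{k}-\binom{M-1}{k+d}$. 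For $x<y$, $\binom{L}{x}\ge\binom{L}{y}$ holds exactly when $x+y\ge L$. So the first difference is nonnegative iff $2k+d\ge M$. In that case the second difference of $f$ is strictly positive, because $k\le M-1$, which is forced by $H$ being an $a$-set. At the same time, the second difference $\binom{M-1}{k+d-1}-\binom{M-1}{k-1}$ is negative whenever $k\ge 1$ and $M<k+d$.

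For $c\ge 2$, $\mathcal I_1$ also contains the sets $F''\cup\{q-c\}\cup Y$ with $Y\subset[q-c+1,n]$, $|Y|=c+k-1$, and $[q-c+1,q-1]\not\subset Y$. These contribute a further pair of binomial terms. The same sign comparison, not a second-difference bound, is what has to be established there.
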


\begin{lemma}[Huang--Peng \cite{12-3-1}]\label{clm29}
Let $n> a+b$ and $A\in \m A\subset {[n]\choose a}$. If $A$ is of Type II in $\m A$, then there exist $G, H\in \m A$ satisfying (\ref{writegh2}), and $f(A)<\max \{f(G), f(H)\}$.
\end{lemma}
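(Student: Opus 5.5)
The plan is to make the function $f$ from (\ref{12-3-4}) completely explicit along the three consecutive candidates $G\prec A\prec H$ of (\ref{writegh2}) and to prove a discrete convexity statement. Set
\[
\Delta_1:=f(A)-f(G),\qquad \Delta_2:=f(H)-f(A).
\]
I will show that $\Delta_1\ge 0$ forces $\Delta_2>0$. This gives $f(A)<\max\{f(G),f(H)\}$: either $\Delta_1<0$, so $f(A)<f(G)$, or $\Delta_1\ge 0$, so $f(A)<f(H)$. Note that $G$ is obtained from $A$ by moving the tail element $n-k+1$ down to $q+1$, and $H$ by moving $q-c+1$ up to $n-k$. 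Hence $G\prec A\prec H$ in lex order, and the three initial segments are nested.

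First I compute the $a$-side increments. Write $p:=|F''|$ and $s:=q-c$. The $a$-sets $X$ with $G\precneqq X\prec A$ all begin with the prefix $F''\cup[s+1,q]$. They are parametrized by where they first depart from $G$ after $q$. This gives $|\mathcal L([n],A,a)|-|\mathcal L([n],G,a)|$ as a short sum of binomials of the shape $\binom{n-j}{k-1}$, where $j$ ranges over the positions $q+2,\dots,n-k+1$; equivalently it is one ``hockey-stick'' difference. Similarly, $|\mathcal L([n],H,a)|-|\mathcal L([n],A,a)|$ counts $a$-sets that contain $F''\cup[s+1,q]$ and are lex-larger than $A$, together with sets containing $F''$ but not $s+1$ that precede $H$. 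This yields one binomial sum with top index about $n-s-1$ and bottom index $a-p-1$.

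Next I treat the $b$-side. By the definition of the $b$-partner, the partner of a set $X$ with $\max X=q_X$ is $([q_X]\setminus X)\cup\{q_X\}$, padded with a terminal segment $[n-\cdot+1,n]$. So the partners $B_G,B_A,B_H$ are explicit, and by Lemma~\ref{3-12-1} each partner count equals the size of the maximal family cross-intersecting with the corresponding initial segment. The partners move in the opposite direction: $B_H\prec B_A\prec B_G$. The decrements $|\mathcal L(B_G)|-|\mathcal L(B_A)|$ and $|\mathcal L(B_A)|-|\mathcal L(B_H)|$ are again single binomial sums, now with lower indices tied to $b$ and to the complementary part $[q]\setminus A$. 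At this stage $\Delta_1$ and $\Delta_2$ are each a difference of two binomial expressions over the same top indices.

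Finally I compare $\Delta_1$ and $\Delta_2$. The condition $\Delta_1\ge 0$ is an inequality of the form $\binom{N}{x}\ge\binom{N}{y}$ (after telescoping), with $x$ coming from $a$ and $y$ from $b$. For $\Delta_2$ the corresponding terms have top index shifted by one and bottom indices shifted in a way that favours the $a$-side. Using $n>a+b$ through the same mechanism as in Claim (\ref{n>2k}), namely that $\binom{N}{x}-\binom{N}{y}$ grows once $N$ exceeds $x+y$, I expect to get $\Delta_2\ge\Delta_1+(\text{positive})$, or at least to get $\Delta_2>0$ whenever $\Delta_1\ge0$.

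I expect the main obstacle to be the bookkeeping of degenerate configurations rather than the inequality itself. These include $k=0$, where $A$ has no terminal run and $G$ may not exist in that form; $c=1$; $q+1=n-k$, where $G$ and $H$ overlap the tail; and cases where the partner of $H$ has a different maximum, so the padding length changes. In each such case the closed forms acquire boundary terms. I would first handle the generic case cleanly, then check the boundary cases one by one, verifying that the extra terms only help the strict inequality; strictness is where $n>a+b$ rather than $n\ge a+b$ is used.
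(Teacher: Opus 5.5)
Your reduction is sound: $f(A)<\max\{f(G),f(H)\}$ is literally equivalent to ``$\Delta_1\ge 0\Rightarrow\Delta_2>0$'', and explicit difference formulas are a natural way to prove it. The paper itself gives no proof here and only cites Huang--Peng. As written, however, your plan has a genuine gap, because the $b$-side rests on a false description of the partner.

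For $G,H$ to be $a$-sets different from $A$ you need $k\ge 1$ and $q+1\le n-k$. If $k=0$ then $|G|=a+1$. If $q=n-k$ then $G=A$, while $H=A$ or $|H|=a-1$. This, not $q+1=n-k$, is the degenerate configuration, and it must be excluded from Type II. Consequently $\max A=\max H=n$, and $([n]\setminus A)\cup\{n\}$ has $n-a+1>b+1$ elements. So the $b$-partner is not that set padded by a terminal segment; it is the lex-last $b$-set preceding it. For instance, with $n=6$, $a=3$, $b=2$, $A=\{2,3,6\}$, that set is $\{1,4,5,6\}$ while the partner is $\{1,3\}$. The binomial sums you would read off from padded partners are therefore wrong, and your worry about changing padding lengths is an artifact of this error.

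What does work is the remark you make in passing. By Lemma~\ref{3-12-1}, $|\mathcal{L}([n],B_X,b)|$ is the number of $b$-sets $Y$ meeting every member of $\mathcal{L}([n],X,a)$. These are exactly the $Y$ for which the first $a$ elements of $[n]\setminus Y$ form a set lex-after $X$. The number of $Y$ whose first $a$ non-elements form a prescribed $Z$ is $\binom{n-\max Z}{a+b-\max Z}$.

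The decisive comparison is also missing; you only say you expect it. Your description of $(A,H]$ also needs correcting. No $a$-set containing $F''\cup[s+1,q]$ lies after $A$. The members of $(A,H]$ are the $X$ with $X\cap[j]=F''\cup[s+1,j-1]$ for some $s+2\le j\le q$, together with the $X$ with $X\cap[q]=F''\cup[s+2,q]$. With this and the counting above, write $y:=a+b-q-k-1$ and $\binom{N}{v}=0$ for $v<0$. One gets
\[
\Delta_1=\binom{n-q-1}{k}-\binom{n-q-1}{y},\qquad
\Delta_2=\Bigl[\binom{n-q}{k+1}-\binom{n-q}{y}\Bigr]+\sum_{i=0}^{c-2}\Bigl[\binom{n-q+i}{k+1+i}-\binom{n-q+i}{y}\Bigr].
\]
Because $n>a+b$, in each bracket the two lower indices sum to less than the upper index. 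In that range $\binom{N}{u}\ge\binom{N}{v}$ iff $u\ge v$, with equality only if $u=v$. Hence $\Delta_1\ge 0$ iff $k\ge y$, and then every bracket in $\Delta_2$ is strictly positive. This proves the lemma with no further boundary cases.

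Your sharper hope $\Delta_2\ge\Delta_1+(\text{positive})$ is false in general. Take $n=6$, $a=3$, $b=2$, $A=\{1,4,6\}$, $G=\{1,4,5\}$, $H=\{1,5,6\}$. Then $f(G),f(A),f(H)$ are $13,14,15$, so $\Delta_1=\Delta_2=1$.
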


Now we are ready to give the proof of Theorem \ref{10-12-1}.

\begin{proof}[Proof of Theorem \ref{10-12-1}]
If $n=a+b$ or $a=b$ or $\m A=\emptyset$, then we are done. We may assume that $n>a+b$, $a>b$ and $\m A\ne\emptyset$. 

By the Kruskal--Katona theorem, we may assume that $\mathcal{A}$ and $\mathcal{B}$ are L-initial, in other words, we may assume that there are $A\in {[n]\choose a}$ and $B\in {[n]\choose b}$ such that 
$\mathcal{A}=\mathcal{L}([n], |\mathcal{A}|, a)=\mathcal{L}([n], A, a)$ and $\mathcal{B}=\mathcal{L}([n], |\mathcal{B}|, b)=\mathcal{L}([n], B, b).$
Let $B'$ be the $b$-partner of $A$. 
By Lemma \ref{3-12-1}, $\m L([n], B', b)$ is maximal cross-intersecting with $\m A$. Hence $\m B\subset \m L([n], B', b)$, and 
\[
f(A)=|\mathcal{L}([n], A, a)|+|\mathcal{L}([n], B', b)|=|\m A|+|\m B'|.
\]
Let $\m F:=\{F\in {[n]\choose a}: [a-b]\subset F\}$.
Since $\emptyset\ne |\m A|\le {n-a+b\choose b}$, $A\in \m F$. 
In view of Lemmas \ref{clm28} and \ref{clm29},
if $A$ is of Type I or Type II in $\m F$, then $f(A)<\max \{f(F): F\in \m F\}$. 
Observe that $[a]$ and $[a-b]\sqcup[n-b+1,n]$ are the only sets that are neither of Type I nor of Type II in $\m F$, since they are the first and the last sets of $\m F$, respectively.  
Thus,
\[
|\m A|+|\m B|\le |\m A|+|\m B'|=f(A)\le \max \{f([a]), f([a-b]\sqcup[n-b+1,n])\}=\tbinom{n}{b},
\]
where the last equality holds by the following calculation: $f([a])=1+\tbinom{n}{b}-\tbinom{n-a}{b}\le \tbinom{n}{b}$ and $f([a-b]\sqcup[n-b+1,n])=\tbinom{n-a+b}{b}+\tbinom{n}{b}-\tbinom{n-a+b}{b}= \tbinom{n}{b}$.
\end{proof}

\end{document}